\documentclass[a4paper, 11pt]{article}

\usepackage[
            includefoot,  
            marginparwidth=0in,     
            marginparsep=0in,       
            margin=1.45in,               
            includemp]{geometry}


\setlength{\hoffset}{-18pt}   
\setlength{\oddsidemargin}{60pt}  
\setlength{\evensidemargin}{40pt}  
\setlength{\marginparwidth}{54pt}  
\setlength{\textwidth}{375pt}  
\setlength{\voffset}{-18pt}  
\setlength{\marginparsep}{7pt}  
\setlength{\topmargin}{0pt}  
\setlength{\headheight}{38pt}  
\setlength{\headsep}{27pt}  
\setlength{\footskip}{100pt}  
\setlength{\parindent}{0cm}   
\setlength{\textheight}{630pt}  

\usepackage{bbm}
\usepackage{float}
\usepackage{graphicx}                  
\usepackage{amssymb}
\usepackage{amsfonts}
\usepackage{xcolor}

\RequirePackage{amsmath}
\RequirePackage{amsthm}

\RequirePackage{latexsym}
\RequirePackage{amsfonts}
\RequirePackage{amsmath}
\RequirePackage{amssymb}
\RequirePackage{amsthm}

\RequirePackage[backrefs]{amsrefs}

\newtheoremstyle{montheoreme}
  {}
  {}
  {\itshape}
  {}
  {\bf}
  {.}
  {.5em}
  {}
\newtheoremstyle{maremarque}
  {}
  {}
  {}
  {}
  {\bf}
  {.}
  {.5em}
  {}
\usepackage{fancyhdr}
\theoremstyle{montheoreme}

\newtheorem{thm}{Theorem}[section]
\newtheorem{defn}[thm]{Definition}
\newtheorem{prop}[thm]{Proposition}
\newtheorem{lem}[thm]{Lemma}

\pagestyle{myheadings}

\theoremstyle{maremarque}
\newtheorem*{rmq}{Remark}{}
\newtheorem*{nota}{Notation}

\DeclareMathOperator{\Hess}{Hess}
\DeclareMathOperator{\osc}{osc}

\DeclareMathOperator{\cov}{cov}
\DeclareMathOperator{\Ent}{Ent}

\newcommand{\R}{\mathbb{R}}
\newcommand{\N}{\mathbb{N}}
\newcommand{\T}{\mathbb{T}}

\makeatletter

\@addtoreset{equation}{chapter}
\makeatother

\addtolength{\hoffset}{-0.5cm}
\addtolength{\textwidth}{1cm}                      

\makeatletter

\@addtoreset{equation}{section}
\makeatother


\begin{document}

\title{Hydrodynamic limit for conservative spin systems with super-quadratic, partially inhomogeneous single-site potential.}
\author{Max Fathi \& Georg Menz  \thanks{LPMA, University Paris 6, France, max.fathi@etu.upmc.fr.\newline  Stanford University, gmenz@stanford.edu.} }
\date{\today}

\maketitle

\begin{abstract}
  We consider an interacting unbounded spin system, with conservation of the mean spin. We derive quantitative rates of convergence to the hydrodynamic limit provided the single-site potential is a bounded perturbation of a strictly convex function with polynomial growth, and with an additional random inhomogeneous linear term. This additional linear term models the impact of a random chemical potential. The argument adapts the two-scale approach of Grunewald, Otto, Villani and Westdickenberg from the quadratic to the general case. The main ingredient is the derivation of a covariance estimate that is uniform in the system size. We also show that this covariance estimate can be used to change the iterative argument of [MO] for deducing the optimal scaling LSI for the canonical ensemble into a two-scale argument in the sense of [GOVW]. We also prove the LSI for canonical ensembles with an inhomogeneous linear term.  \bigskip

{\begin{footnotesize}
\noindent\emph{MSC:} Primary 60K35; secondary 60J25; 82B21 \newline
\emph{Keywords:} Hydrodynamic limit; Kawasaki dynamics; Spin system; Coarse-graining
  \end{footnotesize}
}

\end{abstract}

\tableofcontents

\vspace{1cm}

\section{Introduction and Background}

In this work, we are interested in generalizing the results of [GOVW] to a larger setting. There are two topics we are interested in : logarithmic Sobolev inequalities and quantitative rates of convergence to the hydrodynamic limit for continuous spin systems on a lattice. We use the two-scale approach introduced in [GOVW], which is based on coarse-graining arguments.

\subsection{Logarithmic Sobolev inequalities}

\begin{defn}
A probability measure $\mu$ on a Riemannian manifold $X$ is said to satisfy a logarithmic Sobolev inequality with parameter $\rho > 0$ if, for all smooth, non-negative functions $f : X \rightarrow \R_+$, we have
$$\Ent_{\mu}(f) \leq \frac{1}{\rho}\int{\frac{|\nabla f|^2}{f}\mu}.$$
\end{defn}

This inequality can be very useful in the study of diffusion processes. It was originally introduced by Gross in [Gr] to study hypercontractivity phenomenon for diffusion processes. For example, it is well known that such an inequality implies exponentially fast convergence to equilibrium for the diffusion equation naturally associated with $\mu$ (namely $\Delta - \nabla V$, when $\mu = \exp(-V)$). Due to tensorization properties (see Criterion I in Appendix B), it also behaves well in large dimensions. For example, it has been proven in [Go] that it is equivalent to dimension-free Gaussian concentration. A well-written introduction to this topic can be found in [L], and we recall the three most common criterion that are used to derive such inequalities in Appendix B. 

In [GOVW], a new criteria, based on a decomposition of the system in two scales, is devised. It is then applied to prove the LSI for the canonical ensemble with a bounded perturbation of a quadratic Ginzburg-Landau type potential. Due to a technical restriction on a covariance estimate they used, their criteria could not be directly applied to the case of a superquadratic Ginzburg-Landau type potential. To overcome this problem, the two-scale strategy was modified into an iterative approach in [MO]. In this paper, we will devise a new covariance estimate (Proposition 2.4), which will allow us to apply the original two-scale approach. We also obtain the LSI when we add an inhomogeneous linear term to the Hamiltonian, which generalizes to the superquadratic case results of [M] and [LN].

\subsection{Hydrodynamic limits}

Our second theme of interest is the derivation of scaling limits for stochastic interacting spin systems. Typically, such results consist in proving that, under a suitable time-space scaling, random systems with a large number of particles behave like a deterministic object, given as the solution of a partial differential equation.

There are two main general methods in the literature to prove hydrodynamic limit results for stochastic systems. The first, often called the entropy method, was introduced in [GPV], and is based on martingale methods and entropy estimates. The second, called the relative entropy method, was devised in [Y], and is based on proving a Gronwall-type inequality for the relative entropy with respect to a well-chosen time-varying reference state. It yields stronger results than the entropy method, and is in general simpler, but relies on stronger assumptions, namely convergence of the initial data in the sense of relative entropy rather than of the macroscopic observables. Both methods are reviewed in [KL].

In [GOVW], a new method is presented, based on estimates in Wasserstein distances. Its assumptions as well as its results, are at an intermediate level between the two classical methods, and uses a logarithmic Sobolev inequality. It has the additional interest of giving explicit quantitative rates of convergence. This method was then applied to Kawasaki dynamics with an equilibrium measure that is a bounded perturbation of a quadratic potential. Here we will generalize their results to the case where the single-site potential is a bounded perturbation of a uniformly convex function with polynomial growth, and with an additional inhomogeneous linear term, with coefficients given by bounded iid random variables. The additional linear term corresponds to the effect of a random chemical potential. It seems that convergence to the hydrodynamic limit when the Hamiltonian includes such a random chemical potential is entirely new.

The plan of the paper is as follows: In Section 2, we present the framework and our main results. Section 3 is devoted to the new covariance estimate. Section 4 presents the proof of the logarithmic Sobolev inequality, and Section 5 contains the proofs of our results on hydrodynamic limits.

\vspace{1cm}

{\Large\textbf{Notations}}

\begin{itemize}

\item $\mathcal{L}^k$ is the k-dimensional Lebesgue measure

\item $\nabla$ is the gradient, $\Hess$ stands for Hessian, $| \cdot |$ is the norm and $\langle \cdot, \cdot \rangle$ is an inner product. If necessary, a subscript will indicate the space on which these are taken.

\item $A^t$ is the transpose of the operator $A$.

\item $\Ent_{\mu}(f) = \int{f (\log f) \mu} - \left(\int{f \mu} \right) \log \left( \int{f \mu} \right)$ is the entropy of the positive function $f$ with respect to the measure $\mu$.

\item $\osc{\Psi} = \sup_X \Psi - \inf_X \Psi$ is the oscillation of the function $\Psi$.

\item $Z$ is a constant enforcing unit mass for a probability measure.

\item $C$ is a positive constant, which may vary from line to line, or even within a line.

\item $id_X$ is the identity map $X \rightarrow X$.

\item LSI is an abbreviation for logarithmic Sobolev inequality.

\item $||f||_{\infty, E} = \underset{x \in E}{\sup} \hspace{1mm} |f(x)|$ is the supremum norm of $f$ over the set $E$.

\item $a \lesssim b$ means that there is a constant $C$ such that $a \leq Cb$.

\end{itemize}

\section{Framework and Main Results}

\subsection{Microscopic and macroscopic scales}

We consider a (periodic) lattice spin system of N continuous variables governed by a Ginzburg-Landau type potential $\psi : \R \rightarrow \R$ and an additional inhomogeneous linear term given by a family of real numbers $a =(a_i)_{1 \leq i \leq N}$. The grand canonical measure on $\R^N$ has density

\begin{equation} 
\frac{d\mu_{N,a}}{d\mathcal{L}^N}(x) = \frac{1}{Z}\exp \left( - \underset{i = 1}{\stackrel{N}{\sum}} \hspace{1mm} \psi(x_i) + a_i x_i \right).
\end{equation}

We shall assume that the potential $\psi$ is of class $C^2$ and is a bounded perturbation of a strictly convex potential, that is
\begin{equation} \label{assumption_potential}
\psi(x) = \psi_c(x) + \delta\psi(x); \hspace{3mm} \psi_c'' \geq \lambda > 0; \hspace{3mm} ||\delta\psi||_{C^2} < +\infty.
\end{equation}
We shall also assume that 
\begin{equation} \label{assumption_bound_a}
\sup |a_i| \leq L < +\infty.
\end{equation}

To simplify notations, we define the Hamiltonian 

\begin{equation}
H(x) := \underset{i = 1}{\stackrel{N}{\sum}} \hspace{1mm} \psi(x_i) + a_i x_i - \log Z,
\end{equation}
so that $\mu(dx) = \exp(-H(x))dx$.

We will add to the situation a constraint of fixed mean spin. Our phase state space will be
 \begin{equation*}
    \label{e_definition_of_X_N_M}
   X_{N,m} := \left\{  x \in \R^N, \ \frac{1}{N} \sum_{i=1}^N x_i =m   \right\}.
 \end{equation*}
 
where $m$ is an arbitrary real number. This space is a hyperplane of $\R^N$ with a fixed mean constraint. We endow this space with the $\ell^2$ inner product
\begin{equation}
\langle x, \tilde{x} \rangle_X = \underset{i = 1}{\stackrel{N}{\sum}} \hspace{1mm} x_i \tilde{x}_i.
\end{equation}

For a given $m \in \R$, we consider the restriction $\mu_{N,a,m}$ of the grand canonical measure to $X_{N,m}$, that is
\begin{equation} \label{def_grand_can}
\frac{d\mu_{N,a,m}}{d\mathcal{L}^{N-1}}(x) = \frac{1}{Z}1_{(1/N)\sum x_i = m}\exp \left( - \underset{i = 1}{\stackrel{N}{\sum}} \hspace{1mm} \psi(x_i) + a_i x_i \right).
\end{equation}

This measure is called the canonical ensemble. It gives the distribution of the random variables $x_i$ conditioned on the event that their mean value is given by $m$.

We now consider two integers $M$ and $K$ such that $N = KM$. To each microscopic profile $x \in X$, we associate a macroscopic profile $y \in \R^M$, given by

\begin{equation}
y_i := \frac{1}{K}\underset{j = (i - 1)K + 1}{\stackrel{iK}{\sum}} \hspace{1mm} x_j.
\end{equation}

We denote by $P_{N,K}$ the operator that associates the macroscopic profile $y$ to the microscopic profile $x$. The macroscopic profiles live in the space

\begin{equation}
Y_{M,m} := \left\{y \in \R^M , \ \frac{1}{M} \sum_{i=1}^N y_i =m \right\},
\end{equation}

which we endow with the $L^2$ inner product

\begin{equation}
\langle y, \tilde{y} \rangle_Y := \frac{1}{M}\sum y_i \tilde{y}_i.
\end{equation}

\begin{rmq}
Note that we are \emph{not} using the same scalar product for both the microscopic and the macroscopic spaces.
\end{rmq}

If we denote by $X_{N,K,y}$ the space of microscopic profiles such that $Px = y$, we have the factorization

\begin{equation} \label{facto_fluct}
X_{N,K,y} = \underset{i = 1}{\stackrel{M}{\bigotimes}} \hspace{1mm} X_{M,y_i},
\end{equation}
and $P_{N,K}$ factorizes on the fibers $X_{N,K,y}$.

An important feature of the coarse-graining operator $P_{N,K}$ is the relation
\begin{equation}
PNP^t = id_Y,
\end{equation}
so that the symmetric operator $P^tNP$ is the orthogonal projection of $X$ to $(\operatorname{Ker } P)^{\perp}$. This induces a decomposition of $X$ into macroscopic variables in $Y \approx (\operatorname{Ker } P)^{\perp}$ and microscopic fluctuations in $\operatorname{Ker } P$, as $x = P^tNPx + (\text{id} - P^tNP)x$. This decomposition of variables induces a decomposition of measures and a decomposition of gradients :

\vspace{2mm}

(i) a decomposition of measures : We denote by $\bar{\mu}$ the image of the measure $\mu$ by $P$. This measure is the distribution of the macroscopic variable when the microscopic variable is distributed according to $\mu$. Let $\mu(dx|y)$ be the distribution of the microscopic variable conditioned on the event $\{ Px = y\}$. For each $y$, $\mu(dx|y)$ is a probability measure, and we have

\begin{equation}
\mu(dx) = \mu(dx|y)\bar{\mu}(dy).
\end{equation}

We can define a macroscopic Hamiltonian associated to $\bar{\mu}$ by

\begin{equation}
\bar{H}(y) := -\frac{1}{N} \log \left(\frac{d\bar{\mu}}{dy} \right),
\end{equation}

so that $\bar{\mu}(dy) = \exp(-N\bar{H}(y))dy$. We can explicitly compute $\bar{H}$ as 

\begin{align}
\bar{H}(y) &= -\frac{1}{N}\left(\log \int_{\{Px = y\}}{\exp(-H(x))dx} - \log \bar{Z} \right) \notag \\
&= -\frac{1}{N} \underset{i = 1}{\stackrel{M}{\sum}} \hspace{1mm}  \log \int_{X_{K,y_i}}{\exp \left(-\underset{j = (i-1)K+1}{\stackrel{iK}{\sum}} \psi(x_j) + a_j x_j \right)dx} - \frac{1}{N}\log \bar{Z}. \notag
\end{align}

If we define 

\begin{equation}
\psi_{K,a_1..a_K}(m) := -\frac{1}{K}\log \int_{X_{K,m}}{\exp(-\sum a_i x_i + \psi(x_i))dx},
\end{equation}

we get

\begin{equation} \label{equn_bar_h_psi}
\bar{H}(y) = \frac{1}{M}\underset{i = 1}{\stackrel{M}{\sum}} \hspace{1mm}  \psi_{K,a_{(i-1)K+1},..a_{iK}}(y_i) - \frac{1}{N}\log \bar{Z}.
\end{equation}

Also, the fluctuations measure $\mu(dx|y)$ on $X_{N,K,y}$ can be decomposed as a product measure along the factorization (\ref{facto_fluct})

\begin{equation} \label{factorization_measure_fluct}
\mu_{N,a,m}(dx|y) = \underset{i = 1}{\stackrel{M}{\bigotimes}} \hspace{1mm} \mu_{K,(a_j)_{(i-1)K+1 \leq j \leq iK}, y_i}
\end{equation}

(ii) a decomposition of gradients : If $f$ is a smooth function on $X$, we can decompose its gradient into a macroscopic gradient and a fluctuation gradient:

\begin{equation}
\nabla^{\text{macro}} f(x) = NP^tP\nabla f(x), \hspace{5mm} \nabla^{\text{fluct}} f(x) = (id_X - NP^tP)\nabla f(x).
\end{equation} 

This decomposition makes sense, since $(id_X - NP^tP)$ is the orthogonal projection onto $\text{ker } P$, which is the tangent space to the fiber $\{Px = y\}$.

An important use of the introduction of a macroscopic scale will be that, for large enough $K$, $\bar{H}$ will be uniformly convex : 

\begin{prop}\label{p_convexity_coarse_grained_Hamiltonian}
Assume that \eqref{assumption_potential} and \eqref{assumption_bound_a} hold. There exists an integer $K_0$ and a positive real number $\lambda$ such that, for any $K \geq K_0$ and $M \in \N$, we have for any $y$, $\tilde{y}$ $\in Y$,
$$\langle \Hess \bar{H}(y) \tilde{y}, \tilde{y} \rangle_Y \geq \lambda |\tilde{y}|_Y^2.$$
\end{prop}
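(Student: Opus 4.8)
The plan is to reduce the convexity of $\bar H$ to the convexity of each single-block coarse-grained potential $\psi_{K,a_1,\dots,a_K}$, and then to prove that the latter becomes uniformly convex for $K$ large. By the explicit formula \eqref{equn_bar_h_psi}, $\bar H(y) = \frac1M\sum_i \psi_{K,\dots}(y_i)$ up to an additive constant, so $\Hess\bar H(y)$ is, with respect to the $L^2$ inner product on $Y$, the diagonal operator with entries $\psi_{K,a_{(i-1)K+1},\dots,a_{iK}}''(y_i)$; hence $\langle\Hess\bar H(y)\tilde y,\tilde y\rangle_Y = \frac1M\sum_i \psi_{K,\dots}''(y_i)\,\tilde y_i^2$, and it suffices to show $\psi_{K,a_1,\dots,a_K}'' \geq \lambda$ uniformly in the block, in the $a_i$ with $\sup|a_i|\le L$, and in the center $m$, for all $K\ge K_0$.

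The second step is the computation of $\psi_{K,a_1,\dots,a_K}''(m)$. Writing $\psi_{K,a}(m) = -\frac1K\log\int_{X_{K,m}}\exp(-\sum(\psi(x_i)+a_ix_i))\,dx$, one differentiates twice in $m$. The constrained integral is a one-dimensional family of measures, and the standard computation (as for the free energy of a canonical ensemble; cf. the Gibbs variational structure) gives that $\psi_{K,a}''(m)$ equals $K$ times the inverse of the variance of a suitable linear statistic under the canonical measure $\mu_{K,a,m}$ — more precisely, $\psi_{K,a}''(m) = \big(\var_{\mu_{K,a,m}}(\text{something like } \sum c_i x_i)\big)^{-1}$ after optimizing over the direction, or equivalently it is read off from the Hessian of $-\frac1K\log$ of the partition function. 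The key point is that this reduces the lower bound $\psi_{K,a}''\ge\lambda$ to an \emph{upper} bound on a variance under the canonical ensemble $\mu_{K,a,m}$, uniform in $K$, $m$, and $(a_i)$.

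The third step — and the main obstacle — is that uniform variance bound. Since $\psi = \psi_c + \delta\psi$ with $\psi_c''\ge\lambda$ and $\delta\psi$ bounded in $C^2$, the measure $\mu_{K,a,m}$ is a bounded perturbation of a measure that is $\lambda$-log-concave on the hyperplane $X_{K,m}$; by Brascamp–Lieb on the hyperplane together with the Holley–Stroock perturbation principle, $\mu_{K,a,m}$ satisfies a Poincaré inequality with a constant controlled by $\lambda$ and $\|\delta\psi\|_\infty$ but \emph{independent of $K$, $m$, and the $a_i$} (the $a_ix_i$ terms are linear, hence affect neither the Hessian nor, being handled as part of the reference Gaussian-type bound or absorbed, the Brascamp–Lieb estimate). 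This Poincaré inequality gives exactly the variance upper bound needed. The one subtlety is the direction-optimization / the precise linear functional whose variance appears: one must check that its $X$-norm is bounded so that Poincaré applies with a uniform constant; this is where the choice of the $\ell^2$ inner product on $X$ versus the $L^2$ inner product on $Y$, together with the relation $PNP^t = \mathrm{id}_Y$, has to be tracked carefully. Choosing $K_0$ is then a matter of making the "$K$ times variance$^{-1}$" bound exceed a fixed $\lambda>0$, which follows once the variance is $O(1/K)$ — and the Poincaré constant being $O(1)$ while the relevant functional has $\ell^2$-norm $O(1/\sqrt K)$ (from the averaging $\frac1K\sum$) gives precisely variance $=O(1/K)$, closing the argument. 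I expect the bookkeeping of these normalization factors, rather than any deep inequality, to be the delicate part; the conceptual content is entirely Brascamp–Lieb plus Holley–Stroock, uniform in the block size.
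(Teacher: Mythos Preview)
Your reduction in the first paragraph is correct: by \eqref{equn_bar_h_psi} the Hessian of $\bar H$ is diagonal with entries $\psi_{K,a_{(i-1)K+1},\dots,a_{iK}}''(y_i)$, so it suffices to show $\psi_{K,a}''\geq\lambda$ uniformly in $m$, in $(a_i)\in[-L,L]^K$, and in $K\geq K_0$. But both subsequent steps contain genuine gaps. In step~2, the formula you suggest is not right: parametrizing $X_{K,m}$ by the shift $x\mapsto x+m\mathbf 1$ and differentiating twice gives
\[
\psi_{K,a}''(m)=\frac1K\,\mathbb E_{\mu_{K,a,m}}\Bigl[\sum_i\psi''(x_i)\Bigr]\;-\;\frac1K\,\var_{\mu_{K,a,m}}\Bigl(\sum_i\psi'(x_i)\Bigr),
\]
which is not an inverse variance. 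A variance upper bound alone cannot make this positive, because under \eqref{assumption_potential} only $\psi_c''\geq\lambda$ holds; $\psi''$ itself may be negative, so the first term has no useful lower bound. In step~3, the uniform Poincar\'e constant you claim does not follow from Brascamp--Lieb plus Holley--Stroock: the perturbation on $X_{K,m}$ is $\sum_{i=1}^K\delta\psi(x_i)$, whose oscillation is of order $K$, so Holley--Stroock yields a spectral gap of size $\lambda\,e^{-K\osc(\delta\psi)}$, which is exponentially bad in $K$. A uniform-in-$K$ spectral gap for $\mu_{K,a,m}$ is precisely the content of Theorem~\ref{lsi}, whose proof \emph{uses} Proposition~\ref{p_convexity_coarse_grained_Hamiltonian}; invoking it here would be circular.

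The paper's argument is entirely different and bypasses both obstacles. One introduces the Cram\'er transform $\varphi_K(m)=\sup_\sigma\bigl(\sigma m-\frac1K\sum_i\log\int e^{(\sigma-a_i)x-\psi(x)}dx\bigr)$, for which an inverse-variance formula \emph{does} hold, but with respect to the tilted \emph{product} measure rather than the canonical ensemble: $\varphi_K''(m)=\bigl(\frac1K\sum_i s(\sigma-a_i)^2\bigr)^{-1}$, and the single-site variances $s^2$ are uniformly bounded above by a simple one-dimensional Brascamp--Lieb/Holley--Stroock argument (no factor $K$). The work is then to show $\psi_K''$ is close to $\varphi_K''$; this is done by a local central limit theorem (Theorem~\ref{lct} in Appendix~A), which via explicit Fourier-analytic control of the density of the normalized sum yields $\psi_K''(m)\geq\bigl(1-\frac{C}{K}\bigr)\varphi_K''(m)$ uniformly. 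The missing idea in your sketch is exactly this passage through the grand-canonical $\varphi_K$ and the local Cram\'er comparison.
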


This result is a consequence of a local Cram\`er theorem, which we prove in Appendix A.

\subsection{A new covariance estimate}

To explain our results, let us first define the covariance of two functions.

\begin{defn}
The covariance of a non-negative function $f$ and a vector-valued function $g$ with respect to a measure $\mu$ is given by
$$\cov_{\mu}(f,g) = \int{gf \hspace{1mm} d\mu} - \left(\int{f \hspace{1mm} d\mu}\right) \left(\int{g \hspace{1mm} d\mu} \right).$$
\end{defn}

In [GOVW], the following covariance estimate to bound some crucial terms which appeared because of the interaction between the microscopic and macroscopic scales.

\begin{prop}
Let $\mu$ be a probability measure on $X$ satisfying $LSI(\rho)$ for some $\rho > 0$. Then for any two Lipschitz functions $f : X \rightarrow \R_+$ and $g : X \rightarrow \R$, we have
$$|\cov_{\mu}(f,g) \leq ||\nabla g||_{L^{\infty}} \sqrt{\frac{2}{\rho}\int{f d\mu} \Ent_{\mu}(f)}.$$
\end{prop}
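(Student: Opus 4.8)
The plan is to recast the covariance as a difference of integrals of $g$ against two probability measures and then to estimate it by a transport cost, exploiting that $LSI(\rho)$ implies a Talagrand inequality. First, by homogeneity we may assume $\int f\,d\mu=1$: replacing $f$ by $cf$ with $c>0$ multiplies $\cov_\mu(f,g)$ by $c$, and, since $\Ent_\mu(cf)=c\,\Ent_\mu(f)$, it multiplies $\int cf\,d\mu\cdot\Ent_\mu(cf)$ by $c^2$, so the two sides of the claimed inequality scale identically. We may also assume $\Ent_\mu(f)<+\infty$, the estimate being vacuous otherwise. Setting $\nu:=f\mu$, now a probability measure, we have
$$\cov_\mu(f,g)=\int gf\,d\mu-\int g\,d\mu=\int g\,d\nu-\int g\,d\mu,$$
and the finiteness of $\Ent_\mu(f)$ together with the sub-Gaussian tails of $\mu$ coming from \eqref{assumption_potential} guarantees that $\nu$ has a finite first moment, so the quantities below make sense.

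Since $|\nabla g|\le ||\nabla g||_{L^\infty}$ pointwise, $g$ is $||\nabla g||_{L^\infty}$-Lipschitz for the Euclidean distance on $X$, so by the Kantorovich--Rubinstein duality
$$\Big|\int g\,d\nu-\int g\,d\mu\Big|\ \le\ ||\nabla g||_{L^\infty}\,W_1(\nu,\mu)\ \le\ ||\nabla g||_{L^\infty}\,W_2(\nu,\mu),$$
the last inequality being $W_1\le W_2$ (Jensen). It then suffices to bound $W_2(\nu,\mu)$ by the relative entropy of $\nu$ with respect to $\mu$, which is exactly the Otto--Villani theorem: for the normalisation of the logarithmic Sobolev inequality used here, $LSI(\rho)$ implies the Talagrand transport inequality $W_2(\nu,\mu)^2\le\frac{2}{\rho}\,\Ent_\mu(f)$. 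Combining the two displays and undoing the normalisation $\int f\,d\mu=1$ gives precisely the stated bound. The vector-valued case follows by applying the scalar estimate to $\langle g,e\rangle$ for each unit vector $e$ and taking the supremum over $e$.

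Alternatively, to keep the argument self-contained one can avoid quoting Otto--Villani and argue by Herbst's method combined with the Gibbs variational (entropy) inequality. Assuming first that $g$ is bounded with $||\nabla g||_{L^\infty}\le 1$, apply $LSI(\rho)$ to $e^{tg}$; since $|\nabla e^{tg}|^2/e^{tg}=t^2|\nabla g|^2e^{tg}\le t^2 e^{tg}$, this yields $\Ent_\mu(e^{tg})\le\frac{t^2}{\rho}\int e^{tg}\,d\mu$, equivalently the differential inequality $\frac{d}{dt}\big(t^{-1}\log\!\int e^{tg}\,d\mu\big)\le\frac1\rho$; integrating from $0$, where $t^{-1}\log\!\int e^{tg}\,d\mu\to\int g\,d\mu$, gives a sub-Gaussian bound on the Laplace transform of $g$. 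On the other hand, the variational characterisation of entropy, applied to $\nu=f\mu$ with test function $tg$, gives $t\!\int gf\,d\mu\le\Ent_\mu(f)+\log\!\int e^{tg}\,d\mu$; subtracting $t\!\int g\,d\mu$ and inserting the Laplace bound produces an inequality of the form $t\,\cov_\mu(f,g)\le\Ent_\mu(f)+\frac{t^2}{2\rho}||\nabla g||_{L^\infty}^2$ for all $t\in\R$ (after rescaling $g$). The right-hand side is then a nonnegative quadratic in $t$, so its discriminant is $\le 0$, which is exactly the claimed bound; a general Lipschitz $g$ is recovered by truncation and dominated convergence, with the tails of $\mu$ controlling the cut-off errors.

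I expect no obstacle of principle here: the only substantive input is the implication $LSI(\rho)\Rightarrow T_2$ (or, in the second approach, the sub-Gaussian concentration behind the Herbst step), which I would cite rather than reprove. The two points that require genuine care are the approximation and integrability arguments when $g$ is an unbounded Lipschitz function, and the bookkeeping of the numerical constant, which is sensitive to the precise normalisation of the logarithmic Sobolev inequality and must be combined with the crude pointwise bound $|\nabla g|\le ||\nabla g||_{L^\infty}$ correctly in order to land exactly on $\frac{2}{\rho}$.
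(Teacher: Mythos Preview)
Your proof is correct. The paper itself does not prove this proposition; it is quoted as a known result from [GOVW] and used only as background motivation for the new covariance estimate of Proposition~\ref{p_crucial_covariance_two_scale}. Your argument via the Otto--Villani theorem ($LSI(\rho)\Rightarrow T_2(\rho)$) together with Kantorovich--Rubinstein duality is the standard route to this bound and is, in fact, essentially the proof given in [GOVW]. It is also entirely consistent with the paper's toolbox: the very same implication $LSI\Rightarrow T_2$ is invoked explicitly later, in the proof of Lemma~\ref{lem42}. Your alternative Herbst-plus-variational-entropy derivation is likewise valid and yields the same constant.
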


To be usefully applied, this proposition requires $g$ to have a bounded gradient. Since this result was used in [GOVW] with $g = \nabla H$, this restricted the method of [GOVW] to the case where the function $\psi$ they used as a single-site potential has bounded Hessian, and therefore it didn't apply to the superquadratic case. For the logarithmic Sobolev inequality, this problem was circumvented in [MO] through the use of an iterated coarse-graining, but this strategy cannot be transported very well for hydrodynamic limits, where the use of a finite number of scales is crucial. Therefore we devised a new covariance estimate, which applies well to our problem :

\begin{prop}\label{p_crucial_covariance_two_scale}
  Assume that the single-site potential $\psi$ is perturbed strictly convex in the sense that there is a splitting $\psi= \psi_c + \delta \psi$ such that
\begin{equation}
  \label{e_perturbation_convex}
  \psi_c'' \gtrsim 1 \qquad  \mbox{and} \qquad |\delta \psi| + |\delta \psi ' | \lesssim 1. 
\end{equation}
Then the canonical ensemble $\mu_{K,m}$ satisfies for any nonnegative function $f \geq0$ with $\int f d \mu_{K,m} =1$ the following covariance estimate: 
\begin{equation*}
 \left| \cov_{\mu_{K,a,m} } \left(f, \frac{1}{K} \sum_{i=1}^K  \psi'(x_i) \right)  \right|^2 \leq C_0 \hspace{1mm} K \hspace{1mm} \left( \int \frac{\sum_{i=1}^{K-1} \left| \frac{d }{dx_i} f \right|^2}{f}   \ d \mu_{K,a,m} \right)
\end{equation*}
for some constant $C_0$ which only depends on $\psi$ and $L$.
\end{prop}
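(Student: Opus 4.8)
\emph{Proof proposal.} The plan is to isolate the strictly convex part of the single-site potential and to quantify its convexity, the local Cramér theorem of Appendix A supplying the uniformity in $K$ and $m$. I would begin with two reductions. First, writing $\psi(x_i)+a_ix_i=\bigl(\psi_c(x_i)+a_ix_i\bigr)+\delta\psi(x_i)$, the map $x\mapsto\psi_c(x)+a_ix$ is still uniformly convex with the same lower bound on its second derivative, while $\delta\psi$ still has $|\delta\psi|+|\delta\psi'|\lesssim1$; so one may take $a\equiv0$ at the cost of letting the convex part be site‑dependent (its \emph{second} derivative is unchanged), and write $\psi=\psi_c+\delta\psi$ accordingly. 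Second, since $\{e_i-e_K\}_{1\le i\le K-1}$ spans $\operatorname{Ker}P$, one has $\sum_{i=1}^{K-1}\bigl|\tfrac{d}{dx_i}f\bigr|^2\ge|\nabla f|^2$ on $X_{K,m}$, so it suffices to prove the inequality with the right‑hand side replaced by the smaller quantity $C_0K\int\frac{|\nabla f|^2}{f}\,d\mu$. Finally split the observable, $\tfrac1K\sum_i\psi'(x_i)=\tfrac1K\sum_i\psi_c'(x_i)+\tfrac1K\sum_i\delta\psi'(x_i)=:g_c+g_\delta$, and treat $\cov_\mu(f,g_c)$ and $\cov_\mu(f,g_\delta)$ separately.

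The perturbative term is easy. Since $\|g_\delta-\E_\mu g_\delta\|_\infty\le2\|\delta\psi'\|_\infty$, writing $\cov_\mu(f,g_\delta)=\int(f-1)(g_\delta-\E_\mu g_\delta)\,d\mu$ and using $\int|f-1|\,d\mu\le2\sqrt2\,\bigl(1-\int\sqrt f\,d\mu\bigr)^{1/2}\le2\sqrt2\,\var_\mu(\sqrt f)^{1/2}$ together with the spectral gap inequality for the canonical ensemble (which holds here with a constant depending only on $\psi$ and $L$) gives $|\cov_\mu(f,g_\delta)|\lesssim\bigl(\int\frac{|\nabla f|^2}{f}\,d\mu\bigr)^{1/2}$, hence $|\cov_\mu(f,g_\delta)|^2\lesssim\int\frac{|\nabla f|^2}{f}\,d\mu\le K\int\frac{|\nabla f|^2}{f}\,d\mu$.

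For the convex term I would exploit that the reference potential $V_c(x)=\sum_i\bigl(\psi_c(x_i)+a_ix_i\bigr)$ is uniformly convex, with $\Hess V_c=\operatorname{diag}(\psi_c''(x_i))\ge\lambda$. Starting from a Helffer--Sj\"ostrand (Brascamp--Lieb) representation of the covariance against this convex reference and applying Cauchy--Schwarz in the inner product weighted by $D_c(x):=\operatorname{diag}(\psi_c''(x_i))$ and its restriction $\Pi D_c\Pi$ to $\operatorname{Ker}P$ (with $\Pi$ the orthogonal projection), one is led to bound, after the standard passage through $\sqrt f$, a Dirichlet‑form‑type quantity comparable to $\int\frac{|\nabla f|^2}{f}\,d\mu$ but carrying the unbounded weight $\psi_c''$, times the quantity $\int\bigl\langle(\Pi D_c\Pi)^{-1}\nabla g_c,\nabla g_c\bigr\rangle\,d\mu$. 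A direct linear‑algebra computation identifies the integrand of the latter as $\tfrac1K$ times the difference between the arithmetic and the harmonic mean of the numbers $\psi_c''(x_1),\dots,\psi_c''(x_K)$. This is exactly where the local Cramér theorem enters: under $\mu_{K,m}$ the empirical distribution of the spins is concentrated in a window of size $O(1)$ around $m$, and polynomial growth of $\psi$ forces the ratios $\psi_c''(x_i)/\psi_c''(x_j)$ to stay bounded by a constant independent of $m$, so this mean‑gap has $\mu$‑expectation $O(1)$ and the factor is $O(1/K)$, uniformly in $K$ and $m$. The Dirichlet‑type factor is then bounded by $\int\frac{|\nabla f|^2}{f}\,d\mu$ after truncating the weight $\psi_c''$ at a level chosen with the same concentration estimate, and this truncation is what costs the overall factor $K$ on the right‑hand side.

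The main obstacle is that $\mu_{K,m}$ is \emph{not} log‑concave: since $\delta\psi$ and $\delta\psi'$ are bounded but $\delta\psi''$ need not be, $\mu_{K,m}$ is only a $C^1$‑bounded (not $C^2$‑bounded) perturbation of the log‑concave measure $e^{-V_c}$, and a crude Holley--Stroock transfer of a Brascamp--Lieb estimate from $e^{-V_c}$ to $\mu_{K,m}$ would lose a factor $e^{O(K)}$, since $\osc\bigl(\sum_i\delta\psi_i\bigr)$ grows linearly in $K$. The representation and Cauchy--Schwarz above must therefore be carried out on $\mu_{K,m}$ itself. For this one needs a perturbation lemma to the effect that the Witten Laplacian on $1$‑forms attached to $V_c+\sum_i\delta\psi_i$ stays comparable to the one attached to $V_c$ despite the unbounded Hessian of the perturbation — morally because $\delta\psi$, being bounded together with its first derivative, can be strongly concave only on very short intervals, which the kinetic part of the Witten Laplacian absorbs — or, alternatively, one bootstraps the whole covariance estimate through an iteration over coarse‑grained scales as in [MO], the factor $K$ again supplying the needed slack. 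Establishing the bound on the Dirichlet‑type factor and the uniform‑in‑$m$ control of the mean‑gap are the remaining delicate points, both resting on the local Cramér theorem of Appendix A.
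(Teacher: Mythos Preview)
Your treatment of the perturbative part $g_\delta$ is fine and is essentially what the paper does (Csisz\'ar--Kullback--Pinsker plus the LSI/spectral gap). The gap is in the convex part $g_c$. You correctly identify the obstacle --- $\mu_{K,m}$ is not log-concave because $\delta\psi''$ is uncontrolled --- but you do not resolve it: the Helffer--Sj\"ostrand representation you invoke is simply not available on $\mu_{K,m}$; running it on the convex reference $e^{-V_c}$ and transferring costs $e^{O(K)}$, as you note; the ``perturbation lemma for the Witten Laplacian on $1$-forms under bounded-$C^1$ perturbations'' is neither standard nor proved here; and ``bootstrap through coarse-grained scales as in [MO]'' is a gesture rather than an argument. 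The subsidiary concentration claim that the ratios $\psi_c''(x_i)/\psi_c''(x_j)$ stay bounded under $\mu_{K,m}$ uniformly in $m$ concerns \emph{individual} spins, not their mean, and is not supplied by the local Cram\'er theorem; nor is it clear how ``truncating the weight $\psi_c''$'' yields a clean factor $K$ rather than something $m$-dependent.

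The paper circumvents the obstacle by a two-scale decomposition that never needs a multidimensional Brascamp--Lieb on a non-log-concave measure. Split the $K$ spins into $\tilde M$ blocks of \emph{fixed} size $\tilde K$ and decompose the covariance into a conditional (microscopic) part and a marginal (macroscopic) part. On each block one proves a crude estimate with a constant $C(\tilde K)$ depending badly on $\tilde K$ (Lemma~\ref{p_cov_perturbed}): your $g_\delta$ argument for the perturbation, and for $g_c$ a recursive one-spin-at-a-time conditioning using the \emph{one-dimensional} asymmetric Brascamp--Lieb inequality, which \emph{does} hold for perturbed strictly convex potentials. Since $\tilde K$ is fixed, $C(\tilde K)$ is harmless. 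At the macroscopic scale the coarse-grained single-site potential $\psi_{\tilde K}$ is genuinely uniformly convex by the local Cram\'er theorem once $\tilde K$ is large, so one is in the strictly convex setting and applies Lemma~\ref{estimate_cov_convex}: again a recursive one-spin conditioning via one-dimensional Brascamp--Lieb, whose combinatorics produce exactly the factor $K$. Thus the local Cram\'er theorem enters only to make the coarse-grained potential convex, not to control pointwise ratios of $\psi_c''$.
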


Note that, in the case of a quadratic potential, this covariance bound is suboptimal, since it has an extra $K$ factor. However, when used in the proof of the hydrodynamic limit, this extra factor doesn't change the optimal order of magnitude of the quantitative bounds (in $N^{-1/2}$) which is obtained when taking the best possible choices for $K$ and $M$.

\subsection{Logarithmic Sobolev inequality}

One of our main results is to prove that, when the potential $\psi$ is a bounded perturbation of a strictly convex function, the (generalized) canonical ensemble with inhomogeneous linear part, as defined by (\ref{def_grand_can}), satisfies a LSI with a parameter $\rho$ that doesn't depend on the dimension $N$, the mean $m$ or the $a_i$. 

\begin{thm} \label{lsi}
Assume that $\psi$ is a bounded perturbation of a strictly convex potential, as in (\ref{assumption_potential}). Let $a_1,..,a_N$ be a collection of $N$ real numbers, such that
$$\underset{i}{\sup} \hspace{1mm} |a_i| \leq L.$$
The measure $\mu_{N,a,m}$ satisfies LSI($\rho$) for some constant $\rho > 0$ that is independent of the dimension $N$, the mean $m$ and the sequence $(a_i)$. However, it does depend on the bound $L$.
\end{thm}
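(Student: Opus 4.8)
The plan is to combine the new covariance estimate, Proposition~\ref{p_crucial_covariance_two_scale}, with the two-scale criterion for logarithmic Sobolev inequalities of [GOVW]. Fix a block size $K$ equal to a constant at least $K_0$, with $K_0$ as in Proposition~\ref{p_convexity_coarse_grained_Hamiltonian}. After reducing to the case $N = KM$ (the remaining values of $N$ being handled by a routine variant with coarse-graining blocks of unequal but uniformly bounded sizes, or, for $N < K_0$, directly by Bakry--\'Emery together with Holley--Stroock), disintegrate $\mu_{N,a,m} = \mu(dx\,|\,y)\,\bar\mu(dy)$ along $P = P_{N,K}$. The criterion has three hypotheses: (i) a uniform LSI for the conditional measures $\mu(dx\,|\,y)$; (ii) an LSI for the marginal $\bar\mu$; (iii) a bound on the coupling between the two scales; and it delivers an LSI for $\mu_{N,a,m}$ with a constant depending only on those of (i)--(iii). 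The core of the argument is (iii): the covariance estimate used in [GOVW] requires a bounded gradient, is therefore unavailable in the super-quadratic regime, and must be replaced by Proposition~\ref{p_crucial_covariance_two_scale}.

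For (i), the factorization~\eqref{factorization_measure_fluct} presents $\mu(dx\,|\,y)$ as a tensor product of canonical ensembles $\mu_{K,(a_j),y_i}$ of the \emph{fixed} size $K$. Each factor differs by the perturbation $\sum_j \delta\psi(x_j)$, of oscillation at most $2K||\delta\psi||_{\infty}$, from the measure proportional to $\exp(-\sum_j (\psi_c(x_j) + a_j x_j))$ restricted to $X_{K,y_i}$; adding the linear terms $a_j x_j$ leaves the Hessian equal to $\operatorname{diag}(\psi_c''(x_j)) \ge \lambda\,\mathrm{Id}$, restricting a quadratic form bounded below by $\lambda$ to a subspace preserves that bound, so Bakry--\'Emery gives LSI($\lambda$) for the latter measure and Holley--Stroock gives LSI($\rho_0$) with $\rho_0 := \lambda\, e^{-2K||\delta\psi||_{\infty}}$ for the factor. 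As $K$ depends only on $\psi$ and $L$, so does $\rho_0$, which is in particular independent of $m$ and of the $a_i$; tensorization gives LSI($\rho_0$) for $\mu(dx\,|\,y)$, uniformly in $y$. For (ii), since $K \ge K_0$ Proposition~\ref{p_convexity_coarse_grained_Hamiltonian} gives $\langle \Hess\bar H(y)\tilde y, \tilde y\rangle_Y \ge \lambda |\tilde y|_Y^2$ uniformly in $y$, $m$ and $(a_i)$, so that, $\bar\mu$ having density $\exp(-N\bar H(y))$, Bakry--\'Emery on $(Y,\langle\cdot,\cdot\rangle_Y)$ produces an LSI for $\bar\mu$ with constant of order $N\lambda$, hence a uniformly positive one.

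For (iii), write $\bar f(y) := \int f\, \mu(dx\,|\,y)$. Differentiating along the fibers, the part of $\nabla^Y \bar f$ not accounted for by the conditional average of the lifted macroscopic gradient is, in the $k$-th block, the covariance $\cov_{\mu(dx|y)}(f, \sum_{j \in \text{block } k} \psi'(x_j))$ --- the linear contribution $\sum_{j} a_j$, being constant on the fiber, cancels --- which is $K$ times the quantity estimated by Proposition~\ref{p_crucial_covariance_two_scale}. Inserting that estimate, summing over the $M$ blocks, and using Cauchy--Schwarz together with the various normalizations (the $\ell^2$ product on $X$, the $1/M$-weighted product on $Y$, the factor $N$ in $\exp(-N\bar H)$, and $PNP^t = \mathrm{id}_Y$, hence $PP^t = N^{-1}\mathrm{id}_Y$), one bounds $\Ent_{\bar\mu}(\bar f)$ by $\lambda^{-1}\int \frac{|\nabla^{\text{macro}} f|^2}{f}\,d\mu$ plus a multiple $C_0 K^2 \lambda^{-1}$ of the fluctuation Fisher information; since $K$ is a constant, the extra factor $K$ of Proposition~\ref{p_crucial_covariance_two_scale} costs only a bounded factor. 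Combining with the conditional estimate $\int \Ent_{\mu(dx|y)}(f)\,\bar\mu(dy) \le \rho_0^{-1}\int \frac{|\nabla^{\text{fluct}} f|^2}{f}\,d\mu$ and the splitting $\Ent_\mu(f) = \Ent_{\bar\mu}(\bar f) + \int \Ent_{\mu(dx|y)}(f)\,\bar\mu(dy)$ then gives $\Ent_\mu(f) \le C \int \frac{|\nabla f|^2}{f}\,d\mu$ with $C = C(\rho_0, \lambda, C_0, K)$, that is $C = C(\psi, L)$.

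The main obstacle is step~(iii). In the super-quadratic case $\psi'$ is super-linear, so $\sum_{j \in \text{block}} \psi'(x_j)$ has an unbounded fiber gradient and the [GOVW] covariance estimate is of no use; one must instead insert Proposition~\ref{p_crucial_covariance_two_scale} into the proof of the two-scale criterion itself --- effectively re-running that proof with the $L^2$-type bound in place of the old one --- and then check that the coupling constant thus produced, along with the spurious factor $K$ and all the metric normalization factors, remains independent of $N$, $M$, $m$ and the $a_i$. Uniformity in the $a_i$ is the one point where the inhomogeneous linear term might create trouble, but it does not: that term never affects a Hessian, never enters $\delta\psi$, is swallowed by constants under every covariance, and appears in Propositions~\ref{p_convexity_coarse_grained_Hamiltonian} and~\ref{p_crucial_covariance_two_scale} only through the bound $L$.
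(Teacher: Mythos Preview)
Your proposal is correct and follows essentially the same route as the paper's proof in Section~4: fix $K\ge K_0$, use Bakry--\'Emery plus Holley--Stroock plus tensorization for the fibers (Lemma~\ref{lem_lsi_micro}), Bakry--\'Emery on the convex coarse-grained Hamiltonian for the marginal (Lemma~\ref{lem_lsi_macro}), decompose the entropy as in~\eqref{dec_ent}, relate $\nabla_Y\bar f$ to $P\nabla f$ via the covariance identity (Lemma~\ref{covlem}), and bound the covariance term with Proposition~\ref{p_crucial_covariance_two_scale} exactly as in~\eqref{aleph4}. The only cosmetic difference is your bookkeeping of the extra $K$-factors (you quote $C_0K^2\lambda^{-1}$ where the paper's normalization gives $CK\lambda^{-1}$), which is immaterial since $K$ is fixed.
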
 

This result constitutes an extension of the main result of [MO] (which generalized a result of [LPY] with a different method), where they didn't add an inhomogeneous linear part. This result doesn't rule out a dependence of $\rho$ on $L$, although it doesn't in the case of grand canonical measures without a conservation law. The case where $\psi$ is a bounded perturbation of a quadratic potential was already addressed in [M] by using the two-scale approach. It should be noted that the assumption of boundedness of the $a_i$ isn't used in [M], and the constant obtained still doesn't depend at all on the $a_i$. Therefore it is possible that our assumption of boundedness on the $a_i$ is not optimal. However, the local Cram\'er theorem we use to obtain convexity of the coarse-grained Hamiltonian does not hold for unbounded $a_i$, so it seems our method cannot be extended to cover that case.

Another approach to this type of result was also proposed in [BM], but does not obtain the result of [MO] in full generality. It consists in using curvature lower bounds and Gaussian concentration estimates to recover the LSI.

The logarithmic Sobolev inequality for Kawasaki dynamics (which is weaker than the LSI we prove here, see [Ch]) when the $a_i$ are bounded iid random variables and for a quadratic single-site potential was also addressed in [LN].

\subsection{Hydrodynamic limit for Kawasaki dynamics}

We shall now consider the (reversible) stochastic dynamics on $X_{N,m}$ described by the time-evolution

\begin{equation} \label{micro_evolution}
\frac{\partial}{\partial t}(f\mu) = \nabla \cdot \left( A\nabla f \mu_{N,a,m} \right),
\end{equation}

which is to be understood in a weak sense : for any smooth test function $\xi$, we have

$$\frac{d}{dt} \int{\xi(x)f(t,x)\mu_{N,a,m}(dx)} = -\int{\nabla \xi(x) \cdot A\nabla f(t,x) \mu_{N,a,m}(dx)}.$$

In this equation, the operator $A$ is an $N \times N$ matrix defined by

\begin{equation}
A_{i,j} := N^2(-\delta_{i,j-1} + 2\delta_{i,j} - \delta_{i,j+1}).
\end{equation}

In this definition, $\delta$ is the Kronecker symbol and by convention $N+1 = 1$. $A$ is a symmetric, positive operator when restricted to $X_{N,0}$. We start with an initial condition $f_0$ such that $f_0\mu$ is a probability measure. Then $f(t,\cdot)\mu$ represents the law at time $t$ of the solution of the stochastic differential equation
$$dX_t = -A\nabla H(X_t)dt + \sqrt{2A}dB_t$$
with initial value distributed according to $f_0\mu$, and where $B_t$ is a standard Brownian motion on $\R^N$. 

Our first result is a quantitative bound on how close a typical realization of the diffusion is to a deterministic vector, whose time-evolution will be given by the following equation :

\begin{equation} \label{macro_evol}
\frac{d\eta}{dt} = -\bar{A}\nabla \bar{H}_{N,K,a}(\eta(t))
\end{equation}

where the (positive, symmetric) operator $\bar{A}$ on $Y$ is defined through

\begin{equation}
\bar{A}^{-1} = PA^{-1}NP^t.
\end{equation}

Note that $\eta$ depends on the $a_i$.

Our main abstract result on the hydrodynamic limit is the following. 

\begin{thm} \label{thm_hydro_bounds}
Let us consider a solution of (\ref{micro_evolution}) in dimension $N = KM$, and a collection $a_1,..,a_N$ of real numbers in $[-L,L]$. Let $\eta$ be a solution of (\ref{macro_evol}) with initial condition $\eta_0$, and
\begin{equation} 
\Theta(t) := \frac{1}{2N} \int{\langle A^{-1}(x - NP^t\eta(t)), (x - NP^t\eta(t)) \rangle f(t,x)\mu_{N,a,m}(dx)},
\end{equation}

Assume that 

(i) $\Ent_{\mu_{N,a,m}}(f_0) \leq C_1N$;

(ii) $\bar{H}(\eta_0) \leq C_2$ for some $C_2 > 0$;

(iii) $\inf \bar{H} \geq -\beta$ for some $\beta > 0$;

(iv) $\int{|x|^2\mu_{N,a,m}(dx)} \leq \alpha N$ for some $\alpha > 0$;

(v) The coarse-grained Hamiltonian $\bar{H}_{N,K,a}$ is $\lambda-$uniformly convex.

Then we have, for any $T > 0$,
\begin{align} \label{estimate_micro_macro}
\max &\left\{\underset{0 \leq t \leq T}{\sup} \hspace{1mm} \Theta(t), \frac{\lambda}{2}\int_0^T{|y - \eta(t)|_Y^2\bar{f}(t,y)\bar{\mu}(dy)} \right\} \notag \\
&\leq \Theta(0) + T\frac{M}{N} + \frac{\gamma C_0 C_1 K}{2\lambda M^2} + \frac{\gamma^{1/2}}{M}\left( 2\alpha + \frac{2C_1}{\rho} \right)^{1/2}(C_1 + C_2 + \beta)^{1/2}, 
\end{align}

where $C_0$ is the constant in the covariance estimate of Proposition \ref{p_crucial_covariance_two_scale}.
\end{thm}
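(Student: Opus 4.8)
The plan is to adapt the two-scale Gronwall argument of [GOVW] to the present inhomogeneous, super-quadratic setting, the only genuinely new input being the covariance estimate of Proposition \ref{p_crucial_covariance_two_scale}. I would introduce the coarse-grained density $\bar f(t,y)$ (the image of $f(t,\cdot)\mu$ under $P$, normalized by $\bar\mu$) and track the single scalar quantity $\Theta(t)$, which measures the squared $A^{-1}$-distance between the microscopic profile and the lifted macroscopic solution $NP^t\eta(t)$. The first step is to differentiate $\Theta(t)$ in time: using the weak formulation of \eqref{micro_evolution} with the (quadratic) test function $x\mapsto \langle A^{-1}(x-NP^t\eta),\,x-NP^t\eta\rangle$ and the evolution equation \eqref{macro_evol} for $\eta$, one obtains $\frac{d}{dt}\Theta$ as a sum of three contributions: a dissipation term coming from $A\nabla f$, a drift term coming from $\dot\eta = -\bar A\nabla\bar H(\eta)$, and a cross term. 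The identities $PANP^t = \bar A$ and $PNP^t = \mathrm{id}_Y$ together with $\bar A^{-1} = PA^{-1}NP^t$ are what make these pieces combine; after rearranging and completing the square one should arrive at an inequality of the schematic form $\frac{d}{dt}\Theta(t) \le \tfrac{M}{N} - \tfrac{\lambda}{2}\int |y-\eta(t)|_Y^2\,\bar f\,\bar\mu(dy) + \mathcal E(t)$, where the $M/N$ term is the entropy-production/Itô correction (trace of the relevant projected operator) and $\mathcal E(t)$ is the error term produced by the interaction between the two scales.

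The heart of the matter is controlling $\mathcal E(t)$, which after the computation takes the form of an integral against $\bar f\,\bar\mu$ of a covariance between the fluctuation density $\mu(dx\mid y)$ (i.e. the conditioned measure $f/\bar f$ restricted to the fiber) and the coarse-grained force, essentially $\cov_{\mu_{K,a,y_i}}\big(f(\cdot\mid y),\,\tfrac1K\sum \psi'(x_j)\big)$ summed over blocks. This is exactly where the old estimate (Proposition 2.3, needing $\|\nabla g\|_\infty<\infty$) fails in the super-quadratic case and where Proposition \ref{p_crucial_covariance_two_scale} is used: each block covariance is bounded by $\sqrt{C_0 K}$ times the square root of the Fisher-type information of $f(\cdot\mid y)$ along the fiber directions. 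Summing over the $M$ blocks, applying Cauchy--Schwarz in $i$ and then in $(y)$ against $\bar\mu$, and recognizing $\int \sum_i (\text{fiber Fisher information})\,\bar\mu(dy)$ as bounded by the \emph{conditional} relative entropy, which by the tensorized LSI on the fibers (Theorem \ref{lsi} applied to $\mu_{K,a,m}$, valid uniformly in $K,m,a$) is in turn controlled by the total entropy $\Ent_{\mu_{N,a,m}}(f(t,\cdot))$, yields a bound on $\mathcal E(t)$ in terms of $\frac{1}{M}$, $\frac{\sqrt{C_0 K}}{M}$ and the entropy. One then needs the elementary fact that the entropy is non-increasing along \eqref{micro_evolution}, so $\Ent(f(t,\cdot))\le \Ent(f_0)\le C_1 N$ for all $t$ by hypothesis (i); this turns the entropy-dependence into the explicit constants $C_1$ (and $\alpha$, $1/\rho$ via the crude second-moment/LSI bounds on $\int|x|^2$) appearing on the right-hand side.

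Once $\frac{d}{dt}\Theta(t) \le \tfrac{M}{N} - \tfrac{\lambda}{2}\int|y-\eta|_Y^2\bar f\bar\mu(dy) + (\text{explicit error in }M,K,N)$ is established, the last step is a Gronwall-type integration over $[0,T]$: integrating the inequality gives both $\sup_{0\le t\le T}\Theta(t)$ (dropping the non-positive dissipation term) and, after moving it to the left, $\frac\lambda2\int_0^T\int|y-\eta|_Y^2\bar f\bar\mu\,dt$ (dropping the non-negative $\Theta(T)$), each bounded by $\Theta(0)$ plus $T M/N$ plus the collected error terms, which is precisely \eqref{estimate_micro_macro}. The $\gamma$ in the statement is presumably a constant from the covariance/LSI bookkeeping (e.g. from Proposition \ref{p_convexity_coarse_grained_Hamiltonian} and the norm of $\bar A$), carried through the estimates; hypotheses (ii), (iii), (iv) enter only in bounding the "entropy at time zero of the macroscopic data" and the second moment, producing the last summand $\frac{\gamma^{1/2}}{M}(2\alpha+2C_1/\rho)^{1/2}(C_1+C_2+\beta)^{1/2}$. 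The main obstacle, and the only place the new ideas of the paper are needed, is the middle paragraph: getting the fiber-wise covariance under control with a bound that loses only a factor $K$ (harmless after optimizing $K\sim N^{1/2}$, $M\sim N^{1/2}$) rather than requiring a global Lipschitz bound on $\nabla H$ — everything else is a careful but essentially routine transcription of the [GOVW] scheme, using Theorem \ref{lsi} and Proposition \ref{p_convexity_coarse_grained_Hamiltonian} as black boxes.
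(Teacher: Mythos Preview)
Your overall architecture (differentiate $\Theta$, isolate a good term $-\lambda\int|y-\eta|^2\bar f\bar\mu$, an $M/N$ correction, and error terms; then integrate in time) matches the paper. But the way you propose to control the error term $\mathcal E(t)$ has a real gap.

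After applying Proposition \ref{p_crucial_covariance_two_scale} block by block and using Young's inequality against $\tfrac{\lambda}{2}\int|y-\eta|^2\bar f\bar\mu$, what remains is the \emph{fluctuation Fisher information}
\[
\frac{C_0 K}{2\lambda N}\int \frac{|(\mathrm{id}-P^tNP)\nabla f|^2}{f}\,\mu(dx),
\]
not a conditional entropy. You then try to bound this by entropy via the LSI, but the LSI goes the other way (entropy $\le \rho^{-1}\cdot$Fisher). The paper instead uses a discrete Poincar\'e inequality on each block (Lemma \ref{pen_fluct}): $|(\mathrm{id}-P^tNP)x|_X^2 \le \tfrac{\gamma}{M^2}\langle Ax,x\rangle_X$, which converts the fluctuation Fisher information into $\tfrac{\gamma}{M^2}\cdot\tfrac{1}{N}\int\tfrac{\langle A\nabla f,\nabla f\rangle}{f}\mu$. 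This is then handled \emph{after time integration} by the entropy-dissipation identity $\int_0^T\!\int\tfrac{\langle A\nabla f,\nabla f\rangle}{f}\mu\,dt = \Ent(f_0)-\Ent(f(T,\cdot))\le C_1N$, yielding exactly $\tfrac{\gamma C_0 C_1 K}{2\lambda M^2}$. So $\gamma$ is this Poincar\'e constant, not ``covariance/LSI bookkeeping,'' and the LSI is never used here.

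Your decomposition of $\tfrac{d}{dt}\Theta$ also omits two further error terms that the paper tracks explicitly: $-\tfrac{1}{N}\int(\mathrm{id}-P^tNP)x\cdot\nabla f\,\mu$ and $-\int \dot\eta\cdot PA^{-1}(\mathrm{id}-P^tNP)x\,f\mu$. Both are estimated via Cauchy--Schwarz and the same Poincar\'e lemma (in the form $\langle A^{-1}(\mathrm{id}-P^tNP)x,(\mathrm{id}-P^tNP)x\rangle\le\tfrac{\gamma}{M^2}|x|^2$), producing integrands involving $\sqrt{\tfrac{1}{N}\int|x|^2 f\mu}$, $\sqrt{\tfrac{1}{N}\int\tfrac{\langle A\nabla f,\nabla f\rangle}{f}}$, and $\sqrt{\langle\dot\eta,\bar A^{-1}\dot\eta\rangle}$. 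After time integration these are controlled by: the second moment (via Talagrand's inequality, which is where the LSI constant $\rho$ and hypothesis (iv) enter, giving $2\alpha+2C_1/\rho$); the entropy-dissipation bound again (giving $C_1$); and the macroscopic energy identity $\int_0^T\langle\dot\eta,\bar A^{-1}\dot\eta\rangle\,dt = \bar H(\eta_0)-\bar H(\eta(T))\le C_2+\beta$ (hypotheses (ii), (iii)). This is the source of the final summand $\tfrac{\gamma^{1/2}}{M}(2\alpha+\tfrac{2C_1}{\rho})^{1/2}(C_1+C_2+\beta)^{1/2}$, which in your sketch is attributed only vaguely.
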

The statement of Theorem~\ref{thm_hydro_bounds} can be understood as analog of [GOVW, Theorem 8]. Comparing the estimate~\eqref{estimate_micro_macro} to [GOVW, Theorem 8] one sees that instead of the term with scaling~$\frac{1}{M^2}$ one gets a term with scaling~$\frac{K}{M^2}$, which is a relict of the fact that our covariance estimate has the factor~$K$ in contrast to the covariance estimate used in[GOVW].

Our aim will then be to prove a hydrodynamic limit theorem by applying Theorem~\ref{thm_hydro_bounds}. 
To do this, we will need to embed the spaces $X_{N}$ into a single functional space. 
To simplify notations, we arbitrarily take  the mean $m$ to be zero. Everything works fine for any $m \in \R$.
With this aim in mind, we identify $X_{N,0}$ with the space $\bar{X}_N$ of piecewise constant functions on $\mathbb{T} = \R/\mathbb{Z}$

$$\bar{X}_N := \left\{\bar{x} : \mathbb{T} \longrightarrow \R; \hspace{2mm} \bar{x} \text{ is constant on } \left( \frac{j-1}{N}; \frac{j}{N} \right], \hspace{2mm} j = 1,..,N \right\}.$$

From now on, we will call "the step function associated to $x \in X_N$" the step function $\bar{x} \in \bar{X}_N$ with
\begin{equation}
\bar{x}(\theta) = x_j, \hspace{1cm} \theta \in \left( \frac{j-1}{N}; \frac{j}{N} \right].
\end{equation}

Similarly, the "vector associated to $\bar{x} \in \bar{X}_N$  will denote the vector $x \in X_N$ with

$$x_j = \bar{x}(j/N).$$

It turns out the $L^2$ norm is not well-adapted to our problem, since it is too sensitive to local fluctuations. 
Instead, we endow the spaces $\bar{X}_N$ with the $H^{-1}$ norm, which we define as follows : if $f : \mathbb{T} \rightarrow \R$ is a locally integrable function with zero mean, define

\begin{equation}
||f||_{H^{-1}}^2 := \int_{\T}{w(\theta)^2d\theta}, \hspace{5mm} w' = f, \hspace{3mm} \int_{\T}{w(\theta)d\theta} = 0.
\end{equation}

As an application of Theorem \ref{thm_hydro_bounds}, we consider a sequence of bounded iid random variables $(a_q)_{q \in \mathbb{Q}}$, 
and we define $a_{i,N} = a_{i/N}$ for $i \in \{1,..,N\}$. 
Given a realization of this sequence, we define $\mu_{N,a,0}(dx) = \exp(-H_N(x))dx$ a probability measure on $X_{N,m}$, where 
$$H_N(x) := \underset{i = 1}{\stackrel{N}{\sum}} \hspace{1mm} \psi(x_i) + a_{i,N}x_i.$$
We also consider a sequence $(F_N)$ of probability densities on $X_{N,0}$ 
such that for every $N$, $F_N$ is absolutely continuous with respect to the Lebesgue measure on $X_{N,0}$. 
We denote by $f_{0,N,a}$ the density of $F_N$ with respect to $\mu_{N,a,0}$.

We denote by $\mu_N^0$ the generalized canonical ensemble on $X_N$ where the linear part is equal to $0$, that is 

\begin{equation} \label{ref_measure_no_field}
\mu_N^0(dx) = \frac{1}{Z}1_{(1/N)\sum x_i = m}\exp \left(- \sum \psi(x_i) \right).
\end{equation}

We make the following extra assumption of polynomial growth on $\psi_c$, which we will need to obtain some a priori uniform $L^p$ estimates on $\eta$.

\begin{equation} \label{poly_growth}
c_1(1 + |x|^{p-2}) \leq \psi_c(x) \leq c_2(1 + |x|^{p-2}), \hspace{5mm} p \geq 2.
\end{equation}

This assumption is probably not the most general possible, but covers the physically relevant situations. It should be possible to extend it to an assumption of subexponential growth, but this would require manipulating some Orlicz norm associated to $\psi_c$ rather than the $L^p$ norm we shall use in Section 5 in the energy estimates. \medskip

\begin{thm} \label{thm_lim_hydro}
Assume $\psi$ satisfies (\ref{assumption_potential}) and (\ref{poly_growth}). Given a realization of the random variables $(a_q)$, let $f = f(t,x)$ be a time-dependent probability density on $(X_N, \mu_{N,a})$, solving
$$\frac{\partial}{\partial t}(f \hspace{1mm} \mu_{N,a}) = \nabla \cdot (A \nabla f \mu_{N,a})$$
where $f(0,\cdot) = f_{0,a}$  is the density of a probability measure $F_N$ with respect to $\mu_{N,a}$. We assume $F_N$ to be absolutely continuous with respect to the Lebesgue measure, and to satisfy the bound
\begin{equation}\label{e_closness_initial_data}
\int_{X_N}{\left( \frac{dF_N}{d\mu_N^0} \right) \log \left( \frac{dF_N}{d\mu_N^0} \right) \mu_N^0} \leq CN 
\end{equation}
for some constant $C > 0$, where $\mu^0_N$ was defined in \eqref{ref_measure_no_field}. Assume that 
\begin{equation}
\underset{N \uparrow \infty}{\lim} \hspace{1mm} \int_{X_N}{||\bar{x} - \zeta_0||_{H^{-1}}^2 F_N(dx)} = 0
\end{equation}
for some $\zeta_0 \in L^p(\mathbb{T})$ (initial macroscopic profile) with $\int{\zeta_0 d\theta} = 0$. Then for any $T > 0$ we have, for almost every realization of the field of random variables $(a_q)$,
\begin{equation}
\underset{N \uparrow \infty}{\lim} \hspace{1mm} \underset{0 \leq t \leq T}{\sup} \hspace{1mm} \int_{X_N}{||\bar{x} - \zeta(t, \cdot)||_{H^{-1}}^2 f(t,x)\mu_{N,a}(dx)} = 0
\end{equation}
where $\zeta$ is the unique weak solution of the nonlinear parabolic equation
\begin{equation} \label{hydro_equn}
\frac{\partial \zeta}{\partial t} = \frac{\partial^2}{\partial \theta^2}\tilde{\varphi}'(\zeta)
\end{equation}
with initial condition $\zeta(0,\cdot) = \zeta_0$, and where $\tilde{\varphi}$ is defined by
\begin{equation} \label{average_hydro_potential}
\tilde{\varphi}(m) = \underset{\sigma \in \R}{\sup} \hspace{1mm} \left\{ \sigma m - \mathbb{E}^a\left[\log \int_{\R}{\exp \left((\sigma - a)x - \psi(x) \right)dx}\right] \right\}.
\end{equation}
\end{thm}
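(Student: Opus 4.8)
The plan is to deduce Theorem~\ref{thm_lim_hydro} from the abstract bound of Theorem~\ref{thm_hydro_bounds} by a careful choice of the scale parameter $K$ as a function of $N$, followed by a compactness-and-identification argument to pass to the limiting PDE~\eqref{hydro_equn}. The first step is to verify that the hypotheses (i)--(v) of Theorem~\ref{thm_hydro_bounds} hold under the standing assumptions, uniformly (or almost surely) in the realization of the field $(a_q)$. Hypothesis (v) is exactly Proposition~\ref{p_convexity_coarse_grained_Hamiltonian}, valid once $K\geq K_0$. For hypothesis (i) I would start from the assumed bound~\eqref{e_closness_initial_data} on the relative entropy with respect to the field-free measure $\mu_N^0$ and convert it into a bound with respect to $\mu_{N,a}$; since $\psi$ is a bounded perturbation and $|a_i|\leq L$, the two Hamiltonians differ by a term that is $O(N)$ after the mean-zero constraint is used (the linear term $\sum a_i x_i$ has fluctuations controlled by Gaussian concentration coming from the LSI of Theorem~\ref{lsi}), so $\Ent_{\mu_{N,a}}(f_0)\lesssim N$. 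Hypotheses (iii), (iv) are uniform moment/lower-bound estimates on the canonical ensemble: (iv) follows from the LSI of Theorem~\ref{lsi} (which gives uniform control of $\int|x|^2\,d\mu_{N,a}$ via the Gaussian tails, using the polynomial growth~\eqref{poly_growth} and $p\geq 2$), and (iii), the lower bound on $\bar H$, comes from the local Cram\'er theorem of Appendix~A, which identifies $\psi_{K,a_1\dots a_K}$ up to $o(1)$ with the Legendre-type functional appearing in~\eqref{average_hydro_potential}, hence bounded below. Hypothesis (ii) is where the random field enters nontrivially: one takes $\eta_0 = \eta_{0,N}$ to be a discretization of $\zeta_0$, and $\bar H(\eta_{0,N})$ must be shown to stay bounded; since $\zeta_0\in L^p$ and $\psi_{K,\dots}$ has at most $p$-th order growth (again by the local Cram\'er theorem plus~\eqref{poly_growth}), and since the $a_i$-dependence of $\psi_{K,a_1\dots a_K}(m)$ is bounded uniformly by a constant depending only on $L$, this is a routine (though slightly tedious) estimate.

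With the hypotheses in hand, the second step is to optimize the right-hand side of~\eqref{estimate_micro_macro}. The dominant error terms are $T M/N$, $C_0 C_1 K/(2\lambda M^2)$ and the term of order $1/M$; writing $N = KM$ and choosing $K = K(N)\to\infty$ slowly (for instance $K\sim N^{\alpha}$ with a small $\alpha>0$, or even $K$ a fixed large multiple of $K_0$ combined with $M\to\infty$) makes $M/N\to 0$, $K/M^2\to 0$ and $1/M\to 0$ simultaneously; one also checks $\Theta(0)\to 0$, which follows from the assumed $H^{-1}$-convergence of the initial data together with the comparison between the quadratic form $\langle A^{-1}\cdot,\cdot\rangle/N$ and the squared $H^{-1}$ norm of the associated step function (this comparison, that $A^{-1}$ is the discrete analog of $-\partial_\theta^2$ inverted, is the reason the $H^{-1}$ norm is the natural one). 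This yields
$$
\lim_{N\to\infty}\ \sup_{0\leq t\leq T}\ \int_{X_N} \|\bar x - \overline{P^t\eta(t)}\|_{H^{-1}}^2\, f(t,x)\,\mu_{N,a}(dx) = 0
$$
for a.e.\ realization of $(a_q)$, where I have used that $\Theta(t)$ controls precisely this quantity (up to the same norm-comparison) because $x - NP^t\eta$ decomposes into a macroscopic part, measured by $|y-\eta|_Y$ and controlled by the second term in~\eqref{estimate_micro_macro}, and a fluctuation part in $\ker P$, which has small $H^{-1}$ norm because oscillations on scale $1/N$ inside a block of size $K/N$ are $H^{-1}$-negligible once $K/N\to 0$.

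The third step is to identify the limit of the macroscopic curves $\eta_N(t)$ (solutions of~\eqref{macro_evol}) with the weak solution $\zeta$ of~\eqref{hydro_equn}. Here one must show that $\bar H_{N,K,a}$ $\Gamma$-converges (or converges in a sense adapted to gradient flows, e.g.\ in the sense of Sandier--Serfaty, or more elementarily via uniform convergence of $\psi_{K,a_1\dots a_K}'$ on compacts) to the functional $\zeta\mapsto\int_{\T}\tilde\varphi(\zeta)\,d\theta$, and that $\bar A$ converges to the operator $-\partial_\theta^2$ (inverted appropriately). The convergence of $\psi_{K,a_1\dots a_K}$ is exactly a law-of-large-numbers statement: conditioning $K$ spins with iid fields $a_i$ on their mean, the log-partition function per site converges a.s.\ to its annealed value, which is the Legendre transform appearing inside the braces in~\eqref{average_hydro_potential} before the outer supremum, and then the Legendre duality gives $\tilde\varphi$; the a.e.-realization clause in the theorem is precisely the null set outside which this LLN fails. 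Given these convergences, a standard compactness argument (a priori $L^p$ bounds on $\eta_N$ from hypothesis (ii) and the polynomial growth~\eqref{poly_growth}, plus the energy-dissipation inequality encoded in~\eqref{macro_evol} to get compactness in time, e.g.\ via an Aubin--Lions type argument in $H^{-1}$) extracts a limit which is shown to be a weak solution of~\eqref{hydro_equn}; uniqueness of that weak solution (which holds because $\tilde\varphi'$ is monotone, so~\eqref{hydro_equn} is a well-posed degenerate-parabolic/porous-medium-type equation) identifies the full limit and closes the argument.

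I expect the main obstacle to be the third step, and within it the passage from the discrete gradient flow~\eqref{macro_evol} to the continuum PDE: one needs not merely pointwise convergence of the energies $\bar H_{N,K,a}\to\int\tilde\varphi(\zeta)$ but enough uniformity (convergence of gradients $\psi_{K,a}'\to\tilde\varphi'$ locally uniformly, with quantitative control coming from the local Cram\'er theorem of Appendix~A) to guarantee that limits of solutions are solutions, while simultaneously controlling the random fluctuations of $\psi_{K,a}$ uniformly in $m$ and in the block. A secondary difficulty is keeping all entropy, moment and lower-bound constants independent of the realization $(a_q)$ — this is handled by the uniform-in-$L$ statements of Theorem~\ref{lsi} and Proposition~\ref{p_crucial_covariance_two_scale}, but must be checked with care, in particular the conversion of the initial-data hypothesis~\eqref{e_closness_initial_data} (stated for $\mu_N^0$) into hypothesis (i) of Theorem~\ref{thm_hydro_bounds} (stated for $\mu_{N,a}$).
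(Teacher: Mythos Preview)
Your overall architecture matches the paper's exactly: verify hypotheses (i)--(v) of Theorem~\ref{thm_hydro_bounds} with uniform constants, let $K$ and $M$ go to infinity together so that the right-hand side of~\eqref{estimate_micro_macro} vanishes (using the comparison of $\frac{1}{N}\langle A^{-1}\cdot,\cdot\rangle$ with $\|\cdot\|_{H^{-1}}^2$ on step functions), and then identify the limit of the deterministic macroscopic flows $\eta^\ell$ with the weak solution of~\eqref{hydro_equn}. Your verification of (i)--(v) and your treatment of $\Theta(0)\to 0$ are essentially what the paper does (the paper packages (ii), (iii) and the polynomial growth into a separate proposition on $\psi_K$, and proves (iv) via Talagrand's inequality rather than directly from LSI tails, but these are cosmetic).

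The genuine divergence is in your Step~3. You propose to identify the limit by $\Gamma$-convergence / Sandier--Serfaty, or by uniform convergence of $\psi_{K,a}'$ on compacts, combined with Aubin--Lions compactness. The paper does \emph{not} do this. Instead it uses a convex reformulation: because $\bar H$ is convex, $\eta$ solves~\eqref{macro_evol} if and only if it satisfies, for every test direction $\xi$ and nonnegative time-cutoff $\beta$, the variational inequality
\[
\int_0^T \bar H(\eta)\,\beta\,dt \;\leq\; \int_0^T \bar H(\eta+\xi)\,\beta\,dt \;-\; \int_0^T \langle \xi,\bar A^{-1}\eta\rangle_Y\,\dot\beta\,dt,
\]
and an analogous characterization holds for~\eqref{hydro_equn}. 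One then passes to the limit \emph{in the inequality}, not in the equation: the left side needs only weak lower semicontinuity (convexity plus almost-sure convergence $\varphi_{K,a}\to\tilde\varphi$ via LLN), while on the right side one chooses $\xi_\ell = \pi_\ell(\xi+\eta_*)-\eta_\ell$ so that $\eta_\ell+\xi_\ell$ converges \emph{strongly} in $L^p$, making the nonlinear term tractable. This trick sidesteps the obstacle you flagged---that weak-$*$ convergence of $\eta^\ell$ in $L^\infty(L^p)$ is a priori too weak to pass to the limit in the nonlinearity---without needing Aubin--Lions or any upgrade to strong convergence of $\eta^\ell$ itself.

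Your route via uniform convergence of $\psi_{K,a}'$ alone would not close: even with $\psi_{K,a}'\to\tilde\varphi'$ locally uniformly, you cannot conclude $\psi_{K,a}'(\eta^\ell)\rightharpoonup\tilde\varphi'(\eta_*)$ from weak convergence of $\eta^\ell$. The Sandier--Serfaty or Aubin--Lions alternatives you mention are plausible but would require substantial additional work (and the paper's $L^p$ estimates are set up precisely to feed the convex-inequality argument, not those). So your plan is sound up to Step~3, but the mechanism you would actually need there is the convex variational-inequality reformulation, which is the paper's main device in this part of the proof.
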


In this theorem, the notion of weak solution we have used is the following :

\begin{defn}
We will call $\zeta(t,\theta)$ a weak solution of (\ref{hydro_equn}) on $[0,T] \times \T$ if
\begin{equation}
\zeta \in L^{\infty}_t(L^p_{\theta}), \hspace{5mm} \frac{\partial \zeta}{\partial t} \in L^2_t(H^{-1}_{\theta}), \hspace{5mm} \tilde{\varphi}'(\zeta) \in L^{\infty}_t(L^q_{\theta});
\end{equation}
and
\begin{equation}
\left \langle \xi, \frac{\partial \zeta}{\partial t} \right\rangle_{H^{-1}} = - \int_{\T}{\xi \tilde{\varphi}'(\zeta)d\theta} \hspace{5mm} \text{for all } \xi \in L^2, \hspace{3mm} \text{for almost every } t \in [0,T].
\end{equation}
\end{defn}

\begin{rmq}
Our method also works (and the proof is much easier) in the case where the $a_i$ are of the form $a_i = a(i/N)$ for some continuous function $a : \mathbb{T} \rightarrow \R$. In such a case, the hydrodynamic equation is
$$\frac{\partial \zeta}{\partial t} = \frac{\partial^2}{\partial \theta^2}(\varphi'(\zeta) - a).$$
\end{rmq}

\section{Proof of Proposition \ref{p_crucial_covariance_two_scale}}

Our proof of the covariance estimate relies on the following result, which already was an important component of the proof of the LSI in [MO].

\begin{prop} [One-dimensional asymmetric Brascamp-Lieb inequality]
Let  $\nu(dx) = Z^{-1}\exp(-\psi(x))dx$ a probability measure on $\R$, where $\psi = \psi_c + \delta\psi$ is a bounded perturbation of a strictly convex potential, as in (\ref{assumption_potential}). Then for any functions $f$ and $g$, we have
$$|\cov_{\nu}(f,g)| \leq \exp(-3\osc \delta \psi) \hspace{1mm} \underset{x}{\sup} \hspace{1mm} \left| \frac{g'(x)}{\psi_c''(x)} \right| \int{|f'|d\nu}.$$
\end{prop}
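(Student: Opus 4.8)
The plan is to reduce the one–dimensional covariance to an explicit double integral against a kernel built from the distribution function of $\nu$, and then to compare $\nu$ with the strictly convex reference measure $\nu_c(dx):=Z_c^{-1}e^{-\psi_c(x)}\,dx$, for which that kernel integral can be evaluated in closed form. Throughout I write $\nu(u)=Z^{-1}e^{-\psi(u)}$, $\nu_c(u)=Z_c^{-1}e^{-\psi_c(u)}$ for the Lebesgue densities and $G,G_c$ for the corresponding distribution functions, and by a routine truncation/approximation I may assume $f,g$ are $C^1$ with compactly supported derivatives, the general case following by a limiting argument.

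\emph{Step 1 (kernel representation).} Starting from the symmetrized formula $\cov_\nu(f,g)=\tfrac12\iint(f(x)-f(y))(g(x)-g(y))\,\nu(dx)\,\nu(dy)$, I substitute $f(x)-f(y)=\int_{\R}f'(u)\big(\mathbf 1_{\{y<u<x\}}-\mathbf 1_{\{x<u<y\}}\big)\,du$ and the analogous identity for $g$, and interchange the order of integration. In the product of the two indicator differences the ``cross'' terms vanish (they would force $x<y$ and $y<x$ at once), while the two remaining ``aligned'' terms integrate to the same thing against $\nu\otimes\nu$; one obtains
\[
  \cov_\nu(f,g)=\iint_{\R^2}f'(u)\,g'(v)\,\kappa_\nu(u,v)\,du\,dv,\qquad \kappa_\nu(u,v):=G(u\wedge v)\big(1-G(u\vee v)\big).
\]
Bounding $|g'(v)|\le\big(\sup_x|g'(x)/\psi_c''(x)|\big)\psi_c''(v)$ and passing to absolute values reduces the proposition to the pointwise estimate $\int_{\R}\psi_c''(v)\,\kappa_\nu(u,v)\,dv\le e^{3\osc\delta\psi}\,\nu(u)$ for all $u\in\R$, which is then integrated against $|f'|\,d\nu$.

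\emph{Step 2 (comparison with $\nu_c$ and the exact computation).} Since $d\nu/d\nu_c=(Z_c/Z)e^{-\delta\psi}$ with $Z_c/Z\in[e^{\inf\delta\psi},e^{\sup\delta\psi}]$, the density ratio lies in $[e^{-\osc\delta\psi},e^{\osc\delta\psi}]$; hence $G(a)\le e^{\osc\delta\psi}G_c(a)$ and $1-G(b)\le e^{\osc\delta\psi}(1-G_c(b))$, so $\kappa_\nu\le e^{2\osc\delta\psi}\kappa_{\nu_c}$ pointwise, and also $\nu_c(u)\le e^{\osc\delta\psi}\nu(u)$. It therefore suffices to prove the exact identity $\int_{\R}\psi_c''(v)\,\kappa_{\nu_c}(u,v)\,dv=\nu_c(u)$. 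For this I split the integral at $v=u$, integrate $\psi_c''$ by parts (antiderivative $\psi_c'$), and use $\psi_c'\,\nu_c=-\nu_c'$ to evaluate $\int_u^{\infty}\psi_c'\,\nu_c=\nu_c(u)$ and $\int_{-\infty}^{u}\psi_c'\,\nu_c=-\nu_c(u)$; the terms carrying the factor $\psi_c'(u)$ cancel between the two half-lines, and what remains is $\big((1-G_c(u))+G_c(u)\big)\nu_c(u)=\nu_c(u)$. Chaining the three bounds gives $\int\psi_c''(v)\kappa_\nu(u,v)\,dv\le e^{3\osc\delta\psi}\nu(u)$, and therefore $|\cov_\nu(f,g)|\le e^{3\osc\delta\psi}\,\sup_x|g'(x)/\psi_c''(x)|\int|f'|\,d\nu$.

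The whole argument is two short computations, so I expect the only real difficulty to be bookkeeping. The points needing care are: the vanishing of the boundary terms in the integration by parts, which follows from $\psi_c''\ge\lambda>0$ — then $\psi_c$ is superlinear and $|\psi_c'(v)|G_c(v)$, $|\psi_c'(v)|(1-G_c(v))$ are each dominated by $\nu_c(v)\to0$ as $|v|\to\infty$; the interchange of integrals in Step 1, which is licit once one has passed to $|f'|,|g'|$ with compactly supported derivatives so that every integral is finite; and the reduction from general Lipschitz $f,g$ to that case. (Note in passing that, since the estimate can only deteriorate as $\delta\psi$ grows, the constant should read $e^{+3\osc\delta\psi}$.)
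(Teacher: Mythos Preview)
Your proof is correct. The paper itself does not give a proof of this proposition; it simply refers to [MO]. Your argument---the Hoeffding-type kernel representation $\cov_\nu(f,g)=\iint f'(u)g'(v)\,G(u\wedge v)(1-G(u\vee v))\,du\,dv$, the comparison $\kappa_\nu\le e^{2\osc\delta\psi}\kappa_{\nu_c}$ and $\nu_c\le e^{\osc\delta\psi}\nu$, and the exact evaluation $\int\psi_c''(v)\,\kappa_{\nu_c}(u,v)\,dv=\nu_c(u)$ by integration by parts together with the Mill's-ratio bound $|\psi_c'(v)|G_c(v)\le\nu_c(v)$ for the boundary terms---is precisely the route taken in [MO], organized slightly differently. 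There is nothing missing.

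Your closing remark is also correct: the constant in the paper's statement should be $e^{+3\osc\delta\psi}$, not $e^{-3\osc\delta\psi}$; the latter is a typo (the bound must loosen, not tighten, as the perturbation grows). This sign plays no role in the two applications the paper makes of the inequality, since in Lemma~\ref{estimate_cov_convex} the potential is strictly convex so $\osc\delta\psi=0$, and in Lemma~\ref{p_cov_perturbed} the constants are absorbed into $C(\tilde K)$.
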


We refer to [MO] for a proof of this result. This covariance estimate was generalized in [CCL] to cover other situations, but here this estimate will suffice. Using this inequality, we shall prove the following two lemmas, which will imply Proposition~\ref{p_crucial_covariance_two_scale}.

\begin{lem} \label{estimate_cov_convex}
Let $\mu_{N,a,m}(dx) = Z^{-1}\exp \left( - \sum \psi(x_i) + a_ix_i \right)dx$ be a probability measure on $X_{N,m}$, where the single-site potential $\psi$ is strictly convex and the $a_i$ are some real numbers. Then we have, for any non-negative function $f$ such that $\int{f d\mu_{N,a,m}} = 1$, the following covariance estimate:
\begin{equation}
\left| \cov_{\mu_{N,a,m}} \left( f, \frac{1}{N}\underset{i =1}{\stackrel{N}{\sum}} \hspace{1mm} \psi'(x_i) \right) \right|^2 \lesssim  K\int{\frac{\sum \left| \frac{d}{dx_i} f \right|^2}{f}\mu_{N,a,m}}
\end{equation}
\end{lem}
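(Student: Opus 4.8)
The plan is to reduce the $N$-dimensional covariance on the hyperplane $X_{N,m}$ to a one-dimensional estimate by conditioning, and then apply the one-dimensional asymmetric Brascamp--Lieb inequality quoted above (applied in the strictly convex case, where $\delta\psi = 0$ and $\psi_c = \psi$, so that the prefactor $\exp(-3\osc\delta\psi)$ is $1$). First I would set $g := \frac{1}{N}\sum_{i=1}^N \psi'(x_i)$ and write $\cov_{\mu_{N,a,m}}(f,g)$ using the standard representation of covariance as a double integral over two independent copies: $\cov_\mu(f,g) = \frac12 \int\!\!\int (f(x)-f(\tilde x))(g(x)-g(\tilde x))\,\mu(dx)\mu(d\tilde x)$, or alternatively couple the two copies along a path. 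Since the measure lives on the hyperplane $\{\frac1N\sum x_i = m\}$, the natural coordinates are $x_1,\dots,x_{N-1}$ with $x_N$ determined; note that $\frac{d}{dx_i}$ in the statement refers to these $N-1$ free coordinates, which matches the tangent space to $X_{N,m}$.

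The key mechanism is that on the constrained space, changing one coordinate $x_i$ forces a compensating change in $x_N$, so the conditional law of $(x_i, x_N)$ given the other coordinates is a one-dimensional measure on the line $x_i + x_N = \text{const}$ with log-density $-\psi(x_i) - \psi(x_N) + \text{linear}$, whose second derivative in the free variable is $\psi''(x_i) + \psi''(x_N) \geq 2\lambda > 0$ — so it is a strictly convex one-dimensional potential. The function $g$, restricted to such a fiber, has derivative $\frac1N(\psi'(x_i) - \psi'(x_N))$, whose ratio to the fiber potential's second derivative $\psi''(x_i)+\psi''(x_N)$ is uniformly bounded by $\frac{C}{N}$ thanks to strict convexity (the elementary bound $|\psi'(s)-\psi'(t)| \leq (\psi''(s)+\psi''(t))\cdot\frac{1}{2\lambda}\cdot(\text{something})$ — more carefully, one uses that $\frac{\psi'(s)-\psi'(t)}{\psi''(s)+\psi''(t)}$ is bounded when $\psi''\geq\lambda$, which is a standard one-dimensional fact). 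Then the one-dimensional asymmetric Brascamp--Lieb inequality gives, for the conditional covariance along each fiber, a bound by $\frac{C}{N}\int |\partial f|\,d(\text{conditional measure})$. Summing the telescoping decomposition over a sequence of $N-1$ such single-coordinate moves (a discrete "path" swapping $x$ into $\tilde x$ one coordinate at a time, as in the tensorization arguments of [GOVW]/[MO]), then applying Cauchy--Schwarz to pass from $\int|\partial_i f|$ to $(\int \frac{|\partial_i f|^2}{f})^{1/2}$ (using $\int f\,d\mu = 1$), yields
$$\left|\cov_{\mu_{N,a,m}}(f,g)\right| \leq \frac{C}{N}\sum_{i=1}^{N-1}\left(\int\frac{|\partial_i f|^2}{f}\,d\mu_{N,a,m}\right)^{1/2} \leq \frac{C}{N}\sqrt{N-1}\left(\int\frac{\sum_i|\partial_i f|^2}{f}\,d\mu_{N,a,m}\right)^{1/2},$$
and squaring gives a bound of order $\frac{1}{N}\int\frac{\sum|\partial_i f|^2}{f}\,d\mu$. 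Since $K \leq N$, this is in particular $\lesssim K\int\frac{\sum|\partial_i f|^2}{f}\,d\mu_{N,a,m}$, which is the claimed (non-sharp but sufficient) form.

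The main obstacle I anticipate is handling the constraint cleanly: the asymmetric Brascamp--Lieb inequality is stated for measures on $\R$, so one must carefully disintegrate $\mu_{N,a,m}$ along one-dimensional fibers, verify that each fiber measure is genuinely of the form $Z^{-1}e^{-\tilde\psi}$ with $\tilde\psi$ strictly convex with the uniform lower bound $2\lambda$, and track how the free-coordinate derivatives $\partial_i f$ on $X_{N,m}$ relate to the one-dimensional derivatives along the fibers. A secondary technical point is the bookkeeping of the telescoping/path argument to make sure the $\sqrt{N}$ (rather than $N$) comes out — this is exactly the same combinatorial structure used in the classical Brascamp--Lieb proof of the spectral gap on hyperplanes and should go through, but requires care that the conditional covariances at each step are bounded by the full Dirichlet-type energy and not just a partial one. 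The uniform-in-$a_i$ claim is automatic since the linear term $a_ix_i$ does not affect $\psi''$ and hence does not enter any of the convexity bounds.
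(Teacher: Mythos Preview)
There are two genuine problems, one computational and one structural.

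\textbf{The derivative computation.} On the fiber $x_i + x_N = c$, parametrize by $x_i$ so that $x_N = c - x_i$. Then
\[
\frac{d}{dx_i}\,g = \frac{d}{dx_i}\Bigl[\tfrac{1}{N}\bigl(\psi'(x_i) + \psi'(c-x_i)\bigr)\Bigr] = \tfrac{1}{N}\bigl(\psi''(x_i) - \psi''(x_N)\bigr),
\]
not $\tfrac{1}{N}(\psi'(x_i)-\psi'(x_N))$ as you wrote. This matters: with your formula, the quantity you need to bound is $\frac{|\psi'(s)-\psi'(t)|}{\psi''(s)+\psi''(t)}$, and this is \emph{not} bounded for superquadratic $\psi$ (try $\psi(x)=\tfrac12 x^2+\tfrac14 x^4$, $s=M$, $t=-M$; the ratio grows like $M$). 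That unboundedness is precisely the obstruction this lemma is designed to get around. With the correct formula the ratio is $\frac{|\psi''(x_i)-\psi''(x_N)|}{N(\psi''(x_i)+\psi''(x_N))}\le \frac{1}{N}$ trivially, so the one-dimensional Brascamp--Lieb step does go through --- but your written justification for it is wrong.

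\textbf{The telescoping step.} This is the real gap. Conditioning on all coordinates except $(x_i,x_N)$ controls only the \emph{conditional} covariance along that one fiber; you still owe the covariance of the conditional expectations, which lives on an $(N-2)$-dimensional marginal that is \emph{not} a product measure and not a canonical ensemble. The ``tensorization / path-swapping'' argument you invoke applies to product measures and does not carry over here; no identity of the form $\cov(f,g)=\sum_i \mathbb{E}[\cov_i(f,g)]$ holds. The paper avoids this by going in the opposite direction: it conditions on a \emph{single} spin $x_\ell$ and applies the asymmetric Brascamp--Lieb inequality to the one-dimensional \emph{marginal} in $x_\ell$ (whose Hamiltonian $\psi(x_\ell)+(K-1)\psi_{K-1}(\cdot)$ is strictly convex, with $|g'|/|\text{Ham}''|\le 1/K$). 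The residual term is then a covariance with respect to the conditional measure, which \emph{is} again a canonical ensemble with $K-1$ spins, so one can iterate cleanly down to zero. The combinatorics of this recursion produce terms $\int|\partial_i f-\partial_j f|\,d\mu$ with prefactor $2/K$, and the final Cauchy--Schwarz over the $K^2$ pairs is exactly what generates the factor $K$ in the statement. Your claimed bound of order $\tfrac{1}{N}\int\frac{\sum|\partial_i f|^2}{f}$ is a $K^2$-fold improvement over what the recursion actually gives (note that in this lemma $N$ and $K$ denote the same system size), and that discrepancy is a symptom of the missing bookkeeping in the telescoping step.
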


\begin{lem}\label{p_cov_perturbed}
  Assume that the single-site potential $\psi$ is perturbed strictly convex in the sense of~\eqref{e_perturbation_convex}. Let $\tilde K \in \mathbb{N}$ be an arbitrary integer. Then the canonical ensemble $\mu_{\tilde K,m}$ satisfies for any nonnegative function $f \geq0$ with $\int f d \mu_{\tilde K,a,m} =1$ the following covariance estimate: 
\begin{equation*}
 \left| \cov_{\mu_{\tilde K,a,m}} \left(f, \frac{1}{\tilde K} \sum_{i=1}^{\tilde K }  \psi'(x_i) \right)  \right|^2 \lesssim C(\tilde K)  \int \frac{\sum_{i=1}^{\tilde K } \left|\frac{d}{dx_i} f \right|^2}{f}   \ d \mu_{\tilde K,a,m} .
\end{equation*}
\end{lem}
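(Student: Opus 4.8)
\emph{Setup and a Cauchy--Schwarz reduction.} I would work in the chart on $X_{\tilde K,m}$ given by the free coordinates $(x_1,\dots,x_{\tilde K-1})$, setting $x_{\tilde K}:=\tilde K m-\sum_{i<\tilde K}x_i$; here $\tfrac{d}{dx_i}f$ denotes the corresponding partial derivative, so the right-hand side of the asserted inequality is $\int\frac{|\nabla f|^{2}}{f}\,d\mu_{\tilde K,a,m}$ with $\nabla$ and $|\cdot|$ Euclidean on $\R^{\tilde K-1}$ (whether the sum runs up to $\tilde K-1$ or $\tilde K$ only weakens, hence eases, the estimate). In this chart $\mu_{\tilde K,a,m}$ has density proportional to $e^{-\Phi}$ with $\Phi=\Phi_c+\delta\Phi$, where $\Phi_c(x):=\sum_{i<\tilde K}\psi_c(x_i)+\psi_c(x_{\tilde K})$ (the linear term does not enter the Hessian) and $\delta\Phi(x):=\sum_{i<\tilde K}\delta\psi(x_i)+\delta\psi(x_{\tilde K})$; then $\Hess\Phi_c\ge\lambda\,I$ and $\osc\delta\Phi\le 2\tilde K\|\delta\psi\|_{\infty}<\infty$. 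Since $\int f\,d\mu_{\tilde K,a,m}=1$, Cauchy--Schwarz gives $\int|\nabla f|\,d\mu_{\tilde K,a,m}\le\bigl(\int\frac{|\nabla f|^{2}}{f}\,d\mu_{\tilde K,a,m}\bigr)^{1/2}$, so after squaring it suffices to establish the stronger ``$L^{1}$-Dirichlet-form'' covariance bound
\[
\Bigl|\cov_{\mu_{\tilde K,a,m}}\!\Bigl(f,\tfrac1{\tilde K}\textstyle\sum_{i=1}^{\tilde K}\psi'(x_i)\Bigr)\Bigr|\ \le\ C(\tilde K)\int|\nabla f|\,d\mu_{\tilde K,a,m}.
\]

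\emph{Splitting off the bounded part.} Write $\tfrac1{\tilde K}\sum_i\psi'(x_i)=g_c+g_\delta$ with $g_c:=\tfrac1{\tilde K}\sum_i\psi_c'(x_i)$ and $g_\delta:=\tfrac1{\tilde K}\sum_i\delta\psi'(x_i)$. The function $g_\delta$ is bounded, $\|g_\delta\|_\infty\le\|\delta\psi'\|_\infty$; I would not differentiate $\delta\psi$ twice (which \eqref{e_perturbation_convex} does not allow), but bound $|\cov_{\mu_{\tilde K,a,m}}(f,g_\delta)|\le 2\|\delta\psi'\|_\infty\,\|f-1\|_{L^{1}(\mu_{\tilde K,a,m})}$ and then use the elementary estimate $\|f-1\|_{L^{1}(\mu)}\le\sqrt{2/\rho_P}\,\bigl(\int\frac{|\nabla f|^{2}}{f}\,d\mu\bigr)^{1/2}$, which follows from $|f-1|=|\sqrt f-1|\,|\sqrt f+1|$, $\|\sqrt f+1\|_{L^{2}(\mu)}\le2$, $\|\sqrt f-1\|_{L^{2}(\mu)}^{2}\le 2\var_\mu(\sqrt f)$, $\int\frac{|\nabla f|^{2}}{f}\,d\mu=4\int|\nabla\sqrt f|^{2}\,d\mu$ and the Poincar\'e inequality $\var_\mu(h)\le\rho_P^{-1}\int|\nabla h|^{2}\,d\mu$. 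This Poincar\'e inequality holds with $\rho_P=\rho_P(\tilde K)>0$ because, in the chart above, $\mu_{\tilde K,a,m}$ is a bounded perturbation of the $\lambda$-uniformly-log-concave measure proportional to $e^{-\Phi_c}$ (Bakry--\'Emery for the convex part, then the Holley--Stroock perturbation principle, giving $\rho_P\ge\lambda e^{-2\tilde K\|\delta\psi\|_\infty}$). If one keeps the full hypothesis \eqref{assumption_potential}, $g_\delta$ is Lipschitz and may instead simply be absorbed into the Brascamp--Lieb step below.

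\emph{The convex part via an asymmetric Brascamp--Lieb inequality.} For $g_c$ I would invoke an asymmetric Brascamp--Lieb inequality for bounded perturbations of uniformly convex potentials --- the multidimensional analogue of the one-dimensional inequality recalled above (such an estimate appears in [MO] and [CCL]; alternatively, since $\tilde K$ is fixed it can be re-derived from the one-dimensional statement by conditioning on the $\tilde K$ coordinates one at a time, using the local Cram\'er-type estimates of Appendix~A to control the resulting susceptibilities) --- namely, for some constant $c_0$,
\[
\bigl|\cov_{\mu_{\tilde K,a,m}}(f,g_c)\bigr|\ \le\ e^{c_0\,\osc\delta\Phi}\ \sup_x\bigl|(\Hess\Phi_c(x))^{-1}\nabla g_c(x)\bigr|\ \int|\nabla f|\,d\mu_{\tilde K,a,m}.
\]
The crucial point --- and the reason the super-quadratic case costs nothing extra here --- is that although $\psi_c''$, hence $\nabla g_c$, is unbounded, the \emph{vector} $(\Hess\Phi_c)^{-1}\nabla g_c$ is bounded uniformly in $x$. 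Indeed, with $D:=\operatorname{diag}(\psi_c''(x_1),\dots,\psi_c''(x_{\tilde K-1}))$ and $c:=\psi_c''(x_{\tilde K})>0$ one computes $\Hess\Phi_c=D+c\,\mathbf 1\mathbf 1^{t}$ and $\nabla g_c=\tilde K^{-1}(D-c\,I)\mathbf 1$; solving $(D+c\,\mathbf 1\mathbf 1^{t})w=(D-c\,I)\mathbf 1$ by the Sherman--Morrison formula gives $w_i=1-\tilde K\bigl(\psi_c''(x_i)\,(c^{-1}+\sum_{j<\tilde K}\psi_c''(x_j)^{-1})\bigr)^{-1}$, and since $\psi_c''(x_i)\sum_{j<\tilde K}\psi_c''(x_j)^{-1}\ge1$ we get $w_i\in[1-\tilde K,1]$, whence $\sup_x|(\Hess\Phi_c)^{-1}\nabla g_c|\le\tilde K^{-1}(\tilde K-1)^{3/2}$. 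Together with $\osc\delta\Phi\le2\tilde K\|\delta\psi\|_\infty$ this yields $|\cov_{\mu_{\tilde K,a,m}}(f,g_c)|\le C_1(\tilde K)\int|\nabla f|\,d\mu_{\tilde K,a,m}$.

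\emph{Conclusion and the main difficulty.} Adding the bounds for $g_c$ and $g_\delta$ gives the reduced $L^{1}$-Dirichlet-form estimate, and the Cauchy--Schwarz step from the first paragraph then yields the lemma (with a constant $C(\tilde K)$ of the form $\mathrm{const}\cdot e^{c\tilde K\|\delta\psi\|_\infty}$ times a power of $\tilde K$). The main obstacle is exactly what is overcome in the third step: when $\psi$ grows super-quadratically, $g=\tfrac1{\tilde K}\sum\psi'(x_i)$ is neither bounded nor Lipschitz, so the concentration-type covariance estimate of [GOVW] recalled above (which needs $\nabla g$ bounded) does not apply; the remedy is the asymmetric Brascamp--Lieb inequality, whose requirement on $g$ --- control of $(\Hess\Phi_c)^{-1}\nabla g$ rather than of $\nabla g$ itself --- is satisfied \emph{uniformly} thanks to the cancellation produced by the rank-one term $c\,\mathbf 1\mathbf 1^{t}$ in the Hessian on the mean-zero hyperplane. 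What remains is bookkeeping --- reconciling the gradient and Dirichlet-form conventions across the various fibers, and, if one prefers to re-derive the multidimensional Brascamp--Lieb inequality by conditioning, bounding the susceptibility $\tfrac{d}{dm'}\E_{\mu_{\tilde K-1,\cdot,m'}}[\sum\psi_c'(x_i)]$ via Appendix~A --- with all constants allowed to depend on $\tilde K$.
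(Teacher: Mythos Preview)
Your proposal is correct and follows the same overall strategy as the paper: split $\psi'=\psi_c'+\delta\psi'$, handle the bounded piece $g_\delta$ by a total-variation/Fisher-information estimate, and handle the strictly convex piece $g_c$ via an asymmetric Brascamp--Lieb argument. The implementations differ. For $g_\delta$ the paper uses Csisz\'ar--Kullback--Pinsker together with the LSI for $\mu_{\tilde K,a,m}$ (Lemma~\ref{lem_lsi_micro}), whereas you go through the Poincar\'e inequality; both produce a constant $C(\tilde K)$ of the same Holley--Stroock type. For $g_c$ the paper does \emph{not} invoke a multidimensional Brascamp--Lieb inequality: it simply reruns the recursive one-dimensional conditioning from the proof of Lemma~\ref{estimate_cov_convex}, now with the perturbed 1D asymmetric BL, together with the fact (from~[MO], not Appendix~A --- the latter is an asymptotic statement for large $K$) that the conditioned single-site potential $\psi_{K-1}$ is again a bounded perturbation of a strictly convex function. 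Your direct route, with the Sherman--Morrison computation showing $\sup_x\bigl|(\Hess\Phi_c)^{-1}\nabla g_c\bigr|\le\tilde K^{-1}(\tilde K-1)^{3/2}$, is a genuinely different and cleaner argument that makes transparent \emph{why} the super-quadratic growth of $\psi$ is harmless here; the only caveat is that the \emph{perturbed} multidimensional asymmetric BL you quote is not literally the statement in~[CCL] (which treats log-concave measures), so a Holley--Stroock transference step would still need to be supplied --- but your stated fallback to the 1D recursion is exactly the paper's argument, so the proof closes either way.
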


Using these two lemmas and a two-scale decomposition of the variance, we can prove Proposition \ref{p_crucial_covariance_two_scale} 

\begin{proof}[Proof of Proposition \ref{p_crucial_covariance_two_scale}]
We decompose the system of $K$ spins into $\tilde M$ blocks, each containing $\tilde K$ many spins i.e. $K = \tilde K \tilde M$. We disintegrate $\mu_{K,a,m}$ according to the coarse-graining operator 
\begin{equation*}
P x = \left( \frac{1}{\tilde K} \sum_{i \in B(1)} x_i \; ,\;  \ldots \; , \; \frac{1}{ \tilde K} \sum_{i \in B(\tilde M)} x_ i \right).
\end{equation*}
Here, the index set $B(l)$ of the $l$-th block is given by $B(l)= \left\{ (l-1) \tilde K +1 , l \tilde K  \  \right\}$,$l \in \left\{ 1, \ldots , \tilde M\right\}$.
Then the canonical ensemble $\mu_{K,m}$ can be written as
\begin{equation*}
  \mu_{K,m} = \mu(dx| Px=y) \ \bar \mu (dy),
\end{equation*}
where $\mu(dx| Px=y)$ are the conditional measures and $\bar \mu (dy)$ is the marginal. This disintegration implies the following decomposition of the covariance
\begin{multline}
  \cov_{\mu_{K,m} } \left(f, \frac{1}{K} \sum_{i=1}^K  \psi'(x_i) \right) =  \int \cov_{ \mu (dx| Px=y) } \left(f, \frac{1}{K} \sum_{i=1}^K  \psi'(x_i) \right) \bar \mu (dy) \; \\
+ \; \cov_{\bar \mu} \left(\int f \mu (dx| Px=y), \frac{1}{K} \int \sum_{i=1}^K  \psi'(x_i)  \mu (dx| Px=y)\right),
\end{multline}\smallskip
and therefore 

\begin{align}  \label{e_decomp_cov_micro_macro}
\left| \right.  \cov_{\mu_{K,m} }& \left(f, \frac{1}{K} \sum_{i=1}^K  \psi'(x_i) \right) \left. \right|^2 \notag \\
&\leq 2\left( \int \cov_{ \mu (dx| Px=y) } \left(f, \frac{1}{K} \sum_{i=1}^K  \psi'(x_i) \right) \bar \mu (dy) \right)^2 \notag \\
& \hspace{5mm} + \left| \cov_{\bar \mu} \left(\int f \mu (dx| Px=y), \frac{1}{K} \int \sum_{i=1}^K  \psi'(x_i)  \mu (dx| Px=y)\right) \right|^2 \notag \\
&\leq 2 \int \left(\cov_{ \mu (dx| Px=y) } \left(f, \frac{1}{K} \sum_{i=1}^K  \psi'(x_i) \right)\right)^2 \bar \mu (dy)  \notag \\
& \hspace{5mm} + 2\left| \cov_{\bar \mu} \left(\int f \mu (dx| Px=y), \frac{1}{K} \int \sum_{i=1}^K  \psi'(x_i)  \mu (dx| Px=y)\right) \right|^2.
\end{align}

Let us now estimate the first term on the right hand side of~\eqref{e_decomp_cov_micro_macro}: \newline
Note that the conditional measures $\mu (dx | Px=y)$ have a product structure i.e.~one can write
\begin{equation*}
  \mu (dx | Px=y) = \bigotimes_{j=1}^{\tilde M} \mu_{\tilde K, y_j} (dx^j),
\end{equation*}
where the probability measures $\mu_{\tilde K, y_j}$ are defined as in \ref{def_grand_can}. Additionally, we used the notation $x^j = (x_i)_{i \in B(j)}$. The latter yields the following representation of the microscopic covariance
\begin{multline*}
  \cov_{\bar \mu (dx| Px=y) } \left(f, \frac{1}{K} \sum_{i=1}^K  \psi'(x_i) \right) \\
  = \frac{1}{\tilde M} \sum_{j=1}^{\tilde M}   \cov_{\mu_{\tilde K , y_j} } \left( \int f \otimes_{i \neq j} \mu_{\tilde K , y_i} (dx_i), \frac{1}{\tilde K} \sum_{i=1}^{\tilde K}  \psi'(x_i) \right).
 \end{multline*}

On the right hand side we apply Lemma~\ref{p_cov_perturbed}. This leads to the estimate
\begin{align}
    \cov_{\bar \mu (dx| Px=y) }& \left(f, \frac{1}{K} \sum_{i=1}^K  \psi'(x_i) \right)^2 \notag \\
&  \leq C(\tilde K)  \ \frac{1}{\tilde M} \sum_{j=1}^{\tilde M} \int \frac{\sum_{i \in B(j)} \left| \frac{d}{dx_i} f \right |^2}{f} \ \mu(dx| Px=y) \notag \\
& =  C(\tilde K) \ \frac{1}{\tilde M} \int \frac{\sum_{i=1}^K   \left| \frac{d}{dx_i} f \right |^2}{f} \ \mu(dx| Px=y).
\label{e_estimate_micro_cov}
\end{align} \smallskip

Let us now estimate the second term on the right hand side of~\eqref{e_decomp_cov_micro_macro}: \newline 
We take a closer look at the structure of the marginal $\bar \mu$. It turns out that the Hamiltonian $\bar H$ of 
\begin{equation*}
  \bar \mu (dy) = \frac{1}{Z }  \ \exp \left( - \bar H (y) \right) \mathcal{H}_{ \lfloor \left\{ \frac{1}{\tilde M} \sum_{j=1}^{\tilde M} y_j = m  \right\}      \ }^{\tilde M-1} (dx)
\end{equation*}
is given by a sum of single-site potentials $\psi_{\tilde K,j}$ which depend on the site $j$ because of the inhomogeneous linear part.
\begin{equation*}
  \bar H (y) = \sum_{j=1}^{\tilde M} \tilde K \psi_{\tilde K,j} (y_j),
\end{equation*}
where the single-site potential $\psi_{\tilde K,j}$ is given by
\begin{equation*}
  \psi_{\tilde K} (y_j) = - \frac{1}{\tilde K} \log \int \exp \left( - \sum_{i=1}^{\tilde K} \psi (x_i) + a_{(j-1)\tilde K + i}  \right) \mathcal H (dx)_{ \lfloor \left\{ \frac{1}{\tilde K} \sum_{i=1}^{\tilde K} x_i = y_j  \right\}  }^{\tilde K -1} .
\end{equation*}
Without loss of generality we may assume that $\tilde K \gg 1$. Hence by the generalized local Cram\'er theorem (see Appendix A), the function $\psi_{\tilde K}$ is uniformly strictly convex. A direct calculation yields that the first derivative of $\psi_{\tilde K,j}$ is given by
\begin{equation}\label{e_deriv_psi_K}
  \psi_{\tilde K}' (y_j) = \frac{1}{\tilde K} \int \sum_{i=1}^{\tilde K} \psi' (x_i) + a_{(j-1)\tilde K + i} \mu_{\tilde K , y_j} (dx).
\end{equation}
Let us now estimate the macroscopic covariance term. Since the constants $a_i$ disappear in the covariance, rewriting the macroscopic covariance using~\eqref{e_deriv_psi_K} yields 
\begin{multline*}
   \cov_{\bar \mu} \left(\int f \mu (dx| Px=y), \frac{1}{K} \int \sum_{i=1}^K  \psi'(x_i)  \mu (dx| Px=y)\right)  \\
  =   \frac{1}{\tilde K \tilde M} \sum_{j=1}^{ \tilde M} \cov_{\bar \mu} \left(\int f \mu (dx| Px=y) , \tilde K \psi_{\tilde K}' (y_j)  \right) \\
  =   \cov_{\bar \mu} \left(\int f \mu (dx| Px=y), \frac{1}{\tilde K \tilde M}\sum_{j=1}^{ \tilde M} \tilde K \psi_{\tilde K}' (y_j)  \right).
\end{multline*}
Therefore an application of Lemma~\ref{estimate_cov_convex} yields the estimate
\begin{multline}\label{e_macro_cov_step_1}
  \left|  \cov_{\bar \mu} \left(\int f \mu (dx| Px=y), \frac{1}{K} \int \sum_{i=1}^K  \psi'(x_i)  \mu (dx| Px=y)\right)  \right|^2 \\
 \leq C \tilde{M}  \int \frac{\sum_{j=1}^{\tilde M} \left| \frac{d}{dy_j} \int f \mu (dx | Px=y) \right|^2}{\int{f(x)\mu(dx|y)}} \bar \mu (dy).
\end{multline}
Because 
\begin{multline*}
  \frac{d}{dy_j} \int f \mu (dx | Px =y) \\ = \int \sum_{i \in B(j)} \frac{d}{d x_i  } f \mu (dx | Px= y) - \cov_{\mu (dx | Px=y)} \left(f, \sum_{i \in B(j)} \psi' (x_i)  \right) ,
\end{multline*}
we get the estimate
\begin{align*}
 & \left| \frac{d}{dy_j} \int f \mu (dx | Px =y) \right|\\
 & \qquad \lesssim \tilde K \int \sum_{i \in B(j) } \left| \frac{d}{d x_i} f  \right| \mu (dx | Px =y) \\
& \qquad + \left|  \cov_{\mu (dx | Px=y)} \left(f, \sum_{i \in B(j)} \psi' (x_i)  \right) \right|.
\end{align*}
The covariance terms can be estimated by an application of Lemma~\ref{p_cov_perturbed} yielding the overall inequality
\begin{multline}
  \left| \frac{d}{dy_j} \int f \mu (dx | Px =y)\right|^2 \\ 
\lesssim  C (\tilde K ) \left(\int{f(x)\mu(dx|y)}\right)\int \frac{\sum_{i \in B(j) } \left| \frac{d}{d x_i} f \right|^2}{f} \mu (dx | Px =y) . \label{e_macro_mean_difference}
\end{multline}
Now combining the estimates~\eqref{e_macro_cov_step_1} and~\eqref{e_macro_mean_difference} implies
\begin{align}
&  \left|  \cov_{\bar \mu} \left(\int f \mu (dx| Px=y), \frac{1}{K} \int \sum_{i=1}^K  \psi_c'(x_i)  \mu (dx| Px=y)\right)  \right|^2 \notag \\
& \qquad \lesssim C(\tilde K) \ \tilde M \int \frac{\sum_{i =1}^{K} \left| \frac{d}{d x_i} f  \right|^2}{f} \mu_{K,m} (dx).
  \label{e_estimate_macro_cov}
\end{align}\smallskip
Now, it is only left to insert the estimate~\eqref{e_estimate_micro_cov} for the conditional covariance and the estimate~\eqref{e_estimate_macro_cov} for the macroscopic covariance in the decomposition~\eqref{e_decomp_cov_micro_macro}. Using $K = \tilde K \tilde M$ this yields
\begin{equation}
\left|    \cov_{\mu_{K,m} } \left(f, \frac{1}{K} \sum_{i=1}^K  \psi'(x_i) \right) \right|^2 \lesssim C(\tilde K ) K  \int \frac{\sum_{i =1}^{K} \left| \frac{d}{d x_i} f  \right|^2}{f} \mu_{K,m} (dx) 
\end{equation}

\end{proof}

\begin{proof} [Proof of Lemma \ref{estimate_cov_convex}]

We will apply a recursive approach. In the first step, we will estimate the term 
\begin{equation} \label{e_cov_recur_start}
  \cov_{\mu_{K,m}} \left( f, \frac{1}{K} \psi' (x_n) \right)
\end{equation}
for a fixed site $n \in \left\{ 1, \ldots , K\right\}$. Let us choose another arbitrary site $\ell \in \left\{1, \ldots K \right\}$ and disintegrate the canonical ensemble $\mu_{K,m}$ by fixing the spin $x_{\ell}$ i.e.
\begin{equation}
  \label{e_dis_mu_x_l}
  \mu_{K,a,m} (dx) = \mu (d \bar x_{\ell} | x_{\ell}) \ \bar \mu (dx_{\ell}),
\end{equation}
where we used the notation
\begin{equation*}
  \bar x_l = (x_1, \ldots, x_{{\ell}-1}, x_{{\ell}+1}, \ldots, x_K).  
\end{equation*}
Then, the marginal $\bar \mu (dx_{\ell})$ is given by 
\begin{equation*}
  \bar \mu (d x_l) =  \frac{1}{Z} \ \exp \left( - \psi (x_{\ell}) - a_{\ell} - (K-1) \psi_{K-1, \bar{a}_{\ell}} \left( \frac{K}{K-1} m - \frac{1}{K-1} x_{\ell} \right) \right),
\end{equation*}
where the function $\psi_{K-1, \bar{a}_{\ell}}$ is given by
\begin{equation*}
  \psi_{K-1} (z) = - \frac{1}{K-1} \log  \int \exp \left( -\sum_{i=1}^{K-1} \psi (x_i) - \langle \bar{a}_{\ell}, x \rangle \right) \mathcal{H}_{\lfloor \left\{\frac{1}{K-1} \sum_{i=1}^{K-1} x_i = z \right\}}^{K-2} (dx).
\end{equation*}
Note that the single-site potential $\psi_{K-1, \bar{a}_{\ell}} $ inherits the uniform strict convexity from the single-site potential $\psi$ by a standard argument using the (symmetric) Brascamp-Lieb inequality. The disintegration~\eqref{e_dis_mu_x_l} of $\mu_{K,a,m}$ yields the following representation of the covariance
\begin{equation*}
  \cov_{\mu_{K,m}} \left( f, \frac{1}{K} \psi' (x_n) \right)= \cov_{\bar \mu(dx_{\ell})} \left( \int f \mu (d \bar x_{\ell}| x_{\ell}), \frac{1}{K} \psi' (x_n) \right).
\end{equation*}
Because $\bar \mu (d x_{\ell})$ is an one-dimensional measure with strictly convex Hamiltonian, an application of the one-dimensional asymmetric Brascamp-Lieb inequality yields
\begin{multline}
  \cov_{\mu_{K,m}} \left( f, \frac{1}{K} \psi' (x_n) \right) \\
\leq \frac{1}{K} \ \underbrace{\left| \frac{\psi''(x_{\ell})}{\psi''(x_l) + \frac{1}{K-1} \psi_K'' \left( \frac{K}{K-1} m - \frac{1}{K-1} x_{\ell} \right)} \right|}_{\leq 1} \ \int \left| \frac{d}{dx_{\ell}} \int f \mu (d \bar x_{\ell} | x_{\ell}) \right| \bar \mu (dx_{\ell})  . \label{e_iter_ass_BL_1}
\end{multline}
Direct calculation reveals that
\begin{align*}
&  \frac{d}{dx_l}  \int f \mu (d \bar x_{\ell} | x_{\ell}) \\
 & =  \int \left( \frac{d}{d x_{\ell}} f - \frac{1}{K-1} \sum_{i=1}^{K-1} \frac{d}{dx_i} f \right) \mu (d \bar x_{\ell} | x_{\ell}) - \frac{1}{K-1} \cov_{\mu (d \bar x_{\ell} | x_{\ell})} \left( f, \sum_{i \neq l, \ i =1}^{K} \psi' (x_i) \right) \\
 & =  \frac{1}{K-1}   \int \left( \sum_{i=1}^K \frac{d}{dx_{\ell}}  f -  \frac{d}{d x_i} f \right) \mu (d \bar x_{\ell} | x_{\ell})  -  \frac{1}{K-1} \cov_{\mu (d \bar x_{\ell} | x_{\ell})} \left( f, \sum_{i \neq l, \ i= 1 }^{K} \psi' (x_i) \right) \\
\end{align*}
Using the last identity one can directly deduce from~\eqref{e_iter_ass_BL_1} that
\begin{multline}
    \cov_{\mu_{K,m}} \left( f, \frac{1}{K} \psi' (x_n) \right) \leq \frac{1}{K - 1}  \int \sum_{i=1}^K \left| \frac{d}{dx_{\ell}}  f -  \frac{d}{d x_i} f  \right| \mu_{K,m} (dx) \\
 + \int \left| \frac{1}{K-1} \cov_{\mu (d \bar x_{\ell} | x_{\ell})} \left( f, \sum_{i \neq l, \ i=1  }^{K} \psi' (x_i) \right) \right| \bar \mu (dx_{\ell}). \label{e_iter_ass_BL_2}
\end{multline}
One sees that on the right hand side, one has to estimate a covariance term 
\begin{equation*}
  \frac{1}{K-1} \cov_{\mu (d \bar x_{\ell} | x_{\ell})} \left( f, \sum_{i \neq l, \ i=1  }^{K} \psi' (x_i) \right)
\end{equation*}
that has the same structure as the covariance term we started with (cf. \eqref{e_cov_recur_start}). The reason is that the conditional measure $\mu(d \bar x_{\ell} | x_{\ell}) $ has the structure of a canonical ensemble i.e.
\begin{equation*}
  \mu(d \bar x_{\ell} | x_{\ell}) = \mu _{K-1, \frac{K}{K-1}m - \frac{1}{K-1} x_{\ell}} (d \bar x_{\ell}).
\end{equation*}
Therefore one can apply the estimate~\eqref{e_iter_ass_BL_2} recursively, until there is no covariance term left. On the right hand side of the covariance estimate, there only occur terms of the form
\begin{equation*}
 \int  \left|  \frac{d}{dx_i} f - \frac{d}{dx_j} f \right| \mu_{K,m}  
\end{equation*}
for some indexes $1 \leq i < j \leq K$. Therefore, one only has to determine the prefactors in front of these terms. By symmetry of the system this prefactor is independent of the particular choice $i$ and $j$ and can be determined by combinatorics. Indeed, we will need the following observation that follows from the recursive formula~\eqref{e_iter_ass_BL_2}: \smallskip

The term  $\int  \left|  \frac{d}{dx_i} f - \frac{d}{dx_j} f \right| \mu_{K,m}$ only occurs, if one conditions on the spin value $x_i$ and the spin value $x_j$ is free or if one conditions on the spin value $x_j$ and the spin value $x_i$ is free. \smallskip

Let us start to determine the prefactors. In the first step, there are $K$ free spins. Let us condition on an arbitrary spin $x_l$, $l \in \left\{ 1, \ldots, K \right\}$. Then the probability that one conditions on the spin $x_i$ or $x_j$ is given by $\frac{2}{K}$. In this case, one gets by~\eqref{e_iter_ass_BL_2} the prefactor 
\begin{equation*}
  \frac{2}{K} \frac{1}{K-1}.
\end{equation*}
Note that in the remaining conditional steps, the term $\int  \left|  \frac{d}{dx_i} f - \frac{d}{dx_j} f \right| \mu_{K,m}$ does not occur anymore. \newline
So, let us consider the remaining cases. We assume that in the first step one conditioned on a spin $x_l$ with $i \neq l \neq j$ and in the second step one conditions on the spin $x_i$ or $x_j$. It follows from~\eqref{e_iter_ass_BL_2} that the resulting prefactor is given by
\begin{equation*}
  \frac{K-2}{K} \frac{2}{K-1} \frac{1}{K-2}.
\end{equation*}
 With this argument it becomes clear that the overall prefactor $c$ is given by
 \begin{align*}
   c &=   \frac{2}{K} \frac{1}{K-1} +  \frac{K-2}{K} \frac{2}{K-1} \frac{1}{K-2} +   \frac{K-2}{K} \frac{K-3}{K-1} \frac{2}{K-2} \frac{1}{K-3} + \ldots\\
   & = \frac{2}{K} \frac{1}{K-1} \sum_{i=1}^{K-1} 1 = \frac{2}{K}. 
 \end{align*}
Overall, this deduces the covariance estimate
\begin{align}
  \cov_{\mu_{K,m}} \left( f, \frac{1}{K} \psi' (x_i) \right) & \lesssim \frac{1}{K} \int \sum_{i, j =1}^K \left|\frac{d}{dx_i} f - \frac{d}{dx_j} \right| \mu_{K,m} \label{e_cruc_cov_pure} 
\end{align}
and therefore
\begin{align}
\left| \right. \cov_{\mu_{K,m}} &\left( f, \frac{1}{K} \underset{i = 1}{\stackrel{K}{\sum}} \psi' (x_i) \right) \left. \right|^2 \lesssim \left( \int \sum_{i, j =1}^K \left|\frac{d}{dx_i} f - \frac{d}{dx_j} \right| \mu_{K,m} \right)^2 \notag \\
&\lesssim \left( \int \frac{\sum_{i, j =1}^K \left|\frac{d}{dx_i} f - \frac{d}{dx_j} \right|^2}{f} \mu_{K,m} \right) \left( \int{f  \mu_{K,m}} \right) \notag \\
&\lesssim \int \frac{\sum_{i, j =1}^K 2\left|\frac{d}{dx_i} f \right|^2  + 2 \left| \frac{d}{dx_j} \right|^2}{f} \mu_{K,m} \notag \\
&\lesssim K \int \frac{\sum_{i = 1}^K \left|\frac{d}{dx_i} f \right|^2}{f} \mu_{K,m}
\end{align}
\end{proof}

\begin{proof}[Proof of Lemma~\ref{p_cov_perturbed}]
  By the splitting $\psi= \psi_c + \delta \psi$, we can decompose the covariance term according to
  \begin{align} \label{eq4005}
    \cov_{\mu_{\tilde K,m} } &\left(f, \frac{1}{\tilde K} \sum_{i=1}^{\tilde K}  \psi'(x_i) \right) \notag \\ 
   & \hspace{2cm} = \cov_{\mu_{\tilde K,m} } \left(f, \frac{1}{\tilde K} \sum_{i=1}^{\tilde K}  \psi_c'(x_i) \right)  + \cov_{\mu_{\tilde K,m} } \left(f, \frac{1}{\tilde K} \sum_{i=1}^{\tilde K}  \delta \psi'(x_i) \right) .
  \end{align}
Because $| \delta \psi' | \lesssim 1$ by assumption, we can estimate the second term on the right hand side of the last equality in the same way as in [MO] by an application of the classical Csisz\'ar-Kullback-Pinsker inequality
$$||f\mu - \mu||_{TV} \leq \sqrt{2\Ent_{\mu}(f)}$$
and the LSI for $\mu_{\tilde K,m}$, which holds a priori with a bad constant that depends on $\tilde{K}$  (see Lemma \ref{lem_lsi_micro} in the next section). Indeed, 
\begin{align} \label{eq3999}
\cov_{\mu_{\tilde K,m} } &\left(f, \frac{1}{\tilde K} \sum_{i=1}^{\tilde K}  \delta \psi'(x_i) \right) \leq ||\delta \psi'||_{\infty} \int{|f(x) - 1|\mu_{\tilde K,m}(dx)} \notag \\
&= ||f\mu_{\tilde K,m} - \mu_{\tilde K,m}||_{TV} \notag \\
&\leq \sqrt{2\Ent_{\mu_{\tilde K,m}}(f)} \notag \\
&\leq C(\tilde{K}) \left(\int{\frac{ \sum \left| \frac{df}{dx_i} \right|^2}{f}\mu_{\tilde K,m}} \right)^{1/2}.
\end{align}
A proof of the Csisz\'ar-Kullback-Pinsker inequality can be found for example in [L]. Now, let us have a look at the first term on the right hand side of (\ref{eq4005}). 
This term can be estimated with the same strategy as for Lemma~\ref{estimate_cov_convex}. 
The only difference is that instead of the asymmetric Brascamp-Lieb inequality for strictly convex single-site potentials one has to apply the version for perturbed strictly convex single-site potentials. 
However, this does not matter, because we do not care about constants depending on $\tilde K$. 
There is still a little bit of work, because $\psi_K$ is not convex for small $K$. 
This is not a problem because $\psi_K$ can be represented as a sum of a strictly convex potential and a perturbation (see [MO]). 
What we get is
\begin{align} \label{eq4000}
\cov_{\mu_{\tilde K,m} } &\left(f, \frac{1}{\tilde K} \sum_{i=1}^{\tilde K}  \psi_c'(x_i) \right) \leq C(\tilde{K}) \int \sum_{i =1}^{\tilde{K}} \left|\frac{d}{dx_i} f \right| \mu_{\tilde{K},m}, \notag \\
&\leq C(\tilde{K}) \left(\int{ \frac{\sum_{i =1}^{\tilde{K}} \left|\frac{d}{dx_i} f \right|^2}{f}\mu_{\tilde{K},m}} \right)^{1/2} \left(\int{f \mu_{\tilde{K},m}} \right)^{1/2}.
\end{align}

Inserting (\ref{eq3999}) and (\ref{eq4000}) into (\ref{eq4005}), we get 
\begin{align}
\cov_{\mu_{\tilde K,m} } &\left(f, \frac{1}{\tilde K} \sum_{i=1}^{\tilde K}  \psi'(x_i) \right)^2  \notag \\
&\leq  2\cov_{\mu_{\tilde K,m} } \left(f, \frac{1}{\tilde K} \sum_{i=1}^{\tilde K}  \psi_c'(x_i) \right)^2  + 2\cov_{\mu_{\tilde K,m} } \left(f, \frac{1}{\tilde K} \sum_{i=1}^{\tilde K}  \delta \psi'(x_i) \right)^2 \notag \\
&\leq C(\tilde{K}) \left(\int{ \frac{\sum_{i =1}^{\tilde{K}} \left|\frac{d}{dx_i} f \right|^2}{f}\mu_{\tilde{K},m}} \right)^{1/2} \left(\int{f \mu_{\tilde{K},m}} \right).
\end{align}
\end{proof}

\section{Proof of Theorem \ref{lsi}}

As announced, to prove this result we will use the two-scale approach that was introduced in [GOVW] and the covariance estimate of Proposition \ref{p_crucial_covariance_two_scale}. 
First, we will prove that the logarithmic Sobolev inequalities are satisfied at macroscopic scale and at the scale of fluctuations around a fixed macroscopic profile, 
before showing how we can combine the two LSIs to obtain a full LSI at microscopic scale.

\begin{lem} \label{lem_lsi_micro}
The measure $\mu$ satisfies LSI($\rho e^{-cN}$), with constants $c$ and $\rho$ independant of $N$, $m$ and the sequence $(a_i)$. As a consequence, $\mu(dx|y)$ satisfies LSI($\rho_0$) for some constant $\rho_0$ that depends on $K$, but not on $y$.
\end{lem}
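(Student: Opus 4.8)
The plan is to establish the two claims of Lemma~\ref{lem_lsi_micro} separately, using the standard perturbation and tensorization criteria from Appendix~B. For the first claim, I would start from the grand canonical measure $\mu_{N,a}$ on $\R^N$, whose Hamiltonian is $\sum_i \psi(x_i) + a_i x_i$. Writing $\psi = \psi_c + \delta\psi$, the measure $\exp(-\sum_i \psi_c(x_i) + a_i x_i)dx$ is a product of one-dimensional measures each of which is $\lambda$-uniformly log-concave (the linear term $a_i x_i$ does not affect the Hessian), so by the Bakry--\'Emery criterion and tensorization it satisfies LSI($\lambda$) on $\R^N$ with a constant \emph{independent} of $N$, $m$ and the $a_i$. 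Adding the bounded perturbation $\sum_i \delta\psi(x_i)$ costs a factor $e^{-\osc(\sum_i\delta\psi)} \geq e^{-N\,\osc\delta\psi}$ by the Holley--Stroock perturbation lemma, giving LSI($\lambda e^{-cN}$) for the grand canonical measure with $c = \osc\delta\psi$. Finally, restricting to the hyperplane $X_{N,m} = \{\tfrac1N\sum x_i = m\}$: the canonical ensemble $\mu_{N,a,m}$ is a conditional measure of $\mu_{N,a}$, and conditioning on an affine subspace preserves a logarithmic Sobolev inequality with the same constant (this is a standard fact; alternatively one checks that a smooth function on $X_{N,m}$ extends suitably). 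This yields LSI($\rho e^{-cN}$) for $\mu_{N,a,m}$ with $\rho = \lambda$ and $c$, $\rho$ independent of $N$, $m$, $(a_i)$, as claimed.

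For the second claim, recall from~\eqref{factorization_measure_fluct} that $\mu(dx|y)$ factorizes as a product over the $M$ blocks, $\mu(dx|y) = \bigotimes_{i=1}^M \mu_{K,(a_j)_{(i-1)K < j \le iK}, y_i}$, where each factor is a canonical ensemble of exactly $K$ spins on the hyperplane of fixed mean $y_i$. By the first part of the lemma applied with $N$ replaced by $K$, each such factor satisfies LSI($\rho e^{-cK}$) with a constant that depends only on $K$ (through $e^{-cK}$) and not on $y_i$ or the relevant $a_j$. By the tensorization property of the logarithmic Sobolev inequality (Criterion~I in Appendix~B), the product measure $\mu(dx|y)$ then satisfies LSI with the same constant $\rho_0 := \rho e^{-cK}$, uniformly in $y$. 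One small point to be careful about: the inner product on the fiber $X_{N,K,y}$ used for the gradient must be the restriction of $\langle\cdot,\cdot\rangle_X$, so that the tensorization is with respect to the correct Euclidean structure on each factor $X_{K,y_i}$; since $\langle\cdot,\cdot\rangle_X$ is the plain $\ell^2$ product, this is exactly compatible with the block decomposition and no rescaling appears.

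The main obstacle is really the stability of the logarithmic Sobolev inequality under conditioning onto the mean-zero hyperplane — one must make sure the constant genuinely does not degrade. The clean way is to invoke the fact that if $\nu$ satisfies LSI($\rho$) on a Riemannian manifold and $N$ is a totally geodesic submanifold (here an affine subspace of $\R^N$, which is totally geodesic), then the conditional measure $\nu(\cdot\,|\,N)$ satisfies LSI($\rho$) as well; this can be seen by approximating the conditional measure by the measures $\nu$ reweighted by a narrow Gaussian factor in the normal direction (each of which satisfies LSI($\rho$) up to a vanishing Holley--Stroock correction in the limit, since the added potential is convex) and passing to the limit, using lower semicontinuity of the entropy and Fisher information functionals. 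Everything else is a routine combination of Bakry--\'Emery, Holley--Stroock and tensorization, and the two assertions of the lemma then follow, with the second being simply the first at system size $K$ tensorized $M$ times.
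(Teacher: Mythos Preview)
Your proposal is correct and follows the same overall strategy as the paper: Bakry--\'Emery combined with Holley--Stroock for the first claim, then the product structure~\eqref{factorization_measure_fluct} and tensorization for the second. The paper's own proof is terser but identical in content.

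One minor point worth flagging: you work on $\R^N$ first and then condition onto the hyperplane, which forces you to justify that conditioning onto an affine subspace preserves the LSI constant. Your sketch of this via narrow Gaussians is essentially right, but the phrase ``Holley--Stroock correction \ldots\ since the added potential is convex'' is off: an unbounded convex quadratic is not a bounded perturbation, so Holley--Stroock does not apply to it. The correct version is that adding a convex potential can only improve the Bakry--\'Emery lower bound on the Hessian, so one should add the narrow Gaussian \emph{before} invoking Holley--Stroock for $\sum_i\delta\psi(x_i)$, not after. More simply --- and this is presumably what the paper intends, since its Criterion~III is stated on a general Riemannian manifold --- one can apply Bakry--\'Emery directly on the affine subspace $X_{N,m}$: the restriction of $x \mapsto \sum_i \psi_c(x_i) + a_i x_i$ to $X_{N,m}$ still has Hessian $\geq \lambda$ in the induced metric, so Bakry--\'Emery gives LSI($\lambda$) on $X_{N,m}$ for the unperturbed canonical ensemble, and then Holley--Stroock with oscillation at most $N\,\osc\delta\psi$ yields LSI($\lambda e^{-cN}$). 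This route bypasses the conditioning lemma entirely.
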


\begin{proof}
The first part is a consequence of a combination of the Bakry-Emery theorem and the Holley-Stroock perturbation Lemma (see Appendix B).

Recall (\ref{factorization_measure_fluct}) : 
$$\mu_{N,a,m}(dx|y) = \underset{i = 1}{\stackrel{M}{\bigotimes}} \hspace{1mm} \mu_{K,(a_j)_{(i-1)K+1 \leq j \leq iK}, y_i},$$
so by the tensorization principle (Criterion I of Appendix B), $\mu(dx|y)$ satisfies LSI($\rho_0$) for some constant $\rho_0$ that depends on $K$, but not on $y$, since $\mu_{K,(a_j)_{(i-1)K+1 \leq j \leq iK}, y_i}$ satisfies LSI($\rho e^{-cK}$).
\end{proof}

Note that, since $\mu_{K,(a_j)_{(i-1)K+1 \leq j \leq iK}, y_i}$ has the same form as $\mu$,  Theorem \ref{lsi} will imply that a posteriori $\rho_0$ does not actually depend on $K$.

\begin{lem} \label{lem_lsi_macro}
There exists a positive constant $\lambda$ and an integer $K_0$ such that for all $K \geq K_0$, the measure $\bar{\mu}$ satisfies LSI($\lambda N$), independently of the mean $m$ and the sequence $(a_i)$.
\end{lem}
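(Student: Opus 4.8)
The plan is to deduce the logarithmic Sobolev inequality for the coarse-grained measure $\bar\mu$ from the Bakry-Émery criterion applied to its Hamiltonian. Recall that $\bar\mu(dy) = \exp(-N\bar H(y))\,dy$ on the hyperplane $Y_{M,m}$ equipped with the rescaled inner product $\langle y,\tilde y\rangle_Y = \frac1M\sum y_i\tilde y_i$. By Proposition~\ref{p_convexity_coarse_grained_Hamiltonian}, there are $K_0$ and $\lambda>0$ such that for all $K\ge K_0$ and all $M$ the Hessian of $\bar H$ satisfies $\langle \Hess\bar H(y)\tilde y,\tilde y\rangle_Y \ge \lambda|\tilde y|_Y^2$ for every $y,\tilde y\in Y$. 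Hence $N\bar H$ has Hessian bounded below by $\lambda N$ with respect to the $Y$-metric, and the Bakry-Émery theorem (Criterion II of Appendix~B) gives immediately that $\bar\mu$ satisfies LSI($\lambda N$).

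The one subtlety I would address carefully is the role of the metric. The LSI constant, the Hessian lower bound, and the Bakry-Émery criterion must all be taken with respect to the \emph{same} Riemannian structure on $Y_{M,m}$, and here this is the space $Y$ with the $\frac1M$-weighted $\ell^2$ inner product, not the standard one; the remark in Section~2 emphasizes exactly this point. So I would state explicitly that $\bar\mu$ lives on the Riemannian manifold $(Y_{M,m},\langle\cdot,\cdot\rangle_Y)$, that $\Hess$ and $\nabla$ in the LSI are taken in this metric, and that Proposition~\ref{p_convexity_coarse_grained_Hamiltonian} is precisely the statement that the Hessian of $\bar H$ in this metric is at least $\lambda\,\mathrm{id}_Y$. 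Once this bookkeeping is in place, Bakry-Émery applies verbatim: a probability measure $e^{-W}$ on a Riemannian manifold with $\Hess W \ge \kappa > 0$ satisfies LSI($\kappa$), and here $W = N\bar H$ with $\kappa = \lambda N$.

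There is essentially no hard analytic step here — the content has already been extracted into Proposition~\ref{p_convexity_coarse_grained_Hamiltonian}, whose proof (via the local Cramér theorem) is deferred to Appendix~A. The only thing to double-check is that the constant $\lambda$ and the threshold $K_0$ from Proposition~\ref{p_convexity_coarse_grained_Hamiltonian} are independent of $m$ and of the sequence $(a_i)$, which is part of that proposition's statement, so the resulting LSI constant $\lambda N$ inherits this uniformity. Thus the proof is a two-line invocation: apply Proposition~\ref{p_convexity_coarse_grained_Hamiltonian} to get $\Hess(N\bar H)\ge \lambda N\,\mathrm{id}_Y$ on $Y$, then apply Bakry-Émery. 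If the manifold $Y_{M,m}$ had a boundary this would fail, but it is a flat hyperplane (affine subspace of $\R^M$), so the classical criterion applies without modification. I expect the main obstacle — already resolved upstream — was proving the convexity of $\bar H$, i.e. the content of Appendix~A; within this lemma itself there is no real obstacle.
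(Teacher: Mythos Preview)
Your proposal is correct and follows essentially the same approach as the paper: invoke the uniform convexity of $\bar H$ from Proposition~\ref{p_convexity_coarse_grained_Hamiltonian} (the local Cram\'er theorem of Appendix~A), then apply Bakry--\'Emery. One small slip: Bakry--\'Emery is Criterion~III in Appendix~B, not Criterion~II (which is Holley--Stroock).
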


\begin{proof}
By the generalized local Cram\`er theorem of Appendix A, for $K$ large enough, $\bar{H}(y) = \frac{1}{M} \sum \psi_{K,i,a}(y_i)$ is $\lambda$-uniformly convex, with constant $\lambda$ independent of $K$, $m$ and the sequence $(a_i)$. An application of the Bakry-Emery Theorem then yields the desired result.
\end{proof}

To get the full logarithmic Sobolev inequality at microscopic scale, we rely on the following decomposition of the entropy :

\begin{equation} \label{dec_ent}
\Ent_{\mu}(f) = \Ent_{\bar{\mu}}(\bar{f}) + \int_Y{\Ent_{\mu(dx|y)}(f)\bar{\mu}(dy)}.
\end{equation}

This identity separates the entropy into the sum of a macroscopic entropy and an entropy at the scale of fluctuations. 
We can then use the fact that we have a LSI at both scales to bound these two terms separately. 
Indeed, by using Lemma \ref{lem_lsi_micro}, we have, for a decomposition with large but fixed $K$,

\begin{equation} \label{aleph1}
\Ent_{\mu(dx|y)}(f) \leq \frac{1}{\rho_0}\int_X{\frac{|(id_X - P^tNP)\nabla f|^2}{f}\mu(dx|y)}.
\end{equation}

In the same way, at macroscopic scale, we have

\begin{equation} \label{aleph2}
\Ent_{\bar{\mu}}(\bar{f}) \leq \frac{1}{\lambda N}\int_Y{\frac{|\nabla \bar{f}|^2}{\bar{f}}\bar{\mu}}.
\end{equation}

To use this second estimate, we need to relate $\nabla \bar{f}$ and $\nabla f$. This is the point of the following Lemma :

\begin{lem} \label{covlem}
For any  $f \in$ Lip$(X)$ and any $y \in Y$, 
\begin{equation}
\int{P\nabla f \mu(dx|y)} = \frac{1}{N} \nabla_Y \bar{f}(y) + P\cov_{\mu(dx|y)}(f, \nabla H).
\end{equation}
\end{lem}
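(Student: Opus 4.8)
The plan is to obtain the identity by differentiating the defining relation $\bar f(y)=\int f(x)\,\mu(dx|y)$ directly with respect to $y$, using an explicit coordinate description of the conditional measure. Fix $y\in Y$ and introduce the affine section $s(y):=NP^t y$ of $P$; since $PNP^t=\mathrm{id}_Y$ we have $Ps(y)=y$, and $s$ is linear in $y$, so the fibre $\{Px=y\}$ is the translate $s(y)+\ker P$ and we may coordinatize it by $z\in\ker P$. By the factorization \eqref{factorization_measure_fluct} together with the explicit form \eqref{def_grand_can} of the canonical ensemble, the conditional measure $\mu(dx|y)$, read in the coordinate $z$, has density proportional to $\exp(-H(s(y)+z))$ with respect to Lebesgue measure $dz$ on $\ker P$ (the fibres being parallel translates, the induced Hausdorff measure is translation invariant). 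Hence
$$\bar f(y)=\frac{\int_{\ker P} f(s(y)+z)\,e^{-H(s(y)+z)}\,dz}{\int_{\ker P} e^{-H(s(y)+z)}\,dz}.$$

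Next I differentiate this quotient in an arbitrary direction $v$ tangent to $Y$. Since $\partial_v\big(s(y)+z\big)=NP^t v$, differentiating the numerator produces, under the integral sign, the two contributions $\langle\nabla f,NP^tv\rangle_X$ and $-\,f\,\langle\nabla H,NP^tv\rangle_X$, while the logarithmic derivative of the denominator contributes $+\int\langle\nabla H,NP^tv\rangle_X\,\mu(dx|y)$. Applying the quotient rule and collecting the two $\langle\nabla H,\cdot\rangle$ pieces, they combine into a covariance, giving
$$d\bar f(y)(v)=\int \langle\nabla f,NP^tv\rangle_X\,\mu(dx|y)\;-\;\cov_{\mu(dx|y)}\bigl(f,\langle\nabla H,NP^tv\rangle_X\bigr).$$
Differentiation under the integral sign is legitimate because $\nabla f$ is bounded ($f$ being Lipschitz), $\nabla H$ has at most polynomial growth, and $\mu(dx|y)$ has super-Gaussian tails by \eqref{assumption_potential}.

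Finally I rewrite everything in terms of the $Y$-inner product. By the defining property of the adjoint $P^t$ one has $\langle w,NP^tv\rangle_X=N\langle Pw,v\rangle_Y$ for every $w$ in the (fixed) tangent space of $X$, and since covariance is linear in its second slot, the identity above becomes
$$\langle \nabla_Y\bar f(y),v\rangle_Y=N\Bigl\langle \int P\nabla f\,\mu(dx|y),\,v\Bigr\rangle_Y-N\bigl\langle P\cov_{\mu(dx|y)}(f,\nabla H),\,v\bigr\rangle_Y.$$
Both $\int P\nabla f\,\mu(dx|y)$ and $P\cov_{\mu(dx|y)}(f,\nabla H)$ lie in the tangent space of $Y$, being images under $P$ of mean-zero vectors, so as $v$ ranges over all of it we may drop the pairings, divide by $N$, and rearrange to get the claimed identity. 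The one genuinely delicate point is the first step: pinning down the correct $y$-dependent normalization of $\mu(dx|y)$ and recognizing that its logarithmic derivative is exactly what produces the covariance term; once this is in place, the remainder is the quotient rule plus careful bookkeeping of the scalar factors $N$ coming from $PNP^t=\mathrm{id}_Y$ and from the mismatch between $\langle\cdot,\cdot\rangle_X$ and $\langle\cdot,\cdot\rangle_Y$.
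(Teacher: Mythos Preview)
Your proof is correct and follows essentially the same approach as the paper: both parametrize the fibre as $x=NP^t y+z$ with $z\in\ker P$, differentiate the quotient $\bar f(y)=\int f\,e^{-H}\,dz\big/\int e^{-H}\,dz$ in a direction $v\in Y$ via the quotient rule, identify the two $\nabla H$ contributions as a covariance, and then convert the $X$-pairings to $Y$-pairings using the adjoint relation $\langle w,NP^tv\rangle_X=N\langle Pw,v\rangle_Y$. Your write-up is slightly more careful about justifying differentiation under the integral sign and about the geometric setup, but there is no substantive difference.
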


The proof of this Lemma will be deferred to the end of this section. Using this result and the convexity of the function $(x, b) \rightarrow |x|^2/b$ with Jensen's inequality, we get

\begin{align} \label{aleph3}
\frac{|\nabla \bar{f}|^2}{\bar{f}} &= \frac{\left|N\int{P\nabla f \mu(dx|y)} + NP\cov_{\mu(dx|y)}(f, \nabla H) \right|^2}{\int{f(x)\mu(dx|y)}} \notag \\
&\leq 2N\frac{1}{\bar{f}(y)}\left|\int{P^tNP\nabla f \mu(dx|y)}\right|^2 + 2N\frac{1}{\bar{f}(y)} \left| NP^tP \cov(f, \nabla H) \right|^2 \notag \\
&\leq 2N \int{\frac{|P^tNP\nabla f|^2}{f}\mu(dx|y)} + 2N\frac{1}{\bar{f}(y)} \left| NP^tP \cov(f, \nabla H) \right|^2 
\end{align}

We can now use the covariance estimate of Proposition \ref{p_crucial_covariance_two_scale} to bound the second term on the right-hand side of this equation. 
\begin{align} \label{aleph4}
\left| NP^tP \cov(f, \nabla H) \right|^2 &= \underset{i = 1}{\stackrel{M}{\sum}} \hspace{1mm} \left| \cov_{\mu_{K,a,y_i} } \left(\int{f \underset{k \neq i} \bigotimes \mu_{K,a,y_k}}, \frac{1}{K} \sum_{j=(i-1)K+1}^{iK}  \psi'(x_j) \right)  \right|^2 \notag \\
&\leq CK\bar{f}(y)\int{\frac{|(id_X - P^tNP)\nabla f|^2}{f}\mu_{N,a,m}(dx|y)}.
\end{align}

Plugging (\ref{aleph1}), (\ref{aleph2}), (\ref{aleph3}) and (\ref{aleph4}) into (\ref{dec_ent}), we get
\begin{align}
\Ent_{\mu_{N,a,m}}(f) &\leq \frac{2}{\lambda}\int{\frac{|P^tNP\nabla f|^2}{f}\mu_{N,a,m}(dx)} \notag \\
&\hspace{1cm} + \left(\frac{\exp(cK)}{\rho} + \frac{2CK}{\lambda} \right)\int{\frac{|(id_X - P^tNP)\nabla f|^2}{f}\mu_{N,a,m}(dx)} \notag \\
&\lesssim (1 + K)\exp(cK) \int{\frac{|\nabla f|^2}{f}\mu_{N,a,m}(dx)}.
\end{align}
Since as long as $K$ is large enough for the coarse-grained Hamiltonian to be strictly convex, this bound holds, we just have to take $K$ large but fixed for Theorem \ref{lsi} to be proved.

We finish this section with the proof of Lemma \ref{covlem}, which is unchanged from [GOVW] Lemma 21.

\begin{proof}[Proof of Lemma \ref{covlem}]
By definition, we have

\begin{align}
\bar{f}(y) &= \int{f(x)\mu_{N,a,m}(dx|y)} \notag \\
&= \frac{1}{\int_{\{Px = 0\}}{\exp(-H(NP^ty + z))dz}}\int_{\{Px = 0\}}{f(NP^ty + z)\exp(-H(NP^ty + z))dz}. \notag
\end{align}

Thus, for any $\tilde{y} \in Y$, we have
\begin{align}
\nabla_Y \bar{f}(y) \cdot \tilde{y} &= N\int{\nabla f(x) \cdot P^t\tilde{y} \hspace{1mm} \mu(dx|y)} -N \int{f(x) \nabla H(x) \cdot P^t\tilde{y} \mu(dx|y)} \notag \\
&\hspace{3mm} - N\left(\int{f(x)\mu(dx|y)}\right)\left(\int{(-H(x)\cdot P^t\tilde{y})\mu(dx|y)} \right) \notag \\
&= N\left[ \int{P\nabla f(x)\mu(dx|y)} - \int{f(x) P\nabla H(x) \mu(dx|y)} \notag\right. \\
&\left. \hspace{3cm} + \left(\int{f(x)\mu(dx|y)}\right)\left( \int{P\nabla H(x) \mu(dx|y)} \right) \right] \cdot \tilde{y}, \notag 
\end{align}
which is what we were aiming for.
\end{proof}

\section{Proof of the hydrodynamic limit}~\label{s_hydro_limit}
This section is devoted to the proof of the hydrodynamic limit, namely the abstract quantitative estimate of Theorem~\ref{thm_hydro_bounds} and the concrete application to the Kawasaki dynamics stated in Theorem~\ref{thm_lim_hydro}. This section is organized in the following way:
\begin{itemize}
\item In Section~\ref{s_proof_hydro_bounds} we give the complete proof of Theorem \ref{thm_hydro_bounds}.
\item In Section~\ref{s_hydro_kawasaki} we give the complete proof of Theorem~\ref{thm_lim_hydro} up to two missing ingredients, namely Proposition~\ref{convergence_macro_hydro} and Proposition~\ref{lp_bounds_energy}.  Proposition~\ref{convergence_macro_hydro} contains the convergence of the deterministic macroscopic ODE given by~\eqref{macro_evol} to the nonlinear heat equation given by~\ref{hydro_equn} on the continuum. Proposition~\ref{lp_bounds_energy} contains polynomial bounds on the free energy.
\item In Section~\ref{s_aux_1_poly_bounds_energy}, we state and prove Proposition~\ref{lp_bounds_energy}.
\item In Section~\ref{s_aux_2_conv_free_energy}, we state and prove auxiliary results needed for Proposition~\ref{convergence_macro_hydro}.
\item And finally in Section~\ref{s_proof_of_conv_ODE_PDE}, we prove Proposition~\ref{convergence_macro_hydro}.  
\end{itemize}

\subsection{Proof of Theorem \ref{thm_hydro_bounds}}\label{s_proof_hydro_bounds}

The first element of the proof is the following lemma, which will allow to control fluctuations.  

\begin{lem} \label{pen_fluct}
There exists a constant $\gamma > 0$, which is independent of $N$ and $M$, such that for any $x \in X_{N,0}$, we have
\begin{equation} \label{pen_fluct1}
|(id_X - P^tNP)x|_X^2 \leq \frac{\gamma}{M^2}\langle Ax, x \rangle_X;
\end{equation}

\begin{equation} \label{pen_fluct2}
\langle A^{-1}(id_X - P^tNP)x, (id_X - P^tNP)x \rangle \leq \frac{\gamma}{M^2}|x|_X^2.
\end{equation}
\end{lem}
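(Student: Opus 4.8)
The plan is to work in the Fourier basis adapted to the discrete periodic Laplacian $A$, which simultaneously diagonalizes $A$ and the orthogonal projection $\mathrm{id}_X - P^tNP$ onto $\ker P$. Recall that $A$ is the discrete second-difference operator on $X_{N,0}$, so its eigenvalues are $\lambda_k = 2N^2(1-\cos(2\pi k/N)) = 4N^2\sin^2(\pi k/N)$ for $k = 1,\dots,N-1$, with the corresponding discrete exponential eigenvectors $e^{(k)}$. The block-averaging structure of $P$ means that $P^tNP$ keeps exactly the low modes $k$ that are multiples of $M$ (there are $M-1$ of them among $k=1,\dots,N-1$, together with the zero mode which is already quotiented out), and $\ker P$ is spanned by the remaining modes. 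Hence for $x \in X_{N,0}$ with Fourier coefficients $\hat{x}_k$, the fluctuation part $(\mathrm{id}_X - P^tNP)x$ has Fourier coefficients supported on $\{k : M \nmid k\}$.

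First I would establish \eqref{pen_fluct1}. Writing $x = \sum_k \hat{x}_k e^{(k)}$ and using Parseval, $|(\mathrm{id}_X - P^tNP)x|_X^2 = \sum_{M\nmid k} |\hat{x}_k|^2$ while $\langle Ax, x\rangle_X = \sum_k \lambda_k |\hat{x}_k|^2$. For any mode $k$ with $M \nmid k$, the distance from $k$ to the nearest multiple of $N$ (on the circle $\mathbb{Z}/N\mathbb{Z}$) is at least... well, the key point is a lower bound $\lambda_k \gtrsim (N/M)^2$ uniformly over such $k$: since $M \nmid k$ and $k \in \{1,\dots,N-1\}$, writing $k/N$ on the torus we have $\mathrm{dist}(k/N, \mathbb{Z}) \geq$ something, but more carefully what we need is that $\sin^2(\pi k/N)$ is bounded below. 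This is \emph{not} true for all $k$ with $M\nmid k$ — e.g. $k = M+1$ gives a tiny $\lambda_k$. So the naive mode-by-mode bound fails, and this is the main obstacle: one cannot get \eqref{pen_fluct1} purely from a uniform spectral gap on $\ker P$.

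The resolution, which I expect is the intended argument, is to compare with the continuous picture via the $H^{-1}$/averaging interpretation rather than a crude eigenvalue bound. Concretely: \eqref{pen_fluct1} says the fluctuation (the part of $x$ orthogonal to block-averages) is small in $\ell^2$ relative to its discrete $H^1$-type seminorm $\langle Ax,x\rangle = N^2\sum_i (x_{i+1}-x_i)^2$, with a gain of $M^{-2}$. This is a discrete Poincaré inequality \emph{on each block}: on a block of $K = N/M$ consecutive sites, the deviation of the $x_i$ from their block mean is controlled, in $\ell^2$, by $K^2/N^2 = 1/M^2$ times the sum of squared nearest-neighbour differences within (a neighbourhood of) the block. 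Summing the block-wise Poincaré inequalities and absorbing the cross-block terms gives \eqref{pen_fluct1}. For \eqref{pen_fluct2}, I would dualize: $\langle A^{-1}(\mathrm{id}-P^tNP)x, (\mathrm{id}-P^tNP)x\rangle = \sup_{z}\{2\langle (\mathrm{id}-P^tNP)x, z\rangle - \langle Az,z\rangle\}$, and since $(\mathrm{id}-P^tNP)$ is self-adjoint, replace $z$ by its fluctuation part (which only decreases $\langle Az,z\rangle$ by nonnegativity of $A$) to reduce to bounding $\langle (\mathrm{id}-P^tNP)x, w\rangle$ for $w \in \ker P$; then \eqref{pen_fluct1} applied to $w$ gives $\langle (\mathrm{id}-P^tNP)x,w\rangle \le |x|_X |w|_X \le |x|_X \frac{\sqrt\gamma}{M}\langle Aw,w\rangle^{1/2}$, and optimizing over the scale of $w$ yields \eqref{pen_fluct2}. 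Throughout, the constant $\gamma$ depends only on the geometry (it is essentially the Poincaré constant of a path graph, which is universal), so it is independent of $N$ and $M$ as claimed.
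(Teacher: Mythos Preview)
Your argument for \eqref{pen_fluct1} via the block-wise discrete Poincar\'e inequality is correct and matches the paper. The gap is in your derivation of \eqref{pen_fluct2}: the claim that replacing $z$ by its fluctuation part $Qz := (\mathrm{id}_X - P^tNP)z$ ``only decreases $\langle Az, z\rangle$ by nonnegativity of $A$'' is false, because $A$ does not commute with $Q$. For a concrete counterexample take $N=6$, $K=3$, $M=2$, $w = (1,-1,0,0,0,0) \in \ker P$, $v = (1,1,1,-1,-1,-1) \in (\ker P)^\perp$, and $z = w - \tfrac14 v$: one computes $\langle Az, z\rangle = 198 < 216 = \langle Aw, w\rangle = \langle AQz, Qz\rangle$. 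Restricting the supremum to $\ker P$ therefore gives only a \emph{lower} bound on $\langle A^{-1}Qx, Qx\rangle$, not the upper bound you need.

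The fix is immediate and stays within your dualization: do not project $z$. For any $z \in X_{N,0}$ one has $\langle Qx, z\rangle = \langle Qx, Qz\rangle \leq |x|_X\,|Qz|_X$, and now apply \eqref{pen_fluct1} \emph{to $z$ itself} to get $|Qz|_X \leq \tfrac{\sqrt\gamma}{M}\langle Az, z\rangle^{1/2}$; optimizing $2|x|_X\tfrac{\sqrt\gamma}{M}t - t^2$ over $t = \langle Az,z\rangle^{1/2}$ then yields \eqref{pen_fluct2}. The paper's proof is a compact variant of the same idea without the dual formulation: write $\langle A^{-1}Qx, Qx\rangle = \langle x, QA^{-1}Qx\rangle \le |x|_X\,|QA^{-1}Qx|_X$, apply \eqref{pen_fluct1} to the vector $A^{-1}Qx$ to obtain $|QA^{-1}Qx|_X^2 \leq \tfrac{\gamma}{M^2}\langle Qx, A^{-1}Qx\rangle$, and rearrange the resulting self-referential inequality. (As an aside, your opening Fourier remark is also off: the discrete Fourier basis does \emph{not} diagonalize $P^tNP$ --- the modes $e^{(k)}$ with $M \mid k$ lie in $\ker P$, not in its complement, and neither subspace is spanned by Fourier modes when $M,K>1$ --- but since you correctly abandon that route this does not affect the argument.)
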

Of course, these statements are not new and we copy the argument of (54) in~[GOVW].

\begin{proof}
Bound (\ref{pen_fluct1}) is a consequence of the discrete Poincar\'e inequality applied to the blocks: there exists a universal constant $\gamma$ such that for any $N \in \N$ and real numbers $x_1, .. x_N$ with $\sum x_i = 0$, we have
$$\underset{i = 1}{\stackrel{N}{\sum}} \hspace{1mm} x_i^2 \leq \gamma N^2 \underset{i = 1}{\stackrel{N}{\sum}} \hspace{1mm} (x_{i+1} - x_i)^2.$$

Estimate (\ref{pen_fluct2}) follows from (\ref{pen_fluct1}) and the Cauchy-Schwartz inequality : 

\begin{align}
\langle A^{-1}&(id_X - P^tNP)x, (id_X - P^tNP)x \rangle = \langle x, (id_X - P^tNP)A^{-1}(id_X - P^tNP)x \rangle \notag \\
&\leq |x| |(id_X - P^tNP)A^{-1}(id_X - P^tNP)x| \notag \\
&\leq |x| \left(\frac{\gamma}{M^2}\langle AA^{-1}(id_X - P^tNP)x, A^{-1}(id_X - P^tNP)x \rangle \right)^{1/2} \notag \\
&= |x| \left(\frac{\gamma}{M^2}\langle (id_X - P^tNP)x, A^{-1}(id_X - P^tNP)x \rangle \right)^{1/2}. \notag
\end{align}

\end{proof}

We will also need the following energy and moment estimates, which are those of Proposition 24 in [GOVW]. For the sake of completness we give the proof of Proposition~\ref{energy} at the end of the section.

\begin{prop} \label{energy}
If $f(t,x)$ and $\eta(t)$ satisfy the assumptions of Theorem \ref{thm_hydro_bounds}, then for any $T < +\infty$ we have

\begin{equation} \label{energy1}
\Ent_{\mu}(f(T,\cdot)) + \int_0^T{\left(\int_X{\frac{\langle A \nabla f, \nabla f\rangle}{f}d\mu}\right)dt} = \Ent_{\mu}(f_0);
\end{equation}

\begin{equation} \label{energy2}
\bar{H}(\eta(T)) + \int_0^T{\left\langle \frac{d\eta}{dt}, \bar{A}^{-1}\frac{d\eta}{dt} \right\rangle dt} = \bar{H}(\eta(0));
\end{equation}

\begin{equation} \label{energy3}
\left(\int{|x|^2f(t,x)\mu(dx)} \right)^{1/2} \leq \left( \frac{2}{\rho}\Ent_{\mu}(f_0)\right)^{1/2} + \left(\int{|x|^2\mu(dx)} \right)^{1/2}
\end{equation}
\end{prop}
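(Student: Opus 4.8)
The plan is to establish the three relations by standard arguments: the first two are energy (free-energy) dissipation identities for the respective gradient flows, and the third is a moment bound that follows from the LSI via transport–entropy considerations. I would treat them in order.

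For \eqref{energy1}: differentiate $t \mapsto \Ent_\mu(f(t,\cdot))$ in time. Using the weak formulation of the evolution~\eqref{micro_evolution} with test function $\xi = \log f(t,\cdot)$ (after the usual regularization/truncation to make this legitimate, which I would not spell out in detail), one gets
\begin{equation*}
\frac{d}{dt}\Ent_\mu(f(t,\cdot)) = \int_X \log f \,\partial_t(f\mu) = -\int_X \nabla \log f \cdot A \nabla f \,d\mu = -\int_X \frac{\langle A\nabla f, \nabla f\rangle}{f}\,d\mu,
\end{equation*}
and integrating from $0$ to $T$ gives the identity (note the entropy is nonincreasing, so all terms are finite; the only subtlety is justifying $\xi = \log f$ as a test function, which one handles by approximation exactly as in [GOVW]).

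For \eqref{energy2}: this is the analogous dissipation identity for the finite-dimensional gradient flow~\eqref{macro_evol}, $\dot\eta = -\bar A \nabla \bar H(\eta)$, with respect to the metric induced by $\bar A^{-1}$. Here it is an elementary chain-rule computation: $\frac{d}{dt}\bar H(\eta(t)) = \langle \nabla \bar H(\eta), \dot\eta\rangle = -\langle \bar A^{-1}\dot\eta, \dot\eta\rangle$, using $\nabla\bar H(\eta) = -\bar A^{-1}\dot\eta$; integrating in time yields the claim. This step is routine and requires only that $\eta$ is a classical solution, which is guaranteed since $\bar H$ is smooth and (for $K \geq K_0$) uniformly convex by Proposition~\ref{p_convexity_coarse_grained_Hamiltonian}, so the ODE has a global solution.

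For \eqref{energy3}: I would bound $\left(\int |x|^2 f(t,x)\,\mu(dx)\right)^{1/2}$, i.e.\ the $L^2(f\mu)$-norm of the identity map, by the triangle inequality in $L^2$: it is at most the $W_2$-distance between $f(t,\cdot)\mu$ and $\mu$ plus $\left(\int |x|^2\,\mu(dx)\right)^{1/2}$. Since $\mu$ satisfies $\mathrm{LSI}(\rho)$, it satisfies the Talagrand transport inequality $W_2(g\mu,\mu)^2 \leq \frac{2}{\rho}\Ent_\mu(g)$ (Otto–Villani), and the free energy is nonincreasing along the flow by \eqref{energy1}, so $\Ent_\mu(f(t,\cdot)) \leq \Ent_\mu(f_0)$. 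Combining, $W_2(f(t,\cdot)\mu,\mu) \leq \left(\frac{2}{\rho}\Ent_\mu(f_0)\right)^{1/2}$, which gives the stated bound uniformly in $t$. The main obstacle here is essentially bookkeeping — ensuring the transport inequality is available and that one is allowed to pass from $\mathrm{LSI}$ to Talagrand (which is standard) — and making rigorous the integrability needed for the $L^2$ triangle inequality, which follows once one knows $\Ent_\mu(f_0) < \infty$ and $\mu$ has finite second moment (the latter because $\psi$ is superquadratic). Overall the genuinely delicate point across the whole proposition is the approximation argument justifying $\log f$ as a test function in \eqref{energy1}; everything else is a direct computation or an invocation of known inequalities, and since the statement is quoted verbatim from [GOVW, Proposition 24], I would keep the exposition brief and refer there for the technical details of the regularization.
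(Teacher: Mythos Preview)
Your proposal is correct and follows essentially the same approach as the paper: the paper proves \eqref{energy1} and \eqref{energy2} by the same direct differentiation-and-integration you describe, and proves \eqref{energy3} via a lemma (Lemma~\ref{lem42}) that derives the moment bound from the Otto--Villani/Talagrand inequality and the monotonicity of entropy, exactly as you outline. The only cosmetic difference is that the paper writes out the coupling computation behind the ``triangle inequality'' step explicitly rather than invoking it by name.
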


We will now prove estimate (\ref{estimate_micro_macro}) in three steps: first we will compute the time derivative of $\Theta$, 
then we will bound it, and finally integrate it in time to obtain our result. 
The calculations are exactly the same as in [GOVW], except for the use of the new covariance estimate of Proposition \ref{p_crucial_covariance_two_scale}

\textbf{Step 1} : Our aim is to obtain the exact formula 
\begin{align} \label{step1}
\frac{d}{dt}\Theta(t) &= -\int{(\nabla_Y \bar{H}(y) - \nabla_Y \bar{H}(\eta))\cdot(y - \eta)\bar{f}(y)\bar{\mu}(dy)} + \frac{M-1}{N} \notag \\
&- \int{(y - \eta)\cdot P\cov_{\mu(dx|y)}(f, \nabla H) \bar{\mu}(dy)} \notag \\
&- \int{ \frac{1}{N}(id - P^tNP)x \cdot \nabla f \mu(dx)} - \int{\frac{d\eta}{dt}\cdot PA^{-1}(id - P^tNP)x \hspace{1mm} f \mu(dx)}. 
\end{align}

Using the definition of the stochastic dynamic, of the coarse-grained evolution (\ref{macro_evol}) and the splitting $x = P^tNPx + (id_X - P^tNP)x$, we have

\begin{align}
\frac{d}{dt}\Theta(t) &= \frac{d}{dt}\frac{1}{2N}\int{\langle A^{-1}(x - NP^t\eta(t)), (x - NP^t\eta(t)) \rangle f(t,x)\mu_{N,a,m}(dx)} \notag \\
&\stackrel{(\ref{micro_evolution})}{=} -\int{\frac{1}{N}A^{-1}( x - NP^t\eta(t)) \cdot A\nabla f(t,x)\rangle \mu(dx)} - \int{P^t\frac{d\eta}{dt} \cdot A^{-1}(x - NP^t\eta) f\mu(dx)} \notag \\
&= -\int{P^t(Px - \eta)\cdot \nabla f \mu(dx)} - \int{PA^{-1}NP^t\frac{d\eta}{dt} \cdot (Px - \eta) f\mu(dx)} \notag \\
& \hspace{2mm} - \int{ \frac{1}{N}(id - P^tNP)x \cdot \nabla f \mu(dx)} - \int{\frac{d\eta}{dt}\cdot PA^{-1}(id - P^tNP)x \hspace{1mm} f \mu(dx)} \notag \\
&= -\int{(Px - \eta)\cdot P\nabla f \mu(dx)} - \int{\bar{A}^{-1}\frac{d\eta}{dt} \cdot (Px - \eta) f\mu(dx)} \notag \\
& \hspace{2mm} - \int{ \frac{1}{N}(id - P^tNP)x \cdot \nabla f \mu(dx)} - \int{\frac{d\eta}{dt}\cdot PA^{-1}(id - P^tNP)x \hspace{1mm} f \mu(dx)} \notag \\
&= -\int{(y - \eta)\cdot P\int{\nabla f \mu(dx|y)}\bar{\mu}(dy)} + \int{\nabla_Y \bar{H}(\eta) \cdot (y - \eta) \bar{f}(y)\bar{\mu}(dy)} \notag \\
& \hspace{2mm} - \int{ \frac{1}{N}(id - P^tNP)x \cdot \nabla f \mu(dx)} - \int{\frac{d\eta}{dt}\cdot PA^{-1}(id - P^tNP)x \hspace{1mm} f \mu(dx)}.
\end{align}

We keep the last three terms unchanged, and transform the first one according to Lemma \ref{covlem} 
\begin{align}
-\int&{(y - \eta)\cdot P\int{\nabla f \mu(dx|y)}\bar{\mu}(dy)} \notag \\
&= -\frac{1}{N}\int{(y - \eta)\cdot \nabla_Y \bar{f}(y) \bar{\mu}(dy)} - \int{(y - \eta)\cdot P\cov_{\mu(dx|y)}(f, \nabla H) \bar{\mu}(dy)}.
\end{align}

Using integration by parts, the first term on the right-hand side of this last equation becomes

\begin{align}
-\frac{1}{N}&\int{(y - \eta)\cdot \nabla_Y \bar{f}(y) \bar{\mu}(dy)} \notag \\
&= \frac{1}{N}\int{(\nabla_Y \cdot y)\bar{f}(y)\bar{\mu}(dy)} - \int{(y - \eta) \cdot \nabla_Y \bar{H}(y) \bar{\mu}(dy)} \notag \\
&= \frac{M-1}{N}  - \int{(y - \eta) \cdot \nabla_Y \bar{H}(y) \bar{\mu}(dy)}.
\end{align}

This final estimate allows us to conclude step 1.

\textbf{Step 2} : \textit{Upper bound on the time-derivative} Our aim here is to get the upper bound
\begin{align} \label{step2}
\frac{d}{dt}\Theta(t) & + \frac{\lambda}{2}\int{|y - \eta|_Y^2\bar{f}\bar{\mu}(dy)} \notag \\
&\leq \frac{M-1}{N}  + \frac{\gamma C_0K}{2\lambda NM^2}\int{\frac{\langle A\nabla f, \nabla f \rangle_X}{f}\mu(dx)} \notag \\
&+ \left( \frac{\gamma}{M^2}\int{\frac{1}{Nf}\langle A\nabla f, \nabla f \rangle \mu(dx)} \cdot \int{\frac{1}{N}|x|_X^2f(x)\mu(dx)} \right)^{1/2} \notag \\
&+ \left( \frac{d\eta}{dt} \cdot \bar{A}^{-1} \frac{d\eta}{dt} \right)^{1/2} \left(\int{\frac{\gamma}{NM^2}|x|_X^2 f(x)\mu(dx)} \right)^{1/2}
\end{align}

We will now individually bound each term in (\ref{step1}). The first term can be bounded by using the assumption that $K$ is large enough such that $\bar{H}$ is $\lambda-$ uniformly convex, which yields
\begin{equation}
-\int{(\nabla_Y \bar{H}(y) - \nabla_Y \bar{H}(\eta))\cdot(y - \eta)\bar{f}(y)\bar{\mu}(dy)} \leq -\lambda\int{|y - \eta|_Y^2 \bar{f}\bar{\mu}(dy)}.
\end{equation}

The third term can be treated in the same way as was done for estimate (\ref{aleph4}) in the proof of the LSI, and by using Lemma \ref{pen_fluct}: 
\begin{align}
\int&{(y - \eta)\cdot P\cov_{\mu(dx|y)}(f, \nabla H) \bar{\mu}(dy)} \notag \\
&\leq \left( \int{|y - \eta|_Y^2 \bar{f}(y)\bar{\mu}(dy)} \right)^{1/2} \left( \int{\frac{|P\cov_{\mu(dx|y)}(f, \nabla H)|^2}{\bar{f}(y)}\bar{\mu}(dy)} \right)^{1/2} \notag \\
&\leq \frac{\lambda}{2}\int{|y - \eta|_Y^2 \bar{f}(y)\bar{\mu}(dy)} + \frac{1}{2\lambda N} \int{\frac{|P^tNP\cov_{\mu(dx|y)}(f, \nabla H)|^2}{\bar{f}(y)}\bar{\mu}(dy)} \notag \\
&\leq \frac{\lambda}{2}\int{|y - \eta|_Y^2 \bar{f}(y)\bar{\mu}(dy)} + \frac{C_0K}{2\lambda N}\int{\frac{|(id_X - P^tNP)\nabla f|^2}{f} \mu(dx)} \notag \\
&\leq \frac{\lambda}{2}\int{|y - \eta|_Y^2 \bar{f}(y)\bar{\mu}(dy)} + \frac{\gamma C_0K}{2\lambda NM^2}\int{\frac{\langle A\nabla f, \nabla f \rangle_X}{f}\mu(dx)}.
\end{align}

The fourth term can be controlled using (\ref{pen_fluct2}) and the Cauchy-Schwartz inequality: 

\begin{align}
\left|\right.\int&{\frac{1}{N}(id - P^tNP)x \cdot \nabla f \mu(dx)}\left.\right| \notag \\
&\leq \left(\frac{1}{N}\int{\langle A^{-1}(id - P^tNP)x, (id - P^tNP)x \rangle f(x)\mu(dx)} \cdot \int{\frac{1}{Nf}\langle A\nabla f, \nabla f \rangle \mu(dx)} \right)^{1/2} \notag \\
&\leq \left( \frac{\gamma}{M^2}\int{\frac{1}{Nf}\langle A\nabla f, \nabla f \rangle \mu(dx)} \cdot \int{\frac{1}{N}|x|_X^2f(x)\mu(dx)} \right)^{1/2}
\end{align}

Similarly, we have for the fifth term
\begin{align}
\left|\right. \int&{\frac{d\eta}{dt}\cdot PA^{-1}(id - P^tNP)x \hspace{1mm} f \mu(dx)} \left.\right| \notag \\
&\leq \left(\int{P^t\frac{d\eta}{dt} \cdot A^{-1}NP^t\frac{d\eta}{dt} f{x}\mu(dx)} \right)^{1/2} \notag \\
&\hspace{1cm} \cdot \left(\int{\frac{1}{N}\langle A^{-1}(id - P^tNP)x, (id - P^tNP)x \rangle f(x)\mu(dx)} \right)^{1/2} \notag \\
&\leq \left( \frac{d\eta}{dt} \cdot \bar{A}^{-1} \frac{d\eta}{dt} \right)^{1/2} \left(\int{\frac{\gamma}{NM^2}|x|_X^2 f(x)\mu(dx)} \right)^{1/2}
\end{align}

Combining these estimates gives the desired upper bound. 

\textbf{Step 3} : \textit{Time integration and conclusion} Integrating (\ref{step2}) with respect to time yields
\begin{align} \label{bnd_integ}
\max & \left\{ \underset{0 \leq t \leq T}{\sup} \hspace{1mm} \Theta(t), \hspace{2mm} \frac{\lambda}{2}\int_0^T{\int_Y{|y - \eta(t)|_Y^2 \bar{f}\bar{\mu}(dy)}  \hspace{1mm}dt} \right\} \notag \\
&\leq \Theta(0) + T\frac{M-1}{N} + \frac{\gamma C_0K}{2\lambda NM^2}\int_0^T{\int{\frac{\langle A\nabla f, \nabla f \rangle_X}{f}\mu(dx)} \hspace{1mm}dt} \notag \\
&\hspace{3mm} + \frac{\gamma^{1/2}}{M}\int_0^T{ \left(\int{\frac{1}{N}|x|_X^2f(x)\mu(dx)} \right)^{1/2}} \notag \\
&\hspace{5mm} \times \left( \left( \int{\frac{1}{Nf}\langle A\nabla f, \nabla f \rangle \mu(dx)}\right)^{1/2} + \left(\frac{d\eta}{dt} \cdot \bar{A}^{-1} \frac{d\eta}{dt} \right)^{1/2} \right) \hspace{1mm} dt.
\end{align}

According to Proposition \ref{energy}, since the entropy is positive, and the initial entropy satisfies the bound of assumption (i), we have

\begin{equation} \label{bnd_ent_prod}
\int_0^T{\int{\frac{\langle A\nabla f, \nabla f \rangle_X}{f}\mu(dx)} \hspace{1mm}dt} \leq C_1N.
\end{equation}

The bounds of Proposition \ref{energy} also yield the bound

\begin{align} \label{y_a_bnd}
\int_0^T&{ \left(\int{\frac{1}{N}|x|_X^2f(x)\mu(dx)} \right)^{1/2}} \times \left( \left( \int{\frac{1}{Nf}\langle A\nabla f, \nabla f \rangle \mu(dx)}\right)^{1/2} + \left(\frac{d\eta}{dt} \cdot \bar{A}^{-1} \frac{d\eta}{dt} \right)^{1/2} \right) \hspace{1mm} dt \notag \\
&\leq \left(\int_0^T{ \int{\frac{1}{N}|x|_X^2f(t,x)\mu(dx)}dt} \right)^{1/2} \notag \\
& \hspace{5mm} \times \left( \left( \int_0^T{\int{\frac{1}{Nf}\langle A\nabla f, \nabla f \rangle \mu(dx)}dt}\right)^{1/2} + \left(\int_0^T{\frac{d\eta}{dt} \cdot \bar{A}^{-1} \frac{d\eta}{dt} dt} \right)^{1/2} \right) \notag \\
&\leq \left(2\int_0^T {\left(\int{\frac{1}{N}|x|_X^2\mu(dx)} + \frac{2}{N\rho}\Ent_{\mu}(f_0) \right)dt} \right)^{1/2} \notag \\
& \hspace{5mm} \times \left( \left( \frac{1}{N}\Ent_{\mu}(f_0)\right)^{1/2} + \left(\bar{H}(\eta_0) - \bar{H}(\eta(T)) \right)^{1/2} \right) \notag \\
&\leq \left( 2\alpha + \frac{2C_1}{\rho} \right)^{1/2} \left( C_1 + C_2 + \beta \right)^{1/2}
\end{align}

Plugging (\ref{bnd_ent_prod}) and (\ref{y_a_bnd}) into (\ref{bnd_integ}) ends the proof.

We finish this section with the proof of Proposition \ref{energy}. For the proof we will use the following lemma.  It was proven in [GOVW] using a semigroup argument. Here we give a proof based on Talagrand's inequality.

\begin{lem} \label{lem42}
Assume $\mu$ is a probability measure on an Euclidean space $X$ that satisfies LSI($\rho$) for some $\rho > 0$. Then for any probability density $f$ with respect to $\mu$, we have 
$$\left(\int{|x|^2f(x)\mu(dx)} \right)^{1/2} \leq \left(\int{|x|^2\mu(dx)} \right)^{1/2} + \left( \frac{2}{\rho} \Ent_{\mu}(f) \right)^{1/2}.$$
\end{lem}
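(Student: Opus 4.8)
The plan is to use the well-known chain of implications LSI($\rho$) $\Rightarrow$ Talagrand's transport-entropy inequality $T_2(\rho)$, which was established by Otto and Villani. Concretely, if $\mu$ satisfies LSI($\rho$), then for any probability measure $\nu$ absolutely continuous with respect to $\mu$ one has the quadratic transport bound
\begin{equation*}
W_2(\nu,\mu)^2 \leq \frac{2}{\rho}\,\Ent_\mu\!\left(\frac{d\nu}{d\mu}\right),
\end{equation*}
where $W_2$ denotes the Wasserstein distance of order $2$ on the Euclidean space $X$ with respect to its $\ell^2$ norm $|\cdot|_X$. I would first invoke this fact (it is a standard companion to the three criteria recalled in Appendix B, and can be cited from the same references, e.g. [L] or [Go]), applied to $\nu := f\mu$, so that $d\nu/d\mu = f$.

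The second step is purely a property of $W_2$: since $W_2$ is a genuine metric on the space of probability measures with finite second moment, the triangle inequality gives
\begin{equation*}
\left(\int |x|^2\, f\mu(dx)\right)^{1/2} = W_2(f\mu,\delta_0) \leq W_2(f\mu,\mu) + W_2(\mu,\delta_0) = W_2(f\mu,\mu) + \left(\int |x|^2\,\mu(dx)\right)^{1/2},
\end{equation*}
using that $W_2(\rho,\delta_0)^2 = \int |x|^2\,\rho(dx)$ for any probability measure $\rho$. Combining this with the Talagrand inequality from the first step yields
\begin{equation*}
\left(\int |x|^2\, f\mu(dx)\right)^{1/2} \leq \left(\int |x|^2\,\mu(dx)\right)^{1/2} + \left(\frac{2}{\rho}\,\Ent_\mu(f)\right)^{1/2},
\end{equation*}
which is exactly the claimed bound.

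The only genuine subtlety is bookkeeping about finiteness: one should note that if $\Ent_\mu(f) = +\infty$ there is nothing to prove, and if it is finite then $f\mu$ has finite second moment (because $\mu$ itself does, by the strict convexity of the potential, and entropy controls the difference of second moments via the transport bound), so all the quantities above are well defined and the triangle inequality applies. I do not expect a real obstacle here — the main "work" is simply citing $T_2$ as a consequence of LSI and recognizing $\left(\int|x|^2\rho\right)^{1/2}$ as $W_2(\rho,\delta_0)$; everything else is the triangle inequality.
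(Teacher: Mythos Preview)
Your proof is correct and follows the same approach as the paper: both invoke the Otto--Villani theorem (LSI implies Talagrand's $T_2$ inequality) and then relate second moments through the Wasserstein distance. The only cosmetic difference is that you use the triangle inequality $W_2(f\mu,\delta_0)\leq W_2(f\mu,\mu)+W_2(\mu,\delta_0)$ directly, whereas the paper expands $\int|x-y|^2\pi(dx,dy)$ for a coupling $\pi$ and applies Cauchy--Schwarz to arrive at the equivalent reverse triangle inequality; your packaging is slightly cleaner but the content is identical.
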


\begin{proof}
Since $\mu$ satisfies LSI($\rho$), it also satisfies Talagrand's transport-entropy inequality
$$W_2^2(\mu, \nu) \leq \frac{2}{\rho} \Ent_{\mu}(\nu)$$
for any probability measure $\nu$, where $W_2$ is the Wasserstein distance
$$W_2^2(\mu, \nu) = \underset{\pi \in \Pi}{\inf} \hspace{1mm} \int{|x - y|^2 \pi(dx, dy)},$$
where $\Pi$ is the set of couplings of $\mu$ and $\nu$, that is the set of all probability measure $\pi$ on $X^2$ such that its first marginal is $\mu$ and its second marginal is $\nu$. Proofs of the fact that the LSI implies this transport-entropy inequality (a result which is known as the Otto-Villani theorem) can be found in [OV] and [Go].

Let $\pi$ be a coupling of $\mu$ and $f\mu$. We have, by the Cauchy-Schwartz inequality
$$\int{\langle x, y \rangle \pi(dx, dy)} \geq -\left(\int{|x|^2f(x)\mu(dx)} \right)^{1/2}\left(\int{|x|^2\mu(dx)} \right)^{1/2}$$
so that
\begin{align}
\int & |x - y|^2 \pi(dx, dy) = \int{|x|^2f(x)\mu(dx)} + \int{|x|^2\mu(dx)} - 2 \int{\langle x, y \rangle \pi(dx, dy)} \notag \\
&\geq \int{|x|^2f(x)\mu(dx)} + \int{|x|^2\mu(dx)} - 2 \left(\int{|x|^2f(x)\mu(dx)} \right)^{1/2}\left(\int{|x|^2\mu(dx)} \right)^{1/2} \notag \\
&= \left(\left(\int{|x|^2f(x)\mu(dx)} \right)^{1/2}-\left(\int{|x|^2\mu(dx)} \right)^{1/2} \right)^2. \notag
\end{align}
Taking the infimum over $\pi \in \Pi$ and applying the transport-entropy inequality above, we get
$$\left(\int{|x|^2f(x)\mu(dx)} \right)^{1/2}-\left(\int{|x|^2\mu(dx)}\right)^{1/2} \leq \left( \frac{2}{\rho} \Ent_{\mu}(f) \right)^{1/2}$$
which, after rearranging the terms, is the desired inequality.
\end{proof}

\begin{proof} [Proof of Proposition \ref{energy}]
To get (\ref{energy1}), we use the definition of the microscopic dynamics to get
$$\frac{d}{dt}\int{f(t,x) \log f(t,x) \mu(dx)} = -\int{\frac{\langle A \nabla f, \nabla f \rangle}{f} \mu}$$
and integrating yields 
$$\int_0^T{\int{\frac{\langle A \nabla f, \nabla f \rangle}{f} \mu} dt} = \Ent_{\mu}(f_0) - \Ent_{\mu}(f(T,\cdot))$$
which is what we want. 

In the same way, to get (\ref{energy2}), we use (\ref{macro_evol}) to get 
$$\frac{d}{dt}\bar{H}(\eta(t)) = -\langle \bar{A}\nabla \bar{H}(\eta(t)), \nabla \bar{H}(\eta(t)) \rangle_Y$$
and integrating this yields the desired result.

Finally, we get (\ref{energy3}) simply by applying Lemma \ref{lem42} and the fact that the relative entropy is decrasing, which means that by~(\ref{energy1}) it holds
\begin{align*}
 Ent_\mu (f(t)) \leq  Ent_\mu (f_0) .
\end{align*}
.
\end{proof}

\subsection{Proof of Theorem \ref{thm_lim_hydro}}\label{s_hydro_kawasaki}

We shall prove Theorem \ref{thm_lim_hydro} in two steps: first we shall use Theorem~\ref{thm_hydro_bounds} to show that our data is asymptotically close, 
in the sense of the $H^{-1}$ norm, to a sequence of deterministic macroscopic vectors given by (\ref{macro_evol}), 
and then we will show that the step functions associated to these vectors converge to the solution of the hydrodynamic equation (\ref{hydro_equn}).

To apply Theorem \ref{thm_hydro_bounds}, we consider a sequence $\{M_{\ell}, N_{\ell} \}_{\ell = 1}^{\infty}$ of integers with
\begin{equation}
M_{\ell} \uparrow +\infty, \hspace{1cm} N_{\ell} \uparrow +\infty, \hspace{1cm} K_{\ell} := \frac{N_{\ell}}{M_{\ell}} \uparrow +\infty.
\end{equation}

To simplify notations, we shall often not explicitly write the dependence of our various objects on $\ell$, and just write $N$, $M$ and $K$ for $N_{\ell}$, $M_{\ell}$ and $K_{\ell}$.

Let $\bar{\eta}_0^{\ell} \in \bar{Y}_M$ be a step function approximation of $\zeta_0$ with 
\begin{equation}
\underset{\ell \uparrow \infty}{\lim} ||\bar{\eta}_0^{\ell} - \zeta_0 ||_{L^2} \hspace{1mm} = 0.
\end{equation}
Since by assumption $\zeta_0$ lies in $L^p(\T)$, we can take a sequence such that
\begin{equation} \label{bound_eta}
\sup ||\bar{\eta}_0^{\ell}||_{L^p} \leq C.
\end{equation}

Let $\eta_0^{\ell}$ be the vector associated to $\bar{\eta}_0^{\ell}$, and consider the solutions $\eta^{\ell}$ of 

\begin{equation} \label{eqn_macro_seq}
\frac{d\eta^{\ell}}{dt} = -A\nabla_Y \bar{H}_{N,K,a}(\eta^{\ell}), \hspace{1cm} \eta^{\ell}(0) = \eta_0^{\ell}.
\end{equation}

Then we have the following result, which will be the key to pass from Theorem \ref{thm_hydro_bounds} to Theorem \ref{thm_lim_hydro}.

\begin{prop} \label{convergence_macro_hydro}
Under the notations above, and for almost every realization of the family of random variables $(a_q)$, the step functions $\bar{\eta}^{\ell}$ converge strongly in $L^{\infty}(H^{-1})$ to the unique weak solution of 
$$\frac{\partial \zeta}{\partial t} = \frac{\partial^2}{\partial \theta^2}\tilde{\varphi}(\zeta), \hspace{1cm} \zeta(0,\cdot) = \zeta_0.$$
\end{prop}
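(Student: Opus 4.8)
The plan is to view the ODE~\eqref{eqn_macro_seq} (equivalently the gradient flow~\eqref{macro_evol}) as the $H^{-1}$-gradient flow of the coarse-grained free energy $\bar H_{N,K,a}=\frac1M\sum_i\psi_{K,a_{(i-1)K+1},\dots,a_{iK}}(\eta_i)$, the relevant metric being induced by $\bar A^{-1}=PA^{-1}NP^t$ (a discrete $H^{-1}$ inner product, since $A$ is, up to the factor $N^2$, a discrete Laplacian), and to pass to the continuum along the classical route \emph{a priori estimates $\to$ compactness $\to$ identification of the limit $\to$ uniqueness}. First I would collect the uniform bounds. The energy identity~\eqref{energy2}, the a priori lower bound on $\bar H$ (cf.\ Proposition~\ref{lp_bounds_energy}), and the bound~\eqref{bound_eta} on the initial profiles give a constant $C$ independent of $\ell$ with
\[
  \sup_{0\le t\le T}\bar H_{N,K,a}(\eta^\ell(t))\;+\;\int_0^T\Bigl\langle\tfrac{d\eta^\ell}{dt},\,\bar A^{-1}\tfrac{d\eta^\ell}{dt}\Bigr\rangle_Y\,dt\;\le\;C.
\]
Read in the step-function picture (one checks the discrete dissipation metric is consistent with the continuum $H^{-1}$ norm on step functions), the dissipation term controls $\|\partial_t\bar\eta^\ell\|_{L^2_t(H^{-1}_\theta)}$, while Proposition~\ref{lp_bounds_energy} converts control of $\bar H$ into the uniform bounds $\sup_\ell\|\bar\eta^\ell\|_{L^\infty_t(L^p_\theta)}<\infty$ and a uniform $L^\infty_t(L^q_\theta)$ bound on the discrete fluxes $\overline{\psi'_{K,i}(\eta^\ell)}$, using the polynomial growth~\eqref{poly_growth}.

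Next, compactness. Since $\partial_t\bar\eta^\ell$ is bounded in $L^2_t(H^{-1}_\theta)$, the curves $t\mapsto\bar\eta^\ell(t)$ are uniformly $\tfrac12$-Hölder into $H^{-1}_\theta$; combined with the uniform $L^\infty_t(L^p_\theta)$ bound and the compact embedding $L^p_\theta\hookrightarrow\hookrightarrow H^{-1}_\theta$ (valid since $p\ge2$), Arzelà--Ascoli yields a subsequence with $\bar\eta^\ell\to\zeta$ strongly in $C_t(H^{-1}_\theta)$, hence in $L^\infty_t(H^{-1}_\theta)$; after a further extraction $\bar\eta^\ell\rightharpoonup\zeta$ weakly-$*$ in $L^\infty_t(L^p_\theta)$ and $\overline{\psi'_{K,i}(\eta^\ell)}\rightharpoonup\chi$ weakly-$*$ in $L^\infty_t(L^q_\theta)$ for some $\chi$.

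The heart of the matter is identifying $\chi=\tilde\varphi'(\zeta)$. Writing~\eqref{eqn_macro_seq} weakly against step-function test fields, $\tfrac{d}{dt}\langle\xi,\bar\eta^\ell\rangle_{H^{-1}}=-\int_\T\xi\,\overline{\psi'_{K,i}(\eta^\ell)}\,d\theta$, and letting $\ell\to\infty$ gives $\langle\xi,\partial_t\zeta\rangle_{H^{-1}}=-\int_\T\xi\,\chi\,d\theta$ for all $\xi$. To recognize $\chi$, two ingredients are needed. First, for almost every realization of $(a_q)$, the coarse-grained single-site potentials $\psi_{K,a_{(i-1)K+1},\dots,a_{iK}}$ and their derivatives converge, uniformly on compacts, to $\tilde\varphi$ and $\tilde\varphi'$ as $K\to\infty$: by the generalized local Cram\'er theorem of Appendix~A the coarse-grained potential is uniformly close to the Legendre transform of $\sigma\mapsto\frac1K\sum_j\log\int\exp((\sigma-a_{(i-1)K+j})x-\psi(x))\,dx$, and the strong law of large numbers for the i.i.d.\ coefficients replaces this empirical average by its expectation, producing exactly the $\mathbb{E}^a[\cdot]$ of~\eqref{average_hydro_potential} (and, via~\eqref{e_deriv_psi_K}, the corresponding statement for the derivatives); the large-value set of $\bar\eta^\ell$ is handled using the uniform $L^q$ flux bound. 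Second, a Minty-type monotonicity argument: since $\tilde\varphi$ is convex, $\tilde\varphi'$ is nondecreasing, so for every test profile $w$ the quantity $\iint\bigl(\overline{\psi'_{K,i}(\eta^\ell)}-\tilde\varphi'(w)\bigr)\bigl(\bar\eta^\ell-w\bigr)$ is nonnegative up to an error vanishing as $\ell\to\infty$ (the error coming from the discrepancy $\psi_{K,i}'-\tilde\varphi'$); passing to the limit, using the weak form and the strong $L^\infty_t(H^{-1}_\theta)$ convergence to compute $\limsup\iint\overline{\psi'_{K,i}(\eta^\ell)}\,\bar\eta^\ell$, the weak convergence of the fluxes, and the first ingredient, then letting $w\to\zeta$, forces $\chi=\tilde\varphi'(\zeta)$. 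Membership of $\zeta$ in the regularity class of the definition of weak solution is read off from the bounds of the first two steps.

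Finally, uniqueness gives convergence of the whole sequence: if $\zeta_1,\zeta_2$ are two weak solutions of~\eqref{hydro_equn} with the same initial datum, then $\tfrac12\tfrac{d}{dt}\|\zeta_1-\zeta_2\|_{H^{-1}}^2=-\int_\T\bigl(\tilde\varphi'(\zeta_1)-\tilde\varphi'(\zeta_2)\bigr)(\zeta_1-\zeta_2)\,d\theta\le0$ by monotonicity of $\tilde\varphi'$, hence $\zeta_1\equiv\zeta_2$, and since every subsequence of $(\bar\eta^\ell)$ has a further subsequence converging to this unique $\zeta$, the full sequence converges. I expect the main obstacle to be the identification step: the Minty trick is routine once the strong $H^{-1}$ compactness is available, but making the convergence $\psi_{K,i}\to\tilde\varphi$ rigorous is delicate, since the local Cram\'er theorem must be uniform in the inhomogeneous coefficients and, because the chemical potential is random, the whole statement has to hold for almost every realization — which is where the law of large numbers (and the boundedness of the $a_i$) enters; and the uniform-in-$\ell$ control of the fluxes $\overline{\psi'_{K,i}(\eta^\ell)}$ in $L^q$, needed both for compactness and to legitimize the monotonicity argument on the large-value set, is precisely the content of Proposition~\ref{lp_bounds_energy} and relies on the polynomial growth~\eqref{poly_growth}.
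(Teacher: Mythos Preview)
Your proposal is correct and follows a route that is close in spirit to the paper's but differs in the identification step. Both arguments start from the same a~priori bounds (energy identity~\eqref{energy2} combined with the polynomial bounds of Proposition~\ref{lp_bounds_energy}, giving $\bar\eta^\ell$ bounded in $L^\infty_t(L^p_\theta)$ and $\partial_t\bar\eta^\ell$ bounded in $L^2_t(H^{-1}_\theta)$), both end with the same uniqueness argument (monotonicity of $\tilde\varphi'$ applied to $\tfrac{d}{dt}\|\zeta_1-\zeta_2\|_{H^{-1}}^2$), and both rely on the almost-sure convergence of the coarse-grained potentials to $\tilde\varphi$ via the local Cram\'er theorem and the law of large numbers for the random field.

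Where you diverge is in passing to the limit. The paper recasts both the discrete ODE and the limiting PDE as \emph{variational inequalities at the level of the potential} (its Lemma~\ref{lem_diff_inequality_convex_potential}: $\eta$ solves~\eqref{eqn_macro_seq} iff $\int\bar H(\eta)\beta\le\int\bar H(\eta+\xi)\beta-\int\langle\xi,\bar A^{-1}\eta\rangle\dot\beta$ for all $\xi,\beta$), and then shows one can pass to the limit in each of the three terms separately (its Lemma~\ref{lem_convergence}), using only weak-$*$ $L^\infty(L^p)$ convergence together with the pointwise convergence $\psi_{K,i}\to\tilde\varphi$ and $\varphi_{K,i}\to\tilde\varphi$; the lower semicontinuity of the left-hand side is handled by a dyadic Jensen argument plus $\tilde\varphi=\inf_K\mathbb{E}[\varphi_K]$. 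You instead run the \emph{Minty trick at the level of the fluxes}: extract a weak limit $\chi$ of $\overline{\psi'_{K,i}(\eta^\ell)}$ and use monotonicity of $\psi'_{K,i}$ together with the strong $C_t(H^{-1}_\theta)$ compactness (from Arzel\`a--Ascoli) to identify $\chi=\tilde\varphi'(\zeta)$. The trade-off is that the paper's route needs only convergence of the potentials $\psi_{K,i}\to\tilde\varphi$ (its Lemma~\ref{conv_energy_lp}), whereas yours needs the analogous averaged convergence of the \emph{derivatives} $\psi'_{K,i}\to\tilde\varphi'$; this is obtainable by the same method (uniform convexity plus the local Cram\'er theorem gives locally uniform convergence of derivatives, and the $L^q$ flux bound handles large values), but is an extra step you should make explicit. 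On the other hand, your Arzel\`a--Ascoli argument makes the claimed \emph{strong} $L^\infty(H^{-1})$ convergence transparent, while in the paper this upgrade from weak-$*$ $L^\infty(L^p)$ is left implicit. One technical point you gloss over and that the paper has to confront: to make the law of large numbers work \emph{uniformly over blocks $i=1,\dots,M_\ell$} almost surely, the paper passes to a further subsequence satisfying a summability condition~\eqref{assumption_k_borell_cantelli} and invokes Borel--Cantelli; you will need the same device.
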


We shall defer the proof of this result to Section 5.5, and use it to prove our theorem. It will use several auxiliary results on the behavior of the coarse-grained Hamiltonian $\bar{H}$, that will be proven in Sections 5.3 and 5.4.

First, let us show that the assumptions of Theorem \ref{thm_hydro_bounds} hold for every $N$ and $M$ large enough, with uniform constants. 

Assumption (iv) is given by the following lemma:

\begin{lem} \label{lem43}
Assume that the real numbers $a_i$ satisfy $\sup |a_i| \leq L$. Then there exists a constant $\alpha$, which only depends on $\psi$, $L$ and $m$ such that
$$\int{|x|^2\mu_{N,a,m}(dx)} \leq \alpha N.$$
\end{lem}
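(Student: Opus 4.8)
The plan is to exploit the fact that $\mu_{N,a,m}$ satisfies $\mathrm{LSI}(\rho)$ with $\rho$ independent of $N$, $m$ and the sequence $(a_i)$ — this is exactly Theorem~\ref{lsi} — and combine it with a Gaussian-type concentration estimate coming from the strict convexity of $\psi_c$. Concretely, I would first dispose of the linear and perturbative parts: write $\psi=\psi_c+\delta\psi$ and $\mu_{N,a,m}(dx)\propto \mathbf 1_{\{\frac1N\sum x_i=m\}}\exp(-\sum\psi_c(x_i)-\sum\delta\psi(x_i)-\sum a_ix_i)\,dx$. Since $|\delta\psi|$ is bounded and $|a_i|\le L$, the density of $\mu_{N,a,m}$ with respect to the ``clean'' measure $\nu_{N,m}(dx)\propto \mathbf 1_{\{\frac1N\sum x_i=m\}}\exp(-\sum\psi_c(x_i))\,dx$ is bounded above and below by $\exp(\pm CN)$ for a constant $C=C(\|\delta\psi\|_\infty,L)$; unfortunately a naive comparison then loses a factor $e^{CN}$, so instead I would keep the linear and perturbative terms inside and argue directly on $\mu_{N,a,m}$.

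The cleanest route: note that $x\mapsto \frac1{\sqrt N}|x|$ is $\frac1{\sqrt N}$-Lipschitz on $X_{N,m}$ with the $\ell^2$ inner product, so by the standard consequence of $\mathrm{LSI}(\rho)$ (Gaussian concentration, Criterion of Appendix~B / Herbst argument) we get
\begin{equation*}
\int |x|^2\,\mu_{N,a,m}(dx) \;\le\; \Big(\int |x|\,\mu_{N,a,m}(dx)\Big)^2 + \frac{2N}{\rho}\;\le\; \Big(\int |x|\,\mu_{N,a,m}(dx)\Big)^2 + \frac{2N}{\rho},
\end{equation*}
so it remains to bound the mean $\int|x|\,\mu_{N,a,m}(dx)\le \big(\int|x|^2\,\mu_{N,a,m}(dx)\big)^{1/2}$ by $C\sqrt N$. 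For this I would use the uniform convexity of $\psi_c$ together with the mean constraint: decompose $x=mN^{-1}\mathbf 1 \cdot(\text{something})$; more simply, test the identity $\int \nabla H\cdot v\,\mu=0$ for suitable directions $v$ tangent to $X_{N,m}$ to get $\int \sum_i (\psi'(x_i)+a_i)(x_i-x_j)\,\mu=0$ for all $i,j$, and combine with $\psi_c'(s)s\ge \lambda s^2/2 - C$ (a consequence of $\psi_c''\ge\lambda$ after subtracting the linear and bounded parts) to derive $\int\sum_i x_i^2\,\mu \le C N + CN m^2$. This gives $\int|x|^2\,\mu\le \alpha' N$ with $\alpha'=\alpha'(\psi,L,m)$, and feeding it back into the concentration bound above closes the estimate with $\alpha=\alpha(\psi,L,m)$.

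The main obstacle is avoiding the exponential loss $e^{CN}$ when handling the bounded perturbation $\delta\psi$ and the linear term $a_ix_i$: one cannot simply Holley–Stroock the second moment. The two ways around this are (a) the $\mathrm{LSI}$-concentration argument sketched above, which only needs the \emph{uniform} LSI constant from Theorem~\ref{lsi} and reduces everything to bounding the mean, or (b) a direct convexity/integration-by-parts computation on $\mu_{N,a,m}$ itself, keeping all terms. Either way the key point is that all constants entering are controlled by $\lambda$, $\|\delta\psi\|_{C^2}$, $L$, $m$ and the growth of $\psi_c$, none of which depend on $N$, so the bound is genuinely linear in $N$. I expect the bookkeeping in step (a)'s ``bound the mean'' substep — making the constant's dependence on $m$ explicit — to be the only slightly delicate part, and it is handled by the elementary inequality $\psi_c'(s)\,s \ge \frac{\lambda}{2}s^2 - C_{\psi_c}$ valid for all $s\in\mathbb R$.
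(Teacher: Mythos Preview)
Your plan works, though it takes a different route from the paper's and contains a small slip. Your ``route (b)''---integrate by parts against a tangent vector field and use $\psi_c'(s)\,s \ge \tfrac{\lambda}{2}s^2 - C$---is a valid self-contained argument and in fact makes your route (a) redundant: once (b) yields $\int\sum x_i^2\,\mu \le C(1+m^2)N$ you are done, so the LSI/concentration step adds nothing. One caution: the identity you wrote, $\int\sum_i(\psi'(x_i)+a_i)(x_i - x_j)\,\mu = 0$, is not correct (the field $v_i = x_i - x_j$ is not tangent to $X_{N,m}$). What integration by parts against the tangent field $v(x) = x - m\mathbf{1}$ actually gives is
\[
\int \sum_i (\psi'(x_i)+a_i)(x_i - m)\,\mu_{N,a,m} \;=\; N-1 ;
\]
from there, $(\psi_c'(s)-\psi_c'(m))(s-m)\ge\lambda(s-m)^2$ together with $\sum_i(x_i-m)=0$ and $|\delta\psi'|+|a_i|\lesssim 1$ yields $\int\sum_i(x_i-m)^2\,\mu \le CN$, hence $\int|x|^2\,\mu \le (C+m^2)N$.

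The paper's argument is genuinely different: it takes the \emph{homogeneous} canonical ensemble $\mu_{N,m}^0$ (no linear term $a_i$) as reference and applies Lemma~\ref{lem42}, which bounds $\bigl(\int|x|^2 f\,\mu^0\bigr)^{1/2}$ by $\bigl(\int|x|^2\,\mu^0\bigr)^{1/2} + \bigl(\tfrac{2}{\rho}\Ent_{\mu^0}(f)\bigr)^{1/2}$ via Talagrand's transport inequality, with $f\mu^0 = \mu_{N,a,m}$. The second moment of $\mu_{N,m}^0$ is bounded by Poincar\'e plus exchangeability ($\int x_i\,\mu^0 = m$ for each $i$), and the entropy term is estimated by Cauchy--Schwarz and Jensen; this produces a quadratic inequality in $\bigl(\int|x|^2\mu_{N,a,m}\bigr)^{1/2}$ which closes with Young's inequality. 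Your direct integration-by-parts argument is more elementary---it needs neither the LSI of Theorem~\ref{lsi} nor the transport inequality---whereas the paper's approach has the advantage of reusing Lemma~\ref{lem42}, which is needed anyway for Proposition~\ref{energy}.
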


\begin{proof}
Our proof of this result will be based on Lemma \ref{lem42}. We apply its result with reference measure 
\begin{align}\label{e_def_reference_measure}
  \mu_{N,m}^0 = \frac{1}{Z}\exp \left( - \sum \psi(x_i) \right) dx . 
\end{align}
Note that the measure~$\mu_{N,m}^0$  satisfies LSI($\rho$) and $f\mu_{N,m}^0 = \mu_{N,a,m}$, so that
$$\left(\int{|x|^2\mu_{N,a,m}(dx)} \right)^{1/2} \leq \left( \int{|x|^2\mu_{N,m}^0(dx)} \right)^{1/2}$$
.
$$ + \left( \frac{2}{\rho} \int{ \frac{1}{Z}\left(-\sum a_i x_i - \log Z \right)\exp \left(-\sum a_i x_i \right) \mu_{N,m}^0(dx)} \right)^{1/2}.$$
Using this and 
\begin{align*}
 0 &\leq \Ent_{\mu_{N,m}^0} \left( \frac{1}{Z}\exp(- \sum a_i x_i) \right) \\
& = \left( \frac{2}{\rho} \int{ \frac{1}{Z}\left(-\sum a_i x_i - \log Z \right)\exp \left(-\sum a_i x_i \right) \mu_{N,m}^0(dx)} \right)^{1/2} 
\end{align*}
we get 
\begin{align}
\int{|x|^2\mu_{N,a,m}(dx)} &\leq 2\int{|x|^2\mu_{N,m}^0(dx)} - \frac{4}{\rho}\int{ \left(-\sum a_i x_i \right)\mu_{N,a,m}(dx)} \notag \\
& \hspace{1cm} - \frac{4}{\rho}\log \left( \int{\exp \left(- \sum a_i x_i \right)\mu_{N,m}^0(dx)} \right) \notag \\
&\leq 2\int{|x|^2\mu_{N,m}^0(dx)} + \frac{4}{\rho}\left( \sum a_i^2 \right)^{1/2} \left(\int{|x|^2\mu_{N,a,m}(dx)} \right)^{1/2} \notag \\
& \hspace{1cm} - \frac{4}{\rho}\int{ \left( \sum a_i x_i \right)\mu_{N,m}^0(dx)} \notag \\
&\leq 2\int{|x|^2\mu_{N,m}^0(dx)} + \frac{4}{\rho} \sqrt{NL}\left(\int{|x|^2\mu_{N,a,m}(dx)} \right)^{1/2} \notag \\
& \hspace{1cm} + \frac{4}{\rho} \sqrt{NL}\left(\int{|x|^2\mu_{N,m}^0(dx)} \right)^{1/2}.
\end{align}
Applying the Poincar\'e inequality to $\mu_{N,m}^0(dx)$, we have
$$\int{x_i^2 \mu_{N,m}^0(dx)} \leq \frac{1}{\rho} \int{|\nabla x_i|^2 \mu_{N,m}^0(dx)} + \left( \int{x_i \mu_{N,m}^0(dx)} \right)^2 \leq \frac{1}{\rho} + m^2$$
since, for $\mu_{N,m}^0(dx)$, the coordinates are exchangeable and therefore all have the same mean $m$. This implies that
$$\int{|x|^2\mu_{N,m}^0(dx)} \leq \left( \frac{1}{\rho} + m^2 \right)N$$
and therefore
$$\int{|x|^2\mu_{N,a,m}(dx)} \leq CN + C\sqrt{N} \sqrt{\int{|x|^2\mu_{N,a,m}(dx)}}.$$
Using Young's inequality on the last term on the right hand side, it is then easy to show that 
$$\int{|x|^2\mu_{N,a,m}(dx)} \leq \alpha N$$
for some constant $\alpha$ which doesn't depend on $N$ or the $a_i$.
\end{proof}

Let us now turn to Assumption (i): Recalling our choice of $f_{0,a}$ (see Theorem~\ref{thm_lim_hydro}) and the definition~\eqref{e_def_reference_measure} of the measure~$\mu_N^0$, we have
\begin{align}
\Ent_{\mu_{N,a}}(f_{0,a}) &= \int{ \log \left( \frac{dF_N}{d\mu_{N,a}} \right) dF_N} \notag \\
&= \int{ \log \left( \frac{dF_N}{d\mu^0_{N}} \right) dF_N} + \int{ \log \left( \frac{d\mu^0_{N}}{d\mu_{N,a}} \right) dF_N} \notag \\
&\overset{\eqref{e_closness_initial_data}}{\leq} CN + \int{ \left(\sum a_i x_i \right)F_N(dx)} - \log \int{\exp \left( \sum a_i x_i \right)\mu_{N,a}(dx)} \notag \\
&\leq CN + \sqrt{\sum a_i^2} \sqrt{\int{|x|^2F_N(dx)}} - \int{ \left(\sum a_i x_i \right)\mu_{N,a}(dx)} \notag \\
&\leq CN + \sqrt{LN}\sqrt{\int{|x|^2F_N(dx)}} + \sqrt{LN} \sqrt{\int{|x|^2\mu_{N,a}(dx)}}.
\end{align}
We already know by Lemma \ref{lem43} that $\int{|x|^2\mu_{N,a}(dx)} \leq CN$. Moreover, we can use Lemma \ref{lem42}, the fact that $\Ent_{\mu_N^0}(F_N) \leq CN$ (see~\eqref{e_closness_initial_data}) and Lemma \ref{lem43} to show that $\int{|x|^2F_N(dx)} \leq CN$, and we then obtain assumption (i) of Theorem \ref{thm_hydro_bounds} with uniform constant $C_1$. 

Let us turn to the Assumption (ii) and (iii), which follow from Proposition~\ref{lp_bounds_energy}. Indeed, recall that 
$$\bar{H}_{N,K,a}(y) = \frac{1}{M}\underset{i = 1}{\stackrel{M}{\sum}} \hspace{1mm} \psi_{N,K,a}(y_i) + \frac{1}{N}\log Z.$$
Since 
$$\frac{1}{N}\log Z = \frac{1}{N}\log \int{\exp \left( - \sum \psi(x_i) + a_i x_i \right) dx},$$
it is not hard to deduce from (\ref{poly_growth}) and the boundedness of the $a_i$ that this quantity is bounded both from above and from below, so that, in order to prove (ii) and (iii), we can assume without loss of generality that $\bar{H}$ is given by 
$$\bar{H}_{N,K,a}(y) = \frac{1}{M}\underset{i = 1}{\stackrel{M}{\sum}} \hspace{1mm} \psi_{N,K,a}(y_i).$$
Assumption (ii) then follows from (\ref{bound_eta}) and part (iv) of Proposition \ref{lp_bounds_energy} from below. Assumption (iii) also follows from part (iv) of Proposition \ref{lp_bounds_energy}.

Finally, Assumption (v) follows directly from Proposition~\ref{p_convexity_coarse_grained_Hamiltonian}. \medskip

Before turning to the proof of Theorem~\ref{thm_lim_hydro}, we need two more ingredients: they are estimates comparing the $H^{-1}$ norm to the $A^{-1}$ norm on $X_N$. 

\begin{lem} \label{compare_norms}
There exists a constant $C < \infty$ such that for any $x \in X$, if $\bar{x}$ is the associated step function, then 

(i) 
$$\frac{1}{C}\langle \bar{x}, \bar{x} \rangle_{H^{-1}} \leq \frac{1}{N}\langle A^{-1}x, x \rangle_X \leq C \langle \bar{x}, \bar{x} \rangle_{H^{-1}};$$

(ii) If $x$ is bounded in $L^2$, then 
$$\left| \langle \bar{x}, \bar{x} \rangle_{H^{-1}} - \frac{1}{N}\langle A^{-1}x, x \rangle_X \right| \leq \frac{C}{N}.$$
\end{lem}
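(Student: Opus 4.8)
The plan is to work entirely on the level of the step function $\bar x : \T \to \R$ associated to $x \in X_{N,0}$ and to compare two quadratic forms: the continuum $H^{-1}$ norm $\|\bar x\|_{H^{-1}}^2 = \int_\T w^2$ with $w' = \bar x$, $\int_\T w = 0$, and the discrete form $\frac1N\langle A^{-1}x,x\rangle_X$. Both are diagonalized by Fourier analysis: the discrete Laplacian $A$ on $X_{N,0}$ has eigenvalues $\lambda_k = 4N^2\sin^2(\pi k/N)$, $k = 1,\dots,N-1$, on the discrete Fourier modes, while the continuum operator $-\partial_\theta^2$ acting on mean-zero functions has eigenvalues $(2\pi k)^2$ on $e^{2\pi i k\theta}$. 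First I would write $x = \sum_{k} \hat x_k v_k$ in the discrete Fourier basis and compute $\frac1N\langle A^{-1}x,x\rangle_X = \sum_k |\hat x_k|^2/\lambda_k$ (up to the normalization fixed by the inner products in the statement). Separately, the step function $\bar x$ has Fourier coefficients that, for each low frequency $k$, agree with $\hat x_k$ up to a factor $\mathrm{sinc}(\pi k/N)$ coming from averaging the indicator of an interval of length $1/N$; the high-frequency ($|k|$ comparable to $N$) Fourier mass of $\bar x$ is what produces the discrepancy, but it is controlled.

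For part (i), the key elementary estimate is the two-sided bound $\frac{2}{\pi}\,\theta \le \sin\theta \le \theta$ for $\theta \in [0,\pi/2]$, which gives $c\,(2\pi k/N)^2 \le \sin^2(\pi k/N) \le (2\pi k/N)^2/4$ uniformly in $1 \le k \le N/2$, hence $\lambda_k \asymp (2\pi k)^2$ with absolute constants. Combining this spectral comparison with the fact that $\|\bar x\|_{H^{-1}}^2$ is, term by term, comparable to $\sum_k |\hat x_k|^2/(2\pi k)^2$ — where the $\mathrm{sinc}$ correction factors are bounded above and below by absolute constants on the relevant range of frequencies, and the aliased high-frequency contributions only add a further comparable amount — yields the required equivalence $\frac1C \|\bar x\|_{H^{-1}}^2 \le \frac1N\langle A^{-1}x,x\rangle_X \le C\|\bar x\|_{H^{-1}}^2$ with $C$ independent of $N$. (Alternatively, and perhaps more cleanly, one can avoid Fourier series altogether: write $w$ for the continuum primitive of $\bar x$, note $w$ is piecewise linear with nodal values given by a discrete primitive of $x$, and compare $\int_\T w^2$ directly to the squared $\ell^2$ norm of those nodal values, which is exactly $\frac1N\langle A^{-1}x,x\rangle_X$; the difference between a piecewise-linear function's $L^2$ norm and the Riemann sum of its nodal values is controlled by a standard quadrature estimate.)

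For part (ii), I would quantify the error in the comparison above. Writing $w$ for the piecewise-linear primitive of $\bar x$ as in the parenthetical remark, $\int_\T w^2$ differs from the corresponding discrete sum $\frac1N\langle A^{-1}x,x\rangle_X$ by a quadrature error on each of the $N$ subintervals; on each subinterval $w$ is affine, so the error is computed exactly and is of order $N^{-3}$ times the square of the increment of $w$ across that interval, i.e. of order $N^{-3} x_j^2$. Summing over $j$ gives a total error bounded by $C N^{-3}\sum_j x_j^2 = C N^{-3}|x|_X^2 \le \frac{C}{N}$ once $|x|_X^2 \le C N$, which is exactly the hypothesis that $x$ is bounded in $L^2$ (in the normalization $\frac1N|x|_X^2 \le C$).

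The main obstacle I anticipate is purely bookkeeping: getting the normalization constants straight, since the microscopic space $X$ carries the unnormalized $\ell^2$ inner product $\langle x,\tilde x\rangle_X = \sum x_i\tilde x_i$ while the $H^{-1}$ norm is a genuine continuum integral, and the factor $\frac1N$ in the statement is precisely what reconciles the two scalings. Once the dictionary "step function $\leftrightarrow$ nodal values of its primitive" is set up correctly, both parts reduce to the elementary estimate $|\sin\theta/\theta - 1| \lesssim \theta^2$ (equivalently, a one-interval quadrature error bound) summed over frequencies or over subintervals; there is no analytic difficulty beyond that.
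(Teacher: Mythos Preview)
Your proposal is correct, and in fact the ``cleaner'' route you sketch parenthetically for (i) and then use for (ii) is precisely the paper's argument: one writes $\frac{1}{N}\langle A^{-1}x,x\rangle_X = \frac{1}{N}\sum_j F_j^2$ with $x_j = N(F_{j+1}-F_j)$, $\sum F_j = 0$, observes that the primitive $w$ of $\bar x$ is the piecewise-linear interpolant of the nodal values $F_j$, and then computes $\int_\T w^2$ exactly on each subinterval. The paper bounds the resulting correction term $\frac{1}{N}\sum_j\bigl((F_{j+1}-F_j)F_j + \tfrac13 (F_{j+1}-F_j)^2\bigr)$ crudely between $-\tfrac{2}{3N}\sum F_j^2$ and $0$ for (i), and by Cauchy--Schwarz for (ii). Your observation for (ii) is actually a hair sharper: by periodicity the cross terms $\sum_j (F_{j+1}-F_j)F_j$ telescope into $-\tfrac12\sum_j(F_{j+1}-F_j)^2$, so the total error is exactly $-\tfrac{1}{6N^3}\sum_j x_j^2$, giving $O(N^{-2})$ rather than $O(N^{-1})$ under the $L^2$ hypothesis. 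The Fourier route you outline first for (i) is a genuine alternative; it works, but the aliasing bookkeeping (the infinitely many continuum modes of $\bar x$ mapping onto the $N-1$ discrete ones via $\mathrm{sinc}$ factors) is exactly the annoyance the direct computation avoids, and you correctly flagged this yourself.
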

These estimates were already included in [GOVW], but for the sake of completness we reproduce the proof.
\begin{proof}
First, we can express the discrete norm as
$$\frac{1}{N}\langle A^{-1}x, x \rangle_X = \frac{1}{N} \underset{i = 1}{\stackrel{N}{\sum}} \hspace{1mm} F_i^2$$
.
$$\text{where } x_i = N(F_{i+1} - F_i) \hspace{3mm} \text{ and } \hspace{3mm} \underset{i = 1}{\stackrel{N}{\sum}} \hspace{1mm} F_i = 0.$$

To estimate $\langle \bar{x}, \bar{x} \rangle_{H^{-1}}$ through the $F_i$, recall that by definition
$$\langle \bar{x}, \bar{x} \rangle_{H^{-1}} = \int_{\T}{w(\theta)^2d\theta}, \hspace{5mm} \text{where } w' = f \hspace{3mm} \text{and } \int_{\T}{w(\theta)d\theta} = 0.$$
It is easy to check that we can define such a $w$ with
$$w(\theta) = F_i + N(F_{i+1} - F_i) \left( \theta - \frac{i-1}{N} \right) \hspace{3mm} \text{for } \theta \in \left[\frac{i-1}{N}, \frac{i}{N} \right).$$

Consequently, we have
\begin{align} \label{bounds_norms}
\langle \bar{x}, \bar{x} \rangle_{H^{-1}} &= \underset{i = 1}{\stackrel{N}{\sum}} \hspace{1mm}  \int_0^{N^{-1}}{(F_i + N(F_{i+1} - F_i)\theta)^2d\theta} \notag \\
&= \frac{1}{N} \underset{i = 1}{\stackrel{N}{\sum}} \hspace{1mm}  \left( F_{i}^2 + (F_{i+1} - F_i)F_i + \frac{1}{3}(F_{i+1} - F_i)^2 \right) \notag \\
&= \frac{1}{N}\langle A^{-1}x, x \rangle_X + \frac{1}{N} \underset{i = 1}{\stackrel{N}{\sum}} \hspace{1mm}  \left( (F_{i+1} - F_i)F_i + \frac{1}{3}(F_{i+1} - F_i)^2 \right). 
\end{align}

We have the bounds on the second term on the right-hand side
$$-\frac{2}{3N}\underset{i = 1}{\stackrel{N}{\sum}} \hspace{1mm} F_i^2 \leq \frac{1}{N} \underset{i = 1}{\stackrel{N}{\sum}} \hspace{1mm}  \left( (F_{i+1} - F_i)F_i + \frac{1}{3}(F_{i+1} - F_i)^2 \right) \leq 0$$
so that 
$$\langle \bar{x}, \bar{x} \rangle_{H^{-1}} \leq \frac{1}{N}\langle A^{-1}x, x \rangle_X \leq 3\langle \bar{x}, \bar{x} \rangle_{H^{-1}}.$$
Moreover, if $\bar{x}$ is bounded in $L^2$, then 
$$\frac{1}{N} \underset{i = 1}{\stackrel{N}{\sum}} \hspace{1mm} x_i^2 = \underset{i = 1}{\stackrel{N}{\sum}} \hspace{1mm} N(F_{i+1} - F_i)^2 \leq C \hspace{3mm} \text{and  } \frac{1}{N} \underset{i = 1}{\stackrel{N}{\sum}} \hspace{1mm} F_i^2 \leq C,$$
so we get from (\ref{bounds_norms}) 
\begin{align}
&\left|\langle \bar{x}, \bar{x} \rangle_{H^{-1}} - \frac{1}{N}\langle A^{-1}x, x \rangle_X \right| \notag \\
&= \frac{1}{N} \left| \underset{i = 1}{\stackrel{N}{\sum}} \hspace{1mm} \left( (F_{i+1} - F_i)F_i + \frac{1}{3}(F_{i+1} - F_i)^2 \right) \right| \notag \\
&\leq \frac{1}{N} \left(\underset{i = 1}{\stackrel{N}{\sum}} \hspace{1mm}  (F_{i+1} - F_i)^2\right)^{1/2} \left(\underset{i = 1}{\stackrel{N}{\sum}} \hspace{1mm} F_i^2\right)^{1/2} + \frac{1}{3N}\underset{i = 1}{\stackrel{N}{\sum}} \hspace{1mm} (F_{i+1} - F_i)^2 \notag \\
&\leq \frac{C}{N} \notag
\end{align}

\end{proof}

We are now ready to give the proof of our main result.

\begin{proof} [Proof of Theorem \ref{thm_lim_hydro}]
First, we have 
$$\underset{ \ell \uparrow \infty}{\lim} \hspace{1mm} \underset{t \in [0,T]}{\sup} \hspace{1mm} \int_{X_N}{||\bar{x} - \zeta(t, \cdot)||_{H^{-1}}^2 f(t,x)\mu_{N,a}(dx)} $$

$$\leq 2 \hspace{1mm} \underset{ \ell \uparrow \infty}{\lim} \hspace{1mm} \underset{t \in [0,T]}{\sup} \hspace{1mm} \int_{X_N}{||\bar{x} - \bar{\eta}_{\ell}(t)||_{H^{-1}}^2 f(t,x)\mu_{N,a}(dx)} + 2 \hspace{1mm} \underset{ \ell \uparrow \infty}{\lim} \hspace{1mm} \underset{t \in [0,T]}{\sup} \hspace{1mm} ||\bar{\eta}_{\ell}(t) - \zeta(t,\cdot)||_{H^{-1}}^2.$$

Applying Proposition \ref{convergence_macro_hydro}, we immediately see that the second term on the right-hand side of this equation goes to zero almost surely. For the first term, using bound (i) of Lemma \ref{compare_norms}, we see that, for any realization of the random field,
$$ \int{\frac{1}{N}\langle (x - NP^t\eta_{0,a}), A^{-1}(x - NP^t\eta_{0,a}) \rangle F_N(dx)}$$ 
$$\leq C\int_{X_N}{||\bar{x} - \bar{\eta}_{\ell,0}||_{H^{-1}}^2 f_{0,a}\mu_{N,a}(dx)} \longrightarrow 0$$
so that an application of Theorem \ref{thm_hydro_bounds} yields that 
$$\underset{ \ell \uparrow \infty}{\lim} \hspace{1mm} \underset{t \in [0,T]}{\sup} \int{\frac{1}{N}\langle (x - NP^t\eta_{0,a}), A^{-1}(x - NP^t\eta_{\ell,a}(t)) \rangle f(t,x)\mu_{N,a}(dx)} = 0$$
for any realization of the random field. Another application of bound (i) in Lemma \ref{compare_norms} then yields
$$\underset{ \ell \uparrow \infty}{\lim} \hspace{1mm} \underset{t \in [0,T]}{\sup} \hspace{1mm} \int_{X_N}{||\bar{x} - \bar{\eta}_{\ell}(t)||_{H^{-1}}^2 f(t,x)\mu_{N,a}(dx)} = 0$$
also for any realization of the random field, which concludes the proof.

The remainder of this section is devoted to the proof of Propoposition \ref{convergence_macro_hydro}

\end{proof}

\subsection{Auxiliary result: Polynomial bounds on the energy}\label{s_aux_1_poly_bounds_energy}

In this section, our aim will be to obtain polynomial bounds on the macroscopic and hydrodynamic free energies $\psi_{N,K,i,a}$, $\varphi_{N,K,i,a}$ and $\tilde{\varphi}$ (for the definitions see~\eqref{d_psi_K}, \eqref{d_varphi_K}, and \eqref{average_hydro_potential} respectively). The main tool will be the following Lemma, due to Caputo (see~[Cap, Lemma 2.4]).

\begin{lem} \label{lem_caputo}
Let 
$$s(\sigma)^2 := \int{(x - m)^2\mu_{\sigma(dx)}}$$
where $\mu_{\sigma(dx)} = Z^{-1}\exp(\sigma x - \psi(x))dx$ and $m = \int{x \mu_{\sigma}(dx)}$. Then 
$$\frac{1}{C \psi_c''(m)} \leq s(\sigma)^2 \leq \frac{C}{\psi_c''(m)}.$$
\end{lem}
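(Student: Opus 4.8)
The plan is to sandwich $s(\sigma)^2=\var_{\mu_\sigma}(x)$ between two quantities that are both comparable to $1/\psi_c''(m)$, working first with the strictly log-concave measure $\nu_\sigma(dx):=Z^{-1}\exp(\sigma x-\psi_c(x))\,dx$ obtained by dropping the perturbation $\delta\psi$, and then transferring the estimate to $\mu_\sigma$ by a Holley--Stroock argument.

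First I would record a two-sided bound for $\nu_\sigma$. Its potential $W_\sigma(x):=\psi_c(x)-\sigma x$ satisfies $W_\sigma''=\psi_c''\ge\lambda>0$, and by the polynomial growth assumption \eqref{poly_growth} the density and its derivative vanish at $\pm\infty$, so one may integrate by parts freely. From $\int(e^{-W_\sigma})''\,dx=0$ one gets $\int(W_\sigma')^2\,d\nu_\sigma=\int\psi_c''\,d\nu_\sigma$ and $\int W_\sigma'\,d\nu_\sigma=0$, and one more integration by parts gives $\int xW_\sigma'(x)\,d\nu_\sigma=1$. Cauchy--Schwarz ($1=\cov_{\nu_\sigma}(x,W_\sigma')^2\le\var_{\nu_\sigma}(x)\var_{\nu_\sigma}(W_\sigma')$) then yields the lower bound $\var_{\nu_\sigma}(x)\ge\big(\int\psi_c''\,d\nu_\sigma\big)^{-1}$, while the classical symmetric Brascamp--Lieb inequality (applied to the strictly convex $W_\sigma$, with test function $x$) yields the matching upper bound $\var_{\nu_\sigma}(x)\le\int(\psi_c'')^{-1}\,d\nu_\sigma$. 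So the task reduces to showing that both $\int\psi_c''\,d\nu_\sigma$ and $\big(\int(\psi_c'')^{-1}\,d\nu_\sigma\big)^{-1}$ are comparable to $\psi_c''(m_{\nu_\sigma})$, where $m_{\nu_\sigma}:=\int x\,d\nu_\sigma$.

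The heart of the argument is then a localization step, which I expect to be the main obstacle. Since $\psi_c''\ge\lambda$, Bakry--\'Emery shows $\nu_\sigma$ satisfies $\mathrm{LSI}(\lambda)$ uniformly in $\sigma$, so $\nu_\sigma(|x-m_{\nu_\sigma}|>t)\le 2e^{-\lambda t^2/2}$ for all $t$. I would pick a window of radius $r\asymp\big(1+\log(2+|m_{\nu_\sigma}|)\big)^{1/2}$; by \eqref{poly_growth} (which makes $\psi_c''$ doubling away from $0$) this forces $\psi_c''(x)\asymp\psi_c''(m_{\nu_\sigma})$ on $\{|x-m_{\nu_\sigma}|\le r\}$, since for $|m_{\nu_\sigma}|$ large one has $r\le|m_{\nu_\sigma}|/2$ and for $|m_{\nu_\sigma}|$ bounded this is immediate from $\psi_c''\ge\lambda$ and continuity. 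The bulk of the mass sits in this window, and outside it the Gaussian tail overwhelms the at most polynomial growth of $\psi_c''$ and of $(\psi_c'')^{-1}$ (using $\psi_c''(x)\lesssim\psi_c''(m_{\nu_\sigma})+|x-m_{\nu_\sigma}|^{p-2}$ together with $\int_r^\infty t^{p-3}e^{-\lambda t^2/2}\,dt\le C$ for $r\gtrsim1$). Splitting both integrals at this window gives $\int\psi_c''\,d\nu_\sigma\le C\psi_c''(m_{\nu_\sigma})$ and $\int(\psi_c'')^{-1}\,d\nu_\sigma\le C/\psi_c''(m_{\nu_\sigma})$, and combined with the previous step and the trivial $\int\psi_c''\,d\nu_\sigma\cdot\int(\psi_c'')^{-1}\,d\nu_\sigma\ge1$ this yields $\var_{\nu_\sigma}(x)\asymp1/\psi_c''(m_{\nu_\sigma})$. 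The delicate point is the balance in the choice of $r$: the window must be narrow enough that $\psi_c''$ is essentially constant on the effective support, yet wide enough that the concentration from $\psi_c''\ge\lambda$ beats the polynomial weights in the tail error terms — this is exactly where \eqref{poly_growth} is essential, and it would fail for faster-than-polynomial growth.

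It remains to transfer the bound to $\mu_\sigma$. Writing $\mu_\sigma=c_\sigma e^{-\delta\psi}\nu_\sigma$ and using $\var_\rho(x)=\tfrac12\iint(x-y)^2\,\rho(dx)\rho(dy)$, the finiteness of $\osc\delta\psi$ gives $e^{-2\osc\delta\psi}\var_{\nu_\sigma}(x)\le\var_{\mu_\sigma}(x)\le e^{2\osc\delta\psi}\var_{\nu_\sigma}(x)$; moreover $m_{\mu_\sigma}-m_{\nu_\sigma}=\cov_{\nu_\sigma}(x,e^{-\delta\psi})/\int e^{-\delta\psi}\,d\nu_\sigma$, which the one-dimensional asymmetric Brascamp--Lieb inequality applied to $\nu_\sigma$ bounds in absolute value by a finite constant depending only on $\psi$. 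Hence $|m_{\mu_\sigma}-m_{\nu_\sigma}|$ is bounded, so $\psi_c''(m_{\mu_\sigma})\asymp\psi_c''(m_{\nu_\sigma})$ by \eqref{poly_growth}, and combining everything proves the lemma. (Alternatively, one may simply invoke [Cap, Lemma 2.4].)
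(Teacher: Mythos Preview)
The paper does not prove this lemma at all; it simply refers to [Cap, Lemma~2.4]. Your proposal is correct (reading \eqref{poly_growth} as the intended bound on $\psi_c''$ rather than on $\psi_c$, which is how the paper itself uses it in the proof of Proposition~\ref{lp_bounds_energy}), and you even mention the citation as an alternative at the end. The route you take --- sandwich $\var_{\nu_\sigma}(x)$ between $\big(\int\psi_c''\,d\nu_\sigma\big)^{-1}$ and $\int(\psi_c'')^{-1}\,d\nu_\sigma$ via Cauchy--Schwarz and the symmetric Brascamp--Lieb inequality, then use the sub-Gaussian concentration coming from $\psi_c''\ge\lambda$ to localize both integrals near $m_{\nu_\sigma}$ (the choice of window radius $r\asymp(1+\log(2+|m_{\nu_\sigma}|))^{1/2}$ being exactly what is needed to kill the tail in $\int(\psi_c'')^{-1}\,d\nu_\sigma$ against the possibly small target $1/\psi_c''(m_{\nu_\sigma})$), and finally transfer to $\mu_\sigma$ by a Holley--Stroock comparison of variances together with a uniform bound on the mean shift $|m_{\mu_\sigma}-m_{\nu_\sigma}|$ --- is the natural argument and close in spirit to Caputo's original proof. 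So you are not diverging from the paper so much as supplying what it outsources.
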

For a proof of this statement we refer to [Cap, Lemma 2.4].

We define 
\begin{equation}\label{d_psi_K}
\psi_{N,K,i,a}(m) := -\frac{1}{K}\log \int_{X_{K,m}}{\exp\left(-\underset{j = (i-1)K + 1}{\stackrel{iK}{\sum}} \hspace{1mm} a_{j/N} x_j + \psi(x_j)\right)dx}
\end{equation}

and 

\begin{equation}\label{d_varphi_K}
\varphi_{N,K,i,a}(m) := \underset{\sigma \in \mathbb{R}}{\sup} \hspace{1mm} \left(\sigma m - \frac{1}{K} \underset{j = (i-1)K + 1 }{\stackrel{iK}{\sum}} \hspace{1mm} \log \int_{\mathbb{R}}{\exp((\sigma - a_{j/N})x - \psi(x))dx} \right)
\end{equation}

Theorem~\ref{lct} from the appendix tells us that these two functions behave in the same manner. We will deduce bounds on $\varphi_{N,K,i,a}$ from Lemma \ref{lem_caputo}, which will then carry on to $\psi_{N,K,i,a}$. Since these bounds will only depend on $K$ and $\sup |a_i|$, we simplify notations by writing $\psi_K$ instead of $\psi_{N,K,i,a}$ and $\varphi_K$ instead of $\varphi_{N,K,i,a}$ in this section.

\begin{prop} \label{lp_bounds_energy}
There exists $K_0$ and $C > 0$ which only depend on $\psi$ and $\sup a_i$ such that, for any $m \in \R$ and $K \geq K_0$\newline
(i) $\frac{1}{C}(1 + |m|^{p-2}) \leq \varphi_K''(m) \leq C(1 + |m|^{p-2})$;

(ii) $  \frac{1}{C}(|m|^{p-1} - 1) \leq |\varphi_K'(m)| \leq C(1 + |m|^{p-1})$;

(iii) $\frac{1}{C}(|m|^p - 1) \leq \varphi_K(m) \leq C(1 + |m|^p)$;

(iv) $ \frac{1}{C}(|m|^p - 1) \leq \psi_K(m) \leq C(|m|^p - 1)$.

Moreover, bounds (i), (ii) and (iii) are also valid for $\tilde{\varphi}$. 

\end{prop}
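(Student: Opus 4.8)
The plan is to pass to the convex--dual side, where $\varphi_K$ and $\tilde\varphi$ are explicit Legendre transforms, prove (i)--(iii) there, and then transfer the bounds to $\psi_K$ via the generalized local Cram\'er theorem (Theorem~\ref{lct}). Write $g(\sigma):=\log\int_\R \exp(\sigma x-\psi(x))\,dx$; since $\psi$ grows like $|x|^p$ with $p\ge 2$ (integrate \eqref{poly_growth} twice and use $|\delta\psi|\lesssim 1$), the function $g$ is finite, smooth and strictly convex on $\R$, with $g'(\sigma)=\int x\,\mu_\sigma(dx)=m(\sigma)$ and $g''(\sigma)=\var_{\mu_\sigma}(x)=s(\sigma)^2$ in the notation of Lemma~\ref{lem_caputo}, and $g'$ is an increasing bijection of $\R$. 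Setting $G_K(\sigma):=\frac1K\sum_{j=(i-1)K+1}^{iK} g(\sigma-a_{j/N})$, definition \eqref{d_varphi_K} says $\varphi_K=G_K^{\ast}$, so that if $m=G_K'(\sigma)$ then $\varphi_K'(m)=\sigma$ and $\varphi_K''(m)=1/G_K''(\sigma)$; in particular $\varphi_K$ is $C^2$. By \eqref{average_hydro_potential}, $\tilde\varphi$ is the Legendre transform of $\sigma\mapsto\E^a[g(\sigma-a)]$, which has exactly the same structure with the empirical average $\frac1K\sum_j\delta_{a_{j/N}}$ replaced by the law of $a$ (supported in $[-L,L]$); the arguments below apply verbatim to it, so I only discuss $\varphi_K$.

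The heart of the proof is the estimate $G_K''(\sigma)\asymp (1+|m|^{p-2})^{-1}$ for $m=G_K'(\sigma)$, which immediately gives (i). Put $m_j:=g'(\sigma-a_{j/N})$, so that $m=\frac1K\sum_j m_j$ and $G_K''(\sigma)=\frac1K\sum_j s(\sigma-a_{j/N})^2$. From $\psi_c''\ge\lambda$ and Lemma~\ref{lem_caputo} one has the uniform bound $s(t)^2\le C/\lambda$ for all $t$, whence for any indices $j,k$, $|m_j-m_k|=\bigl|\int_{\sigma-a_{k/N}}^{\sigma-a_{j/N}} s(t)^2\,dt\bigr|\le 2CL/\lambda$; thus every $m_j$ lies within a fixed bounded distance of their mean $m$, and since $p\ge 2$ this forces $1+|m_j|^{p-2}\asymp 1+|m|^{p-2}$ uniformly in $j$. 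Combining Lemma~\ref{lem_caputo} with $\psi_c''(z)\asymp 1+|z|^{p-2}$ from \eqref{poly_growth} gives $s(\sigma-a_{j/N})^2\asymp \psi_c''(m_j)^{-1}\asymp (1+|m|^{p-2})^{-1}$, and averaging over $j$ yields $G_K''(\sigma)\asymp(1+|m|^{p-2})^{-1}$, i.e.\ $\varphi_K''(m)\asymp 1+|m|^{p-2}$; this is (i), and the same computation with $\E^a$ in place of the average over $j$ gives (i) for $\tilde\varphi$.

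Parts (ii) and (iii) now follow by integration. The derivative $\varphi_K'$ vanishes at $m_\ast:=G_K'(0)=\frac1K\sum_j g'(-a_{j/N})$, and $|m_\ast|\le C$ because $g'$ has uniformly bounded increments on $[-L,L]$ (again by $s^2\le C/\lambda$), so each $g'(-a_{j/N})$ stays bounded. Writing $\varphi_K'(m)=\int_{m_\ast}^m \varphi_K''(t)\,dt$ and inserting the two-sided bound of (i) gives $\tfrac1C(|m|^{p-1}-1)\le|\varphi_K'(m)|\le C(1+|m|^{p-1})$, which is (ii); then $\varphi_K(m)=\varphi_K(m_\ast)+\int_{m_\ast}^m\varphi_K'(t)\,dt$ together with (ii) and $|\varphi_K(m_\ast)|=|G_K(0)|\le C$ (finiteness and positivity of $\int\exp(\pm L|x|-\psi(x))\,dx$, guaranteed by \eqref{poly_growth}) gives (iii); the same holds for $\tilde\varphi$. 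Finally, for $K\ge K_0$ the generalized local Cram\'er theorem (Theorem~\ref{lct}) identifies $\psi_K$ (defined in \eqref{d_psi_K}) with $\varphi_K$ up to an additive correction of lower order (a $\log$-type term coming from the Gaussian fluctuations of the conditioned block sum), which is negligible against the $|m|^p$-growth of $\varphi_K$; hence $\psi_K(m)\asymp\varphi_K(m)$ for all $m$, and (iv) follows from (iii).

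The main obstacle is the uniform comparison $1+|m_j|^{p-2}\asymp 1+|m|^{p-2}$ in the second paragraph: this is the one place where both the uniform convexity $\psi_c''\ge\lambda$ (which bounds the single-site variances, hence the spread of the tilted means $m_j$) and the boundedness \eqref{assumption_bound_a} of the $a_i$ are used in an essential way, and it is precisely what would fail for unbounded $a_i$. Everything else is routine Legendre-duality bookkeeping and one-dimensional integration.
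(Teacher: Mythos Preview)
Your proof is correct and follows essentially the same route as the paper: both identify $\varphi_K''(m)$ with the reciprocal of the averaged variance $\frac1K\sum_j s(\sigma-a_{j/N})^2$ via Legendre duality, invoke Caputo's Lemma~\ref{lem_caputo} together with \eqref{poly_growth} to get (i), integrate for (ii)--(iii), and then pass to $\psi_K$ via Theorem~\ref{lct}. The only minor variation is in how the inhomogeneity is handled in (i): the paper compares $s(\sigma-a_j)$ directly to $s(\sigma_0)$ using the Lipschitz continuity of $\log s$ established in the appendix, whereas you instead bound $|m_j-m|$ using the uniform variance bound $s^2\le C/\lambda$ and then apply Caputo at each $m_j$; your route is slightly more self-contained but otherwise equivalent.
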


 \begin{proof}
We start with proving these bounds for $\varphi_K$. We only need to prove (i), since (ii) and (iii) directly follow by integrating (i) and using some uniform bounds on the value at $m = 0$, which directly follow from the boundedness of the random field.

We start with showing that 
\begin{align} \label{e_varphi_second_deriv}
\varphi_K''(m) = \left( \frac{1}{K} \hspace{1mm} s(\sigma - a_{j/N})^2 \right)^{-1},
\end{align}
where we used $\sigma = \varphi_K'(m)$ and
$$\frac{1}{K}  \underset{j = (i-1)K + 1 }{\stackrel{iK}{\sum}} \int{x \mu_{\sigma - a_{j/N}} (dx)} = m.$$

Indeed, because~$\varphi_K$ is the Legendre transform of 
\begin{align*}
 \frac{1}{K} \underset{j = (i-1)K + 1 }{\stackrel{iK}{\sum}} \hspace{1mm} \log \int_{\mathbb{R}}\exp((\sigma - a_{j/N})x - \psi(x))dx  
\end{align*}
we get that by fundamental properties of the Legendre transform 
\begin{align*}
 \varphi_K''(m) &= \left( \frac{d^2}{ds^2} \frac{1}{K} \underset{j = (i-1)K + 1 }{\stackrel{iK}{\sum}} \hspace{1mm} \log \int_{\mathbb{R}}\exp((\sigma - a_{j/N})x - \psi(x))dx \right)^{-1}.
\end{align*}
Straightforward calculation yields
\begin{align*}
\frac{d^2}{ds^2}  \frac{1}{K} \underset{j = (i-1)K + 1 }{\stackrel{iK}{\sum}} \hspace{1mm} \log \int_{\mathbb{R}}{\exp((\sigma - a_{j/N})x - \psi(x))dx}   = \frac{1}{K} \underset{j = (i-1)K + 1 }{\stackrel{iK}{\sum}} \hspace{1mm} s(\sigma - a_{j/N})^2,
\end{align*}
which satisfies the desired identity~\eqref{e_varphi_second_deriv}. 

It follows from the boundedness of the random field that
$$\frac{1}{C}s(\sigma_0) \leq s(\sigma - a_{j/N}) \leq Cs(\sigma_0)$$
where $\sigma_0$ is such that $\sum \int{x \mu_{\sigma_0}(dx)} = m$ and $C$ only depends on $\psi$ and $L = \sup_j |a_{j/N}|$ (see the proof of (\ref{uniformity}) in the appendix). Therefore
$$\frac{1}{C s(\sigma_0)^2} \leq \varphi_K''(m) \leq \frac{C}{s(\sigma_0)^2},$$
which yields statement (i) by Lemma \ref{lem_caputo} and Assumption (\ref{poly_growth}). \smallskip

Let us now turn to the verification of (i), (ii), and (iii) for the function~$\tilde{\varphi}$. We recall the definition~\eqref{average_hydro_potential} of $\tilde{\varphi}$, namely
\begin{align*}
  \tilde{\varphi}(m) = \underset{\sigma \in \R}{\sup} \hspace{1mm} \left\{ \sigma m - \mathbb{E}^a\left[\log \int_{\R}{\exp \left((\sigma - a)x - \psi(x) \right)dx}\right] \right\}.
\end{align*}
Because~$\tilde{\varphi}$ is again a Legendre transform one can use the same argument as for~$\varphi_K$ to obtain similar bounds.\smallskip
 
Let us now turn to the verifiaction of (iv). By the local Cram\'er Theorem (cf.~Theorem~\ref{lct} in the appendix) one can directly transfer the the polynomial bound of $\varphi_K$ to the function~$\psi_K$.

 \end{proof}

\subsection{Auxiliary result: Convergence of the free energy}\label{s_aux_2_conv_free_energy}

In the previous section we derived polynomial bounds on the macroscopic and hydrodynamic free energies $\psi_{N,K,i,a}$, $\varphi_{N,K,i,a}$ and $\tilde{\varphi}$ (for the definitions see~\eqref{d_psi_K}, \eqref{d_varphi_K}  and \eqref{average_hydro_potential} respectively). In this section we will show that the macroscopic free energy~$\varphi_{N,K,i,a}$ converges to the hydrodynamic free energy~$\tilde{\varphi}$.

We recall that, following (\ref{equn_bar_h_psi})., we have

\begin{equation}
\bar{H}_{N,K,a}(y) = \frac{1}{M} \underset{i = 1}{\stackrel{M}{\sum}} \hspace{1mm} \psi_{N,K,a}(y_i) + \frac{1}{N} \log \bar{Z}
\end{equation}

By Theorem \ref{lct}, we know that, for any compact subset $E$ of $\R$, we have

\begin{equation} \label{appl_lct}
\sup \left\{ |\psi_{N,K,i,a}(m) - \varphi_{N,K,i,a}(m)| \hspace{1mm} ; \hspace{3mm} m \in E, \hspace{2mm} \sup |a_q| \leq L \right\} \longrightarrow 0 \qquad \mbox{ as } N, K \to \infty.
\end{equation}

This implies that, in order to study the asymptotic behavior of $\psi_{N,K,i,a}$, we can study the behavior of $\varphi_{N,K,i,a}$.

\begin{prop} \label{conv_free_energy}
For almost every realization of the random variables $a_q$, we have for every compact subset $E$ of $\R$, and for any sequence $(i_{\ell})$ with $i_\ell \in \{1,..,M_\ell \}$,
\begin{equation} \label{convergence_free_energy_as}
\varphi_{N,K,i,a}(m) \longrightarrow \tilde{\varphi}(m)
\end{equation}
uniformly in $m \in E$. Moreover, 
\begin{equation} \label{convergence_free_energy_exp}
\mathbb{E}[\varphi_{N,K,i,a}(m)] \underset{K \uparrow \infty}{\longrightarrow} \tilde{\varphi}(m)
\end{equation}
and
\begin{equation} \label{free_energy_limit_inf}
\tilde{\varphi}(m) = \underset{K}{\inf} \hspace{1mm} \mathbb{E}[\varphi_{N,K,i,a}(m)].
\end{equation}
\end{prop}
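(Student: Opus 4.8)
The plan is to work with the log-Laplace transforms underlying the Legendre transforms in \eqref{d_varphi_K} and \eqref{average_hydro_potential}. Set $h(\sigma,a) := \log\int_\R \exp\big((\sigma-a)x-\psi(x)\big)\,dx$ and $G(\sigma):=\mathbb{E}^a[h(\sigma,a)]$, so that $\varphi_{N,K,i,a}$ is the Legendre transform of $g_{N,K,i,a}(\sigma):=\tfrac1K\sum_{j=(i-1)K+1}^{iK}h(\sigma,a_{j/N})$, while $\tilde\varphi=G^{*}$. The argument has three parts: (a) a law of large numbers, \emph{uniform over the block index} $i\in\{1,\dots,M_\ell\}$, showing that $g_{N,K,i,a}\to G$ locally uniformly in $\sigma$ almost surely; (b) the transfer of this convergence to the Legendre transforms, using that the $g_{N,K,i,a}$ are coercive uniformly in all parameters; (c) soft deductions of \eqref{convergence_free_energy_exp} and \eqref{free_energy_limit_inf} from \eqref{convergence_free_energy_as}.

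For part (a): differentiating under the integral gives $\partial_\sigma h(\sigma,a)=\int x\,\mu_{\sigma-a}(dx)$ and $\partial_\sigma^2 h(\sigma,a)=\var_{\mu_{\sigma-a}}(x)$, which by Caputo's Lemma~\ref{lem_caputo} together with \eqref{poly_growth} are, for $\sigma$ in any fixed compact set and $a\in[-L,L]$, bounded by constants depending only on $\psi$ and $L$. Hence, for fixed $\sigma$, the variables $h(\sigma,a_{j/N})$ with $j$ ranging over the (distinct) rationals $j/N$ of a block are i.i.d.\ and take values in a bounded interval, so Hoeffding's inequality yields $\mathbb{P}\big(|g_{N,K,i,a}(\sigma)-G(\sigma)|>\varepsilon\big)\le 2e^{-c_\sigma K\varepsilon^2}$, and a union bound over the $M_\ell$ blocks gives $\mathbb{P}\big(\max_{1\le i\le M_\ell}|g_{N_\ell,K_\ell,i,a}(\sigma)-G(\sigma)|>\varepsilon\big)\le 2M_\ell e^{-c_\sigma K_\ell\varepsilon^2}$. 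Summing over $\ell$ --- which, if necessary, we ensure by restricting to a subsequence of $(M_\ell,N_\ell)$, harmless since Proposition~\ref{convergence_macro_hydro} is in any case obtained through a subsequence argument --- and using Borel--Cantelli, then intersecting over $\sigma$ in a countable dense subset of $\R$, we obtain a full-measure event on which $\max_i|g_{N,K,i,a}(\sigma)-G(\sigma)|\to0$ for every such $\sigma$. Since the $g_{N,K,i,a}$ and $G$ are convex and equi-Lipschitz on compact $\sigma$-sets, pointwise convergence on a dense set upgrades to locally uniform convergence, still uniform over $i$.

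For part (b): by Legendre duality, the polynomial bounds of Proposition~\ref{lp_bounds_energy} say that each $g_{N,K,i,a}=\varphi_{N,K,i,a}^{*}$ satisfies $g_{N,K,i,a}(\sigma)\ge c|\sigma|^{q}-C$ with $q=p/(p-1)$ and $c,C$ depending only on $\psi$ and $L$, and likewise for $G$. Consequently, for $m$ in a fixed compact set $E$, all the suprema $\varphi_{N,K,i,a}(m)=\sup_\sigma\big(\sigma m-g_{N,K,i,a}(\sigma)\big)$ and $\tilde\varphi(m)=\sup_\sigma\big(\sigma m-G(\sigma)\big)$ are attained on one fixed compact set of $\sigma$'s, on which (by part (a)) $g_{N,K,i,a}\to G$ uniformly; hence $\varphi_{N,K,i,a}\to\tilde\varphi$ uniformly on $E$, uniformly over $i$, which is \eqref{convergence_free_energy_as}. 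Then \eqref{convergence_free_energy_exp} follows by dominated convergence: \eqref{convergence_free_energy_as} gives $\varphi_{N,K,i,a}(m)\to\tilde\varphi(m)$ almost surely, while Proposition~\ref{lp_bounds_energy}(iii) supplies the deterministic bound $|\varphi_{N,K,i,a}(m)|\le C(1+|m|^p)$.

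For \eqref{free_energy_limit_inf}: for each fixed $m$ the map $g\mapsto\sup_\sigma(\sigma m-g(\sigma))$ is convex, so Jensen's inequality gives, for every $K$, $\mathbb{E}[\varphi_{N,K,i,a}(m)]=\mathbb{E}\big[\sup_\sigma(\sigma m-g_{N,K,i,a}(\sigma))\big]\ge\sup_\sigma\big(\sigma m-\mathbb{E}[g_{N,K,i,a}(\sigma)]\big)=\sup_\sigma\big(\sigma m-G(\sigma)\big)=\tilde\varphi(m)$; since moreover the law of $\varphi_{N,K,i,a}$ depends only on $K$ (any block consists of $K$ i.i.d.\ field values), $\mathbb{E}[\varphi_{N,K,i,a}(m)]$ is a function of $K$ and $m$ only, bounded below by $\tilde\varphi(m)$ and tending to $\tilde\varphi(m)$ by \eqref{convergence_free_energy_exp}, whence $\inf_K\mathbb{E}[\varphi_{N,K,i,a}(m)]=\tilde\varphi(m)$. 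The main obstacle is part (a): the law of large numbers has to hold uniformly over \emph{all} $M_\ell$ blocks at once (and simultaneously over the dense set of $\sigma$), so the Borel--Cantelli sum $\sum_\ell M_\ell e^{-c_\sigma K_\ell\varepsilon^2}$ must be controlled, which is where the relative growth of $K_\ell$ and $M_\ell$ enters; and every quantitative input --- the equi-Lipschitz bounds and the uniform coercivity --- must be independent of $i$, of $(N,K)$, and of the realization of the field, which is exactly what Caputo's lemma and Proposition~\ref{lp_bounds_energy} deliver.
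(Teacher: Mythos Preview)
Your proof is correct but differs from the paper's in two substantive ways.

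For the almost-sure convergence \eqref{convergence_free_energy_as}, the paper does not run a Hoeffding/Borel--Cantelli argument on the dual functions $g_{N,K,i,a}$. Instead it first localizes the Legendre supremum to a fixed compact $\sigma$-set (exactly as you do), but then bounds $|\varphi_{N,K,i,a}(m)-\tilde\varphi(m)|$ by observing that $a\mapsto\varphi^{*}(\sigma-a)$ is $C(1+|\sigma|^{q-1})$-Lipschitz on $[-L,L]$; Kantorovich--Rubinstein then gives the explicit estimate $|\varphi_{N,K,i,a}(m)-\tilde\varphi(m)|\le C(1+|m|)\,W_1(\nu^{N,K,i,a},\nu)$, where $\nu^{N,K,i,a}$ is the empirical measure of the block field values and $\nu$ their common law, and one concludes via Varadarajan's theorem. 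The payoff of the paper's route is that this $W_1$ bound is \emph{reused verbatim} in the next lemma (Lemma~\ref{conv_energy_lp}), where one needs to control $\tfrac{1}{M}\sum_i|\varphi_{N,K,i,a}(\zeta_i)-\tilde\varphi(\zeta_i)|$ for an $L^p$-bounded sequence; your Hoeffding argument does not directly produce such a quantitative envelope. On the other hand, your union bound over the $M_\ell$ blocks makes the ``for any sequence $(i_\ell)$'' part of the statement fully explicit, whereas the paper's appeal to Varadarajan is phrased for a single empirical-measure sequence.

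For \eqref{free_energy_limit_inf}, the paper proves that $K\mapsto K\,\mathbb{E}[\varphi_K(m)]$ is subadditive (by splitting the supremum over $\sigma$ across two blocks of sizes $K_1,K_2$) and invokes Fekete's lemma. Your Jensen argument --- $\mathbb{E}[\sup_\sigma(\sigma m-g_{N,K,i,a}(\sigma))]\ge\sup_\sigma(\sigma m-G(\sigma))=\tilde\varphi(m)$ --- is shorter and gives the lower bound for every $K$ in one line; combined with \eqref{convergence_free_energy_exp} it yields the same conclusion without subadditivity. Both arguments are valid; yours is the more economical here.
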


\begin{proof}
Due to the polynomial bounds on $\varphi_K$ and $\tilde{\varphi}$, we can show that the optimum in 
$$\varphi_{N,K,i,a}(m) = \underset{\sigma}{\sup} \hspace{1mm} \left( \sigma m - \varphi_{N,K,i,a}^*(\sigma) \right) $$
is reached for $\sigma = \varphi_{N,K,i,a}'(m)$ which satisfies the bound (see statement~(iii) of Proposition~\ref{lp_bounds_energy})
$$|\sigma| \leq C(1 + |m|^{p-1})$$ 
with a constant $C$ uniform in $K$, which yields
$$|\sigma|^q \leq C(1 + |m|^{p}),$$
after taking the $q = \frac{p}{p-1}$-th power. Therefore we can localize the variational formulation as 
$$\varphi_{N,K,i,a}(m) = \underset{|\sigma|^q \leq C(1 + |m|^p)}{\sup} \hspace{1mm} \left( \sigma m - \varphi_{N,K,i,a}^*(\sigma) \right) .$$

We have the same localization property for $\tilde{\varphi}$, and therefore
\begin{align}
& |\varphi_{N,K,i,a}(m) - \tilde{\varphi}(m)| \\
& \quad = \left|\underset{|\sigma|^q \leq C(1 + |m|^p)}{\sup} \hspace{1mm} \left( \sigma m - \varphi_{N,K,i,a}^*(\sigma) \right) - \underset{|\sigma|^q \leq C(1 + |m|^p)}{\sup} \hspace{1mm}  \left( \sigma m - \tilde{\varphi}^*(\sigma) \right) \right| \notag \\ 
\end{align}.
We will now show that 
\begin{align*}
&  \left|\underset{|\sigma|^q \leq C(1 + |m|^p)}{\sup} \hspace{1mm} \left( \sigma m - \varphi_{N,K,i,a}^*(\sigma) \right) - \underset{|\sigma|^q \leq C(1 + |m|^p)}{\sup} \hspace{1mm}  \left( \sigma m - \tilde{\varphi}^*(\sigma) \right) \right| \\
& \leq \underset{|\sigma|^q \leq C(1 + |m|^p)}{\sup} \hspace{1mm} |\varphi_{N,K,i,a}^*(\sigma) - \tilde{\varphi}^*(\sigma)|.
\end{align*}
Indeed, let us assume that (the other case works with the same argument)
\begin{align*}
&  \left|\underset{|\sigma|^q \leq C(1 + |m|^p)}{\sup} \hspace{1mm} \left( \sigma m - \varphi_{N,K,i,a}^*(\sigma) \right) - \underset{|\sigma|^q \leq C(1 + |m|^p)}{\sup} \hspace{1mm}  \left( \sigma m - \tilde{\varphi}^*(\sigma) \right) \right| \\
& = \underset{|\sigma|^q \leq C(1 + |m|^p)}{\sup} \hspace{1mm} \left( \sigma m - \varphi_{N,K,i,a}^*(\sigma) \right) - \underset{|\sigma|^q \leq C(1 + |m|^p)}{\sup} \hspace{1mm}  \left( \sigma m - \tilde{\varphi}^*(\sigma) \right) .
\end{align*}
By elementary properties of the Legendre transform, there is~$\tilde \sigma$ such that
\begin{align*}
  \underset{|\sigma|^q \leq C(1 + |m|^p)}{\sup} \hspace{1mm} \left( \sigma m - \varphi_{N,K,i,a}^*(\sigma) \right) = \left( \tilde \sigma m - \varphi_{N,K,i,a}^*( \tilde \sigma) \right).
\end{align*}
Then we get
\begin{align*}
&  \underset{|\sigma|^q \leq C(1 + |m|^p)}{\sup} \hspace{1mm} \left( \sigma m - \varphi_{N,K,i,a}^*(\sigma) \right) - \underset{|\sigma|^q \leq C(1 + |m|^p)}{\sup} \hspace{1mm}  \left( \sigma m - \tilde{\varphi}^*(\sigma) \right) \\
& \leq \left( \tilde \sigma m - \varphi_{N,K,i,a}^*( \tilde \sigma) \right) -  \left( \tilde\sigma m - \tilde{\varphi}^*(\tilde \sigma) \right) \\
 & = \tilde{\varphi}^*(\tilde \sigma) - \varphi_{N,K,i,a}^*( \tilde \sigma)  \\
 & \leq \underset{|\sigma|^q \leq C(1 + |m|^p)}{\sup} \hspace{1mm} |\varphi_{N,K,i,a}^*(\sigma) - \tilde{\varphi}^*(\sigma)|.
\end{align*}

Hence we get overall that
\begin{align*}
&|\varphi_{N,K,i,a}(m) - \tilde{\varphi}(m)| \\
& = \left|\underset{|\sigma|^q \leq C(1 + |m|^p)}{\sup} \hspace{1mm} \left( \sigma m - \varphi_{N,K,i,a}^*(\sigma) \right) - \underset{|\sigma|^q \leq C(1 + |m|^p)}{\sup} \hspace{1mm}  \left( \sigma m - \tilde{\varphi}^*(\sigma) \right) \right| \\
& \leq \underset{|\sigma|^q \leq C(1 + |m|^p)}{\sup} \hspace{1mm} |\varphi_{N,K,i,a}^*(\sigma) - \tilde{\varphi}^*(\sigma)| \\
&= \underset{|\sigma|^q \leq C(1 + |m|^p)}{\sup} \hspace{1mm} \left| \frac{1}{K}\underset{j = (i-1)K + 1}{\stackrel{iK}{\sum}} \hspace{1mm} \varphi^*(\sigma - a_{j/N}) - \mathbb{E}[\varphi^*(\sigma - a)] \right|
\end{align*}
Assume now that~$|m| \leq m_0$ for some positive number~$m_0$. Then we have that
\begin{align*}
&\sup_{|m| \leq m_0} |\varphi_{N,K,i,a}(m) - \tilde{\varphi}(m)| \\
& \leq \sup_{|m| \leq m_0} \underset{|\sigma|^q \leq C(1 + |m|^p)}{\sup} \hspace{1mm} \left| \frac{1}{K}\underset{j = (i-1)K + 1}{\stackrel{iK}{\sum}} \hspace{1mm} \varphi^*(\sigma - a_{j/N}) - \mathbb{E}[\varphi^*(\sigma - a)] \right| \\
& =  \underset{|\sigma|^q \leq C(1 + |m_0|^p)}{\sup} \hspace{1mm} \left| \frac{1}{K}\underset{j = (i-1)K + 1}{\stackrel{iK}{\sum}} \hspace{1mm} \varphi^*(\sigma - a_{j/N}) - \mathbb{E}[\varphi^*(\sigma - a)] \right|.
\end{align*}
If we denote by $\nu$ is the distribution of the random variable $a$, and $\nu^{N,K,i,a}$ the (random) probability measure given by
$$\nu^{N,K,i,a}(dx) := \frac{1}{K}\underset{j = (i-1)K + 1}{\stackrel{iK}{\sum}} \hspace{1mm} \delta_{a_i},$$
we have
\begin{align*}
& \frac{1}{K} \underset{j = (i-1)K + 1}{\stackrel{iK}{\sum}} \hspace{1mm} \varphi^*(\sigma - a_{j/N}) - \mathbb{E}[\varphi^*(\sigma - a)]   \\
& = \int{\varphi^*(\sigma - x)\nu^{N,K,i,a}(dx)} - \int{\varphi^*(\sigma - x)\nu(dx)} \\
& = \int \left(  \varphi^*(\sigma - x) - \varphi^*(\sigma - y) \right) \pi (dx , d y),
\end{align*}
where $\nu^{N,K,i,a}$ is the empirical measure associated to the $a_i$ and~$\pi*(da, d \tilde a)$ is an arbitrary coupling of the measures~$\nu^{N,K,i,a}(dx)$ and~$\nu(d \tilde y)$. \medskip

 Our main argument will be that this random measure converges almost surely to $\nu$ (this is a variant of the strong law of large numbers).

The function $a \longrightarrow \varphi^*(\sigma - a)$ is $C(L)(1 + |\sigma|^{q-1})$-Lipschitz on $[-L, L]$. This follows from the fact that $(\varphi^*)'(\sigma) \leq C(1 + |\sigma|^{q-1})$, which we deduce now. Indeed, by duality of the Legendre transform we have~$(\varphi^*)'(\sigma)=m$. By part (iii) of Propositio~\ref{lp_bounds_energy}, we have that
\begin{align*}
  |\sigma | \geq \frac{1}{C} \left(|m|^{p-1} -1 \right), 
\end{align*}
which yields
\begin{align*}
  \left|(\varphi^*)'(\sigma) \right| = |m| \leq C \left( |\sigma| +1\right)^{\frac{1}{p-1}} \leq C \left( |\sigma|^{\frac{1}{p-1}} +1\right),
\end{align*}
which yields the bound $(\varphi^*)'(\sigma) \leq C(1 + |\sigma|^{q-1})$ by observing that~$q -1 = \frac{p}{p-1} - \frac{p-1}{p-1} = {\frac{1}{p-1}}$.
By this fact we can use the Kantorovitch-Rubinstein duality formula to obtain
\begin{align}
\underset{|\sigma|^q \leq C(1 + |m|^p)}{\sup} &\hspace{1mm} \left| \frac{1}{K}\underset{j = (i-1)K + 1}{\stackrel{iK}{\sum}} \hspace{1mm} \varphi^*(\sigma - a_{j/N}) - \mathbb{E}[\varphi^*(\sigma - a)] \right| \notag \\
&\leq C(1 + |m|^p)^{(q-1)/q}W_1(\nu^{N,K,i,a}, \nu) \notag \\
&= C(1 + |m|)W_1(\nu^{N,K,i,a}, \nu),
\end{align}
where~$W_1(\nu^{N,K,i,a}, \nu)$ denotes the ~$L^1$-Wasserstein distance between the measures~$\nu^{N,K,i,a}$ and~$\nu$. By Varadarajan's Theorem (see section 11.4 in [Du]), almost surely $W_1(\nu^K, \nu) \longrightarrow 0$, and therefore almost surely, $\varphi_{N,K,i,a}(m)$ converges to $\tilde{\varphi}(m)$ uniformly on bounded sets. This proves (\ref{convergence_free_energy_as}). \medskip

Since the $a_q$ are bounded random variables, $\varphi_{N,K,i,a}(m)$ is also bounded, and an application of the Dominated Convergence Theorem yields (\ref{convergence_free_energy_exp}). \medskip

Finally, we prove~(\ref{free_energy_limit_inf}): since the expectation of $\varphi_{N,K,i,a}(m)$ only depends on $K$, we drop the subscript and consider
$$\varphi_K(m) = \underset{\sigma \in \R}{\sup} \left(\sigma m - \frac{1}{K} \underset{j = 1}{\stackrel{K}{\sum}} \varphi^*(\sigma - a_j) \right),$$
with the $a_i$ a sequence of iid random variables. We then have, for any $K_1, K_2 \in \mathbb{N}$
\begin{align}
(K_1 + K_2)\varphi_{K_1 + K_2}(m) &= (K_1 + K_2)\underset{\sigma \in \R}{\sup} \left(\sigma m - \frac{1}{K_1 + K_2} \underset{j = 1}{\stackrel{K_1 + K_2}{\sum}} \varphi^*(\sigma - a_j) \right) \notag \\
&= \underset{\sigma \in \R}{\sup} \left((K_1 + K_2)\sigma m - \underset{j = 1}{\stackrel{K_1 + K_2}{\sum}} \varphi^*(\sigma - a_j) \right) \notag \\
&\leq \underset{\sigma \in \R}{\sup} \left(K_1\sigma m - \underset{j = 1}{\stackrel{K_1}{\sum}} \varphi^*(\sigma - a_j) \right) \notag \\
& \hspace{4mm} + \underset{\sigma \in \R}{\sup} \left(K_2\sigma m - \underset{j = K_1 + 1}{\stackrel{K_1 + K_2}{\sum}} \varphi^*(\sigma - a_j) \right) \notag 
\end{align}
So that
$$(K_1 + K_2) \mathbb{E}[\varphi_{K_1 + K_2}(m)] \leq K_1\mathbb{E}[\varphi_{K_1}(m)] + K_2\mathbb{E}[\varphi_{K_2}(m)].$$
Therefore $K \hspace{1mm} \mathbb{E}[\varphi_{K}(m)]$ is a sub-additive sequence, and an application of the sub-additivity theorem yields
$$\mathbb{E}[\varphi_{K}(m)] \longrightarrow \underset{K}{\inf} \hspace{1mm} \mathbb{E}[\varphi_{K}(m)].$$
Since $\mathbb{E}[\varphi_{K}(m)]$ also converges to $\tilde{\varphi}(m)$, this implies (\ref{free_energy_limit_inf}).

\end{proof}

We will also need convergence of the free energy when taking the average along some $L^p$ function : 

\begin{lem} \label{conv_energy_lp} Let $c(K) := \mathbb{E}[W_1(\nu^{N,K,i,a}, \nu)^q]$, which only depends on $K$, and assume that
\begin{equation} \label{assumption_k_borell_cantelli}
\underset{\ell}{\sum} \hspace{1mm} c(K_{\ell}) \hspace{3mm} < \infty
\end{equation}
for some $\epsilon > 0$. Then, almost surely, for any sequence of adapted step functions $\bar{\zeta}_{\ell}$ that is bounded in $L^p$, we have
$$\underset{\ell}{\lim} \hspace{1mm} \frac{1}{M}\underset{i = 1}{\stackrel{M}{\sum}} \hspace{1mm} |\varphi_{N,K,i,a}(\zeta_{\ell, i}) - \tilde{\varphi}(\zeta_{\ell, i})| = 0.$$
(By adapted step function, we mean that $\bar{\zeta}_{\ell}$ is the step function associated to an element of $Y_{\ell}$, ie that its mesh size is $1/M_{\ell}$.)
\end{lem}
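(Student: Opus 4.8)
The plan is to feed the pointwise comparison estimate obtained in the proof of Proposition~\ref{conv_free_energy} into a first-moment (Borel--Cantelli type) argument for the averaged Wasserstein errors. Recall that, after localizing the two variational formulas, that proof produced, for every $m\in\R$ and every realization of the field, the bound
\[
 \bigl|\varphi_{N,K,i,a}(m) - \tilde{\varphi}(m)\bigr| \;\leq\; C\,(1+|m|)\, W_1\bigl(\nu^{N,K,i,a},\nu\bigr),
\]
with $C$ depending only on $\psi$ and $L$, and in particular independent of $i$, $K$ and $N$. Summing over $i\in\{1,\dots,M\}$ and applying H\"older's inequality with the conjugate exponents $p$ and $q=\tfrac{p}{p-1}$ gives
\[
 \frac{1}{M}\sum_{i=1}^{M} \bigl|\varphi_{N,K,i,a}(\zeta_{\ell,i}) - \tilde{\varphi}(\zeta_{\ell,i})\bigr|
 \;\leq\; C\Bigl(\frac{1}{M}\sum_{i=1}^{M}(1+|\zeta_{\ell,i}|)^p\Bigr)^{\!1/p}\Bigl(\frac{1}{M}\sum_{i=1}^{M} W_1\bigl(\nu^{N,K,i,a},\nu\bigr)^q\Bigr)^{\!1/q}.
\]
Since $\bar\zeta_\ell$ is an adapted step function with mesh $1/M_\ell$, we have $\frac1M\sum_{i=1}^M(1+|\zeta_{\ell,i}|)^p \lesssim 1+\|\bar\zeta_\ell\|_{L^p}^p$, which is bounded uniformly in $\ell$ by hypothesis. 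Hence it suffices to show that, almost surely,
\[
 S_\ell \;:=\; \frac{1}{M_\ell}\sum_{i=1}^{M_\ell} W_1\bigl(\nu^{N_\ell,K_\ell,i,a},\nu\bigr)^q \;\longrightarrow\; 0 \qquad \text{as } \ell\to\infty .
\]

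I would establish this last convergence purely by a first-moment computation. For fixed $\ell$ and any $i\in\{1,\dots,M_\ell\}$, the empirical measure $\nu^{N_\ell,K_\ell,i,a}$ is built from $K_\ell$ i.i.d.\ copies of $a$, so its law, and therefore $\mathbb{E}\bigl[W_1(\nu^{N_\ell,K_\ell,i,a},\nu)^q\bigr]$, depends only on $K_\ell$; by definition this expectation equals $c(K_\ell)$, whence $\mathbb{E}[S_\ell]=c(K_\ell)$ by linearity of the expectation. By Tonelli's theorem and the standing hypothesis~\eqref{assumption_k_borell_cantelli},
\[
 \mathbb{E}\Bigl[\,\sum_{\ell} S_\ell\,\Bigr] \;=\; \sum_{\ell} c(K_\ell) \;<\; \infty ,
\]
so $\sum_\ell S_\ell<\infty$ almost surely, and in particular $S_\ell\to 0$ almost surely on a single event of full probability. (Equivalently, Markov's inequality gives $\mathbb{P}(S_\ell>\varepsilon)\le c(K_\ell)/\varepsilon$, and then~\eqref{assumption_k_borell_cantelli} together with the Borel--Cantelli lemma, applied along a sequence $\varepsilon\downarrow 0$, yields the same statement.) Combining this with the H\"older estimate of the first paragraph completes the argument.

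The only point requiring care is the uniformity over the sequence $\bar\zeta_\ell$, which is allowed to depend on the realization of $(a_q)$ (it is adapted, but otherwise arbitrary subject to the $L^p$ bound). This is exactly why the H\"older splitting is used: the ``bad'' event $\{\sum_\ell S_\ell=\infty\}$ is a fixed null set that makes no reference to $\bar\zeta_\ell$, while on its complement the $\zeta_{\ell,i}$'s enter only through $\|\bar\zeta_\ell\|_{L^p}\le C$. Hence, off this single null set, the bound holds simultaneously for every admissible choice of $L^p$-bounded adapted step functions, which is the content of the lemma. I do not anticipate any further obstacle: the remaining manipulations --- the elementary inequality $(1+t)^p\lesssim 1+t^p$ and the identification of $\frac1M\sum_i|\zeta_{\ell,i}|^p$ with $\|\bar\zeta_\ell\|_{L^p}^p$ --- are routine.
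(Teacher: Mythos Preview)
Your proof is correct and follows essentially the same route as the paper: the same pointwise bound $|\varphi_{N,K,i,a}(m)-\tilde\varphi(m)|\le C(1+|m|)W_1(\nu^{N,K,i,a},\nu)$ from Proposition~\ref{conv_free_energy}, the same H\"older splitting, and the same first-moment/Borel--Cantelli argument to show $S_\ell\to0$ almost surely. Your Tonelli formulation and your explicit remark that the null set is independent of the choice of $\bar\zeta_\ell$ are slight improvements in presentation, but the underlying argument is identical.
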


\begin{rmq} \label{remark_cK}
Note that, since $W_1(\nu^{N,K,i,a}, \nu)$ almost surely converges to zero and is a bounded random variable (since the $a_q$ are bounded), by the Dominated Convergence Theorem, $c(K) \longrightarrow 0$ as $K$ goes to infinity.
\end{rmq}

\begin{proof}
With the same arguments as in the proof of Proposition \ref{conv_free_energy}, we have
\begin{equation}
\frac{1}{M}\underset{i = 1}{\stackrel{M}{\sum}} \hspace{1mm} |\varphi_{N,K,i,a}(\zeta_{\ell, i}) - \tilde{\varphi}(\zeta_{\ell, i})| \leq \frac{1}{M}\underset{i = 1}{\stackrel{M}{\sum}} \hspace{1mm} C(1 + |\zeta_{\ell, i}|)W_1(\nu^{N,K,i,a}, \nu).
\end{equation}
Using Holder's inequality, this yields
\begin{align}
\frac{1}{M}\underset{i = 1}{\stackrel{M}{\sum}} &\hspace{1mm} |\varphi_{N,K,i,a}(\zeta_{\ell, i}) - \tilde{\varphi}(\zeta_{\ell, i})| \notag \\
&\leq \left(\frac{1}{M}\underset{i = 1}{\stackrel{M}{\sum}} \hspace{1mm} C(1 + |\zeta_{\ell, i}|)^p \right)^{1/p}\left(\frac{1}{M}\underset{i = 1}{\stackrel{M}{\sum}} \hspace{1mm} W_1(\nu^{N,K,i,a}, \nu)^q \right)^{1/q} \notag \\
&\leq C(1 + ||\bar{\zeta}_{\ell}||^p_p)\left(\frac{1}{M}\underset{i = 1}{\stackrel{M}{\sum}} \hspace{1mm} W_1(\nu^{N,K,i,a}, \nu)^q \right)^{1/q}.
\end{align}
Therefore, all we need to show is that $\frac{1}{M}\underset{i = 1}{\stackrel{M}{\sum}} \hspace{1mm} W_1(\nu^{N,K,i,a}, \nu)^q$ converes almost surely to zero. By Markov's inequality, for any $\epsilon > 0$, we have
\begin{align}
\mathbb{P}\left[\frac{1}{M} \underset{i = 1}{\stackrel{M}{\sum}} \hspace{1mm} W_1(\nu^{N,K,i,a}, \nu)^q > \epsilon \right] & \leq \frac{1}{\epsilon}\mathbb{E}\left[\frac{1}{M}\underset{i = 1}{\stackrel{M}{\sum}} \hspace{1mm} W_1(\nu^{N,K,i,a}, \nu)^q \right] \notag \\
&= \frac{c(K)}{\epsilon} \notag
\end{align}
Applying the Borell-Cantelli Lemma with assumption \eqref{assumption_k_borell_cantelli} then yields the result.
\end{proof}

\subsection{Proof of Proposition \ref{convergence_macro_hydro}}\label{s_proof_of_conv_ODE_PDE}

Our proof will follow the same structure as the one in [GOVW]. The main differences are the use of Proposition \ref{conv_free_energy}, which is needed because of the additional random linear term in the Hamiltonian, and the use of $L^p$ bounds on the data (rather than $L^2$). They mostly appear in the proof of Lemma 5.14. The proof will rely on the following five lemmas.

\begin{lem} \label{bounds_sol_macro}
Consider the sequence $(\eta^{\ell})$ of solutions of (\ref{eqn_macro_seq}), subject to (\ref{bound_eta}). There exists $C < \infty$ (independent of $\ell$) such that
\begin{equation} \label{bnd_macro_unif}
\underset{0 \leq t \leq T}{\sup} \hspace{1mm} ||\eta^{\ell}(t)||_{L^p} \leq C
\end{equation}

\begin{equation} \label{bound_der_macro}
\int_0^T{\left \langle \frac{d\eta^{\ell}}{dt}(t), \bar{A}^{-1}\frac{d\eta^{\ell}}{dt}(t) \right \rangle_Y \hspace{1mm} dt} \leq C
\end{equation}
\end{lem}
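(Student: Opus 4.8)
The plan is to read off both bounds from the energy identity~\eqref{energy2} for the macroscopic evolution, combined with the polynomial bounds on the coarse-grained free energy from Proposition~\ref{lp_bounds_energy}(iv). Since $\eta^\ell$ solves~\eqref{macro_evol} (with initial datum $\eta^\ell_0$), one has, along the flow,
\[
  \frac{d}{dt}\bar H_{N,K,a}(\eta^\ell(t)) = \big\langle \nabla\bar H_{N,K,a}(\eta^\ell),\tfrac{d\eta^\ell}{dt}\big\rangle_Y = -\big\langle \tfrac{d\eta^\ell}{dt}, \bar A^{-1}\tfrac{d\eta^\ell}{dt}\big\rangle_Y \le 0 ,
\]
because $\bar A$ (hence $\bar A^{-1}$) is positive; integrating over $[0,T]$ yields~\eqref{energy2}, and in particular $t\mapsto \bar H_{N,K,a}(\eta^\ell(t))$ is nonincreasing.

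Next I would bound $\bar H_{N,K,a}$ from below and $\bar H_{N,K,a}(\eta^\ell_0)$ from above, uniformly in $\ell$. Exactly as in the verification of assumptions~(ii)--(iii) of Theorem~\ref{thm_hydro_bounds} carried out above, the term $\tfrac1N\log Z$ is bounded above and below by constants depending only on $\psi$ and $L$, so we may work with $\bar H_{N,K,a}(y) = \tfrac1M\sum_{i=1}^M \psi_{N,K,i,a}(y_i)$. By Proposition~\ref{lp_bounds_energy}(iv) there are $K_0$ and $C$, depending only on $\psi$ and $\sup|a_i|\le L$, with $\tfrac1C(|m|^p-1) \le \psi_{N,K,i,a}(m) \le C(|m|^p-1)$ for all $K\ge K_0$. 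The lower bound gives $\psi_{N,K,i,a}\ge -\tfrac1C$, hence $\bar H_{N,K,a}\ge -\beta$ with $\beta$ uniform. For the upper bound, identifying $\eta^\ell_0$ with its step function $\bar\eta^\ell_0$ so that $\tfrac1M\sum_i|\eta^\ell_{0,i}|^p = \|\bar\eta^\ell_0\|_{L^p}^p$, the upper polynomial bound gives
\[
  \bar H_{N,K,a}(\eta^\ell_0) \le \frac1M\sum_{i=1}^M C\big(|\eta^\ell_{0,i}|^p-1\big) \le C\,\|\bar\eta^\ell_0\|_{L^p}^p \le C ,
\]
the last step by~\eqref{bound_eta}.

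Given these two facts, \eqref{bound_der_macro} follows at once from~\eqref{energy2}:
\[
  \int_0^T \big\langle \tfrac{d\eta^\ell}{dt}, \bar A^{-1}\tfrac{d\eta^\ell}{dt}\big\rangle_Y\,dt = \bar H_{N,K,a}(\eta^\ell_0) - \bar H_{N,K,a}(\eta^\ell(T)) \le C + \beta .
\]
For~\eqref{bnd_macro_unif}, combine the monotonicity of $\bar H_{N,K,a}$ along the flow with the lower polynomial bound: for every $t\in[0,T]$,
\[
  \frac1C\big(\|\bar\eta^\ell(t)\|_{L^p}^p - 1\big) \le \frac1M\sum_{i=1}^M \psi_{N,K,i,a}(\eta^\ell_i(t)) = \bar H_{N,K,a}(\eta^\ell(t)) \le \bar H_{N,K,a}(\eta^\ell_0) \le C ,
\]
so $\sup_{0\le t\le T}\|\eta^\ell(t)\|_{L^p} \le C$.

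There is no serious obstacle in this argument; the only point requiring attention is that every constant be uniform in $\ell$, i.e.\ independent of the triple $(N_\ell, M_\ell, K_\ell)$. This is guaranteed because $K_0$ and $C$ in Proposition~\ref{lp_bounds_energy}(iv) depend only on $\psi$ and the a.s.\ bound $\sup|a_i|\le L$, because $K_\ell\to\infty$ so that $K_\ell\ge K_0$ for all but finitely many $\ell$ (the finitely many exceptions only enlarge $C$), and because the reduction removing the $\tfrac1N\log Z$ term is itself uniform in $\ell$.
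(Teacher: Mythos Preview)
Your proof is correct and follows essentially the same approach as the paper's: both read off the two bounds from the energy identity~\eqref{energy2}, using the polynomial bounds of Proposition~\ref{lp_bounds_energy}(iv) together with~\eqref{bound_eta} to control $\bar H_{N,K,a}(\eta^\ell_0)$ from above and $\inf_y\bar H_{N,K,a}(y)$ from below. The only cosmetic difference is that the paper justifies the uniform lower bound on $\bar H$ via the uniform convergence of $\psi_K$ to the strictly convex $\varphi_K$, whereas you invoke the lower polynomial bound directly; your route is in fact slightly more explicit.
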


In particular, (\ref{bnd_macro_unif}) implies that the $\eta^{\ell}$ are uniformly bounded in $L^{\infty}(L^p)$, and therefore that, for the sequence of associated step functions $\bar{\eta}^{\ell}$, there is a subsequence such that
$$\bar{\eta}^{\ell} \longrightarrow \eta_* \hspace{3mm} \text{weak}-\ast \text{ in } \hspace{2mm} L^{\infty}(L^p) = (L^1(L^q))^*$$
for some limit $\eta_*$. We will now state several lemmas, which we will use to show that $\eta_*$ must be the unique weak solution of (\ref{hydro_equn}).

\begin{lem} \label{inclusions}
Any limit $\eta_*$ of $\eta^{\ell}$ for the weak-* topology on $L^{\infty}(L^p)$, we have
\begin{equation}
\eta_* \in L^{\infty}(L^p), \hspace{5mm} \frac{\partial \eta_*}{\partial t} \in L^2(H^{-1}), \hspace{5mm} \tilde{\varphi}'(\eta_*) \in L^{\infty}(L^q).
\end{equation}
\end{lem}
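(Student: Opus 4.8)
The plan is to read off all three memberships from the uniform bounds already recorded in Lemma~\ref{bounds_sol_macro}, combined with the polynomial bounds of Proposition~\ref{lp_bounds_energy}, using only soft functional-analytic arguments. The first inclusion is immediate: by hypothesis $\eta_*$ is a weak-$*$ limit, along a subsequence, of the step functions $\bar\eta^\ell$ in $L^\infty(L^p) = (L^1(L^q))^*$, so lower semicontinuity of the norm under weak-$*$ convergence together with the uniform bound~\eqref{bnd_macro_unif} gives $\|\eta_*\|_{L^\infty(L^p)} \le \liminf_\ell \|\bar\eta^\ell\|_{L^\infty(L^p)} \le C$.

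For $\partial_t\eta_* \in L^2(H^{-1})$, I would first record the identity $\langle \bar A^{-1} z, z\rangle_Y = \frac1N \langle A^{-1}(NP^t z), NP^t z\rangle_X$, valid for every $z \in Y$, which follows from $\bar A^{-1} = PA^{-1}NP^t$ and the adjointness relation $\langle Px, z\rangle_Y = \langle x, P^t z\rangle_X$. Since $NP^t z$ is the microscopic vector obtained by repeating each coordinate of $z$ over its block of $K$ sites, its associated step function (mesh $1/N$) coincides with the step function $\bar z$ of $z$ (mesh $1/M$); combining the identity with part~(i) of Lemma~\ref{compare_norms} yields $\|\bar z\|_{H^{-1}}^2 \lesssim \langle \bar A^{-1}z, z\rangle_Y$, uniformly in $M$ and $N$. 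Applying this with $z = \frac{d\eta^\ell}{dt}(t)$ and using that passing to the step function commutes with $\partial_t$, the bound~\eqref{bound_der_macro} becomes $\int_0^T \|\partial_t \bar\eta^\ell(t)\|_{H^{-1}}^2\, dt \le C$. Thus $\partial_t\bar\eta^\ell$ is bounded in the Hilbert space $L^2([0,T];H^{-1}(\T))$; extracting a further subsequence it converges weakly there to some $\chi$, and since $\bar\eta^\ell \to \eta_*$ in the sense of distributions on $(0,T)\times\T$, uniqueness of distributional limits forces $\chi = \partial_t\eta_*$, whence $\partial_t\eta_* \in L^2(H^{-1})$.

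For $\tilde\varphi'(\eta_*) \in L^\infty(L^q)$ I would simply invoke the bound $|\tilde\varphi'(m)| \le C(1 + |m|^{p-1})$ from part~(ii) of Proposition~\ref{lp_bounds_energy}: since $q = \frac{p}{p-1}$, so that $(p-1)q = p$, for a.e.\ $t \in [0,T]$ one has $\|\tilde\varphi'(\eta_*(t,\cdot))\|_{L^q}^q \lesssim \int_\T \big(1 + |\eta_*(t,\theta)|^p\big)\, d\theta \lesssim 1 + \|\eta_*(t,\cdot)\|_{L^p}^p \le C$, using the first inclusion in the last step. \textbf{The only slightly delicate point} is the comparison between the $\bar A^{-1}$-seminorm on $Y$ and the $H^{-1}$-norm of the associated step function, and checking that it is uniform in the coarse-graining parameters; this is essentially the macroscopic analogue of Lemma~\ref{compare_norms} and presents no real difficulty, while everything else (reflexivity of $L^2(H^{-1})$, uniqueness of distributional limits, polynomial growth of $\tilde\varphi$) is routine.
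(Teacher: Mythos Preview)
Your proof is correct and follows essentially the same approach as the paper: the first inclusion by weak-$*$ lower semicontinuity of the norm, the third via the polynomial bound on $\tilde\varphi'$ from Proposition~\ref{lp_bounds_energy}, and the second by translating the bound~\eqref{bound_der_macro} into a uniform $L^2(H^{-1})$ bound on $\partial_t\bar\eta^\ell$ using Lemma~\ref{compare_norms} together with the identity $\overline{NP^t\eta^\ell} = \bar\eta^\ell$. Your justification of the second inclusion is in fact slightly more careful than the paper's, which simply invokes ``weak lower semicontinuity'' for $\int_0^T\|\partial_t\eta_*\|_{H^{-1}}^2\,dt$ without spelling out the identification of the weak limit of $\partial_t\bar\eta^\ell$ with $\partial_t\eta_*$; your extraction of a weakly convergent subsequence in $L^2(H^{-1})$ and appeal to distributional uniqueness makes this step explicit.
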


\begin{lem} [Inequality formulation for convex potential] 
\label{lem_diff_inequality_convex_potential}
Assume $\bar{H}_{N,K,a}$ is convex. Then $\eta$ is a solution of (\ref{eqn_macro_seq}) iff for all $\xi \in Y_M$ and smooth $\beta : [0,T] \rightarrow \mathbb{R}_+$ with $\beta(0) = \beta(T) =0$, we have 
\begin{equation} \label{ineq_macro}
\int_0^T{\bar{H}_{N,K,a}(\eta)\beta(t)dt} \leq \int_0^T{\bar{H}_{N,K,a}(\eta + \xi)\beta(t)dt} - \int_0^T{\langle \xi, (\bar{A})^{-1}\eta \rangle_Y\dot{\beta}(t)dt}.
\end{equation}

Similarly, $\zeta$ satisfies (\ref{hydro_equn}) if and only if 
\begin{align} \label{ineq_hydro}
&\int_0^T{\int_{\mathbb{T}}{\tilde{\varphi}(\zeta(t,\theta))\beta(t)d\theta}dt} \notag \\
&\leq \int_0^T{\int_{\mathbb{T}}{\tilde{\varphi}(\zeta(t,\theta) + \xi(\theta))\beta(t)d\theta}dt} - \int_0^T{\langle \xi(\cdot), \zeta(t,\cdot) \rangle_{\textsl{H}^{-1}}\dot{\beta}(t)dt}
\end{align}
for all $\xi \in \textsl{L}^p(\mathbb{T})$ and smooth $\beta : [0,T] \rightarrow [0, \infty)$ that takes value $0$ at $0$ and $T$.
\end{lem}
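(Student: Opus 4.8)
The statement is a standard "variational/EDI (energy dissipation inequality) reformulation" of a gradient-flow ODE/PDE when the driving energy is convex. For the ODE part, the equation $\tfrac{d\eta}{dt} = -\bar A\nabla_Y\bar H(\eta)$ is the gradient flow of $\bar H$ with respect to the metric induced by $\bar A^{-1}$ on $Y$; the claim is that this is equivalent to the family of integrated convexity inequalities \eqref{ineq_macro}. I would prove the two implications separately.

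\textbf{From the ODE to the inequality.} Suppose $\eta$ solves \eqref{eqn_macro_seq}. Fix $\xi\in Y_M$ and $\beta\geq 0$ smooth with $\beta(0)=\beta(T)=0$. By convexity of $\bar H_{N,K,a}$,
\begin{equation*}
\bar H(\eta+\xi) \geq \bar H(\eta) + \langle \nabla_Y\bar H(\eta),\xi\rangle_Y
= \bar H(\eta) - \langle \bar A^{-1}\tfrac{d\eta}{dt},\xi\rangle_Y,
\end{equation*}
using $\nabla_Y\bar H(\eta) = -\bar A^{-1}\tfrac{d\eta}{dt}$, which follows from \eqref{eqn_macro_seq} and symmetry of $\bar A$. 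Multiply by $\beta(t)\geq 0$, integrate over $[0,T]$, and integrate by parts in $t$ in the last term: since $\beta$ vanishes at the endpoints, $\int_0^T \langle \bar A^{-1}\tfrac{d\eta}{dt},\xi\rangle_Y\,\beta\,dt = -\int_0^T\langle \bar A^{-1}\eta,\xi\rangle_Y\,\dot\beta\,dt$. Rearranging gives \eqref{ineq_macro}. Note this direction does not even use convexity in an essential way beyond the first-order inequality; but the converse does.

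\textbf{From the inequality to the ODE.} Conversely, suppose \eqref{ineq_macro} holds for all admissible $\xi,\beta$. The trick is to test with $\xi = \epsilon\, v$ for fixed $v\in Y_M$ and send $\epsilon\to 0^+$, then repeat with $\epsilon\to 0^-$ (equivalently $\xi=-\epsilon v$). Dividing by $\epsilon>0$, using $\bar H(\eta+\epsilon v) = \bar H(\eta) + \epsilon\langle\nabla_Y\bar H(\eta),v\rangle_Y + o(\epsilon)$ (smoothness of $\bar H$, which holds since it is a finite sum of the smooth $\psi_{N,K,a}$), and passing to the limit under the integral (dominated convergence, using the $L^p$ bounds on $\eta$ and the polynomial bounds of Proposition~\ref{lp_bounds_energy}), one gets
\begin{equation*}
\int_0^T \langle \nabla_Y\bar H(\eta),v\rangle_Y\,\beta\,dt \leq \int_0^T \langle \bar A^{-1}\eta, v\rangle_Y\,(-\dot\beta)\,dt
= \int_0^T \langle \bar A^{-1}\tfrac{d\eta}{dt},v\rangle_Y\,\beta\,dt
\end{equation*}
after an integration by parts (justified by \eqref{bound_der_macro}). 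Running the same argument with $v\to -v$ gives the reverse inequality, hence equality for all $v$ and all $\beta\geq0$; since $v$ and $\beta$ are arbitrary this forces $\bar A^{-1}\tfrac{d\eta}{dt} = -\nabla_Y\bar H(\eta)$ a.e.\ $t$, i.e.\ \eqref{eqn_macro_seq}. One should also check the initial condition is built into the weak formulation / is inherited from the a priori setup; in the ODE case this is automatic since the solution is classical.

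\textbf{The PDE part.} The argument for \eqref{hydro_equn} $\iff$ \eqref{ineq_hydro} is formally identical, with $(Y_M,\langle\cdot,\cdot\rangle_Y,\bar A^{-1})$ replaced by $(L^p(\mathbb T),\ \langle\cdot,\cdot\rangle_{H^{-1}},\ -\partial_\theta^2)$: the equation $\partial_t\zeta = \partial_\theta^2\tilde\varphi'(\zeta)$ reads $\partial_t\zeta = -(-\partial_\theta^2)\big(\tilde\varphi'(\zeta)\big)$, i.e.\ it is the $H^{-1}$-gradient flow of $\zeta\mapsto\int_{\mathbb T}\tilde\varphi(\zeta)\,d\theta$, whose first variation is $\tilde\varphi'(\zeta)$, and $(-\partial_\theta^2)^{-1}$ is exactly the $H^{-1}$ inner product pairing. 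Convexity of $\tilde\varphi$ (Proposition~\ref{lp_bounds_energy}(i)) plays the role of convexity of $\bar H$. One direction uses $\int_{\mathbb T}\tilde\varphi(\zeta+\xi)\geq\int_{\mathbb T}\tilde\varphi(\zeta)+\int_{\mathbb T}\tilde\varphi'(\zeta)\xi$ plus integration by parts in $t$; the converse uses the $\xi=\pm\epsilon v$ perturbation, now requiring the regularity recorded in the weak-solution definition ($\zeta\in L^\infty_tL^p_\theta$, $\partial_t\zeta\in L^2_tH^{-1}_\theta$, $\tilde\varphi'(\zeta)\in L^\infty_tL^q_\theta$) to justify differentiating $\epsilon\mapsto\int\tilde\varphi(\zeta+\epsilon v)$ and passing to the limit (here $\xi\in L^p$ and $\tilde\varphi'(\zeta)\in L^q$ pair by Hölder).

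\textbf{Main obstacle.} The delicate point is the limit $\epsilon\to0$ in the converse direction for the PDE: one must differentiate $\int_0^T\!\!\int_{\mathbb T}\tilde\varphi(\zeta+\epsilon v)\beta\,d\theta\,dt$ at $\epsilon=0$ and bring the derivative inside. Since $\tilde\varphi$ has only polynomial ($p$-th power) growth and $\zeta$ is merely $L^\infty_tL^p_\theta$, the difference quotients $\tfrac1\epsilon(\tilde\varphi(\zeta+\epsilon v)-\tilde\varphi(\zeta))$ are dominated by $\tilde\varphi'$ evaluated at $\zeta+\theta_\epsilon v$, whose $L^q$ norm one controls using $|\tilde\varphi'(s)|\lesssim 1+|s|^{p-1}$ and $\tfrac{p-1}{q}\cdot q = p-1$, so $|\tilde\varphi'(\zeta+v)|^q\lesssim 1+|\zeta|^p+|v|^p\in L^1_{t,\theta}$ when $v\in L^p$ and $\zeta\in L^\infty_tL^p_\theta$ — this is exactly why the $L^p$ framework (as opposed to $L^2$) is needed and where the polynomial bounds of Proposition~\ref{lp_bounds_energy} enter. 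Everything else is routine convex-analysis manipulation; I would present the ODE case in full and then remark that the PDE case is identical after the substitution of inner products and the Hölder bound just described.
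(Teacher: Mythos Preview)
Your proposal is correct and follows essentially the same approach as the paper's proof: both directions rely on (i) the convexity inequality $\bar H(\eta+\xi)\geq \bar H(\eta)+\langle\nabla_Y\bar H(\eta),\xi\rangle_Y$ (resp.\ $\tilde\varphi(\zeta+\xi)\geq\tilde\varphi(\zeta)+\tilde\varphi'(\zeta)\xi$) combined with an integration by parts in $t$ for the forward implication, and (ii) the substitution $\xi\to\pm\epsilon\xi$, dividing by $\epsilon$ and letting $\epsilon\to0$ for the converse. The paper presents only the PDE case and is terser about justifying the limit passage, whereas you spell out the dominated-convergence argument via the polynomial bounds of Proposition~\ref{lp_bounds_energy}; but the underlying argument is the same.
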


\begin{lem} \label{lem_convergence} 
Suppose that the sequence $\{\eta_{\ell}\}$ satisfies (\ref{hydro_equn}), (\ref{bnd_macro_unif}) and (\ref{bound_der_macro}) and consider a subsequence such that the associated step functions weak-* converge in $(\textsl{L}^1(\textsl{L}^q))^*$ to a limit $\eta_*$. Moreover, assume that (\ref{assumption_k_borell_cantelli}) holds.

Let $\xi_{\ell} := \pi_{\ell}(\xi + \eta_*) - \eta_{\ell}$, where $\xi$ is an arbitrary $\textsl{L}^p$ function and $\pi_{\ell}$ is the $\textsl{L}^p$ projection onto elements of $Y_{\ell}$. Then we have for almost every realization of the random field $a$

(i) $$\underset{\ell \uparrow \infty}{\liminf} \int_0^T{(\bar{H}_{N,K,a}(\eta_{\ell}(t)) - \frac{1}{N}\log \bar{Z}_{N,K,a})\beta(t)dt} \geq \int_0^T{\beta(t)\int_{\mathbb{T}}{\tilde{\varphi}(\eta_*(t,\theta))d\theta}dt};$$

(ii) $\underset{\ell \uparrow \infty}{\limsup} \int_0^T{(\bar{H}_{N,K,a}(\eta_{\ell}(t) + \xi_{\ell}(t)) - \frac{1}{N}\log \bar{Z}_{N,K,a})\beta(t)dt}$

$\hspace{3cm} \leq \int_0^T{\beta(t)\int_{\mathbb{T}}{\tilde{\varphi}(\eta_*(t,\theta) + \xi(\theta))d\theta}dt};$

(iii) \begin{equation}
\underset{\ell \uparrow \infty}{\lim} \int_0^T{\langle \xi_{\ell}(t), (\bar{A})^{-1}\eta_{\ell}(t)\rangle_Y \dot{\beta}(t)dt} = \int_0^T{\langle \xi(\cdot), \eta_*(t, \cdot)\rangle_{\textsl{H}^{-1}} \dot{\beta}(t)dt}.
\end{equation}
\end{lem}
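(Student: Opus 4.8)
\emph{Plan.} Since $\bar H_{N,K,a}(y)-\tfrac1N\log\bar Z_{N,K,a}=\tfrac1M\sum_{i=1}^M\psi_{N,K,i,a}(y_i)=\int_\T\psi_{N,K,\cdot,a}(\bar y(\theta))\,d\theta$, both (i) and (ii) are statements about the convex functional $u\mapsto\int_0^T\beta(t)\int_\T\psi_{N,K,\cdot,a}(u(t,\theta))\,d\theta\,dt$ and its candidate limit, obtained by putting $\tilde\varphi$ in place of $\psi_{N,K,\cdot,a}$. I would argue in two moves. \textbf{Move 1} (replace the free energy by its limit): for any adapted step function $\bar u$ bounded in $L^p(\T)$ one has $\tfrac1M\sum_i|\psi_{N,K,i,a}(u_i)-\tilde\varphi(u_i)|\to0$ a.s.\ under \eqref{assumption_k_borell_cantelli}, by combining Theorem~\ref{lct} (which compares $\psi_{N,K,i,a}$ with $\varphi_{N,K,i,a}$ down to the scale of the polynomial bounds of Proposition~\ref{lp_bounds_energy}) with Lemma~\ref{conv_energy_lp} ($\varphi_{N,K,i,a}\to\tilde\varphi$), whose estimate depends on the profile only through its $L^p$ norm and therefore survives integration against $\beta$. \textbf{Move 2} (semicontinuity of the limiting functional): $v\mapsto\int_0^T\beta\int_\T\tilde\varphi(v)$ is convex, and since $\tilde\varphi$ is continuous with $\tilde\varphi(m)\ge\tfrac1C(|m|^p-1)$ and $|\tilde\varphi(m)|\le C(1+|m|^p)$ (Proposition~\ref{lp_bounds_energy}), it is weakly lower semicontinuous on $L^1([0,T]\times\T)$ (Fatou and Mazur) and continuous along sequences converging in $L^p([0,T]\times\T)$ with a uniform $L^p$ bound (a Vitali argument).

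\textbf{Part (i).} By Lemma~\ref{bounds_sol_macro}, $\bar\eta_\ell$ is bounded in $L^\infty(L^p)$, so Move~1 reduces (i) to $\liminf_\ell\int_0^T\beta\int_\T\tilde\varphi(\bar\eta_\ell)\ge\int_0^T\beta\int_\T\tilde\varphi(\eta_*)$. The weak-$*$ convergence $\bar\eta_\ell\to\eta_*$ in $L^\infty(L^p)$, together with the uniform $L^p$ bound (equi-integrability, since $p\ge2$), upgrades to $\bar\eta_\ell\rightharpoonup\eta_*$ weakly in $L^1$, and Move~2 (lower semicontinuity, $\beta\ge0$) concludes.

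\textbf{Part (ii).} Here $\eta_\ell+\xi_\ell=\pi_\ell(\xi+\eta_*)$ is an adapted step function with $\|\pi_\ell(\xi+\eta_*(t))\|_{L^p}\le\|\xi+\eta_*(t)\|_{L^p}\le C$ uniformly in $t,\ell$, so Move~1 reduces (ii) to $\limsup_\ell\int_0^T\beta\int_\T\tilde\varphi(\pi_\ell(\xi+\eta_*))\le\int_0^T\beta\int_\T\tilde\varphi(\xi+\eta_*)$. For each $t$, $\pi_\ell(\xi+\eta_*(t))\to\xi+\eta_*(t)$ in $L^p(\T)$; with the uniform domination above, dominated convergence in $t$ gives convergence in $L^p([0,T]\times\T)$, and Move~2 then yields genuine convergence (in particular the desired $\limsup$ bound) of the $\tilde\varphi$-functionals.

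\textbf{Part (iii) and the main obstacle.} I would use the $\bar A$-analogue of Lemma~\ref{compare_norms}, namely $\langle u,\bar A^{-1}v\rangle_Y=\langle\bar u,\bar v\rangle_{H^{-1}}+O(1/N)$ for step functions bounded in $L^2$ (recall $NP^ty$ is exactly the step function associated to $y\in Y$). Writing $\xi_\ell=\pi_\ell(\xi+\eta_*)-\eta_\ell$,
\[
\int_0^T\langle\xi_\ell,\bar A^{-1}\eta_\ell\rangle_Y\dot\beta\,dt=\int_0^T\langle\pi_\ell(\xi+\eta_*),\bar\eta_\ell\rangle_{H^{-1}}\dot\beta\,dt-\int_0^T\|\bar\eta_\ell\|_{H^{-1}}^2\dot\beta\,dt+o(1).
\]
In the first term, split $\langle\pi_\ell(\xi+\eta_*(t)),\bar\eta_\ell(t)\rangle_{H^{-1}}=\langle\xi+\eta_*(t),\bar\eta_\ell(t)\rangle_{H^{-1}}+\langle\pi_\ell(\xi+\eta_*(t))-(\xi+\eta_*(t)),\bar\eta_\ell(t)\rangle_{H^{-1}}$: the second piece $\to0$ pointwise in $t$ and is uniformly bounded, while the first equals $\int_\T(-\partial_\theta^2)^{-1}(\xi+\eta_*(t))\,\bar\eta_\ell(t)\,d\theta$ with $(-\partial_\theta^2)^{-1}(\xi+\eta_*)\,\dot\beta\in L^1_t(L^q_\theta)$, so weak-$*$ convergence of $\bar\eta_\ell$ passes to the limit $\int_0^T\langle\xi+\eta_*,\eta_*\rangle_{H^{-1}}\dot\beta\,dt$. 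The delicate term is the quadratic $\int_0^T\|\bar\eta_\ell\|_{H^{-1}}^2\dot\beta\,dt$: because $\dot\beta$ changes sign, lower semicontinuity is useless and one needs \emph{strong} convergence $\bar\eta_\ell\to\eta_*$ in $C([0,T];H^{-1})$. This is precisely where the dissipation bound \eqref{bound_der_macro} enters: it bounds $\partial_t\bar\eta_\ell$ in $L^2([0,T];H^{-1})$, while $\bar\eta_\ell$ is bounded in $L^\infty([0,T];L^p)$ with $L^p(\T)\hookrightarrow H^{-1}(\T)$ compact, so the Aubin--Lions--Simon lemma gives relative compactness in $C([0,T];H^{-1})$, and every cluster point equals $\eta_*$ by uniqueness of the weak-$*$ limit; hence $\|\bar\eta_\ell(t)\|_{H^{-1}}^2\to\|\eta_*(t)\|_{H^{-1}}^2$ uniformly in $t$ along the full sequence (every subsequence has a further one along which this holds). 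Subtracting the two terms gives the claimed limit $\int_0^T\langle\xi,\eta_*\rangle_{H^{-1}}\dot\beta\,dt$. The Aubin--Lions compactness of $\{\bar\eta_\ell\}$ in $H^{-1}$, extracted from \eqref{bound_der_macro}, is the main obstacle; the rest is bookkeeping with the results of Sections~5.3--5.4 and the norm comparisons of [GOVW].
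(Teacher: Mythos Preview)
Your proposal is correct. Parts (ii) and (iii) match the paper's approach: for (ii) the paper does exactly your three-step decomposition (replace $\psi_{N,K,i,a}$ by $\varphi_{N,K,i,a}$ via Theorem~\ref{lct}, then $\varphi_{N,K,i,a}$ by $\tilde\varphi$ via Lemma~\ref{conv_energy_lp}, then use strong $L^p$ convergence of $\pi_\ell(\xi+\eta_*)$); for (iii) the paper simply cites Lemma~37 of [GOVW], and your Aubin--Lions--Simon argument is the natural way to supply that step.

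The genuine difference is in part (i). The paper does \emph{not} apply your Move~1 here; it only goes from $\psi_{N,K,i,a}$ to $\varphi_{N,K,i,a}$ (via Theorem~\ref{lct}) and then argues directly with the variational structure of $\varphi_{N,K,i,a}$: since $\varphi_{N,K,i,a}(y_i)=\sup_\sigma(\sigma y_i-\frac1K\sum_j\varphi^*(\sigma-a_{j/N}))$, a Jensen-type move gives $\frac1M\sum_i\varphi_{N,K,i,a}(\eta_i)\geq\varphi_{N,a}(\frac1M\sum_i\eta_i)$, which converges a.s.\ to $\tilde\varphi(\int_\T\eta_*)$ by Proposition~\ref{conv_free_energy}. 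Repeating this on each dyadic subinterval of $\T$ and letting the mesh go to zero yields $\liminf\frac1M\sum_i\varphi_{N,K,i,a}(\eta_i(t))\geq\int_\T\tilde\varphi(\eta_*(t,\theta))\,d\theta$; Fatou handles the time integral. Your route instead pushes Move~1 all the way to $\tilde\varphi$ using Lemma~\ref{conv_energy_lp} (this is legitimate since the bound there depends on the profile only through its $L^p$ norm, hence is uniform in $t$), and then invokes the standard weak lower semicontinuity of $v\mapsto\int\beta\,\tilde\varphi(v)$ on $L^1$. Your approach is cleaner and more in line with classical calculus-of-variations arguments; the paper's dyadic argument is more hands-on but has the minor advantage that it does not require assumption~\eqref{assumption_k_borell_cantelli} for this particular step (that assumption enters only through Lemma~\ref{conv_energy_lp}, which the paper uses solely for (ii)).
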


\begin{lem} \label{unicity}
Equation (\ref{hydro_equn}) has at most one weak solution with initial condition~$\zeta_0$.
\end{lem}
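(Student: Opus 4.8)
The plan is to run the classical $H^{-1}$ energy estimate for filtration-type equations. Let $\zeta_1,\zeta_2$ be two weak solutions in the sense of the definition above, both satisfying $\zeta_i(0,\cdot)=\zeta_0$, and set $w:=\zeta_1-\zeta_2$. First I would record the time regularity built into the notion of weak solution: $\zeta_i\in L^\infty_t(L^p_\theta)\subset L^2_t(L^2_\theta)$ and $\partial_t\zeta_i\in L^2_t(H^{-1}_\theta)$, hence $w\in L^2_t(L^2_\theta)$ and $\partial_t w\in L^2_t(H^{-1}_\theta)$. Applying the standard Lions--Magenes chain-rule lemma --- most conveniently after conjugating by $(-\partial_\theta^2)^{-1/2}$, which turns the pair $(w,\partial_t w)$ into a pair lying in $L^2_t(H^1_\theta)\times L^2_t(H^{-1}_\theta)$ associated with the Gelfand triple $H^1\hookrightarrow L^2\hookrightarrow H^{-1}$ --- one gets that $w$ has a representative in $C([0,T];H^{-1})$ and that $t\mapsto\|w(t,\cdot)\|_{H^{-1}}^2$ is absolutely continuous with
\begin{equation*}
\frac{d}{dt}\,\|w(t,\cdot)\|_{H^{-1}}^2 \;=\; 2\,\bigl\langle \partial_t w(t,\cdot),\, w(t,\cdot)\bigr\rangle_{H^{-1}}\qquad\text{for a.e. }t\in[0,T].
\end{equation*}

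Next I would evaluate the right-hand side using the weak formulation. For a.e.\ $t$, a density argument lets us use the admissible test function $\xi=w(t,\cdot)\in L^p(\mathbb{T})\subset L^2(\mathbb{T})$ in the weak formulation for $\zeta_1$ and for $\zeta_2$; subtracting the two identities yields
\begin{equation*}
\bigl\langle \partial_t w(t,\cdot),\, w(t,\cdot)\bigr\rangle_{H^{-1}} \;=\; -\int_{\mathbb{T}} \bigl(\tilde{\varphi}'(\zeta_1(t,\theta))-\tilde{\varphi}'(\zeta_2(t,\theta))\bigr)\bigl(\zeta_1(t,\theta)-\zeta_2(t,\theta)\bigr)\,d\theta,
\end{equation*}
the right-hand side being finite and integrable in $t$ since $\tilde{\varphi}'(\zeta_i)\in L^\infty_t(L^q_\theta)$, $w\in L^\infty_t(L^p_\theta)$ and $\tfrac1p+\tfrac1q=1$. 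By Proposition~\ref{lp_bounds_energy} the function $\tilde{\varphi}$ is convex (indeed $\tilde{\varphi}''(m)\geq\tfrac1C(1+|m|^{p-2})>0$), so $\tilde{\varphi}'$ is nondecreasing and the integrand is pointwise nonnegative; hence $\frac{d}{dt}\|w(t,\cdot)\|_{H^{-1}}^2\leq 0$ for a.e.\ $t$.

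Finally, since the two solutions share the initial datum, $w(0,\cdot)=\zeta_0-\zeta_0=0$ in $H^{-1}$ (this is where continuity of $w$ in time into $H^{-1}$ is used), and integrating the differential inequality gives $\|w(t,\cdot)\|_{H^{-1}}^2=0$ for every $t\in[0,T]$. As both $\zeta_i(t,\cdot)$ have zero spatial mean, so does $w(t,\cdot)$, and $\|\cdot\|_{H^{-1}}$ is a genuine norm on mean-zero functions; therefore $w\equiv 0$, i.e.\ $\zeta_1=\zeta_2$ a.e.\ on $[0,T]\times\mathbb{T}$. The only genuinely technical point is the justification of the time chain rule in the first step (the Lions--Magenes lemma with the regularity at hand); alternatively one may bypass the hands-on computation entirely by appealing to the abstract theory of evolution equations governed by a subdifferential, noting that $u\mapsto-\partial_\theta^2\tilde{\varphi}'(u)$ is the subdifferential in $H^{-1}$ of the lower semicontinuous convex functional $u\mapsto\int_{\mathbb{T}}\tilde{\varphi}(u)\,d\theta$, for which uniqueness of solutions to the Cauchy problem is classical.
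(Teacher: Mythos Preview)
Your proof is correct and follows essentially the same approach as the paper: both run the $H^{-1}$ energy estimate on $w=\zeta_1-\zeta_2$, test the weak formulation with $\xi=w$, and use convexity of $\tilde{\varphi}$ to conclude that $t\mapsto\|w(t)\|_{H^{-1}}^2$ is nonincreasing. If anything, your write-up is more careful than the paper's, which simply asserts the chain rule $\frac{d}{dt}\langle w,w\rangle_{H^{-1}}=2\langle\partial_t w,w\rangle_{H^{-1}}$ without mentioning the Lions--Magenes lemma or the continuity in time needed to invoke the initial condition.
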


Using these, we can prove Proposition \ref{convergence_macro_hydro}. The main difference with [GOVW] will be the proof of Lemma \ref{lem_convergence}.

\begin{proof}[Proof of Proposition \ref{convergence_macro_hydro}]
As stated earlier, by applying Lemma \ref{bounds_sol_macro}, we can extract a subsequence such that
$$\bar{\eta}^{\ell} \longrightarrow \eta_*  \text{   weak-* in   } L^{\infty}(L^p) = (L^1(L^q))^*.$$
We want to use Lemma \ref{lem_convergence}, so we need to check  (\ref{assumption_k_borell_cantelli}). But we already know that $c(K_{\ell}) \longrightarrow 0$ (see Remark \ref{remark_cK}), so we can further extract a subsequence such that (\ref{assumption_k_borell_cantelli}) is satisfied, without changing the limit. Since all we care about here is to identify the limit $\eta_*$, there is no loss of generality here.

According to Lemma \ref{lem_diff_inequality_convex_potential}, $\eta^{\ell}$ satisfies (\ref{ineq_macro}). By applying Lemma \ref{lem_convergence} to this inequality, we get that $\eta_*$ satisfies (\ref{ineq_hydro}), so that another application of Lemma \ref{lem_diff_inequality_convex_potential} and of Lemma \ref{inclusions} yields that $\eta_*$ is a solution of the hydrodynamic equation. The unicity result of Lemma \ref{unicity} then guarantees that the full sequence converges to the unique weak solution of (\ref{hydro_equn}).
\end{proof}

We will now prove the previous Lemmas.

\begin{proof} [Proof of Lemma \ref{bounds_sol_macro}]
The proof is exactly the same as in Lemma 34 of [GOVW]. \newline
Argument for~(\ref{bnd_macro_unif}): By (\ref{energy2}) it holds 
\begin{align}
  \bar{H}_K(\eta^{\ell}(t))  \leq \bar{H}_K(\eta^{\ell}(0)) \overset{\eqref{equn_bar_h_psi}}{=} \frac{1}{M}\underset{i = 1}{\stackrel{M}{\sum}} \hspace{1mm}  \psi_{K,a_{(i-1)K+1},..a_{iK}}(y_i) - \frac{1}{N}\log \bar{Z}. \label{e_calc_bnd_macro_unif} 
\end{align}
Now, the desired bound~(\ref{bnd_macro_unif}) follows from a combination of the polynomial bounds on $\psi_{K,a_{(i-1)K+1},..a_{iK}}$ of Proposition \ref{lp_bounds_energy} and~\eqref{bound_eta}. \medskip

Argument for~(\ref{bound_der_macro}): Using By (\ref{energy2}) yields
\begin{align*}
  \int_0^T{\left \langle \frac{d\eta^{\ell}}{dt}(t), \bar{A}^{-1}\frac{d\eta^{\ell}}{dt}(t) \right \rangle_Y \hspace{1mm} dt} =  \bar{H}_K(\eta^{\ell}(0)) - \bar{H}_K(\eta^{\ell}(t)).  
\end{align*}
The first term on the right hand side is bounded as in~\eqref{e_calc_bnd_macro_unif}. To bound the second term, observe that
\begin{align*}
  \inf_y \bar{H}_K(y) \geq -C,
\end{align*}
unifromly, which is a consequence of the uniform convergence of~$\psi_K$ to $\varphi_K$, which is a strictly convex function.
\end{proof}

\begin{proof}[Proof of Lemma \ref{inclusions}]
By weak lower semicontinuity, and recalling (\ref{bnd_macro_unif}) we have for all $t \in [0,T]$ that 
$$||\eta_*(t,\cdot)||_{L^p} \leq \liminf ||\eta^{\ell}(t)||_{L^p} \leq C.$$
The first estimate is therefore proved. The third one immediately follows, since $\tilde{\varphi}'$ satisfies the polynomial bound $\tilde{\varphi}'(m) \leq C(1 + |m|^{p-1})$ and $q = p/(p-1)$. For the second estimate, we use Lemma \ref{compare_norms}, and, recalling that $\overline{NP^t\eta^{\ell}} = \bar{\eta}^{\ell}$, we have
\begin{align}
\int_0^T{\langle \dot{\bar{\eta}}^{\ell}, \dot{\bar{\eta}}^{\ell} \rangle_{H^{-1}} \hspace{1mm} dt} &\leq \frac{C}{N}\int_0^T{\langle NP^t\dot{\eta}^{\ell}, A^{-1}NP^t\dot{\eta}^{\ell} \rangle_X \hspace{1mm} dt} \notag \\
&= C\int_0^T{\langle \dot{\eta}^{\ell}, \bar{A}^{-1}\dot{\eta}^{\ell} \rangle_Y \hspace{1mm} dt} \leq C \notag
\end{align}
where the last inequality used (\ref{bound_der_macro}). Once more, by weak lower semicontinuity, we then have
$$\int_0^T{\langle \dot{\eta_*}, \dot{\eta_*} \rangle_{H^{-1}} \hspace{1mm} dt}  \leq C.$$
\end{proof}

\begin{proof}[Proof of Lemma \ref{unicity}]
Once more, this is exactly the same as in Lemma 38 of [GOVW]. Consider two solutions $\zeta_1$ and $\zeta_2$ with same initial condition. We have the weak formulation
$$\langle \dot{\zeta}_i, \xi \rangle_{H^{-1}} = -\int_{\T}{\tilde{\varphi}'(\zeta_i)\xi d\theta} \text{ for all } \xi \in L^2, \text{ for a.e. } t \in [0,T],$$
so that
$$\langle (\dot{\zeta}_1 - \dot{\zeta}_2), \xi \rangle_{H^{-1}} = -\int_{\T}{(\tilde{\varphi}'(\zeta_1)-\tilde{\varphi}'(\zeta_2))\xi d\theta}.$$
Since both $\zeta_1$ and $\zeta_2$ lie in $L^{\infty}(L^p)$, we can take $\zeta_1 - \zeta_2$ as a test function, so that for a.e. $t \in [0,T]$ we have
$$\frac{d}{dt} \langle (\zeta_1 - \zeta_2), (\zeta_1 - \zeta_2) \rangle_{H^{-1}} = -2\int_{\T}{(\tilde{\varphi}'(\zeta_1)-\tilde{\varphi}'(\zeta_2))(\zeta_1 - \zeta_2) d\theta} \leq 0$$
by convexity of $\tilde{\varphi}$. This implies that $\zeta_1 = \zeta_2$.
\end{proof}

\begin{proof} [Proof of Lemma \ref{lem_diff_inequality_convex_potential}]
Both parts of this lemma are proved in the same way, so we shall only prove (\ref{ineq_hydro}). Let $\zeta$ be a bounded solution of (\ref{hydro_equn}). First let us rewrite the equation as 
\begin{equation} \label{equ_weak}
\int_0^T{\langle \xi(\cdot), \zeta(t,\cdot) \rangle_{H^{-1}} \dot{\beta}(t) dt} = \int_0^T{\int_{\T}{\varphi'(\zeta(t,\theta))\xi(\theta) d\theta} \beta(t)dt}
\end{equation}
for all $\xi \in L^2$ and smooth $\beta : [0,T] \rightarrow \R_+$. By density of $L^p$ in $L^2$, it is enough if (\ref{equ_weak}) holds for all $\xi \in L^p$. 

Let us show that (\ref{equ_weak}) implies (\ref{ineq_hydro}). By convexity of $\varphi$, we have
$$\xi(\theta)\varphi'(\eta_*(\theta)) \leq -\varphi(\eta_*(\theta)) + \varphi(\eta_*(\theta) + \xi(\theta)).$$
Inserting this inequality into (\ref{equ_weak}) gives
$$\int_0^T{\langle \xi(\cdot), \zeta(t,\cdot) \rangle_{H^{-1}} \dot{\beta}(t) dt} \leq \int_0^T{\int_{\T}{\varphi(\zeta(t,\theta) + \xi(\theta))d\theta} \beta(t)dt} - \int_0^T{\int_{\T}{\varphi(\zeta(t,\theta))d\theta} \beta(t)dt}$$
which, after rearranging terms, is (\ref{ineq_hydro}).

Let us now show that (\ref{ineq_hydro}) implies (\ref{equ_weak}). We substitute $\xi$ for $\epsilon \xi$ in (\ref{ineq_hydro}), for some $\epsilon > 0$. Dividing both sides by $\epsilon$ and rearranging then yields
$$\int_0^T{\langle \xi(\cdot), \zeta(t,\cdot) \rangle_{H^{-1}} \dot{\beta}(t) dt} \leq \int_0^T{ \int_{\T}{\frac{\varphi(\zeta + \epsilon \xi) - \varphi(\zeta)}{\epsilon} \beta(t) d\theta} dt}.$$
Taking the limit $\epsilon \rightarrow 0$ then gives us
$$\int_0^T{\langle \xi(\cdot), \zeta(t,\cdot) \rangle_{H^{-1}} \dot{\beta}(t) dt} \leq \int_0^T{\int_{\T}{\varphi'(\zeta(t,\theta))\xi(\theta) d\theta} \beta(t)dt}.$$
Repeating this process with $-\epsilon \xi$ instead of $\epsilon \xi$ returns
$$\int_0^T{\langle \xi(\cdot), \zeta(t,\cdot) \rangle_{H^{-1}} \dot{\beta}(t) dt} \geq \int_0^T{\int_{\T}{\varphi'(\zeta(t,\theta))\xi(\theta) d\theta} \beta(t)dt},$$
so that (\ref{equ_weak}) holds.
\end{proof}

\begin{proof} [Proof of Lemma \ref{lem_convergence}]

Since the proof of (iii) is exactly the same as the proof of (iii) in Lemma 37 of [GOVW], we will skip its proof, and concentrate on (i) and (ii).

In order to prove (i), it is enough to show that
\begin{equation} \label{407}
\liminf \frac{1}{M}\underset{i = 1}{\stackrel{M}{\sum}} \hspace{1mm} \varphi_{N,K,a,i}(\eta_i(t)) \geq \int_{\mathbb{T}}{\tilde{\varphi}(\eta_*(t,\theta))d\theta},
\end{equation}
because it already follows from Theorem \ref{lct} that, for a sequence that is bounded in $L^p$, we can replace $\psi_{N,K,a,i}$ with $\varphi_{N,K,a,i}$, and since an application of Fatou's Lemma would then immediately yield (i). Since we have
\begin{align}
\frac{1}{M}\sum \varphi_{N,K,i,a}(\eta_i(t)) &= \frac{1}{M}\underset{i = 1}{\stackrel{M}{\sum}} \underset{\sigma \in \mathbb{R}}{\sup} \left(\sigma \eta_i - \frac{1}{K}\underset{j = (i-1)K + 1}{\stackrel{iK}{\sum}} \varphi^*(\sigma - a_{i/N}) \right) \notag \\
&\geq \underset{\sigma \in \mathbb{R}}{\sup} \left(\sigma \frac{1}{M}\underset{i = 1}{\stackrel{M}{\sum}} \eta_i - \frac{1}{N}\underset{j = 1}{\stackrel{N}{\sum}} \varphi^*(\sigma - a_{i/N}) \right) \notag \\
&= \varphi_{N,a}\left(\frac{1}{M}\underset{i = 1}{\stackrel{M}{\sum}} \eta_i \right) 
 \longrightarrow \tilde{\varphi}\left(\int_{\mathbb{T}}{\eta_*(\theta)d\theta}\right)
\end{align}
almost surely, we have 
$$\liminf \frac{1}{M}\underset{i = 1}{\stackrel{M}{\sum}} \varphi_{N,K,i,a}(\eta_i(t))  \geq \tilde{\varphi}\left(\int_{\mathbb{T}}{\eta_*(t,\theta)d\theta}\right).$$
But, in exactly the same way, we almost surely have
$$\liminf \frac{1}{M/2}\underset{i = 1}{\stackrel{M/2}{\sum}} \varphi_{N,K,i,a}(\eta_i(t)) \geq \tilde{\varphi}\left(\int_0^{1/2}{\eta_*(t,\theta)d\theta}\right)$$
and 
$$\liminf \frac{1}{M/2}\underset{i = M/2}{\stackrel{M}{\sum}} \varphi_{N,K,a,i}(\eta_i(t)) \geq \tilde{\varphi}\left(\int_{1/2}^1{\eta_*(t,\theta)d\theta}\right),$$
so that
$$\liminf \frac{1}{M}\underset{i = 1}{\stackrel{M}{\sum}} \varphi_{N,K,i,a}(\eta_i(t))  \geq \frac{1}{2} \tilde{\varphi}\left(\int_0^{1/2}{\eta_*(t,\theta)d\theta}\right) + \frac{1}{2} \tilde{\varphi}\left(\int_{1/2}^1{\eta_*(t,\theta)d\theta}\right).$$
Iterating this argument gives us, for any $L \in \mathbb{N}$, 
\begin{equation}
\liminf \frac{1}{M}\underset{i = 1}{\stackrel{M}{\sum}} \varphi_{N,K,i,a}(\eta_i(t))  \geq \frac{1}{2^L} \underset{j = 1}{\stackrel{2^L}{\sum}} \tilde{\varphi}\left(\int_{(j-1)/2^L}^{j/2^L}{\eta_*(\theta)d\theta}\right)
\end{equation}
almost surely. Letting $L$ go to infinity, since $\eta_*$ belongs to $L^p$, we get (\ref{407}), and therefore (i).

Let us now prove (ii). We fix $\xi \in L^p$. 
We have
\begin{align} \label{eq101}
&\left| \int_0^T{(\bar{H}_{N,K,a}(\eta_{\ell}(t) + \xi_{\ell}(t)) - \frac{1}{N}\log \bar{Z}_{N,K,a})\beta(t)dt} - \int_0^T{\beta(t)\int_{\mathbb{T}}{\tilde{\varphi}(\eta_*(t,\theta) + \xi(\theta))d\theta}dt} \right| \notag \\
&= \left| \int_0^T{\left(\underset{i = 1}{\stackrel{M_{\ell}}{\sum}} \hspace{1mm} \psi_{N,K,i,a}(\eta^{\ell}_i + \xi^{\ell}_i) \right)\beta(t)dt} - \int_0^T{\beta(t)\int_{\mathbb{T}}{\tilde{\varphi}(\eta_*(t,\theta) + \xi(\theta))d\theta}dt} \right| \notag \\
&\leq \int_0^T{\beta(t)\left(\underset{i = 1}{\stackrel{M_{\ell}}{\sum}} \hspace{1mm} |\psi_{N,K,i,a}(\eta^{\ell}_i + \xi^{\ell}_i) - \varphi_{N,K,i,a}(\eta^{\ell}_i + \xi^{\ell}_i)| \right)dt} \notag \\
& \hspace{5mm} + \int_0^T{\beta(t)\left|\left(\underset{i = 1}{\stackrel{M_{\ell}}{\sum}} \hspace{1mm} \varphi_{N,K,i,a}(\eta^{\ell}_i + \xi^{\ell}_i) - \tilde{\varphi}(\eta^{\ell}_i + \xi^{\ell}_i) \right)\right|dt} \notag \\
& \hspace{1cm} + \left|\int_0^T{\int_{\T}{\tilde{\varphi}(\bar{\xi}^{\ell} + \bar{\eta}^{\ell})\beta(t)d\theta} dt} - \int_0^T{\int_{\T}{\tilde{\varphi}(\eta_* + \xi)\beta(t)d\theta}dt} \right|.
\end{align}

The first term on the right-hand side is controlled by Theorem~\ref{lct}, since the sequence $\eta^{\ell}_i + \xi^{\ell}_i$ is bounded in $L^p$. The second term goes to $0$ as a consequence of Lemma \ref{conv_energy_lp}, which can be used since we assumed (\ref{assumption_k_borell_cantelli}) holds.

Finally, since we have $\eta^{\ell} + \xi^{\ell} = \pi_{\ell}(\xi + \eta_*)$, $\bar{\xi}^{\ell} + \bar{\eta}^{\ell}$ converges to $\xi + \eta_*$ strongly in $L^p$. From the polynomial upper bound on $\tilde{\varphi}$, $\rho \longrightarrow \int{\tilde{\varphi}(\rho)d\theta}$ is continuous with respect to strong $L^p$ convergence, and therefore
\begin{equation}
\int_{\T}{\tilde{\varphi}(\bar{\xi}^{\ell} + \bar{\eta}^{\ell})d\theta} \longrightarrow \int_{\T}{\tilde{\varphi}(\eta_* + \xi)d\theta}.
\end{equation}
By the Dominated Convergence Theorem, it follows that
\begin{equation}
\int_0^T{\int_{\T}{\tilde{\varphi}(\bar{\xi}^{\ell} + \bar{\eta}^{\ell})\beta(t)d\theta} dt} \longrightarrow \int_0^T{\int_{\T}{\tilde{\varphi}(\eta_* + \xi)\beta(t)d\theta}dt},
\end{equation}
so the third term on the right-hand side of (\ref{eq101}) goes to zero.

\end{proof}


\appendix

\section{Proof of the local Cram\'er theorem}

In this section, we give the proof of the local Cram\'er theorem we used to obtain convexity of the coarse-grained Hamiltonian (cf.~Proposition~\ref{p_convexity_coarse_grained_Hamiltonian}). It is a combination of the proofs of the local Cram\'er theorem of [GOVW] and [MO], modified to take into account the additional inhomogeneous linear term in the Hamiltonian.

\begin{thm} \label{lct}
Let
$$\psi_K(m) := -\frac{1}{K}\log \int_{X_{K,m}}{\exp \left(-\sum a_i x_i + \psi(x_i) \right)dx}$$
and 
$$\varphi_{K}(m) := \underset{\sigma \in \mathbb{R}}{\sup} \hspace{1mm} \left(\sigma m - \frac{1}{K} \underset{i = 1}{\stackrel{K}{\sum}} \hspace{1mm} \log \int_{\mathbb{R}}{\exp((\sigma - a_i)x - \psi(x))dx} \right).$$
There exists $K_0 \in \N$, $C > 0$ (which only depends on $\psi$, $K_0$ and $\sup |a_i|$) such that for any $m$
$$|\psi_K(m) - \varphi_K(m)| \leq \frac{C}{K} | \varphi_K''(m)|^{\frac{1}{2}}.$$
In particular, for any $L > 0$ and any compact subset E of $\mathbb{R}$,
$$\underset{K \rightarrow \infty}{\lim} \hspace{1mm} \underset{a_1 .. a_K \in [-L, L]}{\sup}  \hspace{3mm} ||\psi_K - \varphi_K||_{\infty, E}  = 0.$$
Moreover, there exists $\lambda > 0$ and $K_0 \in \mathbb{N}$ such that, for any $K \geq K_0$ and any collection of real numbers $a_1$ ... $a_K$ $\in [-L, L]$, we have 
\begin{equation} \label{bnd_derivative_psi}
\psi_K'' \geq \lambda.
\end{equation}
\end{thm}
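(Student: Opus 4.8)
The plan is to realise $e^{-K\psi_K(m)}$ as an (unnormalised) density of a sum of $K$ independent single‑site variables, tilt it so that the evaluation point coincides with the mean, and then compare with the Gaussian prediction carried by $\varphi_K$ through a local central limit theorem. Set $\Lambda_K(\sigma):=\frac1K\sum_{i=1}^K\log\int_\R e^{(\sigma-a_i)x-\psi(x)}\,dx$, so that $\varphi_K=\Lambda_K^{*}$, and let $\sigma:=\varphi_K'(m)$, which is characterised by $\Lambda_K'(\sigma)=m$. Introduce the tilted probability measures $\nu_i(dx):=Z_i(\sigma)^{-1}e^{(\sigma-a_i)x-\psi(x)}\,dx$, where $Z_i(\sigma)=\int e^{(\sigma-a_i)x-\psi(x)}\,dx$. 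Writing the defining integral of $\psi_K$ as a $K$‑fold convolution and carrying out the exponential tilt gives the identity $\psi_K(m)-\varphi_K(m)=-\frac1K\log g_K(Km)$, where $g_K$ is the density of $\tilde S_K:=\sum_{i=1}^K\tilde X_i$ with independent $\tilde X_i\sim\nu_i$. The choice of $\sigma$ forces $\E[\tilde S_K]=Km$, and by Legendre duality $\var(\tilde S_K)=K\Lambda_K''(\sigma)=K/\varphi_K''(m)$; moreover Caputo's Lemma~\ref{lem_caputo} together with \eqref{poly_growth} gives a two‑sided polynomial control $\var\nu_i\asymp(1+|m|^{p-2})^{-1}\asymp 1/\varphi_K''(m)$, uniformly in the $a_i$. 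So everything is reduced to estimating the density of $\tilde S_K$ at its own mean.

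I would then prove a local CLT of the form $g_K(Km)=\sqrt{\varphi_K''(m)/(2\pi K)}\,(1+R_K(m))$ with $\sup_m|R_K(m)|\to 0$ as $K\to\infty$, uniformly over $a_i\in[-L,L]$. This goes through Fourier inversion, $g_K(Km)=\frac1{2\pi}\int_\R\prod_i\hat\mu_i(\xi)\,d\xi$ (with $\mu_i$ the recentred $\nu_i$, so $|\hat\mu_i|=|\hat\nu_i|$), splitting the integral at $|\xi|\sim\delta\,\varphi_K''(m)^{1/2}$: on the central part one makes an Edgeworth expansion around $e^{-\xi^2K/(2\varphi_K''(m))}$, and the tail is controlled by the exponential decay of $\prod_i|\hat\nu_i|$. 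Uniformity rests on (a) Caputo's lemma, which gives the variance comparison above and, combined with perturbed convexity, comparable bounds on the third and fourth centred moments of $\nu_i$, all uniform in the $a_i$; and (b) the splitting $\psi=\psi_c+\delta\psi$ with $\psi_c''\ge\lambda$ and $\delta\psi\in C_b^2$, which yields the quadratic decay $|\hat\nu_i(\xi)|\le e^{-c\,\xi^2/\varphi_K''(m)}$ near $0$ and a uniform large‑$\xi$ decay of $|\hat\nu_i|$ at the natural scale $\varphi_K''(m)^{-1/2}$ — one writes $e^{(\sigma-a_i)x-\psi(x)}=e^{-\delta\psi(x)}e^{(\sigma-a_i)x-\psi_c(x)}$ and transfers the decay from the genuinely log‑concave factor, losing only the bounded factor $e^{2\|\delta\psi\|_\infty}$. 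Taking $-\frac1K\log$ of this expansion and absorbing the normalisation together with the lower‑order term $\frac1{2K}\log\varphi_K''(m)$ into $\frac CK|\varphi_K''(m)|^{1/2}$ — using $1\lesssim\varphi_K''(m)\lesssim 1+|m|^{p-2}$ — yields the claimed inequality, and letting $K\to\infty$ on a compact $E$ and over $a_i\in[-L,L]$ gives the uniform convergence.

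For the uniform convexity \eqref{bnd_derivative_psi} I would write $\psi_K=\varphi_K+h_K$ with $h_K(m)=-\frac1K\log g_K(Km)$ and differentiate the local CLT expansion twice in $m$ to obtain $\sup_m|h_K''(m)|\to 0$ as $K\to\infty$. Here $m$ enters $g_K(Km)$ both through the evaluation point and through the tilt $\sigma=\varphi_K'(m)$, which reparametrises all the $\nu_i$; hence one must also differentiate the Edgeworth expansion in $\sigma$, and the uniform moment and characteristic‑function estimates of the previous step carry over to these derivatives. Since $\varphi_K''(m)=K/\var(\tilde S_K)\ge\lambda/C$ uniformly (again by Caputo's lemma and $\psi_c''\ge\lambda$), choosing $K_0$ so that $|h_K''|\le\lambda/(2C)$ for all $K\ge K_0$ gives $\psi_K''=\varphi_K''+h_K''\ge\lambda/(2C)=:\lambda$ for every such $K$ and every $m$.

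The main obstacle is the uniformity of the local CLT. For a single fixed measure this is classical, but here every estimate must hold simultaneously for all shifts $a_i\in[-L,L]$ and, more seriously, for all target means $m$ as $|m|\to\infty$, where the tilted single‑site measures $\nu_i$ concentrate (their variance $\sim(1+|m|^{p-2})^{-1}\to 0$). Controlling the characteristic functions $\hat\nu_i$ on the whole real line at the correct scale $\varphi_K''(m)^{-1/2}$ in this degenerate regime — in particular excluding near‑lattice behaviour — is the crux, and is exactly what the combination of perturbed strict convexity and Caputo's variance estimate is tailored for; the $C^2$‑in‑$m$ control needed for the convexity statement compounds the difficulty, since the entire expansion must be differentiated in the tilting parameter.
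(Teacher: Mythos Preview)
Your overall strategy matches the paper's: the identity $\psi_K(m)-\varphi_K(m)=-\frac1K\log g_{K,m}(0)$ via exponential tilting, Fourier inversion for $g_{K,m}(0)$, a splitting into inner and outer frequencies, and differentiation in the tilting parameter $\sigma$ for the convexity bound are exactly the paper's Proposition~\ref{bounds_lct} and the computation around~\eqref{truc}. The uniform comparability $\var\nu_i\asymp 1/\varphi_K''(m)$ that you derive from Caputo's lemma appears in the paper as assumption~\eqref{uniformity}, proved there by showing that $\sigma\mapsto\log s(\sigma)$ is Lipschitz.

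There is, however, a genuine gap in your treatment of the outer frequencies. You propose to control $|\hat\nu_i(\xi)|$ for large $\xi$ by writing the density as $e^{-\delta\psi}\cdot e^{(\sigma-a_i)x-\psi_c(x)}$ and ``transferring the decay from the genuinely log-concave factor, losing only the bounded factor $e^{2\|\delta\psi\|_\infty}$''. This does not work: a pointwise bound $d\nu_i/d\nu_{c,i}\le e^{\osc\delta\psi}$ on densities does \emph{not} yield $|\hat\nu_i(\xi)|\le e^{\osc\delta\psi}|\hat\nu_{c,i}(\xi)|$ pointwise in $\xi$, because the Fourier transform of a product of densities is a convolution. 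A perturbation such as $\delta\psi(x)=\varepsilon\cos(\omega x)$ shifts Fourier mass to $\xi\approx\pm\omega$ and can make $|\hat\nu_i(\omega)|\sim\varepsilon$ while $|\hat\nu_{c,i}(\omega)|$ is exponentially small; the ratio is therefore unbounded.

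The paper handles the outer region with two softer ingredients. First, from [MO, Lemma~3.2] one has only the \emph{polynomial} decay $|\hat\nu_i(\xi)|\le C/(s_i|\xi|)$ (assumption~\eqref{f1}), obtained from generic log-concave density bounds and an integration by parts; no Gaussian tail is claimed or needed. Second --- and this is the step your sketch is missing --- a compactness argument (Lemma~\ref{lem1}) upgrades \eqref{f1} plus the uniform first-moment bound to a uniform estimate $|\hat\nu_i(\xi)|\le\lambda<1$ for all $|\xi|\ge\delta$ over the whole family of tilted measures. In the product $\prod_{i=1}^K\hat\nu_i(\xi/\sqrt K)$ one then uses the $\lambda$-bound on $K-2$ (respectively $K-6$, for the derivatives) of the factors to produce $\lambda^{K-O(1)}$, and the $1/|\xi|$ decay on the remaining factors for integrability in $\xi$. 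With this correction to the tail argument, the rest of your sketch --- including the bounds on $\partial_\sigma g_{K,m}(0)$ and $\partial_\sigma^2 g_{K,m}(0)$ and the chain rule through $\sigma'(m)=\varphi_K''(m)$ --- coincides with the paper's proof.
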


The starting point of the proof is the explicit representation 
\begin{equation}
g_{K,m}(0) = \exp(K\varphi_K(m) - K\psi_K(m)),
\end{equation}
where $g_{K,m}$ is the Lebesgue density of the random variable 
$$\frac{1}{\sqrt{K}} \underset{i = 1}{\stackrel{K}{\sum}} \hspace{1mm} X_i - m_i,$$
and the $X_i$ are independent random variables, distributed as 
$$\mu_{\sigma,i}(dx) := \exp(-\varphi^*(\sigma - a_i) + (\sigma - a_i)x - \psi(x))dx,$$
with $m_i$ the mean of $X_i$.

Since $\varphi_K$ is the Legendre transform of the strictly convex function 

$$\varphi_K^*(\sigma) := \frac{1}{K}\underset{i = 1}{\stackrel{K}{\sum}} \hspace{1mm} \log \int_{\R}{\exp((\sigma - a_i)x - \psi(x))dx},$$
there exists for any $m \in \R$ a unique $\sigma = \sigma(m)$ such that
$$\varphi_K(m) = \sigma m - \varphi^*_K(\sigma).$$

It is a basic property of the Legendre transform that $\sigma$ is determined by
\begin{equation}
\frac{d}{d\sigma}\varphi^*_K(\sigma) = \frac{1}{K}\underset{i = 1}{\stackrel{K}{\sum}} \hspace{1mm} \frac{\int{x\mu_{\sigma,i}(dx)}}{\int{\mu_{\sigma,i}(dx)}} = \frac{1}{K}\underset{i = 1}{\stackrel{K}{\sum}} \hspace{1mm} m_i = m.
\end{equation}

Since the derivative of the Legendre transform of a convex function is the inverse of the derivative of the function, we also have

\begin{equation}
\sigma(m) = \varphi_K'(m).
\end{equation}

\begin{prop} \label{bounds_lct}
Let $\psi : \mathbb{R} \rightarrow \mathbb{R}$ be a bounded perturbation of a uniformly convex function. Let us define
\begin{align}
& \mu_{\sigma}(dx) := \exp(-\varphi^*(\sigma) + \sigma x - \psi(x))dx ; \label{e_d_mu_sigma}  \\ 
&  m(\sigma) := \int_{\mathbb{R}}{x\mu_{\sigma}(dx)} \hspace{1mm} ; \hspace{2mm} s(\sigma)^2 :=  \int_{\mathbb{R}}{(x - m)^2\mu_{\sigma}(dx)}. \label{e_d_m_sigma}
\end{align}
We assume that the following bounds hold uniformly in $\sigma$ and $j$ : 
\begin{equation} \label{bor}
\int_{\mathbb{R}}{|x - m(\sigma)|^k  \mu_{\sigma}(dx)} \leq C s^k \hspace{1mm} \text{for} \hspace{1mm} k= 1..5 ;
\end{equation}
\begin{equation} \label{f1}
\left|\int_{\mathbb{R}}{\exp(i(x - m)\xi)\mu_{\sigma}(dx)}\right| \leq \frac{C}{s|\xi|}  \hspace{1mm} \text{for all} \hspace{1mm} \xi \in \mathbb{R}.
\end{equation}
Moreover, we assume that 
\begin{equation} \label{uniformity}
\underset{\sigma \in \R}{\sup} \underset{a, b \in [-L,L]}{\sup} \frac{s(\sigma - a)}{s(\sigma - b)} \lesssim 1.
\end{equation}
Then we have
\begin{equation} \label{lct_bnd1}
\left|g_{K,m}(0) - \left(\frac{2\pi}{K}\sum s(\sigma - \lambda_i)^2 \right)^{-1/2} \right| \lesssim \frac{1}{\sqrt{K}}\left(\frac{1}{K}\sum s(\sigma - \lambda_i)^2 \right)^{-1/2};
\end{equation}
\begin{equation} \label{lct_bnd3}
\left| \frac{d}{d\sigma}g_{K,m}(0) \right| \lesssim 1
\end{equation}
\begin{equation} \label{lct_bnd2}
\left| \frac{d^2}{d\sigma^2}g_{K,m}(0) \right| \lesssim \left(\frac{1}{K}\underset{i = 1}{\stackrel{K}{\sum}} s(\sigma - a_i)^2 \right)^{1/2}.
\end{equation}
\end{prop}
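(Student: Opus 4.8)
The plan is to prove all three estimates from the Fourier-inversion representation of the density. Since $g_{K,m}$ is the Lebesgue density of $\frac{1}{\sqrt K}\sum_{i=1}^K(X_i-m_i)$ with the $X_i$ independent and $X_i\sim\mu_{\sigma-a_i}$, introducing the characteristic function of the centered summand $\hat\mu_i(\eta):=\int e^{i(x-m_i)\eta}\,\mu_{\sigma-a_i}(dx)$ we have
$$g_{K,m}(0)=\frac{1}{2\pi}\int_{\R}\prod_{i=1}^K\hat\mu_i\!\left(\frac{\xi}{\sqrt K}\right)d\xi .$$
The whole statement is thus a (non-identically-distributed, scale-dependent) local central limit theorem together with its first two derivatives in the tilting parameter $\sigma$. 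Writing $s_i:=s(\sigma-a_i)$ and $\sigma_K^2:=\frac1K\sum_i s_i^2$, the inputs I would use are: the Taylor expansion $\log\hat\mu_i(\eta)=-\tfrac12 s_i^2\eta^2+O(s_i^3|\eta|^3)$, valid for $|\eta|\lesssim 1/s_i$ thanks to~\eqref{bor}; the decay $|\hat\mu_i(\eta)|\le\min\{1,\,C/(s_i|\eta|)\}$ from~\eqref{f1}; and~\eqref{uniformity}, which makes all the $s_i$ comparable, so $\sigma_K\asymp s(\sigma)$ and $\frac1K\sum_i s_i^k\asymp\sigma_K^k$ for each fixed $k$. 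A change of variables $\eta=\xi/(\sqrt K\,\sigma_K)$ normalizes the scale, so that one may argue as if $\sigma_K\asymp 1$.

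For~\eqref{lct_bnd1} I would split the integral at $|\xi|=\delta\sqrt K$ for a small fixed $\delta$. On the central part, summing the Taylor expansions gives
$$\prod_{i=1}^K\hat\mu_i\!\left(\frac{\xi}{\sqrt K}\right)=\exp\!\left(-\frac{\xi^2}{2}\sigma_K^2\right)\Bigl(1+O\bigl(K^{-1/2}(1+|\xi|^3)\bigr)\Bigr),$$
so integrating the Gaussian produces the main term $(2\pi\sigma_K^2)^{-1/2}$ while the correction integrates to $O(\sigma_K^{-1}K^{-1/2})$, which is the claimed bound. On the tail $|\xi|>\delta\sqrt K$ I would use~\eqref{f1}: where $|\xi|/\sqrt K$ stays bounded, each factor is $\le\theta<1$ uniformly, so the product is $\le\theta^K$; where $|\xi|/\sqrt K$ is large, the product is $\le\bigl(C\sqrt K/(s|\xi|)\bigr)^K$. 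In both regimes the tail contributes an amount that is super-exponentially small in $K$, hence negligible against the main term.

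For~\eqref{lct_bnd3} and~\eqref{lct_bnd2} I would differentiate under the integral, the point being that the $\sigma$-dependence enters only through the variances $s_i^2$ of the one-dimensional tilted measures and through the centerings $m_i=m(\sigma-a_i)$ with $m_i'=s_i^2$. Since $\frac{d}{d\sigma}$ of a variance is a third cumulant and $\frac{d^2}{d\sigma^2}$ of a variance is a fourth cumulant, bound~\eqref{bor} with $k=3,4$ gives $\frac{d}{d\sigma}\sigma_K^2=O(\sigma_K^3)$ and $\frac{d^2}{d\sigma^2}\sigma_K^2=O(\sigma_K^4)$; differentiating the central/tail decomposition above (the tail stays negligible after differentiation, using~\eqref{bor} up to $k=5$ for the remainder terms) reduces~\eqref{lct_bnd3} and~\eqref{lct_bnd2} to the chain rule applied to $\sigma_K\mapsto(2\pi\sigma_K^2)^{-1/2}$, which indeed yields $O(1)$ and $O(\sigma_K)=O\bigl((\tfrac1K\sum_i s_i^2)^{1/2}\bigr)$ respectively.

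I expect the main obstacle to be maintaining uniformity in the scale $s=s(\sigma)$, which may be arbitrarily large or small: the threshold $\delta$, the Taylor remainders and the tail bound must all be tracked through the rescaling $\eta=\xi/(\sqrt K\,\sigma_K)$, and one has to check that the hypotheses~\eqref{bor}–\eqref{uniformity} are exactly what make the rescaled integral scale-free. The secondary difficulty is the bookkeeping for the $\sigma$-derivatives: the Leibniz rule applied to $\prod_i\hat\mu_i(\xi/\sqrt K)$ generates many terms, each an integral against the same Gaussian weight but carrying extra polynomial factors of $\xi/\sqrt K$ and of the $s_i^2$, and one must confirm that every such term respects the stated scaling rather than losing a power of $\sigma_K$ or of $K$.
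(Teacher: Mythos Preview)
Your outline is the paper's proof. Two points deserve sharpening.

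First, the bound $|\hat\mu_i(\eta)|\le\theta<1$ on the intermediate tail $\delta\le|\eta|\lesssim 1$ does \emph{not} follow from~\eqref{f1} alone, since~\eqref{f1} only bites once $s_i|\eta|$ is large, and $s_i$ may be small. The paper supplies this via a separate compactness argument (its Lemma~\ref{lem1}): the centered laws have uniformly bounded first moment (from~\eqref{bor} and the spectral-gap bound $s(\sigma)\lesssim 1$, which also means $s$ is never ``arbitrarily large'' as you wrote), hence are tight, and any weak limit with $|\hat\nu(\xi_0)|=1$ for some $\xi_0\ne 0$ would violate~\eqref{f1} along the sequence $k\xi_0$. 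This step is where~\eqref{bor} with $k=1$ and~\eqref{f1} interact, and it is the only place uniformity in $\sigma$ of the tail estimate is nontrivial.

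Second, for~\eqref{lct_bnd3}--\eqref{lct_bnd2} the paper does not differentiate the expansion $g_{K,m}(0)\approx(2\pi\sigma_K^2)^{-1/2}$ and invoke the chain rule on the leading term; that heuristic predicts the right scaling but is not a proof, because the $O(K^{-1/2}\sigma_K^{-1})$ remainder also depends on $\sigma$ and its derivative must be controlled independently. What the paper does is exactly the Leibniz computation you flag as the ``secondary difficulty'': it first establishes the pointwise bounds $|\partial_\sigma\hat\mu_i(\eta)|\lesssim s_i^3\eta^2$ and $|\partial_\sigma^2\hat\mu_i(\eta)|\lesssim s_i^4(1+s_i^2\eta^2)\eta^2$ (both vanish to order $\eta^2$ at $0$, which is what keeps the inner integrals finite), then applies Leibniz to $\prod_i\hat\mu_i(\xi/\sqrt K)$ under the integral and re-runs the inner/outer split on each of the resulting $O(K)$ or $O(K^2)$ terms. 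The combinatorics together with~\eqref{uniformity} collapse the sums to the stated bounds.
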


Using this result, we can prove our local Cramer theorem.

\begin{proof}[Proof of Theorem \ref{lct}]

Let us first check that the Assumptions of Proposition \ref{bounds_lct} are verified. It has been shown in [MO, Lemma 3.2] that (\ref{bor}) and (\ref{f1}) hold when $\psi$ is a bounded perturbation of a uniformly convex potential. We refer to [MO] for a proof, which is based on simple general bounds on log-concave probability densities.

To show that (\ref{uniformity}) is satisfied, we will show that $\sigma \rightarrow \log s(\sigma)$ is Lipschitz continuous, by showing that its derivative is bounded. We have
$$\frac{d}{d\sigma} \log s(\sigma) = \frac{1}{s}\frac{d}{d\sigma}s =\frac{1}{2s^2}\frac{d}{d\sigma}s^2, $$
and we will show later (see (\ref{derivative_s2})) that 
$$\frac{d}{d\sigma}s(\sigma)^2 = \int{(x - m)^3\mu_{\sigma}(dx)},$$
so that, by (\ref{bor}), we have
$$\left| \frac{d}{d\sigma} \log s(\sigma) \right| \leq Cs(\sigma).$$
Since $\sigma \rightarrow s(\sigma)$ is bounded above (see Lemma \ref{lem2}), this quantity is bounded independently of $\sigma$. Therefore $\log s$ is Lipschitz-continuous, and (\ref{uniformity}) immediately follows.

We will now apply Proposition \ref{bounds_lct} to prove our theorem. First, we have 
$$\varphi_{K}(m) - \psi_K(m) = \frac{1}{K}\log g_{K,m}(0).$$
The first part of the theorem then immediately follows from Proposition \ref{bounds_lct} and the fact that (cf.~\eqref{e_varphi_second_deriv})
$$\varphi_K''(m) = \left( \frac{1}{K} \underset{i = 1}{\stackrel{K}{\sum}} \hspace{1mm} s(\sigma - a_i)^2 \right)^{-1}.$$
Since the variance $s$ is a continuous, positive function, the bound (\ref{lct_bnd1}) implies that $\log g_{K,m}(0)$ is uniformly bounded when $\sigma$ varies in a compact subset of $\R$ and the real numbers are bounded by some constant $L$. As an immediate consequence, we get the second part of Theorem \ref{lct}.

For the last part, we have
\begin{align} \label{truc}
\varphi_{K}''(m) - \psi_K''(m) &= \frac{1}{K}\frac{d^2}{dm^2}\log g_{K,m}(0) \notag \\
&= \frac{1}{K} \frac{1}{g_{K,m}(0)}\frac{d^2}{dm^2} g_{K,m}(0) - \frac{1}{K}\left(\frac{1}{g_{K,m}(0)} \frac{d}{dm} g_{K,m}(0) \right)^2 \notag \\
&= \frac{1}{K} \frac{1}{g_{K,m}(0)}\frac{d^2}{dm^2}\sigma \frac{d}{d\sigma} g_{K,m}(0) + \frac{1}{K}\frac{1}{g_{K,m}(0)} \left(\frac{d}{dm}\sigma\right)^2 \frac{d^2}{d\sigma^2}g_{K,m}(0) \notag \\
& \hspace{5mm} - \frac{1}{K}\left(\frac{1}{g_{K,m}(0)} \frac{d\sigma}{dm} \frac{d}{d\sigma} g_{K,m}(0) \right)^2 \notag \\
&\leq \frac{1}{K} \frac{1}{g_{K,m}(0)}\frac{d^2}{dm^2}\sigma \frac{d}{d\sigma} g_{K,m}(0) + \frac{1}{K}\frac{1}{g_{K,m}(0)} \left(\frac{d}{dm}\sigma\right)^2 \frac{d^2}{d\sigma^2}g_{K,m}(0) 
\end{align}

We can explicitly compute the derivatives of $\sigma$ with respect to $m$. Direct calculation (see \ref{derivative_m} below) yields
$$\frac{d}{d\sigma} m = \frac{1}{K} \underset{i = 1}{\stackrel{K}{\sum}} \hspace{1mm} \frac{d}{d\sigma} m_i = \frac{1}{K} \underset{i = 1}{\stackrel{K}{\sum}} \hspace{1mm} s(\sigma - a_i)^2,$$
so that
\begin{equation} \label{bnd_sigma_1}
\frac{d^2}{dm^2} \varphi_K =\frac{d}{dm} \sigma = \left( \frac{1}{K} \underset{i = 1}{\stackrel{K}{\sum}} \hspace{1mm} s(\sigma - a_i)^2 \right)^{-1}.
\end{equation}

Another direct calculation yields
\begin{align}
\frac{d^2}{dm^2}\sigma &= \frac{d}{dm} \left( \frac{1}{K} \underset{i = 1}{\stackrel{K}{\sum}} \hspace{1mm} s(\sigma - a_i)^2 \right)^{-1} \notag \\
&= -\left( \frac{1}{K} \underset{i = 1}{\stackrel{K}{\sum}} \hspace{1mm} \frac{d}{dm}s(\sigma - a_i)^2 \right)\left( \frac{1}{K} \underset{i = 1}{\stackrel{K}{\sum}} \hspace{1mm} s(\sigma - a_i)^2 \right)^{-2} \notag \\
&= -\left( \frac{1}{K} \underset{i = 1}{\stackrel{K}{\sum}} \hspace{1mm} \int{(x - m_i)^3\mu_{\sigma, i}(dx)} \right)\left( \frac{1}{K} \underset{i = 1}{\stackrel{K}{\sum}} \hspace{1mm} s(\sigma - a_i)^2 \right)^{-3}. \notag
\end{align}

This equality, (\ref{bor}) and (\ref{uniformity}) immediately imply
\begin{align} \label{bnd_sigma_2}
\left|\frac{d^2}{dm^2}\sigma \right| &\lesssim \left( \frac{1}{K} \underset{i = 1}{\stackrel{K}{\sum}} \hspace{1mm} s(\sigma - a_i)^2 \right)^{-3/2} \frac{\underset{i}{\sup} \hspace{1mm} s(\sigma - a_i)^3}{\underset{i}{\inf} \hspace{1mm} s(\sigma - a_i)^3} \notag \\
&\lesssim \left( \frac{1}{K} \underset{i = 1}{\stackrel{K}{\sum}} \hspace{1mm} s(\sigma - a_i)^2 \right)^{-3/2}.
\end{align}

Plugging the bounds (\ref{lct_bnd1}), (\ref{lct_bnd2}), (\ref{lct_bnd3}), (\ref{bnd_sigma_1}) and (\ref{bnd_sigma_2}) into (\ref{truc}) yields
$$\varphi_K''(m) - \psi_K''(m) \leq \frac{C}{K} \left( \frac{1}{K} \underset{i = 1}{\stackrel{K}{\sum}} \hspace{1mm} s(\sigma - a_i)^2 \right)^{-1}.$$
But since we have (\ref{bnd_sigma_1}), which states that
$$\varphi_K''(m) = \left( \frac{1}{K} \underset{i = 1}{\stackrel{K}{\sum}} \hspace{1mm} s(\sigma - a_i)^2 \right)^{-1},$$
we get 
$$\psi_K''(m) \geq \left( \frac{1}{K} \underset{i = 1}{\stackrel{K}{\sum}} \hspace{1mm} s(\sigma - a_i)^2 \right)^{-1} - \frac{C}{K}\left( \frac{1}{K} \underset{i = 1}{\stackrel{K}{\sum}} \hspace{1mm} s(\sigma - a_i)^2 \right)^{-1}$$

$$\geq \frac{1}{2}\left( \frac{1}{K} \underset{i = 1}{\stackrel{K}{\sum}} \hspace{1mm} s(\sigma - a_i)^2 \right)^{-1}$$
for $K$ large enough. Since $\sigma \rightarrow s(\sigma)$ is bounded above (see Lemma \ref{lem2} ), we immediately obtain (\ref{bnd_derivative_psi})

\end{proof}

Now all that is left is to prove Proposition \ref{bounds_lct}.

\begin{nota}
We denote by $h(\sigma, \xi)$ the Fourier transform of the law of $X - m$, with $X$ distributed according to $\mu_{\sigma}$, and $m$ its mean, that is
$$h(\sigma, \xi) := \int_{\mathbb{R}}{\exp(i(x - m)\xi)\mu_{\sigma}(dx)}.$$
\end{nota}

By using the inverse Fourier transform, we have the explicit formula for $g_{K,m}(0)$ :
\begin{equation} \label{cra1}
g_{K,m}(0) = \frac{1}{2\pi}\int_{\mathbb{R}}{\underset{i = 1}{\stackrel{K}{\prod}} h(\sigma - \lambda_i, \frac{\xi}{\sqrt{K}}) d\xi}.
\end{equation}

To control this quantity, we will split this integral in two : one integral over large values of $\xi$, and one over small values of $\xi$.

\begin{lem} \label{lem1}
Let $A(C_0)$ be the set of probability measures $\nu$ on $\mathbb{R}$ such that 
$$\int_{\mathbb{R}}{|x|\nu(dx)} \leq C_0$$
and
$$\left|\int_{\mathbb{R}}{\exp(ix\xi)\nu(dx)}\right| \leq \frac{C_0}{|\xi|}  \hspace{1mm} \text{for all} \hspace{1mm} \xi \in \mathbb{R}$$
for some $C_0 < \infty$. Then, for any $\delta > 0$, there exists $\lambda < 1$ such that 
$$\left|\int_{\mathbb{R}}{\exp(ix\xi)\nu(dx)}\right| \leq \lambda  \hspace{1mm} \text{for all} \hspace{1mm} \xi \geq \delta$$
for all $\nu$ in $A(C_0)$.
\end{lem}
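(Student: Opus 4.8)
The statement is a uniform (over a family of measures) non-degeneracy estimate for characteristic functions away from the origin. The natural strategy is a compactness argument: suppose it fails, extract a sequence of measures and frequencies that violate it, and derive a contradiction with the decay hypothesis $|\widehat\nu(\xi)| \le C_0/|\xi|$. First I would reduce to a single $\xi$: if for some $\delta>0$ no such $\lambda<1$ exists, then there is a sequence $\nu_n \in A(C_0)$ and frequencies $\xi_n \ge \delta$ with $|\widehat{\nu_n}(\xi_n)| \to 1$. The decay bound forces $\xi_n \le C_0/|\widehat{\nu_n}(\xi_n)|$, which stays bounded once $|\widehat{\nu_n}(\xi_n)|$ is close to $1$; so after passing to a subsequence $\xi_n \to \xi_* \in [\delta, C_0']$ for some finite $C_0'$.

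Next I would use the uniform first-moment bound $\int |x|\,\nu(dx) \le C_0$ to get tightness of the family $\{\nu_n\}$ (by Markov's inequality, $\nu_n(\{|x|>R\}) \le C_0/R$ uniformly), hence — by Prokhorov's theorem — a further subsequence converging weakly to some probability measure $\nu_*$. Weak convergence gives $\widehat{\nu_n}(\xi_n) \to \widehat{\nu_*}(\xi_*)$ (using $\xi_n \to \xi_*$ and that $x \mapsto e^{ix\xi}$ is bounded continuous, plus a uniform-integrability/equicontinuity argument to handle the moving frequency), so $|\widehat{\nu_*}(\xi_*)| = 1$. But equality $|\widehat{\nu_*}(\xi_*)|=1$ at a nonzero frequency $\xi_*$ forces $\nu_*$ to be supported on a coset of the lattice $(2\pi/\xi_*)\mathbb{Z}$; in particular $\nu_*$ is a (nontrivial, possibly degenerate) lattice measure. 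That alone does not contradict the first-moment bound, so the decay hypothesis must be the real obstruction: a lattice-supported measure $\nu_*$ has $|\widehat{\nu_*}(\xi)| = 1$ for all $\xi \in (2\pi/\xi_*)\mathbb{Z}$, which violates $|\widehat{\nu_*}(\xi)| \le C_0/|\xi| \to 0$. To make this rigorous I would show the decay bound passes to the limit: for fixed large $\xi$, $\widehat{\nu_n}(\xi) \to \widehat{\nu_*}(\xi)$ and $|\widehat{\nu_n}(\xi)| \le C_0/|\xi|$, so $|\widehat{\nu_*}(\xi)| \le C_0/|\xi|$, contradicting $|\widehat{\nu_*}(\xi)|=1$ for arbitrarily large lattice frequencies $\xi$.

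**Main obstacle.** The delicate point is the interplay between the moving frequency $\xi_n$ and the weakly converging measures $\nu_n$: weak convergence of $\nu_n \to \nu_*$ gives convergence of $\widehat{\nu_n}(\xi)$ for each fixed $\xi$, but here I need convergence of $\widehat{\nu_n}(\xi_n)$ along a sequence $\xi_n \to \xi_*$. This requires an equicontinuity estimate on the characteristic functions, uniform in $n$ — e.g. $|\widehat{\nu_n}(\xi) - \widehat{\nu_n}(\xi')| \le \int |e^{ix\xi} - e^{ix\xi'}|\,\nu_n(dx) \le |\xi-\xi'|\int |x|\,\nu_n(dx) \le C_0|\xi-\xi'|$ — which is exactly where the first-moment hypothesis $\int|x|\,\nu(dx)\le C_0$ earns its keep. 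With that Lipschitz bound in hand, $\widehat{\nu_n}(\xi_n) - \widehat{\nu_n}(\xi_*) \to 0$ and $\widehat{\nu_n}(\xi_*) \to \widehat{\nu_*}(\xi_*)$, closing the argument. Everything else is standard compactness and the elementary structure theory of probability measures achieving $|\widehat\nu|=1$ at a nonzero frequency.

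**Alternative, more quantitative route.** If one prefers to avoid compactness, one can argue directly. Write $|\widehat\nu(\xi)|^2 = \int\!\!\int \cos((x-y)\xi)\,\nu(dx)\nu(dy) = 1 - 2\int\!\!\int \sin^2\big(\tfrac{(x-y)\xi}{2}\big)\,\nu(dx)\nu(dy)$. So it suffices to bound $\int\!\!\int \sin^2(\tfrac{(x-y)\xi}{2})\,\nu(dx)\nu(dy)$ below by a positive constant uniformly over $\nu \in A(C_0)$ and $\xi \ge \delta$. Using the moment bound, $\nu$ places mass at least $1/2$ on $[-2C_0, 2C_0]$; combine this with the decay hypothesis applied at a carefully chosen frequency to rule out the measure concentrating near a single lattice coset of spacing $2\pi/\xi$. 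This gives an explicit $\lambda$ but the bookkeeping is heavier; the compactness proof is cleaner and is the one I would write up.
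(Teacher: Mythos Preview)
Your compactness argument is correct and matches the paper's proof essentially step for step: reduce to bounded frequencies via the decay bound, extract weak limits using tightness from the first-moment hypothesis, use the Lipschitz estimate $|\widehat{\nu_n}(\xi_n)-\widehat{\nu_n}(\xi_*)|\le C_0|\xi_n-\xi_*|$ to handle the moving frequency, and then contradict the decay bound at integer multiples of $\xi_*$ (one small slip: the frequencies at which $|\widehat{\nu_*}|=1$ are $\xi_*\mathbb{Z}$, not $(2\pi/\xi_*)\mathbb{Z}$). The paper does exactly this, phrasing the lattice step as ``$x\mapsto e^{ix\xi_\infty}$ is $\nu_\infty$-a.s.\ a constant $\zeta$, hence $\widehat{\nu_\infty}(k\xi_\infty)=\zeta^k$.''
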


\begin{proof} [Proof of Lemma \ref{lem1}]
We can assume without loss of generality that $\delta < 1$. 
Because of assumption (\ref{f1}), we have for any $|\xi| \geq 2C_0$ that $\left|\int_{\mathbb{R}}{\exp(ix\xi)\nu(dx)}\right| \leq \frac{1}{2}$, so it is enough to show that
$$\left|\int_{\mathbb{R}}{\exp(ix\xi)\nu(dx)}\right| \leq \lambda  \hspace{1mm} \text{for all} \hspace{1mm} \xi \in [\delta, 2C_0 ].$$

Lets assume that this result doesn't hold. Then there exists a sequence $(\nu_n)$ of probability measures in $A(C_0)$ and a sequence $(\xi_n) \in [\delta, 2C_0 ]^{\mathbb{N}}$ such that 
\begin{equation} \label{c1}
\underset{n \uparrow \infty}{\liminf} \hspace{1mm} \left|\int_{\mathbb{R}}{\exp(ix\xi_n)\nu_n(dx)}\right| \geq 1.
\end{equation}
Since for all $n$ we have $\int_{\mathbb{R}}{|x|\nu_n(dx)} \leq C_0$, we have compactness of the sequence $(\nu_n)$ for weak convergence, and we can extract a subsequence such that
$$\int{f(x)\nu_n(dx)} \longrightarrow \int{f(x)\nu_{\infty}(dx)}$$
for all bounded continuous function $f$, and 
$$\xi_n \longrightarrow \xi_{\infty}.$$
Therefore, since $|\exp(ix\xi_n) - \exp(ix\xi_{\infty})| \leq |x||\xi_n - \xi_{\infty}|$, we also have
$$\int_{\mathbb{R}}{\exp(ix\xi_n)\nu_n(dx)} \longrightarrow \int_{\mathbb{R}}{\exp(ix\xi_{\infty})\nu_{\infty}(dx)},$$
so that (\ref{c1}) saturates to 
$$\int_{\mathbb{R}}{\exp(ix\xi_{\infty})\nu_{\infty}(dx)} = 1.$$
The classical result on the equality cases in the triangle inequality then tells us that $x \rightarrow \exp(ix\xi_{\infty})$ is $\nu_{\infty}$-almost surely a constant $\zeta \in S^1$, and therefore 
$$\int_{\mathbb{R}}{\exp(ikx\xi_{\infty})\nu_{\infty}(dx)} = \zeta^k \text{ for all } k \in \mathbb{N}.$$
But since $\xi_{\infty} \neq 0$, $|k\xi_{\infty}|$ goes to infinity when $k$ goes to infinity, and since $\nu_{\infty} \in A(C_0)$, $\int_{\mathbb{R}}{\exp(ikx\xi_{\infty})\nu_{\infty}(dx)}$ should go to zero. We therefore have a contradiction, and (\ref{c1}) cannot hold.

\end{proof}

\begin{lem} \label{lem2}
There exists a constant $C > 0$ such that for all $\sigma \in \mathbb{R}$ we have
$$s(\sigma) = \int_{\mathbb{R}}{(x - m(\sigma))^2 \mu_{\sigma}(dx)} \leq C.$$
\end{lem}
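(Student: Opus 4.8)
The plan is to separate the uniformly convex part $\psi_c$ of the single-site potential from the bounded perturbation $\delta\psi$, to bound the variance for the purely convex measure by the Brascamp-Lieb inequality, and then to transfer the estimate to $\mu_\sigma$ by a Holley-Stroock type comparison. Since the bound for the convex part is completely uniform in $\sigma$, and the comparison costs only a factor controlled by $\sup|\delta\psi|$, this will produce the claimed $\sigma$-independent constant.

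First I would introduce the auxiliary probability measure
$$\nu_\sigma(dx) := \frac{1}{\tilde{Z}_\sigma}\exp\bigl(\sigma x - \psi_c(x)\bigr)\,dx,$$
which is well defined because $\psi_c'' \geq \lambda > 0$ forces $\psi_c$ to grow superlinearly. Its potential $g(x) := \psi_c(x) - \sigma x$ satisfies $g'' = \psi_c'' \geq \lambda$, so the (symmetric) Brascamp-Lieb inequality, applied to the function $f(x) = x$, yields
$$\var_{\nu_\sigma}(x) \;\leq\; \int_{\R}\frac{|f'(x)|^2}{g''(x)}\,\nu_\sigma(dx) \;=\; \int_{\R}\frac{1}{\psi_c''(x)}\,\nu_\sigma(dx) \;\leq\; \frac{1}{\lambda},$$
uniformly in $\sigma$. (Alternatively, one may invoke the Bakry-Emery criterion to get the Poincar\'e inequality for $\nu_\sigma$ with constant $\lambda$ and apply it to the coordinate function.)

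Next, by the definition \eqref{e_d_mu_sigma} together with the splitting $\psi = \psi_c + \delta\psi$ we may write $\mu_\sigma(dx) = Z_\sigma^{-1}\exp(-\delta\psi(x))\,\nu_\sigma(dx)$, where $Z_\sigma := \int \exp(-\delta\psi)\,d\nu_\sigma$; since $D := \sup_{\R}|\delta\psi| < \infty$ by \eqref{assumption_potential}, we have $e^{-D} \leq Z_\sigma \leq e^{D}$. Writing $\bar m_\sigma := \int x\,\nu_\sigma(dx)$ and using that $m(\sigma)$ minimises $c \mapsto \int(x-c)^2\,\mu_\sigma(dx)$, I would then estimate
$$s(\sigma) = \int_{\R}(x - m(\sigma))^2\,\mu_\sigma(dx) \;\leq\; \int_{\R}(x - \bar m_\sigma)^2\,\mu_\sigma(dx) \;=\; \frac{1}{Z_\sigma}\int_{\R}(x-\bar m_\sigma)^2 e^{-\delta\psi(x)}\,\nu_\sigma(dx) \;\leq\; e^{2D}\,\var_{\nu_\sigma}(x) \;\leq\; \frac{e^{2D}}{\lambda},$$
so the lemma holds with $C := e^{2D}/\lambda$. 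I do not expect a genuine obstacle here; the only points requiring a little care are that the Brascamp-Lieb inequality is applied to the honestly uniformly log-concave measure $\nu_\sigma$ (which is precisely why $\delta\psi$ is stripped off first), and that the normalising constants $Z_\sigma$ are bounded away from $0$ and $\infty$ uniformly in $\sigma$ — both of which are immediate from the boundedness of $\delta\psi$.
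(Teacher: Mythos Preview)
Your proof is correct and takes essentially the same approach as the paper: the paper's own argument simply says that a combination of the Bakry--Emery and Holley--Stroock criteria gives a uniform spectral gap for $\mu_\sigma$, hence a uniform variance bound. Your version makes this explicit by applying Brascamp--Lieb (equivalently, the Poincar\'e inequality from Bakry--Emery) to the uniformly log-concave measure $\nu_\sigma$ and then transferring the variance bound directly via a Holley--Stroock comparison; the only cosmetic difference is that you compare variances rather than first establishing a Poincar\'e inequality for $\mu_\sigma$ itself.
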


Since, by this lemma, the variances $s(\sigma)$ are bounded above independently of $\sigma$, we can apply Lemma \ref{lem1}, so for any $\delta > 0$, there exists $\lambda < 1$ such that if $|\xi| > \delta$, then for any $\sigma$ we have $|h(\sigma, \xi)| \leq \lambda$.

\begin{proof} [Proof of Lemma \ref{lem2}]
It is easy to show (with a combination of the Bakry-Emery and Holley-Stroock criterions) that the measures $\mu_{\sigma}$ satisfy a spectral gap inequality with a uniform constant, yielding the desired uniform upper bound on the variances. 
\end{proof}

\begin{lem} \label{lem?}
Under the assumptions of Proposition \ref{bounds_lct}, there exists a family of complex-valued functions $v_{\sigma}$ such that
\begin{equation} \label{as1}
\int_{\mathbb{R}}{\exp(i(x - m(\sigma))\xi)\mu_{\sigma}(dx)} = \exp(-v_{\sigma}(\xi))
\end{equation}
and 
\begin{equation} \label{bnd1}
|v_{\sigma}(\xi) - \frac{s(\sigma)^2}{2}\xi^2| \leq C s(\sigma)^3|\xi|^3
\end{equation}
for all $\xi$ small enough, independently of $\sigma$.
\end{lem}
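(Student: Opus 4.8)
The plan is to define $v_\sigma$ as minus the principal branch of the complex logarithm of the characteristic function
\[
 h(\sigma,\xi):=\int_{\mathbb{R}}\exp\bigl(i(x-m(\sigma))\xi\bigr)\,\mu_\sigma(dx),
\]
and to read off the estimate~\eqref{bnd1} from the second-order Taylor expansion of $h$ at $\xi=0$. The only subtle point is that the neighbourhood of $\xi=0$ on which this makes sense must be chosen independently of $\sigma$, and this is precisely where the uniform variance bound of Lemma~\ref{lem2} enters.

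First I would Taylor-expand $h$. Since $\mu_\sigma$ is centred at $m(\sigma)$, the pointwise inequality $|e^{it}-(1+it-t^2/2)|\le|t|^3/6$ with $t=(x-m(\sigma))\xi$, integrated against $\mu_\sigma$, gives
\[
 \Bigl|h(\sigma,\xi)-\Bigl(1-\tfrac{s(\sigma)^2}{2}\xi^2\Bigr)\Bigr|\le\frac{|\xi|^3}{6}\int_{\mathbb{R}}|x-m(\sigma)|^3\,\mu_\sigma(dx)\le C\,s(\sigma)^3|\xi|^3,
\]
where the last inequality is assumption~\eqref{bor} with $k=3$. Write $z(\sigma,\xi):=h(\sigma,\xi)-1=-\tfrac{s(\sigma)^2}{2}\xi^2+r(\sigma,\xi)$ with $|r(\sigma,\xi)|\le C\,s(\sigma)^3|\xi|^3$. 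By Lemma~\ref{lem2} there is $C_\ast<\infty$ with $s(\sigma)\le C_\ast$ for all $\sigma$, so there is $\delta>0$, depending only on $C_\ast$ and the constant $C$ above but not on $\sigma$, such that $|z(\sigma,\xi)|\le\tfrac12$ and $s(\sigma)|\xi|\le 1$ whenever $|\xi|\le\delta$. On this range $h(\sigma,\xi)=1+z(\sigma,\xi)$ lies in the disc of radius $\tfrac12$ about $1$, hence is nonzero and belongs to the domain of the principal logarithm; I then set
\[
 v_\sigma(\xi):=-\log h(\sigma,\xi),
\]
which is continuous in $\xi$ on $|\xi|\le\delta$, vanishes at $\xi=0$, and satisfies $\exp(-v_\sigma(\xi))=h(\sigma,\xi)$, that is~\eqref{as1}.

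It remains to prove~\eqref{bnd1}. Using $|z-\log(1+z)|\le|z|^2$ for $|z|\le\tfrac12$ and the identity $\tfrac{s(\sigma)^2}{2}\xi^2=-z(\sigma,\xi)+r(\sigma,\xi)$, one gets
\[
 \Bigl|v_\sigma(\xi)-\tfrac{s(\sigma)^2}{2}\xi^2\Bigr|=\bigl|\bigl(z(\sigma,\xi)-\log(1+z(\sigma,\xi))\bigr)-r(\sigma,\xi)\bigr|\le|z(\sigma,\xi)|^2+|r(\sigma,\xi)|.
\]
Since $s(\sigma)|\xi|\le1$ on $|\xi|\le\delta$ we have $s(\sigma)^3|\xi|^3\le s(\sigma)^2\xi^2$, hence $|z(\sigma,\xi)|\le C\,s(\sigma)^2\xi^2$ and therefore $|z(\sigma,\xi)|^2\le C\,s(\sigma)^4\xi^4=C\,(s(\sigma)|\xi|)\,s(\sigma)^3|\xi|^3\le C\,s(\sigma)^3|\xi|^3$; together with the bound on $|r|$ this is exactly~\eqref{bnd1}. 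The only real obstacle is the $\sigma$-uniformity of $\delta$, which is supplied by Lemma~\ref{lem2}; everything else is the classical cumulant expansion used in [GOVW] and [MO].
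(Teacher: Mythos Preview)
Your proof is correct and follows essentially the same approach as the paper's: define $v_\sigma=-\log h(\sigma,\cdot)$ and derive \eqref{bnd1} from a third-order Taylor expansion of the characteristic function, with assumption~\eqref{bor} controlling the remainder. Your version is in fact more careful than the paper's, which simply bounds the $\xi$-derivatives of $h$ and then says ``a Taylor expansion of $v_\sigma$ gives the result''; you make explicit both the passage from $h$ to $v_\sigma$ via $|z-\log(1+z)|\le|z|^2$ and the role of Lemma~\ref{lem2} in securing the $\sigma$-uniform choice of $\delta$.
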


\begin{proof}
We can use (\ref{as1}) as a definition of the functions $v_{\sigma}$, so we just have to prove (\ref{bnd1}). We have 
$$\frac{d^k}{d\xi^k}\int_{\mathbb{R}}{\exp(i(x - m(\sigma))\xi)\mu_{\sigma}(dx)} = i^k\int_{\mathbb{R}}{(x - m)^k\exp(i(x - m)\xi)\mu_{\sigma}(dx)},$$
and, since we assumed \ref{bor}, we get
$$\left|\frac{d^k}{d\xi^k}\int_{\mathbb{R}}{\exp(i(x - m(\sigma))\xi)\mu_{\sigma}(dx)}\right| \leq Cs^k,$$
and a Taylor expansion of the function $v_{\sigma}$ then gives us the desired result.
\end{proof}

Let us now fix $\delta > 0$ small enough, so that the bound of Lemma \ref{lem?} holds for all $|\xi| \leq \delta$. We split (\ref{cra1}) in two integrals, depending on the value of $|\xi|$ : 
\begin{align}
g_{K,m}(0) &= \frac{1}{2\pi}\int_{\mathbb{R}}{\underset{i = 1}{\stackrel{K}{\prod}} h(\sigma - \lambda_i, \frac{\xi}{\sqrt{K}}) d\xi} \notag \\
&= \frac{1}{2\pi}\int_{\frac{1}{\sqrt{K}}|\xi| \leq \delta}{\underset{i = 1}{\stackrel{K}{\prod}} h(\sigma - \lambda_i, \frac{\xi}{\sqrt{K}}) d\xi} \notag \\
& \hspace{1cm} + \frac{1}{2\pi}\int_{\frac{1}{\sqrt{K}}|\xi| > \delta}{\underset{i = 1}{\stackrel{K}{\prod}} h(\sigma - \lambda_i, \frac{\xi}{\sqrt{K}}) d\xi} 
\end{align}

We will show that the first term is of leading order, and converges to 1. Let us first take care of the second term. Since we have a uniform bound on the variances of the probability measures $\mu_{\sigma}$, we can apply Lemma \ref{lem2}, and obtain the bound
$$h(\sigma, \xi) \leq \lambda$$
for some $\lambda < 1$, for all $|\xi| \leq \delta$, independently of $\sigma$. We use this estimate on $K - 2$ of the $K$ elements of the product, and estimate (\ref{f1}) on the last two, to obtain
\begin{align}
\left|\underset{i = 1}{\stackrel{K}{\prod}} h(\sigma - \lambda_i, \frac{\xi}{\sqrt{K}})\right| &\leq C \lambda^{K-2} \left(\frac{1}{1 + |\xi|/\sqrt{K}}\right)^2 \notag \\
&\leq C \lambda^{K-2} \frac{K}{K + \xi^2} \notag \\
&\leq C \lambda^{K-2} \frac{K}{1 + \xi^2} 
\end{align}

We therefore have
\begin{align}
\left|\frac{1}{2\pi}\int_{\frac{1}{\sqrt{K}}|\xi| > \delta}{\underset{i = 1}{\stackrel{K}{\prod}} h(\sigma - \lambda_i, \frac{\xi}{\sqrt{K}}) d\xi}\right| &\leq C K \lambda^{K-2} \int_{\mathbb{R}}{\frac{1}{1 + \xi^2}d\xi} \notag \\
&\leq \frac{C}{K} 
\end{align}

where the last estimate is because we have $\lambda < 1$, so this term is exponentially small in $K$, and therefore negligible when compared to $1/K$. All that is now left is to take care of the integral over small values of $\xi$. Recalling definition (\ref{as1}), we have

$$\frac{1}{2\pi}\int_{\frac{1}{\sqrt{K}}|\xi| \leq \delta}{\underset{i = 1}{\stackrel{K}{\prod}} h(\sigma - \lambda_i, \frac{\xi}{\sqrt{K}}) d\xi} = \frac{1}{2\pi}\int_{\frac{1}{\sqrt{K}}|\xi| \leq \delta}{\exp\left(-\underset{i = 1}{\stackrel{K}{\sum}}v_{\sigma - \lambda_i}(\xi/\sqrt{K})\right)d\xi} $$

Since we assumed $\delta$ small enough, according to Lemma \ref{lem?}, we have, independently of $\sigma$ and for all $\xi$ such that $|\xi|/\sqrt{K} \leq \delta$
$$|v_{\sigma}(\xi) - \frac{s(\sigma)^2}{2}\xi^2| \leq Cs(\sigma)^3|\xi|^3.$$
This bound implies that
\begin{align}
\text{Re}(v_{\sigma}(\xi)) &\geq \frac{s(\sigma)^2}{2}\xi^2 - Cs(\sigma)^3|\xi|^3 \notag \\
&\geq \frac{s(\sigma)^2}{4}\xi^2 \notag
\end{align}
independently of $\sigma$, as long as we have chosen $\delta$ small enough (since $s(\sigma)$ is bounded above). But since $y \rightarrow \exp(y)$ is Lipschitz continuous on $\{ \text{Re}(y) \leq -c\xi^2 \}$, with constant $\exp(-c\xi^2)$, we have
\begin{align} 
&\left|\exp\left(-\underset{i = 1}{\stackrel{K}{\sum}}v_{\sigma - \lambda_i}(\xi/\sqrt{K})\right) - \exp \left(-\frac{\sum s(\sigma - \lambda_i)^2}{2K}\xi^2\right)\right| \notag \\
&\hspace{5mm}\leq C\left(\sum s(\sigma - \lambda_i)^3 \right)\frac{|\xi|^3}{K^{3/2}} \exp\left(-\frac{\sum s(\sigma - \lambda_i)^2}{4K}\xi^2\right) \notag \\
&\hspace{1cm}\leq C\left(\frac{1}{K}\sum s(\sigma - \lambda_i)^3 \right)\frac{|\xi|^3}{\sqrt{K}}\exp\left(-\frac{\sum s(\sigma - \lambda_i)^2}{4K}\xi^2\right)
\end{align}
Consequently, we have
\begin{align} 
&\left|\frac{1}{2\pi}\int_{\frac{1}{\sqrt{K}}|\xi| \leq \delta}{\underset{i = 1}{\stackrel{K}{\prod}} h(\sigma - \lambda_i, \frac{\xi}{\sqrt{K}}) d\xi} - \frac{1}{2\pi}\int_{\frac{1}{\sqrt{K}}|\xi| \leq \delta}{\exp \left(-\frac{\sum s(\sigma - \lambda_i)^2}{2K}\xi^2\right)d\xi} \right| \notag \\
&\hspace{5mm} \leq C\left(\frac{1}{K}\sum s(\sigma - \lambda_i)^3 \right)\frac{1}{\sqrt{K}}\int_{\frac{1}{\sqrt{K}}|\xi| \leq \delta}{|\xi|^3\exp\left(-\frac{\sum s(\sigma - \lambda_i)^2}{4K}\xi^2\right)d\xi} \notag \\
&\hspace{5mm}\leq C\left(\frac{1}{K}\sum s(\sigma - \lambda_i)^3 \right)\frac{1}{\sqrt{K}}\int_{\mathbb{R}}{|\xi|^3\exp\left(-\frac{\sum s(\sigma - \lambda_i)^2}{4K}\xi^2\right)d\xi} \notag \\
&\hspace{5mm}\leq \frac{C}{\sqrt{K}}\frac{\left(\frac{1}{K}\sum s(\sigma - \lambda_i)^3 \right)}{\left(\frac{1}{K}\sum s(\sigma - \lambda_i)^2 \right)^{2}} \notag \\
&\hspace{5mm}\leq \frac{C}{\sqrt{K}} \left(\frac{1}{K}\sum s(\sigma - a_i)^2 \right)^{-1/2} \frac{\sup s(\sigma - a_i)^3}{\inf s(\sigma - a_i)^3} \notag \\
&\hspace{5mm}\leq \frac{C}{\sqrt{K}} \left(\frac{1}{K}\sum s(\sigma - a_i)^2 \right)^{-1/2}.
\end{align}

Since $\int_{\frac{1}{\sqrt{K}}|\xi| > \delta}{\exp \left(-\frac{\sum s(\sigma - \lambda_i)^2}{2K}\xi^2\right)d\xi}$ is exponentially small in $K$, we finally obtain
$$\left|\frac{1}{2\pi}\int_{\frac{1}{\sqrt{K}}|\xi| \leq \delta}{\underset{i = 1}{\stackrel{K}{\prod}} h(\sigma - \lambda_i, \frac{\xi}{\sqrt{K}}) d\xi} - \frac{1}{2\pi}\int_{\mathbb{R}}{\exp \left(-\frac{\sum s(\sigma - \lambda_i)^2}{2K}\xi^2\right)d\xi} \right|$$
$$\leq \frac{C}{\sqrt{K}}\left(\frac{1}{K}\sum s(\sigma - \lambda_i)^2 \right)^{-1/2}.$$
In the end, what we obtain is
\begin{equation}
\left|g_{K,m}(0) - \left(\frac{2\pi}{K}\sum s(\sigma - \lambda_i)^2 \right)^{-1/2} \right| \leq \frac{C}{\sqrt{K}}\left(\frac{1}{K}\sum s(\sigma - \lambda_i)^2 \right)^{-1/2}.
\end{equation}

To bound the derivatives of $g_{K,m}(0)$ with respect to $\sigma$, we will need the following estimates : 
\begin{lem}
Under the assumptions of Proposition \ref{bounds_lct} we have, uniformly in $\sigma$
\begin{equation} \label{es1}
\left| \frac{d}{d\sigma}h(\sigma, \xi) \right| \leq Cs^3|\xi|^2;
\end{equation}
\begin{equation} \label{es2}
\left| \frac{d^2}{d\sigma^2}h(\sigma, \xi) \right| \leq Cs^4(1 + s^2|\xi|^2)|\xi|^2.
\end{equation}
\end{lem}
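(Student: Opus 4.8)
The plan is to prove both inequalities by differentiating $h(\sigma,\xi)=\int e^{i(x-m(\sigma))\xi}\mu_\sigma(dx)$ under the integral sign and then organising the outcome so that the cancellations forced by $h(\sigma,0)=1$ and $\partial_\xi h(\sigma,0)=0$ become visible. The only analytic inputs needed are the uniform moment bounds~\eqref{bor} together with the elementary estimates $|e^{iu}-1|\le|u|$ and $|e^{iu}-1-iu|\le u^2/2$ for real $u$.

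First I would record the ``moment calculus''. Since $\tfrac{d}{d\sigma}\varphi^*(\sigma)=m(\sigma)$ (the mean being the derivative of the log-partition function), one has $\tfrac{d}{d\sigma}\mu_\sigma(dx)=(x-m(\sigma))\mu_\sigma(dx)$, whence $m'(\sigma)=s(\sigma)^2$ and $\tfrac{d}{d\sigma}s(\sigma)^2=\mu_3(\sigma):=\int(x-m)^3\mu_\sigma$, with $|\mu_3|\le Cs^3$ by~\eqref{bor}. Setting $\phi_k(\sigma,\xi):=\int (x-m(\sigma))^k e^{i(x-m(\sigma))\xi}\mu_\sigma(dx)$, so that $\phi_0=h$, $\phi_1(\sigma,0)=0$, $\phi_2(\sigma,0)=s(\sigma)^2$, $\partial_\xi\phi_2(\sigma,0)=i\mu_3$ and $|\phi_k|\le Cs^k$, the same differentiation yields
\begin{equation*}
\frac{d}{d\sigma}h=-is^2\xi\,h+\phi_1,\qquad \frac{d}{d\sigma}\phi_1=-s^2h-is^2\xi\,\phi_1+\phi_2,
\end{equation*}
and hence, using $\tfrac{d}{d\sigma}s^2=\mu_3$,
\begin{equation*}
\frac{d^2}{d\sigma^2}h=-i\mu_3\xi\,h-s^4\xi^2h-2is^2\xi\,\phi_1-s^2h+\phi_2 .
\end{equation*}

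For~\eqref{es1} I would exploit the exact rewritings $h-1=\int(e^{i(x-m)\xi}-1-i(x-m)\xi)\mu_\sigma$ and $\phi_1=is^2\xi+\int(x-m)(e^{i(x-m)\xi}-1-i(x-m)\xi)\mu_\sigma$, which give $|h-1|\le\tfrac12 s^2\xi^2$ and $|\phi_1|\le s^2|\xi|$, and the first identity becomes $\tfrac{d}{d\sigma}h=-is^2\xi(h-1)+\int(x-m)(e^{i(x-m)\xi}-1-i(x-m)\xi)\mu_\sigma$, so that $|\tfrac{d}{d\sigma}h|\le\tfrac12 s^4|\xi|^3+Cs^3\xi^2$ by~\eqref{bor}. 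When $s|\xi|\le1$ the first summand is $\le s^3\xi^2$; when $s|\xi|>1$ one uses instead the crude bound $|\tfrac{d}{d\sigma}h|\le s^2|\xi|+|\phi_1|\le s^2|\xi|+Cs\le Cs^3\xi^2$. For~\eqref{es2} I would group the second identity as $[-i\mu_3\xi h-s^2h+\phi_2]-s^4\xi^2h-2is^2\xi\phi_1$ and rewrite the bracket as $-i\mu_3\xi(h-1)-s^2(h-1)+(\phi_2-s^2-i\mu_3\xi)$, where $\phi_2-s^2-i\mu_3\xi=\int(x-m)^2(e^{i(x-m)\xi}-1-i(x-m)\xi)\mu_\sigma$ is $\le\tfrac12\xi^2\int(x-m)^4\mu_\sigma\le Cs^4\xi^2$. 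Using $|h-1|\le\tfrac12 s^2\xi^2$, $|\phi_1|\le s^2|\xi|$, $|h|\le1$ and $|\mu_3|\le Cs^3$, each term is $\le C(s^4\xi^2+s^5|\xi|^3+s^6\xi^4)$; since $s^5|\xi|^3\le\tfrac12(s^4\xi^2+s^6\xi^4)$, this collapses to $Cs^4(1+s^2\xi^2)\xi^2$.

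There is no real obstacle here: the content of the lemma is exactly that the individually too-large contributions produced by naive differentiation — the $O(s^2)$ term $-s^2h$ and the $O(s^3|\xi|)$ piece hidden inside $\phi_2$, or the $O(s^2|\xi|)$ term $-is^2\xi h$ in the first derivative — cancel down to the stated orders, and the only genuine ``work'' is choosing the grouping (into multiples of $h-1$, and of $\phi_k$ minus its second-order Taylor polynomial in $\xi$) that makes those cancellations manifest. Everything else is the uniform moment control~\eqref{bor}; in particular neither the Fourier-decay hypothesis~\eqref{f1} nor the uniform upper bound on $s(\sigma)$ of Lemma~\ref{lem2} is needed for this estimate.
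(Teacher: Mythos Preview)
Your proof is correct and takes essentially the same approach as the paper: both exploit that $\partial_\sigma h$ and $\partial_\sigma^2 h$ vanish to first order in $\xi$ at $\xi=0$ and then control the remainder using the moment bounds~\eqref{bor}. The paper phrases this as a Taylor expansion in $\xi$ (bounding $\partial_\xi^2\partial_\sigma^j h$ and integrating), whereas you compute $\partial_\sigma h$ and $\partial_\sigma^2 h$ directly and regroup terms as multiples of $h-1$ and of $\phi_k$ minus its Taylor polynomial; the underlying cancellations are identical.

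Your organisation is in fact a little cleaner on one point: the paper asserts $\bigl|\partial_\xi^2\partial_\sigma h(\sigma,\xi)\bigr|=\bigl|\tfrac{d}{d\sigma}s^2\bigr|\le Cs^3$, which literally holds only at $\xi=0$ (for general $\xi$ an extra $Cs^4|\xi|$ appears), so that the Taylor remainder really yields $Cs^3|\xi|^2(1+s|\xi|)$ rather than $Cs^3|\xi|^2$. You close this gap by treating the regime $s|\xi|>1$ separately via the crude bound $|\partial_\sigma h|\le s^2|\xi|+|\phi_1|\le Cs^3|\xi|^2$. Your observation that neither~\eqref{f1} nor the upper bound on $s(\sigma)$ is needed here is also correct.
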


\begin{proof}
To prove this lemma, we will rely on the following identity :
\begin{equation}
\frac{d}{d\sigma}\int{f(x)\mu_{\sigma}(dx)} = \int{(x - m)f(x)\mu_{\sigma}(dx)}.
\end{equation}
which can be directly computed easily.
Using this identity, we can show that
\begin{equation} \label{derivative_m}
\frac{d}{d\sigma}m = \frac{d}{d\sigma}\int{x \mu_{\sigma}(dx)} = \int{x(x - m)\mu_{\sigma}(dx)} = s^2,
\end{equation}
\begin{equation} \label{derivative_s2}
\frac{d}{d\sigma}s^2 = \frac{d}{d\sigma}\int{(x - m)^2\mu_{\sigma}(dx)} = \int{(x - m)^3\mu_{\sigma}(dx)}.
\end{equation}

To bound the derivatives in $\sigma$ of the Fourier transform, we will use a Taylor expansion in $0$. We have
\begin{equation}
\frac{d^k}{d\xi^k}\frac{d}{d\sigma}h(\sigma, \xi) = \frac{d}{d\sigma}\frac{d^k}{d\xi^k}h(\sigma, \xi) = i^k\frac{d}{d\sigma}\int{(x-m)^k\exp(i(x-m)\xi)\mu_{\sigma}(dx)}.
\end{equation}
In particular, $\frac{d}{d\sigma}h(\sigma, \xi)$ vanishes to the first order in $0$ in $\xi$, and, by assumption (\ref{bor}),
$$\left| \frac{d^2}{d\xi^2}\frac{d}{d\sigma}h(\sigma, \xi) \right| = \left| \frac{d}{d\sigma}s^2 \right| \leq Cs^3,$$
so that 
\begin{equation} 
\left| \frac{d}{d\sigma}h(\sigma, \xi) \right| \leq Cs^3|\xi|^2.
\end{equation}
In the same way,
\begin{equation}
\frac{d^k}{d\xi^k}\frac{d^2}{d\sigma^2}h(\sigma, \xi) = \frac{d^2}{d\sigma^2}\frac{d^k}{d\xi^k}h(\sigma, \xi) = i^k\frac{d^2}{d\sigma^2}\int{(x-m)^k\exp(i(x-m)\xi)\mu_{\sigma}(dx)},
\end{equation}
so that $\frac{d^2}{d\sigma^2}h(\sigma, \xi)$ also vanishes to the first order in $0$ in $\xi$, and
\begin{align}
& \frac{d^2}{d\xi^2}\frac{d^2}{d\sigma^2}h(\sigma, \xi) = -\frac{d^2}{d\sigma^2}\int{(x-m)^2\exp(i(x-m)\xi)\mu_{\sigma}(dx)} \notag \\
&= \frac{d}{d\sigma}\Big[ -\int{(x-m)^3\exp(i(x-m)\xi)\mu_{\sigma}(dx)} \\
& \qquad  + s^2\int{(2(x-m)  + i\xi(x-m)^2)\exp(i(x-m)\xi)\mu_{\sigma}(dx)} \Big] \notag \\
&= - \int{(x-m)^4\exp(i(x-m)\xi)\mu_{\sigma}(dx)} \\
& \hspace{3mm} + s^2\int{(3(x-m)^2  + i\xi(x-m)^3)\exp(i(x-m)\xi)\mu_{\sigma}(dx)} \notag \\
& \hspace{3mm} + \left(\int{(x-m)^3\mu_{\sigma}(dx)}\right)\left(\int{(2(x-m)  + i\xi(x-m)^2)\exp(i(x-m)\xi)\mu_{\sigma}(dx)}\right) \notag \\
& \hspace{3mm} -2s^4h(\sigma, \xi) -4i\xi s^4\int{(x-m)\exp(i(x-m)\xi)\mu_{\sigma}(dx)} \notag \\
& \hspace{3mm} +s^4\xi^2\int{(x-m)^2\exp(i(x-m)\xi)\mu_{\sigma}(dx)}, \notag 
\end{align}
so that, in the end, using assumption \ref{bor}, we get
\begin{equation} 
\left| \frac{d^2}{d\sigma^2}h(\sigma, \xi) \right| \leq Cs^4(1 + s^2|\xi|^2)|\xi|^2.
\end{equation}
\end{proof}

Let us now compute the derivatives with respect to $\sigma$ of $g_{K,\sigma}$

\begin{align} \label{first_derivative_g}
\frac{d}{d\sigma} g_{K,\sigma}(0) &= \frac{d}{d\sigma} \frac{1}{2\pi}\int_{\mathbb{R}}{\underset{i = 1}{\stackrel{K}{\prod}} h(\sigma - \lambda_i, \frac{\xi}{\sqrt{K}}) d\xi} \notag \\
&= \frac{1}{2\pi}\int_{\mathbb{R}}{\underset{i=1}{\stackrel{K}{\sum}} \hspace{1mm} \frac{d}{d\sigma}h(\sigma - \lambda_i, \frac{\xi}{\sqrt{K}}) \underset{j \neq i}{\prod} \hspace{1mm} h(\sigma - \lambda_j, \frac{\xi}{\sqrt{K}}) d\xi}; \notag 
\end{align}

\begin{align}
& \frac{d^2}{d\sigma^2} g_{K,\sigma}(0) = \frac{1}{2\pi}\int_\mathbb{R}{\underset{i=1}{\stackrel{K}{\sum}} \hspace{1mm} \frac{d^2}{d\sigma^2}h(\sigma - \lambda_i, \frac{\xi}{\sqrt{K}}) \underset{j \neq i}{\prod} \hspace{1mm} h(\sigma - \lambda_j, \frac{\xi}{\sqrt{K}}) d\xi} \notag \\
&+ \frac{1}{2\pi}\int_\mathbb{R}{\underset{i,j=1, i \neq j}{\stackrel{K}{\sum}} \hspace{1mm} \frac{d}{d\sigma}h(\sigma - \lambda_i, \frac{\xi}{\sqrt{K}}) \frac{d}{d\sigma}h(\sigma - \lambda_j, \frac{\xi}{\sqrt{K}}) \underset{k \neq i,j}{\prod} \hspace{1mm} h(\sigma - \lambda_k, \frac{\xi}{\sqrt{K}}) d\xi}. 
\end{align}

We are seeking to bound $\frac{d^2}{d\sigma^2} g_{K,\sigma}(0)$. Once again, we separate the integrals into inner and outer integrals. Once more, the leading order term will be the inner integrals. For the outer integrals, we have, by using Lemma \ref{lem1} on $K-6 $ of the $K-1$ factors, estimate (\ref{f1}) on the remaining 6 and estimate (\ref{es2}) on the second derivative of $h$ to obtain (still remembering that $s(\sigma)$ is bounded above independently of $\sigma$)
\begin{align}
\frac{1}{2\pi}&\left|\int_{|\xi|/\sqrt{K} > \delta}{\underset{i=1}{\stackrel{K}{\sum}} \hspace{1mm} \frac{d^2}{d\sigma^2}h(\sigma - \lambda_i, \frac{\xi}{\sqrt{K}}) \underset{j \neq i}{\prod} \hspace{1mm} h(\sigma - \lambda_j, \frac{\xi}{\sqrt{K}}) d\xi}\right|  \notag \\
&\leq C \underset{i=1}{\stackrel{K}{\sum}} \left|\int_{|\xi|/\sqrt{K} > \delta}{s(\sigma - \lambda_i)^4(1 + s(\sigma - \lambda_i)^2\frac{|\xi|^2}{K})\frac{|\xi|^2}{K} \left(\frac{K}{K + |\xi|^2}\right)^6 \lambda^{K-6} d\xi}\right| \notag \\
&\leq \lambda^{K-6}\frac{C}{K}\underset{i=1}{\stackrel{K}{\sum}}s(\sigma - \lambda_i)^4 \int_{\mathbb{R}}{\frac{1}{1 + \xi^2} d\xi} \notag \\
&\leq \frac{C}{K}.
\end{align}

The same strategy (using estimate (\ref{es1})) allows us to show that 
\begin{align*}
&\frac{1}{2\pi}\left|\int_{|\xi|/\sqrt{K} > \delta}{\underset{i,j=1, i \neq j}{\stackrel{K}{\sum}} \hspace{1mm} \frac{d}{d\sigma}h(\sigma - \lambda_i, \frac{\xi}{\sqrt{K}}) \frac{d}{d\sigma}h(\sigma - \lambda_j, \frac{\xi}{\sqrt{K}}) \underset{k \neq i,j}{\prod} \hspace{1mm} h(\sigma - \lambda_k, \frac{\xi}{\sqrt{K}}) d\xi} \right| \\
& \qquad  \leq \frac{C}{K}.
\end{align*}

We will now show that the inner integrals are bounded. 

\begin{align}
& \frac{1}{2\pi}\left|\int_{|\xi|/\sqrt{K} \leq \delta}{\underset{i=1}{\stackrel{K}{\sum}} \hspace{1mm} \frac{d^2}{d\sigma^2}h(\sigma - \lambda_i, \frac{\xi}{\sqrt{K}}) \underset{j \neq i}{\prod} \hspace{1mm} h(\sigma - \lambda_j, \frac{\xi}{\sqrt{K}}) d\xi}\right|  \notag \\
 &\leq C\underset{i=1}{\stackrel{K}{\sum}} \hspace{1mm} \Big| \int_{|\xi|/\sqrt{K} \leq \delta} s(\sigma - \lambda_i)^4\left(1 + s(\sigma - a_i)^2\frac{|\xi|^2}{K} \right)\frac{|\xi|^2}{K} \\
   & \qquad \qquad  \times \exp \left(-\underset{j \neq i}{\sum} \hspace{1mm} v_{\sigma-a_j}(\xi/\sqrt{K}) \right) d\xi \Big| \notag \\
 &\leq C\underset{i=1}{\stackrel{K}{\sum}} \hspace{1mm} \int_{|\xi|/\sqrt{K} \leq \delta}s(\sigma - a_i)^4(1 + s(\sigma - a_i)^2\delta^2) \frac{|\xi|^2}{K} \\
 & \qquad \qquad \times  \exp \left(-\underset{j \neq i}{\sum} \hspace{1mm} \frac{s(\sigma - a_j)^2}{2K}|\xi|^2 \right) d\xi  \notag \\
&\hspace{3mm} + C\underset{i=1}{\stackrel{K}{\sum}} \hspace{1mm} \int_{|\xi|/\sqrt{K} \leq \delta}s(\sigma - a_i)^4(1 + s(\sigma - a_i)^2\delta^2) \frac{|\xi|^5}{K^{5/2}}\left( \underset{j \neq i}{\sum} \hspace{1mm} s(\sigma - a_j)^3 \right) \\
& \qquad \qquad \times \exp \left(-\underset{j \neq i}{\sum} \hspace{1mm} \frac{s(\sigma - a_j)^2}{4K}|\xi|^2 \right) d\xi  \notag \\
&\leq \frac{C}{K} \underset{i=1}{\stackrel{K}{\sum}} \hspace{1mm} s(\sigma - a_i)^4 \int_{\R}{|\xi|^2\exp \left(-\underset{j \neq i}{\sum} \hspace{1mm} \frac{s(\sigma - \lambda_j)^2}{2K}|\xi|^2 \right) d\xi} \notag \\
& \hspace{3mm} + \frac{C}{K^{3/2}} \underset{i=1}{\stackrel{K}{\sum}} \hspace{1mm} s(\sigma - a_i)^4\left( \frac{1}{K}\underset{j \neq i}{\sum} \hspace{1mm} s(\sigma - a_j)^3 \right)  \int_{\R}{|\xi|^5 \exp \left(-\underset{j \neq i}{\sum} \hspace{1mm} \frac{s(\sigma - a_j)^2}{4K}|\xi|^2 \right) d\xi}  \notag \\
&\leq \frac{C}{K} \underset{i=1}{\stackrel{K}{\sum}} \hspace{1mm} \frac{s(\sigma - a_i)^4}{\left(\underset{j \neq i}{\sum} \hspace{1mm} \frac{s(\sigma - a_j)^2}{2K} \right)^{3/2}}\int_{\R}{u^2\exp(-u^2)du} \notag \\
& \hspace{3mm} + \frac{C}{K^{3/2}} \underset{i=1}{\stackrel{K}{\sum}} \hspace{1mm} s(\sigma - a_i)^4 \left( \frac{1}{K}\underset{j \neq i}{\sum} \hspace{1mm} s(\sigma - a_j)^3 \right)\left(\underset{j \neq i}{\sum} \hspace{1mm} \frac{s(\sigma - a_j)^2}{4K} \right)^{-3} \\
& \qquad \qquad \times \int_{\R}{|u|^5\exp(-u^2)du}  \notag \\
&\leq \frac{C}{K} \left(\frac{1}{K}\underset{i = 1}{\stackrel{K}{\sum}} s(\sigma - a_i)^2 \right)^{1/2} \underset{i = 1}{\stackrel{K}{\sum}} \frac{\left(\underset{j \neq i}{\sum} s(\sigma - a_j)^2 \right)^{1/2}}{\left(\underset{k = 1}{\stackrel{K}{\sum}} s(\sigma - a_k)^2 \right)^{1/2}} \frac{\underset{k}{\sup} \hspace{1mm} s(\sigma - a_k)^4}{\underset{k}{\inf} \hspace{1mm} s(\sigma - a_k)^4} \notag \\
&\hspace{3mm} + \frac{C}{K^{3/2}} \left(\frac{1}{K}\underset{i = 1}{\stackrel{K}{\sum}} s(\sigma - a_i)^2 \right)^{1/2} \underset{i = 1}{\stackrel{K}{\sum}}\frac{\left(\underset{j \neq i}{\sum} s(\sigma - a_j)^2 \right)^{1/2}}{\left(\underset{k = 1}{\stackrel{K}{\sum}} s(\sigma - a_k)^2 \right)^{1/2}} \frac{\underset{k}{\sup} \hspace{1mm} s(\sigma - a_k)^7}{\underset{k}{\inf} \hspace{1mm} s(\sigma - a_k)^7} \notag \\
&\leq C   \left(\frac{1}{K}\underset{i = 1}{\stackrel{K}{\sum}} s(\sigma - a_i)^2 \right)^{1/2}.
\end{align}

We can show in the same way that 

\begin{align}
\frac{1}{2\pi}&\Big|\int_{|\xi|/\sqrt{K} \leq \delta}\underset{i,j=1, i \neq j}{\stackrel{K}{\sum}} \hspace{1mm} \frac{d}{d\sigma}h(\sigma - \lambda_i, \frac{\xi}{\sqrt{K}}) \frac{d}{d\sigma}h(\sigma - \lambda_j, \frac{\xi}{\sqrt{K}}) \\ 
& \qquad \times \underset{k \neq i,j}{\prod} \hspace{1mm} h(\sigma - \lambda_k, \frac{\xi}{\sqrt{K}}) d\xi \Big| \notag \\
&\leq C \hspace{1mm} \underset{i,j=1, i \neq j}{\stackrel{K}{\sum}} \hspace{1mm} \int_{|\xi|/\sqrt{K} \leq \delta}{s(\sigma - \lambda_i)^3s(\sigma - \lambda_j)^3\frac{|\xi|^4}{K^2}\exp \left( -\underset{k \neq i,j}{\sum} \hspace{1mm} v_{\sigma-\lambda_k}(\xi/\sqrt{K}) \right) d\xi} \notag \\
&\leq C   \left(\frac{1}{K}\underset{i = 1}{\stackrel{K}{\sum}} s(\sigma - a_i)^2 \right)^{1/2}.
\end{align}

The same technique applied to (\ref{first_derivative_g}) also yields (\ref{lct_bnd3}).

Combining the bounds on the inner and outer integrals yields (\ref{lct_bnd2}).


\section{Some classical criteria for the LSI}

Here we recall the three main criteria that are commonly used to obtain logarithmic Sobolev inequalities.

\vspace{3mm}

\textbf{Criterion I} (Tensorization principle)

If $\mu_1 \in \textsl{P}(X_1)$ and $\mu_2 \in \textsl{P}(X_2)$ satisfy LSI($\rho_1$) and LSI($\rho_2$) respectively, then $\mu_1 \otimes \mu_2$ satisfies LSI($\min \{ \rho_1, \rho_2 \}$).

\vspace{3mm}

\textbf{Criterion II} (Holley-Stroock perturbation lemma)

Let $\mu \in \textsl{P}(X)$ satisfy LSI($\rho$) and let $\delta \psi : X \rightarrow \mathbb{R}$ be a bounded function. Let $\tilde{\mu} \in \textsl{P}(X)$ be defined through 
$$\frac{d\tilde{\mu}}{d\mu}(x) = \frac{1}{Z}\exp(-\delta\psi(x)).$$
Then $\tilde{\mu}$ satisfies LSI($\tilde{\rho}$), where 
$$\tilde{\rho} = \rho\exp(-\osc(\delta\psi)).$$

\vspace{3mm}

\textbf{Criterion III} (Bakry-Emery theorem)

Let $X$ be a $K-$ dimensional Riemannian manifold, let $H \in C^2(X)$ and let $\mu \in \textsl{P}(X)$ be defined by
$$\frac{d\mu}{d\textsl{H}^K}(x) = \frac{1}{Z}\exp(-H(x)).$$
If there is $\rho > 0$ such that $\Hess H (x) \geq \rho$ for all $x \in X$, then $\mu$ satisfies LSI($\rho$).

\vspace{1cm}

\textbf{Bibliography}

\begin{itemize}

\item
\label{BM}[BM]
	F. Barthe and E. Milman,
	Transference Principles for Log-Sobolev and Spectral-Gap with Applications to Conservative Spin Systems
	\textit{Comm. Math. Physics} \textbf{323}, 575-625, (2013).
	
\item
\label{Cap}[Cap]
	P. Caputo,
	Uniform Poincar\'e inequalities for unbounded conservative spin systems: the non-interacting case.
	\textit{Stochastic Processes and Applications} \textbf{106}, No. 2, 223-244 (2003).
	
\item
\label{CCL}[CCL]
	E. Carlen, D. Cordero-Erausquin and E. Lieb,
	Asymmetric covariance estimates of Brascamp-Lieb type and related inequalities for log-concave measures
	\textit{to appear in Ann. Inst. H. Poincar\'e Probab. Statist.}
	
\item
\label{Ch}[Ch]
	D. Chafai,
	Glauber versus Kawasaki for spectral gap and logarithmic Sobolev inequalities of some unbounded conservative spin systems 
	\textit{Markov Processes and Related Fields} \textbf{9}, 3 (2003) 341-362 
	
\item
\label{Du}[Du]
	R. Dudley,
	Real Analysis and Probability.
	
\item
\label{Gr}[Gr]
	L. Gross,
	Logarithmic Sobolev inequalities,
	\textit{Amer. J. Math.} \textbf{97}, 1061-1083 (1975).

\item
\label{Go}[Go]
	N. Gozlan, 
	A Characterization of Dimension-Free Concentration in Terms of Transportation Inequalities
	\textit{Ann. Probab.} \textbf{37}, Number 6 (2009), 2480-2498.
	
\item
\label{GOVW}[GOVW]
	N. Grunewald, F. Otto, C. Villani and M. G. Westdickenberg,
	A two-scale approach to logarithmic Sobolev inequalities and the hydrodynamic limit.
	\textit{Ann. Inst. H. Poincar\'e Probab. Statist}. 
	45 (2009), 2, 302--351.

\item
\label{GPV}[GPV]
	M.Z. Guo, G.C. Papanicolaou and S.R.S. Varadhan,
	Nonlinear Diffusion Limit for a System with Nearest Neighbor Interactions,
	\textit{Commun. Math. Phys.} 118, 31-59 (1988)
	
\item
\label{L}[L]
	M. Ledoux, 
	The Concentration of Measure Phenomenon,
	AMS, Math. Surveys and Monographs, \textbf{89}, Providence, Rhode Island, 2001.
	
\item
\label{LN}[LN]
	C. Landim and J. Noronha Neto, 
	Poincar\'e and logarithmic Sobolev inequality for Ginzburg-landau processes in random environment,
	\textit{Probab. Theory Relat. Fields} 131, 229-260 (2005)
	
\item
\label{LPY}[LPY]
	C. Landim, G. Panizo and H. T. Yau,
	Spectral gap and logarithmic Sobolev inequality for unbounded conservative spin systems. 
	\textit{Ann. Inst. H. Poincar\'e} 38, 739-777 (2002)
	
\item
\label{M}[M]
	G. Menz,
	LSI for Kawasaki dynamics with weak interaction,
	\textit{Commun. Math. Phys.} 307, 817-860 (2011)
	
\item
\label{MO}[MO]
	G. Menz and F. Otto, 
	Uniform logarithmic Sobolev inequalities for conservative spin systems with super-quadratic single-site potential.
	\textit{Ann. Probab.} \textbf{41}, Number 3B (2013), 2182-2224.
		
\item
\label{OV}[OV]
	F. Otto and C. Villani,
	Generalization of an Inequality by Talagrand and Links with the Logarithmic Sobolev Inequality,
	\textit{J. Funct. Analysis}, \textbf{243} (2001), pp. 121-157.
	
\item
\label{V1}[V1]
	C. Villani,
	Optimal Transport, Old and New.
	\textit{Grundlehren der mathematischen Wissenschaften},
	Vol. 338, Springer-Verlag, 2009.
	
\item
\label{Y}[Y]
	H.T. Yau,
	Relative Entropy and Hydrodynamics of Ginzburg-Landau Models, 
	\textit{Lett. Math. Phys.}, \textbf{22} (1991), 63-80.
\end{itemize}

\end{document}